\documentclass[12pt]{article}

\usepackage{style}

\usepackage{nine, braket, nicematrix}

\theoremstyle{definition}
\newtheorem{mycase}{Case}
\newtheorem{step}{Step}

\dateline{XXX X, 202X}{XXX X, 202X}{TBD}

\keywords{Lexicographic codes, Golay codes, Solomon-Stiffler codes, Griesmer bound}

\MSC{94B05}

\title{On linear lexicographic codes: \\ Ninth column construction \\ of the ternary Golay code}

\author[1]{Yuki Irie\thanks{Supported by JSPS KAKENHI Grant Number JP20K14277 and JP24K16892.}}

\affil[1]{%
Graduate Faculty of Interdisciplinary Research, University of Yamanashi, Yamanashi, Japan

\email{}%
}

\newcommand{\resetmycase}{\setcounter{mycase}{0}}
\newcommand{\resetstep}{\setcounter{step}{0}}
\begin{document}

\maketitle

\begin{abstract}
We characterize linear lexicographic \(p\)-ary codes.
Using this characterization, when $\ccp \ge 3$, we determine the dimensions of linear lexicographic codes
obtained from several bases including the standard basis, except for those of certain minimum distances.
In these excluded cases, we may obtain linear codes of higher dimensions;
for instance, when \(p = 3\) and \(d = 6\), the ternary Golay code is obtained.

\end{abstract}

\section{Introduction}
\label{sec:org8b4adb6}
Lexicographic codes were introduced by Levenshtein \cite{levenshtein-class-1960}
 and independently by Conway and Sloane \cite{conway-lexicographic-1986}.
Although a binary lexicographic code is linear,
a non-binary lexicographic code is not always linear,
so several algorithms for producing linear codes have been proposed \cite{bonn-forcing-1996, zanten-construction-2005}.
In this paper, we use a slightly different approach.
For a prime \(p\), we investigate linear 
lexicographic \(p\)-ary codes produced by the original algorithm developed by Levenshtein and Conway-Sloane.
We first give a characterization for linear lexicographic 
\(p\)-ary codes.
Using this characterization, when \(p > 2\),
we determine the dimensions of linear lexicographic codes
obtained from several bases including the standard basis,
except for cases with specific minimum distances. 
In these excluded cases,
we may obtain linear codes of higher dimensions; 
for instance, when \(p = 3\) and \(d = 6\), we can obtain
the ternary Golay code.

\subsection{Lexicographic codes}
\label{sec:orgae3f54a}
In this paper, we only consider lexicographic codes over 
a prime field \(\FF_{\ccp}\). 
We identify \(\FF_{\ccp }\) with \(\set{0, 1, \ldots, \ccp - 1}\).

Let \(\FF_\ccp^\NN\) denote the set of \([\ccx_\cci]_{\cci \in \NN}\) such that
\(\ccx_\cci \in \FF_p\) and \(\ccx_\cci = 0\) for all but only finitely many \(\cci \in \NN\),
where \(\NN\) is the set of non-negative integers.
Let \(\ccF\) be an (ordered) basis \((\bdf_{\cci})_{\cci \in \NN}\) of \(\FF_\ccp^\NN\).
For \(\bdx \in \FF_\ccp^\NN\), we write 
\[\bdx = \sum_{\cci \in \NN} \ithComp{\ccx}[\cci; \ccF]  \bdf_{\cci}.\]
When no confusion can arise, 
we simply write \(\ithComp{x}[\cci]\) instead of \(\ithComp{x}[\cci; \ccF]\).
Let \(\ccE\) denote the standard basis 
\((\bde_0, \bde_1, \ldots)\) of \(\FF_\ccp^\NN\),
where \(\bde_i = \mWord{0 \cdots,  0  1  0  \cdots}\) with 1 in the \(\cci\)-th coordinate;
for example, \(\bde_0 = \mWord{1  0  0  \cdots}\)
and \(\bde_1 = \mWord{0 1 0  \cdots}\).
We write \(\ithComp{\ccx}<\cci> = \ithComp{\ccx}[\cci, \ccE]\).

A basis \(F\) of \(\FF_\ccp^\NN\) induces a total order on \(\FF_p^\NN\) as follows.
Let \(\bdx\) and \(\bdy\) be two distinct elements in \(\FF_\ccp^\NN\).
Then there exists \(\ccN \in \NN\) such that
\(\ithComp{\ccx}[\ccN]\relax \neq \ithComp{\ccy}[\ccN]\) and \(\ithComp{\ccx}[\nIi] = \ithComp{\ccy}[\nIi]\) for \(\nIi > \ccN\).
We write \(\bdx <_{\ccF} \bdy\) if \(\ithComp{\ccx}[\ccN]\relax < \ithComp{\ccy}[\ccN]\).
For example, if \(\ccp = 3\) then
\[
 \mWord{0 0 0 \cdots} <_{\ccE} \mWord{1 0 0 \cdots}  <_{\ccE} \mWord{2 0 0 \cdots} <_{\ccE} \mWord{0 1 0 \cdots} <_{\ccE}  \cdots
\]

Let \(d \in \NN\) with \(d \ge 2\).
For \(\cca \in \NN\), 
let \(\word|\ccF, \ccd|(\cca)\) or simply \(\word(\cca)\) denote the minimum of \(\bdz \in \FF_\ccp^\NN\) with respect to \(<_{\ccF}\) such that
\(\ccd(\bdz, \word(\ccb)) \ge \ccd\) for \(0 \le \ccb < \cca\),
where \(\ccd(\bdx, \bdy)\) is the Hamming distance between \(\bdx\) and \(\bdy\),
that is, \(\ccd(\bdx, \bdy) = \Size{\set{\cci \in \NN : \ccx_\cci \neq \ccy_\cci}}\).
Define \(\word<\cci>(\cca)\) and \(\word[\cci](\cca)\) by
\[
 \word(\cca) = \sum_{\cci \in \NN} \word<\cci>(\cca) \bde_i =  \sum_{\cci \in \NN} \word[\cci](\cca) \bdf_i.
\]
For \(\nIk \in \NN\), let
\[
 \Lex|\ccF, \ccd|{\nIk} = \set{\word(0), \word(1), \ldots, \word(\ccp^{\nIk} - 1)}.
\]
We call \(\Lex|\ccF, \ccd|{\nIk}\) a \emph{lexicographic code}.

Let \(\bdx = [x_\cci]_{\cci \in \NN} \in \FF_p^\NN\).
For a subset \(S\) of \(\NN\), let \(\mRes[S](\bdx)\) denote 
the vector obtained from \(\bdx\) by deleting coordinates in \(\NN \setminus S\), that is,
\(\mRes[S](\bdx) = [\ithComp{\ccx}<\cci>]_{\cci \in S}\).
Let \(\msupp|\ccF|(\bdx) = \set{\cci \in \NN : \ithComp{x}[\cci; \ccF] \neq 0}\).
For example, if \(\bdx = \bde_0 + \bde_2 = \mWord{1 0 1 0 0 \cdots}\),
then \(\msupp|\ccE|(\bdx) = \set{0, 2}\) and \(\mRes[\set{0, 1, 2}](\bdx) = \mWord{1 0 1}\).
If \(\Lex{}\) is a subset of \(\FF_p^\NN\), then let
\[
 \msupp|\ccF|(\Lex{}) = \bigcup_{\bdx \in \Lex{}} \msupp|\ccF|(\bdx)
\]
and
\[
 \mRes(\Lex{}) = \Set{\mRes[\msupp|\ccE|(\Lex{})](\bdx) : \bdx \in \Lex{}}.
\]

\begin{example}
 \comment{Exm.}
\label{sec:org44bb357}
\label{orgcb2e1cd}
Let \(\ccp = 3\), \(\ccF = \ccE\), and \(\ccd = 2\).
Then
\begin{align*}
 \word(0) &= \mWord{0 0 0 0 0 \cdots},\ \
 \word(1)  = \mWord{1 1 0 0 0 \cdots},\ \
 \word(2)  = \mWord{2 2 0 0 0 \cdots},\\
 \word(3) &= \mWord{1 0 1 0 0 \cdots},\ \
 \word(4)  = \mWord{0 1 1 0 0 \cdots},\ \
 \word(5)  = \mWord{2 0 2 0 0 \cdots},\\ 
 \word(6) &= \mWord{0 2 2 0 0 \cdots},\ \
 \word(7)  = \mWord{1 0 0 1 0 \cdots},\ \
 \word(8)  = \mWord{0 1 0 1 0 \cdots}.
\end{align*}
Therefore 
\begin{align*}
 \mRes(\Lex{1}) &= \Set{\mWord{0 0},\ \mWord{1 1},\ \mWord{2 2}},\\
 \mRes(\Lex{2}) &= \Set{\mWord{0 0 0 0},\ \mWord{1 1 0 0},\ \ldots,\ \mWord{0 1 0 1}}.
\end{align*}
Note that \(\mRes(\Lex{1})\) is a linear code, but \(\mRes(\Lex{2})\) is not.
 
\end{example}

\subsection{Linearity of lexicographic codes}
\label{sec:org928374d}

\begin{theorem}[\hspace{0.1ex}\cite{conway-lexicographic-1986, levenshtein-class-1960}] \comment{Thm. [\hspace{0.1ex}\cite{conway-lexicographic-1986, levenshtein-class-1960}]}
\label{sec:orgce1444d}
\label{org3f1ceb3}
If \(\ccp = 2\), then \(\Lex|\ccF, \ccd|{\nIk}\) is a linear code.

\end{theorem}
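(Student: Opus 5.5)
We prove the stronger statement that, for \(p = 2\), the map \(a \mapsto \word(a)\) is additive with respect to binary addition of indices. Precisely, write \(a \oplus b\) for the bitwise XOR of the binary expansions of \(a, b \in \NN\). We claim
\[
 \word(a \oplus b) = \word(a) + \word(b) \quad \text{for all } a, b \in \NN .
\]
Given this, \(\Lex|\ccF, \ccd|{\nIk} = \set{\word(a) : 0 \le a < 2^{\nIk}}\) is closed under addition, because \(\set{0, \ldots, 2^{\nIk}-1}\) is closed under \(\oplus\). Over \(\FF_2\), closure under addition is exactly linearity. Equivalently, we show that \(\word(a) = \sum_{j \in \mathrm{bits}(a)} \word(2^j)\).

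We argue by induction on \(n\) and show that \(C_n = \set{\word(a) : a < 2^n}\) is a linear space of size \(2^n\) with \(\word(2^n + a) = \word(2^n) + \word(a)\) for \(a < 2^n\). Assume \(C_n\) is linear, and let \(g = \word(2^n)\). This is the \(<_\ccF\)-least vector at distance \(\ge d\) from \(C_n\), that is, with \(\min_{c \in C_n} \mathrm{wt}(g + c) \ge d\). For every \(c \in C_n\), the coset element \(g + c\) is then also at distance \(\ge d\) from \(C_n\), since \(C_n + c = C_n\).

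The key lemma concerns the order \(<_\ccF\) over \(\FF_2\). Using \(F\)-coordinates, \(\bdx <_\ccF \bdy\) is decided by the highest index where the coordinates differ, so the order is the binary integer order on \(F\)-coordinate vectors. We use the standard fact that in a binary nim-ordering, \(x < y\) implies \(x + c\) and \(y + c\) compare according to the top differing bit, which is unchanged by adding \(c\). More precisely, \(\bdx + \bdc <_\ccF \bdy + \bdc\) holds exactly when the highest index \(N\) where \(\bdx, \bdy\) differ has \(x^{[N]} + c^{[N]} < y^{[N]} + c^{[N]}\). Thus addition of \(\bdc\) may flip comparisons, but it does so consistently across the whole coset. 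From this we conclude that translation by \(g\) maps the set \(\set{z : d(z, C_n) \ge d}\) to itself. On this set, it sends the elements of the coset \(g + C_n\), listed in increasing order, to \(C_n\) in an order matching the ordering of \(C_n\) itself.

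Now we show \(\word(2^n + a) = g + \word(a)\) by a second induction on \(a < 2^n\). The candidate \(g + \word(a)\) is at distance \(\ge d\) from \(C_n\) and from the earlier \(g + \word(b)\), \(b < a\), since distances are translation invariant and \(C_n\) has minimum distance \(\ge d\). It remains to show that no smaller vector \(z\) qualifies. Suppose \(z <_\ccF g + \word(a)\) is at distance \(\ge d\) from \(C_n \cup \set{g + \word(b) : b < a}\). We distinguish cases by the top bit where \(z\) and \(g + \word(a)\) differ, compared with the leading position of \(g\) (the top \(F\)-index of \(g\), which exceeds all indices in \(C_n\) by minimality). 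In each case, translating by \(g\) or by a suitable codeword produces a vector contradicting either the minimality of \(g\) or that of some \(\word(b)\). This is the Conway–Sloane argument. The main obstacle is handling these orderings cleanly.

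My first attempt will use the simpler strategy of Conway–Sloane directly. I would show that \(\word(a)\) is the least vector at distance \(\ge d\) from all \(\word(b) \oplus\)-combinations, using the fact that for \(p = 2\) the greedy choice depends only on which cosets are excluded. The order-compatibility lemma remains the crux.
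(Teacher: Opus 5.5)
Your frame is sound: an outer induction on $n$ with $C_n=\{w(a):a<2^n\}$ linear, and an inner induction proving $w(2^n+a)=g+w(a)$ for $g=w(2^n)$. Your check that $g+w(a)$ is at distance $\ge d$ from all earlier words is also correct. But the one nontrivial step, that no $z<_F g+w(a)$ is admissible, is only described (``in each case, translating by $g$ or by a suitable codeword produces a vector contradicting \dots'') and deferred to Conway--Sloane. The lemma you call the crux does not supply it, and as stated it is false. Translation by $g$ does not preserve $\{z: d(z,C_n)\ge d\}$: $g$ lies in this set while $g+g=0\in C_n$. Adding $c$ reverses the comparison of $x$ and $y$ exactly when $c^{[N]}=1$ at the top index $N$ where $x,y$ differ; this depends on the pair, so there is no ``consistent'' flipping. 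It is true that $c\mapsto g+c$ preserves the order on $C_n$, but only because $g$ has $F$-coordinate $0$ at the leading positions of $C_n$ (cf.\ Lemma~\ref{org5db7ec7}(2)), which you do not show. In any case it is beside the point, because the danger is an admissible $z$ outside $g+C_n$. Your proposed case split compares the top differing index $N$ of $z$ and $g+w(a)$ with the leading index of $g$. It only settles the case where $N$ equals that index (then $z<_F g$); the case of smaller $N$ is the entire difficulty.

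The missing idea is to split on which summand carries the bit at $N$. Put $y=g+w(a)$, and let $z<_F y$ be at distance $\ge d$ from $C_n$ and from all $g+w(b)$, $b<a$. At the highest index $N$ where $z$ and $y$ differ, $z^{[N]}=0$ and $1=y^{[N]}=g^{[N]}+w(a)^{[N]}$, so exactly one of $g^{[N]}$, $w(a)^{[N]}$ equals $1$.
\begin{itemize}
\item If $w(a)^{[N]}=0$, then $z+w(a)<_F y+w(a)=g$. Yet $z+w(a)$ is at distance $\ge d$ from $C_n$ because $C_n+w(a)=C_n$, contradicting the choice of $g$.
\item If $g^{[N]}=0$, then $z+g<_F w(a)$. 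Yet $d(z+g,w(b))=d(z,g+w(b))\ge d$ for $b<a$, contradicting the choice of $w(a)$.
\end{itemize}
No order-compatibility lemma is needed. This is exactly the mechanism of the paper's Lemma~\ref{org13a9541}: translate both the true word and $w^+(a)$ by sub-sums $w(b)$, and compare at the top differing coordinate $N$. The paper obtains the theorem as the trivial $p=2$ case of Theorem~\ref{org100bcd6}(3) (Remark~\ref{sec:org81eba8d}). For a minimal $a$ with $w(a)\ne w^+(a)$, take two binary digits $2^h,2^l$ of $a$. Then $w(2^h)$ and $w(2^l)$ both have $N$-th $F$-coordinate $1$, so $w^+(2^h+2^l)$ has $N$-th $F$-coordinate $0$, contradicting that lemma.
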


\begin{example}[\hspace{0.1ex}\cite{conway-lexicographic-1986}]
 \comment{Exm. [\hspace{0.1ex}\cite{conway-lexicographic-1986}]}
\label{sec:orgfa29451}
If \(\ccp = 2\), then \(\mRes(\Lex|\ccE, 8|{12})\) is the (extended) binary Golay code.
 
\end{example}

\comment{connect}
\label{sec:orga36bae7}
In contrast to binary lexicographic codes,
a \(p\)-ary lexicographic code may not be linear when \(\ccp \ge 3\)
as we have seen in Example \ref{orgcb2e1cd}.

Our first result gives the conditions for \(\Lex|\ccF, \ccd|{\nIk}\) to be linear.
For \(\cca \in \NN\), let \(\ithComp{\cca}<\nIi>\) denote the
 \(\nIi\)-th digit of \(\cca\) in the \(\ccp\)-ary expansion, that is,
\(\cca = \sum \ithComp{\cca}<\nIi> \ccp^{\nIi}\) and 
\(\ithComp{\cca}<\nIi> \in \set{0, 1, \ldots, \ccp - 1}\).
We define \(\word+|\ccF, \ccd|(\cca)\) as follows:
\[
\word+|\ccF, \ccd|(\cca) =  \sum_{\nIi \in \NN} \ithComp{\cca}<\nIi> \word|\ccF, \ccd|(\ccp^{\nIi}).
\]
Note that \(\word+|\ccF, \ccd|(\ccp^\nIi) = \word|\ccF, \ccd|(\ccp^\nIi)\).

\begin{theorem} \comment{Thm.}
\label{sec:orgcd4d6d8}
\label{org100bcd6}
The following three conditions are equivalent.
\begin{enumerate}
\item \(\Lex|\ccF, \ccd|{\nIk}\) is a linear code.
\item \(\word|\ccF, \ccd|(\cca) = \word+|\ccF, \ccd|(\cca)\) for \(0 \le \cca < \ccp^{\nIk}\).
\item \(\word|\ccF, \ccd|(\cca) = \word+|\ccF, \ccd|(\cca)\) for \(0 \le \cca < \ccp^{\nIk}\) with \(\sum_{\nIi \in \NN} \cca_{\nIi} \le \ccp - 1\).
\end{enumerate}

\end{theorem}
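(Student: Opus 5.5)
The implication (2)\(\Rightarrow\)(3) is trivial. For the other two implications I would first record two elementary facts about the greedy sequence. First, \(\word(0) <_{\ccF} \word(1) <_{\ccF} \cdots\) is strictly increasing. Indeed, \(\word(\cca+1)\) satisfies every constraint imposed on \(\word(\cca)\), so \(\word(\cca) \le_{\ccF} \word(\cca+1)\), and the two are distinct because \(\ccd \ge 2\). Hence \(\word(\cca)\) is the \(\cca\)-th smallest element of \(\Lex{\nIk}\) for \(\cca < \ccp^{\nIk}\). Second, I need an ordering lemma for linear subspaces. Let \(C \subseteq \FF_\ccp^\NN\) be a \(\nIk\)-dimensional subspace. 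Take its reduced echelon basis \(\bdg_1, \ldots, \bdg_{\nIk}\) with respect to \(\ccF\): the pivots \(N_1 < \cdots < N_{\nIk}\) are the top indices, \(\ithComp{g_j}[N_j] = 1\), and \(\ithComp{g_j}[N_i] = 0\) for \(i \ne j\). Then the elements of \(C\), listed in increasing \(<_{\ccF}\)-order, are exactly \(\sum_{\nIi} \cca_{\nIi}\bdg_{\nIi+1}\) for \(\cca = 0, 1, \ldots, \ccp^{\nIk}-1\).

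To prove the lemma, take \(\bdc = \sum c_j \bdg_j\) and \(\bdc' = \sum c'_j \bdg_j\), and let \(j\) be the largest index with \(c_j \ne c'_j\). Above \(N_j\) the two vectors agree: the terms with index \(> j\) coincide, and the terms with index \(\le j\) have support at most \(N_j\). At \(N_j\) their coordinates are exactly \(c_j\) and \(c'_j\) as integers in \(\set{0, \ldots, \ccp-1}\). So \(<_{\ccF}\) is the reverse-lexicographic order on the coefficient vectors, which is the order of the integer \(\sum c_j \ccp^{j-1}\).

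\emph{(1)\(\Rightarrow\)(2).} Apply the lemma to \(C = \Lex{\nIk}\), which has exactly \(\ccp^{\nIk}\) elements. Combined with monotonicity, this gives \(\word(\cca) = \sum \cca_{\nIi} \bdg_{\nIi+1}\). Taking \(\cca = \ccp^{\nIi}\) gives \(\bdg_{\nIi+1} = \word(\ccp^{\nIi})\), and therefore \(\word(\cca) = \word+(\cca)\).

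\emph{(3)\(\Rightarrow\)(2).} This is the real work. I would argue by induction on \(\nIk\), proving at the same time that \(\Lex{\nIk}\) is linear, so that (1) also follows. Assume \(C = \Lex{\nIk-1}\) is linear with \(\word = \word+\) on it, and put \(\bdg = \word(\ccp^{\nIk-1})\). Write \(\cca = t\ccp^{\nIk-1} + r\) with \(r < \ccp^{\nIk-1}\). I would then show, by a double induction on \((t, r)\), that \(\word(\cca) = t\bdg + \word(r)\). Two things must be checked.
\begin{enumerate}
\item[(a)] \(t\bdg + \word(r)\) is at distance \(\ge \ccd\) from all earlier words. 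By the induction hypothesis this reduces to showing that every nonzero vector \(s\bdg + \bdx\) with \(\bdx \in C\) has weight \(\ge \ccd\).
\item[(b)] No smaller vector qualifies. This follows by comparing with the order structure from the lemma: the top index of \(\bdg\) must exceed that of every word of \(C\), or at least the leading coefficients must behave additively.
\end{enumerate}

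Hypothesis (3) enters exactly here. It gives \(\word(s\ccp^{\nIk-1}) = s\bdg\) for \(s \le \ccp-1\), and more generally it pins down the words whose digit sum is small. The greedy distance condition applied to these words yields \(\mathrm{wt}(s\bdg - \bdx) \ge \ccd\) first for \(\bdx\) of the form \(\word(b)\) with small digit sum. I would then try to bootstrap to all \(\bdx \in C\): using linearity of \(C\) and induction on digit sum, I would rewrite each difference \(t\bdg + \word(r) - \word(b)\) as a difference of two words whose correctness is already known.

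I expect step (a), together with the compatibility of \(\bdg\)'s leading position with \(C\) in step (b), to be the main obstacle. Once they are in place, the greedy minimality and the ordering lemma force \(\word(\cca) = t\bdg + \word(r) = \word+(\cca)\).
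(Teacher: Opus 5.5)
Your arguments for (2)$\Rightarrow$(3) and (1)$\Rightarrow$(2) are correct. Monotonicity of $w$ combined with the echelon ordering lemma is a clean replacement for the paper's Lemma~\ref{org5db7ec7}.

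\textbf{The gap is in (3)$\Rightarrow$(2)}, at the step you treat as routine.

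Your step (a) needs no hypothesis. Since $C=L_{k-1}$ is linear and $g=w(p^{k-1})$ is at distance at least $d$ from every element of $C$, for $s\neq 0$ and $x\in C$ you get $\mathrm{wt}(sg+x)=\mathrm{wt}\bigl(g-(-s^{-1}x)\bigr)\ge d$. So the digit-sum bootstrapping you plan is spent on the wrong step.

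Step (b) is where (3) is indispensable, and the ordering lemma cannot supply it. That lemma only orders the elements of $C+\FF_p g$. The danger is a smaller vector \emph{outside} this span that is at distance at least $d$ from all earlier words. The pivot compatibility you invoke holds automatically and is still insufficient. In Example~\ref{orgcb2e1cd} ($p=3$, $d=2$, $F=E$):
\begin{itemize}
\item $L_1=\{(0,0,\dots),(1,1,0,\dots),(2,2,0,\dots)\}$ is linear;
\item $g=w(3)=(1,0,1,0,\dots)$ has top index $2$ and $g_{[1]}=0$;
\item yet $w(4)=(0,1,1,0,\dots)<_E(2,1,1,0,\dots)=w^+(4)$.
\end{itemize}

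\textbf{The missing idea} is a mechanism that turns (3) into minimality. The paper does it as follows.
\begin{enumerate}
\item Let $a$ be least with $w(a)\neq w^+(a)$. The scaling argument above and Lemma~\ref{org6f41f57} give $w(a)<_F w^+(a)$. Let $N$ be the top $F$-index where they differ.
\item Take $0<b<a$ with $b_i\le a_i$ for all $i$. Then $w(a)-w(b)$ is at distance at least $d$ from every $w(c)$ with $c<a-b$, because $w(c)+w(b)=w(c\oplus_p b)$ and $c\oplus_p b\le c+b<a$.
\item Hence $w(a)-w(b)>_F w(a-b)=w^+(a)-w(b)$. Comparing at $N$ forces $w_{[N]}(b)\neq 0$. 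For $b=a$ the same conclusion is just $w^+_{[N]}(a)>w_{[N]}(a)\ge 0$. This is Lemma~\ref{org13a9541}.
\item Now suppose $\sum_i a_i\ge p$. List $w_{[N]}(p^i)$ with multiplicity $a_i$. Among the at least $p$ partial sums in $\FF_p$, either one is $0$ or two coincide, so some nonempty contiguous block sums to $0$.
\item That block gives $b$ with $0<b$, $b_i\le a_i$, and $w^+_{[N]}(b)=\sum_i b_i\,w_{[N]}(p^i)=0$, a contradiction.
\end{enumerate}
Therefore the least failure has digit sum at most $p-1$, and (3) excludes it. Without this translation-plus-zero-sum argument, or a genuine substitute, your plan does not establish (3)$\Rightarrow$(2).
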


\begin{remark}
 \comment{Rem.}
\label{sec:org81eba8d}
Theorem \ref{org100bcd6} can be considered as a generalization of
Theorem \ref{org3f1ceb3} because (3) in Theorem \ref{org100bcd6} 
holds trivially when \(p = 2\). 
Indeed, if \(p = 2\) and \(\sum \cca_{\nIi} \le 1\), then
\(\cca\) must be a power of 2, and hence \(\word|\ccF, \ccd|(\cca) = \word+|\ccF, \ccd|(\cca)\).
 
\end{remark}

\subsection{Dimensions of linear lexicographic codes}
\label{sec:orgedb17e0}
Using Theorem \ref{org100bcd6}, we will determine the dimensions of several linear lexicographic codes.

Recall that when \(p = 2\), the lexicographic code \(\mRes(\Lex|E, 8|{12})\)
is the Golay code. However,
when \(p = 3\), the lexicographic code \(\mRes(\Lex|\ccE, 6|{6})\) is
not the ternary Golay code, and is not even a linear code;
in \cite{conway-atlas-1986}, it is pointed out that a modified algorithm produces 
the ternary Golay code (see Example \ref{org0fc9cf0}).
We here give another construction of the ternary Golay code.
For distinct non-negative integers \(\xi\) and \(\eta\), let \(\ccF(\xi, \eta)\) be the basis \((\bdf_i)_{\cci \in \NN}\) of \(\FF_p^\NN\) defined by
\[
 \bdf_i = \begin{cases} \bde_\cci & \tif \cci \neq \xi, \\ 
   \bde_\xi + \bde_\eta & \tif \cci = \xi. \end{cases}
\]
The basis \(\ccF(\xi, \eta)\) differs from standard basis \(E\) only in the \(\xi\)-th element.
Let 
\[\sF = \set{\ccE} \cup \set{\ccF(\xi, \eta) : \xi, \eta \in \NN, \xi \neq \eta}.
\]
For convenience, let \(\ccE = \ccF(\infty, \infty)\).

For a basis \(\ccF\) of \(\FF_{\ccp}^{\NN}\),
let \(\MaxDimLinearLex|\ccF, \ccd|\)
denote the maximum \(\cck\) such that
\(\Lex|\ccF, \ccd|{\nIk}\) is a linear code;
if \(\Lex|\ccF, \ccd|{\nIk}\) is a linear code for all \(\cck\),
then let \(\MaxDimLinearLex|\ccF, \ccd| =  \infty\).
The following theorem says that a slight modification of the standard basis allows the lexicographic code algorithm to produce the ternary Golay code.

\begin{theorem} \comment{Thm.}
\label{sec:org17b2b3d}
\label{orgeadf707}
Let \(p = 3\) and \(\ccF \in \sF\). Then
\[
\MaxDimLinearLex|\ccF, 6| = \begin{cases}
6 & \tif \ccF = \ccF(\xi, 9),\ 0 \le \xi \le 8,\\
3 & \tif \ccF = \ccF(\xi, \eta),\ 0 \le \xi \le 8,\ \eta \ge 10,\\
2 & \totherwise.\\
\end{cases}
\]
In particular, if \(\ccF = \ccF(\xi, 9)\) and \(0 \le \xi \le 8\),
then \(\mRes(\Lex|\ccF, 6|{6})\) is the ternary Golay code.

\end{theorem}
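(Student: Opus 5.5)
Our plan is to apply Theorem \ref{org100bcd6} directly: $\MaxDimLinearLex|\ccF, 6|$ is the largest $\nIk$ for which $\word(\cca) = \word+(\cca)$ holds for every $\cca < 3^{\nIk}$ whose ternary digits sum to at most $2$. For $p=3$ such $\cca$ are exactly $0$, $3^i$, $2\cdot 3^i$ and $3^i+3^j$ with $i<j$. So the work has two parts. First compute the basis vectors $\word(3^i)$. Then compare $\word(2\cdot 3^i)$ with $2\word(3^i)$, and $\word(3^i+3^j)$ with $\word(3^i)+\word(3^j)$. All computations are done in the total order $<_{\ccF}$. For $\ccF = \ccF(\xi,\eta)$ this order is the standard reverse-lexicographic order after the substitution $\ithComp{x}[\xi] = \ithComp{x}<\xi>$ and $\ithComp{x}[\eta] = \ithComp{x}<\eta> - \ithComp{x}<\xi>$. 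Hence only the comparison at coordinate $\eta$ (and any ties it breaks) differs from the case $\ccE$.

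\textbf{Step 1 (the case $\ccF=\ccE$, and the bound $\ge 2$ for all $\ccF$).} We compute greedily with $d=6$. This gives $\word(1)=\mWord{1 1 1 1 1 1}$ and $\word(2)=\mWord{2 2 2 2 2 2}$. Next, $\word(3)$ is the least word at distance $\ge 6$ from both. We expect $\word(3)=\mWord{0 0 0 1 1 1 1 1 1}$ (shifted so that the last support coordinate is as small as possible), and likewise for $\word(4),\dots,\word(8)$. In each case we check that these words equal the corresponding $\word+$ values. Then we exhibit an explicit $\cca<27$ with digit sum $\le 2$ for which the greedy word is strictly smaller than $\word+(\cca)$. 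This yields $\MaxDimLinearLex|\ccE, 6| = 2$. For $\ccF(\xi,\eta)$ with $\xi\ge 9$ or $\eta\le 8$ (the remaining "otherwise" cases), the modified coordinate is either beyond the region where the first $27$ words live, or it affects only ties that are resolved identically. So the same computation and the same witness of failure apply. We must verify this carefully, especially for $\eta<9$.

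\textbf{Step 2 (the case $\xi\le 8$, $\eta\ge 9$).} Now the vector $\bde_\xi$ has $\ccF$-coordinate $-1=2$ at position $\eta$. Therefore words with $\ithComp{x}<\eta>$ "compensating" for $\ithComp{x}<\xi>$ become small in the order. Consequently the greedy algorithm is pushed to use coordinate $\eta$ early. This changes $\word(3^i)$ from $i=2$ onward in a way that restores linearity at the critical $\cca$ found in Step 1. For $\eta\ge 10$ we continue the computation through $\cca<3^4$. We show linearity holds up to $\nIk=3$ and find a failing $\cca<81$, giving $3$. For $\eta=9$ we show that the words $\word(3^i)$, $0\le i\le 5$, generate a $[12,6,6]$ code, which has support $\{0,\dots,11\}$. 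By uniqueness of the ternary Golay code, this code is the ternary Golay code. We then verify condition (3) for all $\cca<3^6$ of digit sum $\le 2$. For the upper bound, we show $\word(3^6)\ne$ anything compatible: the Griesmer bound forbids a $[n,7,6]_3$ extension within the lexicographic support pattern. More concretely, we exhibit $\cca=3^6+3^i$ or $2\cdot 3^6$ where the greedy choice differs.

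\textbf{Main obstacle.} The verification of condition (3) for $\ccF(\xi,9)$ is hard because it involves $\binom{7}{2}+12$ cases. Each case requires proving that no word smaller than $\word+(\cca)$ is at distance $\ge 6$ from all earlier words. Here we would use the linearity already established for smaller $\cca$, which reduces the check to minimum weight $6$ of the span. We would also use the perfectness/covering structure of the Golay code to show that every smaller candidate lies within distance $5$ of some codeword. In addition, the independence from $\xi\in\{0,\dots,8\}$ must be argued by symmetry of the first nine coordinates in the words constructed so far.
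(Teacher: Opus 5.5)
Your plan has a genuine gap, and it is where the statement has its content. The lower bound $\ge 2$ for every $\ccF\in\sF$ needs no computation: it is Theorem~\ref{org9ad3185}(2) with $k=2$, $d'=2$. But $d'=2$ lies outside Theorem~\ref{org3214412}, so the exact values rest on explicit greedy runs. The shortcuts you use to cover infinitely many bases with a few runs are false in one case and missing in the others.

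In Step~1 you claim that when $\xi,\eta\le 8$ the change of basis only affects ties that are resolved identically. This is wrong. At position $\eta$ the $\ccF$-order compares the entry at $\eta$ minus the entry at $\xi$, before any lower position is examined, so it genuinely reorders candidates. Concretely, Example~\ref{org86ab854}(2) gives $\word(3)=\mWord{2 2 1 0 0 1 1 1 0 \cdots}$ for $\ccF(6,5)$. For $\ccE$, a short count gives $\word(3)=\mWord{2 2 1 1 0 0 1 1 0 \cdots}$: the least admissible word ends at coordinate $7$, coordinates $6,7$ are nonzero, and each symbol occurs exactly twice among coordinates $0,\dots,5$. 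Your guess $\mWord{0 0 0 1 1 1 1 1 1}$ ends at coordinate $8$ and is not minimal. Hence each of the $72$ bases $\ccF(\xi,\eta)$ with $\xi\ne\eta$ in $\{0,\dots,8\}$ must be run separately, with its own failure witness among $a\in\{10,12,18\}$. Your transfer argument is valid only for $\xi\ge 9$. There, vectors supported in $\{0,\dots,8\}$ have identical $\ccE$- and $\ccF$-coordinates and every other vector is $\ccF$-larger. So the $\ccE$-run up to its first failure, which stays inside $\{0,\dots,8\}$, is reproduced verbatim.

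Step~2 has no mechanism for the infinitely many $\eta\ge 10$ or for the nine values of $\xi$. There is no symmetry among the first nine coordinates, and for small $\xi$ the words change already at $i=0$. For $\eta\ge 9$:
$\word(1)=\bde_0+\dots+\bde_4+\bde_\eta$ if $\xi\le 4$;
$\word(1)=\bde_0+\bde_1+\bde_2+\bde_3+\bde_5+\bde_\eta$ if $\xi=5$;
$\word(1)=\mWord{1 1 1 1 1 1 0 \cdots}$ only if $6\le\xi\le 8$.

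The missing observation is the following. For $\xi<\eta$, the vectors whose highest nonzero $\ccF$-coordinate lies below $\eta$ are exactly those supported in $\{0,\dots,\eta\}$ with equal entries at positions $\xi$ and $\eta$. On them $<_{\ccF}$ is the $\ccE$-order of positions $0,\dots,\eta-1$, and the Hamming distance counts position $\xi$ twice. So for fixed $\xi\le 8$ the run is the same for all $\eta\ge 10$, provided every word up to the dimension-$4$ failure witness has highest $\ccF$-coordinate at most $9$; you must check this.

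For $\eta=9$, the Griesmer bound cannot give the upper bound, since $\word(3^6)$ may use coordinates $\ge 12$. You need an explicit $a\in\{3^6+3^i,\ 2\cdot 3^6\}$ with $\word(a)<_{\ccF}\word+(a)$.

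Finally, minimum weight $6$ of the span only shows that $\word+(a)$ is admissible, which is the easy inequality already proved inside Lemma~\ref{org13a9541}. The real work is to show that every $\bdz<_{\ccF}\word+(a)$ is within distance $5$ of some $\word(b)$ with $b<a$, i.e.\ of a proper prefix of the code. The covering radius of the full Golay code says nothing about that.
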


\comment{connect}
\label{sec:orgf369433}
Note that, in Theorem \ref{orgeadf707}, the dimension \(\MaxDimLinearLex|\ccF, 6|\) is significantly larger
in the case when \(\ccF = \ccF(\xi, 9)\) and \(0 \le \xi \le 8\).
The following theorem shows that, except for certain \(d\), this phenomenon does not occur.

\begin{theorem} \comment{Thm.}
\label{sec:orgf3c5d1d}
\label{org9ad3185}
Let \(\ccF \in \sF\).
\begin{enumerate}
\item \(\MaxDimLinearLex|\ccF, \ccd|  = \infty\) if  \(\ccp = 2\).
\item \(\MaxDimLinearLex|\ccF, \ccd|  \ge \nIk\) if \(\ccp = 3\), \(\ccd = 3^{\nIk - 1} \ccd'\), and \(\ccd' \ge 2\);
moreover, if \(\nIk - 1\) is the \(3\)-adic order of \(\ccd\) and \(\ccd' \ge 8\), then
\(\MaxDimLinearLex|\ccF, \ccd| = \nIk\).
\item \(\MaxDimLinearLex|\ccF, \ccd| \le 2\) if \(\ccp \ge 5\).
\end{enumerate}

\end{theorem}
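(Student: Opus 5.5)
Part (1) follows directly from Theorem \ref{org3f1ceb3}: when $\ccp = 2$, $\Lex|\ccF, \ccd|{\nIk}$ is linear for every $\nIk$, so $\MaxDimLinearLex|\ccF, \ccd| = \infty$. For (2) and (3) the tool is Theorem \ref{org100bcd6}(3). Linearity of $\Lex|\ccF,\ccd|{\nIk}$ reduces to checking $\word(\cca) = \word+(\cca)$ only for those $\cca < \ccp^{\nIk}$ whose digit sum is at most $\ccp - 1$. The plan is to compute the basis words $\word(\ccp^{\nIi})$ explicitly for small $\nIi$, for the standard basis and for each $\ccF(\xi,\eta)$, and then compare the lexicographically minimal words with the candidate sums.

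\textbf{Part (3).} I will show that linearity already fails at $\nIk = 3$. Take $\ccF = \ccE$ first. Greedy computation gives $\word(1) = \bde_0+\cdots+\bde_{\ccd-1}$. The words $\word(\cca)$ for $\cca < \ccp$ are scalar multiples of $\word(1)$, since these are the smallest words supported on the first $\ccd$ coordinates at mutual distance $\ge \ccd$. Next, $\word(\ccp)$ has the shape of a word supported on the first $\ccd$ coordinates plus a new tail. Because $\ccp \ge 5$ gives many scalars, the minimal choice for $\word(\ccp)$ uses only about $\lceil \ccd/2\rceil$ coordinates overlapping $\word(1)$. I then check the element $\cca = 1 + \ccp$, whose digit sum is $2 \le \ccp-1$. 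The sum $\word(1)+\word(\ccp)$ should fail to be the lexicographic minimum, because a smaller vector at distance $\ge\ccd$ from all previous codewords exists. That smaller vector uses a different scalar pattern, which is available only when $\ccp \ge 5$. If the failure instead first appears at a digit sum $\le \ccp - 1$ involving $\word(\ccp^2)$, the argument still gives $\MaxDimLinearLex \le 2$. For $\ccF(\xi,\eta)$ the modification only affects the ordering through coordinate $\eta$. I will argue that the same witness, possibly shifted, still works: either $\eta$ lies outside the window used, or the modified basis vector changes $\word(\cca)$ in a controlled way.

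\textbf{Part (2).} For the lower bound with $\ccp = 3$ and $\ccd = 3^{\nIk-1}\ccd'$, I expect the lexicographic basis words $\word(3^{\nIi})$, $\nIi < \nIk$, to form a Solomon--Stiffler/simplex-like repetition structure. Concretely, $\word(3^{\nIi})$ should consist of blocks of length $\ccd/3^{\nIi}$ arranged like the generator of the $[\,(3^{\nIk}-1)/2\cdot\text{scale},\nIk]$ code, with all nonzero combinations of weight exactly $\ccd$ or more. I will prove by induction on $\nIi$ that every $\word(\cca)$ with digit sum $\le 2$ equals $\word+(\cca)$, and then invoke Theorem \ref{org100bcd6}. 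The key observation is that a codeword of the linear span lies at distance $\ge \ccd$ from the previously listed words, and that no smaller vector does. The second claim uses a Griesmer-type counting argument: a smaller vector would force a code exceeding the Griesmer bound. For the upper bound when $v_3(\ccd)=\nIk-1$ and $\ccd'\ge 8$, I exhibit $\cca < 3^{\nIk+1}$ with digit sum $\le 2$, typically $\cca = 3^{\nIk}+3^{\nIi}$ or $2\cdot 3^{\nIk}$, for which $\word(\cca) \ne \word+(\cca)$. Since $3 \nmid \ccd'$, the next basis word $\word(3^{\nIk})$ cannot continue the divisible block pattern. A shorter vector then beats the sum, and $\ccd'\ge 8$ gives enough room for it.

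\textbf{Main obstacle.} The hardest step will be the explicit determination of $\word(3^{\nIk})$ in the upper bound of (2), uniformly over all $\ccF\in\sF$ and all $\ccd'\ge 8$ with $3\nmid\ccd'$. The greedy minimum depends delicately on residues of $\ccd'$ and on the position of $\eta$. My first approach is to split by $\ccd' \bmod 3$ and by whether $\eta$ lies inside the support $\msupp|\ccE|(\Lex{\nIk})$, and in each case write down the competing smaller vector directly.
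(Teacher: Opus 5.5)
Part (1) and the reduction to Theorem~\ref{org100bcd6}(3) are fine. Parts (2) and (3), however, both have genuine gaps.

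\textbf{Part (3).} Your main witness $a=1+p$ does not work for every $F(\xi,\eta)$. For $d=2$ with $\xi\in\{0,1,2\}$, and for $p=d=5$, $\xi=2$, $\eta\in\{3,4,5\}$, the code $\mathrm{Lex}_2$ is linear. For example, with $d=2$ and $F=F(2,\eta)$, $\eta\ge 3$, one gets $w(1)=e_0+e_1$, $w(p)=f_2=e_2+e_\eta$, and $w(1+p)=w(1)+w(p)$. In such cases failure can only be exhibited at an element involving $w(p^2)$. That requires computing all $w(a)$ with $a\le p^2$ (Subsection~\ref{sec:org60d8c4e}), and your fallback sentence gives no argument for it.

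``The same witness, possibly shifted'' also does not handle $F(\xi,\eta)$ in general. A class of columns $(1,\alpha)^T$ of the matrix with rows $w(1),w(p)$ may consist only of coordinates in $\Theta=\{\xi,\eta\}$, where one-coordinate modifications are unavailable. The paper therefore needs a case analysis on these multiplicities (Lemmas~\ref{org042dd48}, \ref{orga081bbf}, \ref{orga704c68}, \ref{orga67785b}, Proposition~\ref{org230bb86}), and it ends in a computer check for $d=p\in\{5,7\}$.

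Your description of $w(p)$ is also wrong. The condition $d(w(p),\alpha w(1))\ge d$ for all $\alpha$ forces each value to occur on $\mathrm{supp}\,w(1)$ at most as often as the tail weight. So for $F=E$ the tail has weight $\lceil d/p\rceil$, and the overlap with $w(1)$ is $d-\lceil d/p\rceil$, not about $d/2$.

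\textbf{Part (2).} The step ``a smaller vector would force a code exceeding the Griesmer bound'' is not a valid mechanism. Take a competitor $y<_F w^+(a)$ at distance $\ge d$ from all $w(b)$, $b<a$. Together with $w(3^0),\dots,w(3^{h-1})$ it spans a code of minimum distance $\ge d$, and Griesmer only bounds that code's length from below. In the paper's contradiction argument this code is in fact shown to \emph{meet} the bound (Corollary~\ref{org7f53679}). Likewise, in your own upper-bound situation, the actual competitor to $w(3^k)+w(1)$ spans with $w(3^0),\dots,w(3^{k-1})$ a Griesmer-optimal Solomon--Stiffler code.

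So counting alone cannot decide lexicographic minimality. You must turn ``$y$ is smaller'' into positional information about columns, which is the role of the order lemmas (Lemmas~\ref{orgfc26eca}, \ref{org06db361}, \ref{org69880d5}). They say that within a prefix class, columns with larger last entry lie at smaller coordinates, with controlled exceptions at $\xi,\eta$. The paper proceeds in stages:
\begin{enumerate}
\item It proves that the forced-linear words give the $d'$-fold simplex code (Theorem~\ref{org56f1397}, via Lemma~\ref{org2f35cc7}; $d'\ge 2$ guarantees columns outside $\Theta$).
\item It takes the minimal $a=3^h+3^l$ with $w(a)\ne w^-(a)$ and shows that the supports of $y$ and $y-w(3^h)$ stay inside the old supports (Lemma~\ref{orgab95765}).
\item Only then does a case analysis on the critical coordinate $N$ produce a column class too large for a Griesmer-tight code (Corollary~\ref{orgb141d90}).
\end{enumerate}

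For the upper bound you correctly locate the obstacle but give no way to determine $w(3^k)$. The paper does this in Proposition~\ref{org6d2e6f0}: the generator matrix is $\pi_d$-distributed, proved by restriction to $\Phi_0,\Phi_1$ and induction on $k$. The hypothesis $q+r-2\ge 2$, i.e.\ $d'\ge 8$ with $3\nmid d'$, is exactly what lets Lemma~\ref{org2f35cc7} produce the smaller competitor at $a=3^k+1$.
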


\comment{connect}
\label{sec:org2df7c01}
While proving Theorem \ref{org9ad3185}, 
we will determine, in certain cases, the structure of the following lexicographic-code variants
discussed in \cite{conway-atlas-1986, bonn-forcing-1996}. 
For a non-negative integer \(\cca\), 
define \(\word-|\ccF, \ccd|(\cca)\), or simply \(\word-(\cca)\), as follows.
If \(\cca\) is a power of \(\ccp\), then \(\word-(\cca)\) is
the minimum of \(\bdz \in \FF_\ccp^\NN\) such that
\(\ccd(\bdz, \word-(\ccb)) \ge \ccd\) for \(0 \le \ccb < \cca\);
otherwise, \(\word-(\cca) = \sum_{\nIi \in \NN} \ithComp{\cca}<\nIi> \word-(\ccp^\nIi)\).
For \(\nIk \in \NN\), let
\[
 \Lex-|\ccF, \ccd|{\nIk} = \set{\word-(0), \word-(1), \ldots, \word-(\ccp^{\nIk} - 1)}.
\]
We define \(\word-<\nIi>(\cca)\) and \(\word-[\nIi](\cca)\) 
in the same way as 
\(\word<\nIi>(\cca)\) and \(\word[\nIi](\cca)\), that is,
\(\word(\cca) = \sum \word-<\nIi>(\cca) \bde_\nIi = \sum \word-[\nIi](\cca) \bdf_\nIi\).
Note that, by Theorem \ref{org100bcd6}, we see that if \(\Lex|\ccF, \ccd|{\cck}\) is linear, then 
\(\word(\cca) = \word+(\cca) = \word-(\cca)\) for \(0 \le \cca < \ccp^{\cck}\), and \(\Lex|\ccF, \ccd|{\cck} = \Lex-|\ccF, \ccd|{\cck}\).

\begin{example}[\hspace{0.1ex}\cite{conway-atlas-1986}]
 \comment{Exm. [\hspace{0.1ex}\cite{conway-atlas-1986}]}
\label{sec:org830358a}
\label{org0fc9cf0}
If \(\ccp = 3\), then \(\mRes(\Lex-|\ccE, 6|{6})\) is the ternary Golay code.
 
\end{example}

\begin{example}
 \comment{Exm.}
\label{sec:org7ed8f61}
Let \(p = 3\). Then
\(\mRes(\Lex-|\ccE, 2|{1})\) is generated by \(\mWord{1 1}\)
and \(\mRes(\Lex-|\ccE, 2|{2})\) is generated by \(\mWord{1 1 0}\) and \(\mWord{1 0 1}\).
Note that \(\mRes(\Lex-|\ccE, 2|{1}) = \mRes(\Lex|\ccE, 2|{1})\)
and \(\mRes(\Lex-|\ccE, 2|{2}) \neq \mRes(\Lex|\ccE, 2|{2})\).
 
\end{example}

\begin{theorem} \comment{Thm.}
\label{sec:orga37c967}
\label{org56f1397}
If \(\ccd = \ccp^{\nIk - 1} \ccd'\) with \(\ccd' \ge 2\) and \(\ccF \in \sF\), 
then \(\mRes(\Lex-|\ccF, \ccd|{\nIk})\) is a \(\bigl[\frac{\ccp^{\nIk} - 1}{\ccp - 1} \ccd',\ \nIk,\ \ccp^{\nIk - 1} \ccd' \bigr]\) code,
which is obtained by repeating the \(\bigl[\frac{\ccp^{\nIk} - 1}{\ccp - 1},\ \nIk,\ \ccp^{\nIk - 1} \bigr]\) simplex code \(\ccd'\) times.

\end{theorem}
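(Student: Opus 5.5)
The plan is to argue by induction on \(\ccj\), for \(0 \le \ccj \le \nIk\), about the code
\[
C_\ccj = \Lex-|\ccF, \ccd|{\ccj} = \Set{\textstyle\sum_{\nIi < \ccj} \ithComp{\cca}<\nIi> \word-(\ccp^\nIi) : 0 \le \cca < \ccp^{\ccj}}.
\]
Since \(\ccd(\bdx,\bdy)=\operatorname{wt}(\bdx-\bdy)\), this set is exactly the \(\FF_\ccp\)-span of \(\word-(1), \word-(\ccp), \ldots, \word-(\ccp^{\ccj-1})\). The induction hypothesis is that \(\mRes(C_\ccj)\) is a \([n_\ccj, \ccj, \ccd]\) code, where
\[
n_\ccj = \sum_{\nIi < \ccj} \ccd/\ccp^{\nIi} = \ccd' \ccp^{\nIk-\ccj}\,\frac{\ccp^{\ccj}-1}{\ccp-1}.
\]
Moreover, it should be the \([\frac{\ccp^\ccj-1}{\ccp-1}, \ccj, \ccp^{\ccj-1}]\) simplex code repeated \(\ccd'\ccp^{\nIk-\ccj}\) times, with every nonzero codeword of weight exactly \(\ccd\). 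The case \(\ccj=\nIk\) is the statement. The case \(\ccj=0\) is trivial.

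For the inductive step, \(\word-(\ccp^\ccj)\) is by definition the \(<_\ccF\)-minimum \(\bdz\) with \(\operatorname{wt}(\bdz-\bdc)\ge\ccd\) for all \(\bdc\in C_\ccj\). For \(a\ne0\) we have \(\operatorname{wt}(a\bdz+\bdc)=\operatorname{wt}(\bdz+a^{-1}\bdc)\), so every such \(\bdz\) extends \(C_\ccj\) to a \((\ccj+1)\)-dimensional linear code of minimum distance \(\ge\ccd\).

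\emph{Lower bound.} The Griesmer bound gives support size at least \(\sum_{\nIi\le\ccj}\lceil \ccd/\ccp^\nIi\rceil = n_{\ccj+1}\), because \(\ccp^{\ccj}\mid\ccd\) for \(\ccj<\nIk\).

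\emph{Structure at equality.} The standard column-counting identity states that the sum of the weights of all codewords in a \((\ccj+1)\)-dimensional code equals \(\ccp^{\ccj}(\ccp-1)\) times the number of nonzero columns. Combined with the lower bound, if the new code has support of size exactly \(n_{\ccj+1}\), then every nonzero weight equals \(\ccd\). By Bonisoli's theorem on equidistant linear codes, the new code is then a replicated simplex code. The replication count is forced to be \(\ccd'\ccp^{\nIk-\ccj-1}\). This is exactly the hypothesis at \(\ccj+1\).

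It therefore remains to show that the lexicographic minimum \(\word-(\ccp^\ccj)\) does not enlarge the support beyond \(n_{\ccj+1}\) coordinates. Equivalently, the support of \(C_{\ccj+1}\) is \(\{0,\ldots,n_{\ccj+1}-1\}\), given that the support of \(C_\ccj\) is \(\{0,\ldots,n_\ccj-1\}\).

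For \(\ccF=\ccE\) this follows by comparing top indices. First I would exhibit one admissible \(\bdz\) with \(\msupp|\ccE|(\bdz)\subseteq\{0,\ldots,n_{\ccj+1}-1\}\). To build it, take the replicated \((\ccj+1)\)-dimensional simplex code and arrange its columns so that the first \(n_\ccj\) columns restrict to \(C_\ccj\) and the next \(n_{\ccj+1}-n_\ccj\) columns are the new points, each repeated \(\ccd'\ccp^{\nIk-\ccj-1}\) times. Any admissible \(\bdz\) has highest nonzero coordinate at index \(\ge n_{\ccj+1}-1\), by Griesmer applied to its support. Hence the minimum has highest index exactly \(n_{\ccj+1}-1\) and lies inside \(\{0,\ldots,n_{\ccj+1}-1\}\), and the structure result at equality applies.

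The main obstacle is the bases \(\ccF(\xi,\eta)\). There, \(<_\ccF\) compares \(\ccF\)-coordinates, and \(\bde_\xi=\bdf_\xi-\bdf_\eta\), so \(\ccE\)-support and \(\ccF\)-support differ on \(\{\xi,\eta\}\). My plan is to split into cases according to whether \(\xi,\eta<n_{\ccj+1}\), exactly one of them is \(<n_{\ccj+1}\), or both are \(\ge n_{\ccj+1}\).

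In the first and last cases, \(\ccE\)- and \(\ccF\)-supports agree up to \(n_{\ccj+1}\), and the argument for \(\ccE\) goes through verbatim. When \(\xi<n_{\ccj+1}\le\eta\), I would show the witness can be chosen with \(\ithComp{z}<\xi>=0\). This is possible by rescaling or permuting the replicated-simplex columns so that coordinate \(\xi\) lies in a column where the new generator vanishes, since every point class has many columns with zero last coordinate. With that choice the \(\ccF\)-support equals the \(\ccE\)-support, and the top-index comparison again works. The case \(\eta<n_{\ccj+1}\le\xi\) would be handled symmetrically, using that \(\ithComp{z}[\xi;\ccF]=\ithComp{z}<\xi>\).

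Note that the conclusion is only about the code's parameters and its replicated-simplex structure, not about specific codewords. So it suffices that the \(\ccF\)-minimum lives on the first \(n_{\ccj+1}\) coordinates; the Griesmer and Bonisoli argument then does the rest.
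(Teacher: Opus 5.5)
Your argument for $F=E$ is correct, and it takes a different route from the paper's. Griesmer forces at least $n_{j+1}-n_j$ new coordinates, the top-index comparison pins the $<_E$-minimum into $\{0,\dots,n_{j+1}-1\}$, and the weight-sum identity plus Bonisoli gives the replicated simplex. The gap is in the bases $F(\xi,\eta)$, which is where the real difficulty of the theorem lies.

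There, your invariant that the support of $C_j$ is the initial segment $\{0,\dots,n_j-1\}$ is false. In Example~\ref{org86ab854}(1) ($p=3$, $d=4$, so $k=1$, $d'=4$, with $\xi=3$, $\eta=7$) one has $w(1)=\mathbf{e}_0+\mathbf{e}_1+\mathbf{e}_3+\mathbf{e}_7$. This is your mixed case $\xi<n_1\le\eta$ at $j=0$, and your remedy is unavailable: every position $0,\dots,n_1-1$ carries the new point class, so no witness supported there can vanish at $\xi$. The underlying issue is that $<_F$ compares $F$-coordinates, while supports and distances are measured in $E$-coordinates, and the two are linked by $z_{\langle\eta\rangle}=z_{[\eta]}+z_{[\xi]}$. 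This breaks your top-index comparison in two ways:
\begin{itemize}
\item Even when a witness vanishes at $\xi$, the minimum need not. It may have $z_{[\xi]}\neq 0$, hence a nonzero entry at $\eta$ outside the segment.
\item The minimum's $F$-top index can fall below $n_{j+1}-1$. For $F(1,5)$ and $d=4$ one gets $w(1)=\mathbf{f}_0+\mathbf{f}_1+\mathbf{f}_2$, with top index $2<3$.
\end{itemize}

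A telling symptom is that you never use $d'\ge 2$, yet the conclusion fails for $d'=1$. Take $p=3$, $d=3$ ($k=2$, $d'=1$) and $F=F(1,5)$. Then $w(1)=\mathbf{f}_0+\mathbf{f}_1=\mathbf{e}_0+\mathbf{e}_1+\mathbf{e}_5$. Every $z$ with $F$-support in $\{0,1,2\}$ has $z_{\langle 1\rangle}=z_{\langle 5\rangle}$, so $z-z_{[1]}w(1)$ has weight at most $2$. This forces $w^-(3)=\mathbf{f}_0+\mathbf{f}_2+\mathbf{f}_3=\mathbf{e}_0+\mathbf{e}_2+\mathbf{e}_3$, and the restricted code is a $[5,2,3]$ code, not $[4,2,3]$.

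What you are missing is the paper's mechanism, which never tries to locate the support. It works with column multiplicities; Lemma~\ref{orgf9637b2} shows the columns are normalized, so Corollary~\ref{org09650b1} applies. Your initial-segment invariant is replaced by the order lemma, Lemma~\ref{orgfc26eca}. That lemma only says new coordinates outside $\Theta$ lie above old coordinates outside $\Theta$; $\xi$ and $\eta$ may sit anywhere. The exchange argument then runs as follows. Suppose the new column class (zero in rows $0,\dots,h-1$, one in row $h$) had more than $d'p^{k-h-1}$ members. Delete one member $M\notin\Theta$, and rebalance every old class with Lemma~\ref{org2f35cc7}, touching only coordinates outside $\Theta$. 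All of those lie below $M$, so the result is admissible and $<_F$-smaller than $w^-(p^h)$, a contradiction. Both the existence of $M$ and the rebalancing need target multiplicities $d'p^{k-h-1}\ge 2$, which is exactly where $d'\ge 2$ enters.
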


\begin{theorem} \comment{Thm.}
\label{sec:org3a15845}
\label{org3214412}
Let \(\ccd = \ccp^{\nIk - 1} (\ccp \mQuo + \mRem)\), where \(0 < \mRem < \ccp\).
If \(\mQuo + \mRem - \ccp + 1 \ge 2\), then, 
\(\mRes(\Lex-|\ccF, \ccd|{\nIk + 1})\) is 
a \(\bigl[\frac{\ccp^{\nIk} - 1}{\ccp - 1} (\ccp \mQuo + \mRem) + \mQuo + 1,\ \nIk + 1,\ \ccd \bigr]\) 
Solomon-Stiffler code.

\end{theorem}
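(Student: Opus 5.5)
The plan is to reduce everything to the first $\nIk$ generators, which Theorem~\ref{org56f1397} already describes, and then to compute the single new generator $\word-(\ccp^{\nIk})$ by hand. Put $\ccd'' = \ccp\mQuo + \mRem$, so that $\ccd = \ccp^{\nIk-1}\ccd''$. Because $\mRem \ge 1$ and $\mQuo \ge 1$ (the hypothesis forces $\mQuo \ge \ccp+1-\mRem \ge 2$), we have $\ccd'' \ge 2$. Theorem~\ref{org56f1397} therefore applies, and $\mRes(\Lex-|\ccF, \ccd|{\nIk})$ is the simplex code of dimension $\nIk$ repeated $\ccd''$ times. Its length is $n_0 = \frac{\ccp^{\nIk}-1}{\ccp-1}\ccd''$, and every nonzero word has weight exactly $\ccd$.

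\textbf{Step 1.} I will determine the support of this code in the original coordinates, which depends on $\ccF$. In particular, I want to know whether the exceptional coordinate $\eta$ of $\ccF(\xi,\eta)$ lies inside or outside it. The first $\nIk$ words of $\Lex-$ occupy exactly $n_0$ coordinates, and I expect these to be, up to the $\bde_\xi\leftrightarrow\bde_\xi+\bde_\eta$ twist, the first $n_0$ coordinates.

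\textbf{Step 2.} The new generator $\bdz=\word-(\ccp^{\nIk})$ is the $<_\ccF$-minimal vector at distance $\ge\ccd$ from every codeword. Since the old code is linear, this is the same as requiring $\mathrm{wt}(\bdz-\bdc)\ge\ccd$ for all $\bdc$ in the old code. Write $\bdz=(\bdu,\bdv)$, where $\bdu$ is the restriction to the old support and $\bdv$ is the restriction to the new coordinates. The condition becomes $\mathrm{wt}(\bdv)+\min_{\bdc}\mathrm{wt}(\bdu-\bdc)\ge\ccd$. The order $<_\ccF$ compares highest indices first. So minimality means that $\bdv$ is supported on as few new coordinates as possible, placed as early as possible, and that the entries are lexicographically smallest, in particular leading entries equal to $1$.

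I will therefore minimize $\mathrm{wt}(\bdv)=\ccd-\max_{\bdu}\min_{\bdc}\mathrm{wt}(\bdu-\bdc)$, i.e., find the covering radius of the repeated simplex code. Decompose $\bdu$ blockwise over the $\ccd''$ copies of the simplex code $S$. A coset of the simplex code has minimum weight at most $\frac{\ccp^{\nIk}-1}{\ccp-1}-\ccp^{\nIk-1}$ on each copy. The crucial computation is the maximum over $\bdu$ of $\min_\bdc \mathrm{wt}(\bdu-\bdc)$. I expect it to equal $\ccd-\mQuo-1$, and to be attained by taking $\bdu$ to be a repeated "one value per column class" pattern. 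Here the remainder $\mRem$ and the count of copies enter through averaging over the $\ccp$ scalar multiples $\bdu-\lambda\bdc$. The hypothesis $\mQuo+\mRem-\ccp+1\ge2$ should be exactly what guarantees that this $\bdu$ is feasible and that it beats the alternatives. Consequently $\mathrm{wt}(\bdv)=\mQuo+1$ new coordinates are used, which gives length $n_0+\mQuo+1$.

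\textbf{Step 3.} I will check the parameters. The extended code has dimension $\nIk+1$ and minimum distance $\ge\ccd$ by construction. Its length $n_0+\mQuo+1$ equals $\sum_{i=0}^{\nIk}\lceil \ccd/\ccp^i\rceil$, since $\lceil \ccd/\ccp^{\nIk}\rceil=\lceil(\ccp\mQuo+\mRem)/\ccp\rceil=\mQuo+1$. So the code meets the Griesmer bound. The column multiset should be the $\ccd''$-fold simplex code with one $\ccp$-ary projective column pattern attached. I will compare it with the Solomon–Stiffler description: $\mQuo+1$ copies of the $(\nIk+1)$-simplex with a suitable union of subspaces removed, the removed part having size governed by $\ccp-\mRem$.

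\textbf{Main obstacle.} The main obstacle is Step 2: pinning down exactly which $\bdz$ is lexicographically minimal, not merely bounding its weight. I would first handle $\ccF=\ccE$ and an explicit small case such as $\ccp=3$, $\nIk=1$, to guess the pattern of $\bdu$. I would then prove optimality by a counting argument: sum $\mathrm{wt}(\bdu-\lambda\bdc)$ over $\lambda\in\FF_\ccp$ and $\bdc$, using that each nonzero column of the simplex code is hit uniformly. Finally I would treat $\ccF(\xi,\eta)$ by noting that the twist only changes the order on vectors through coordinate $\xi$, which is inside the old support.
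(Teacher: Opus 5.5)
There is a genuine gap: your plan never determines the last row $w^-(p^k)$ on the old coordinates, and that is the actual content of the theorem. The paper proves it as Proposition \ref{org6d2e6f0}: the matrix with rows $w^-(1),\dots,w^-(p^k)$ is $\pi_d$-distributed. Concretely, there are $q+1$ columns $\mathbf e_k$. Over each column $\mathbf a$ of the $(pq+r)$-fold simplex code, the last entry takes each value $\alpha$ exactly $q+1$ times. The one exception is the unique $\alpha$ with $\langle(\mathbf a,\alpha),\mathbf 1\rangle=0$, which occurs $q+r-p+1$ times.

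Your Steps 2--3 give at most the parameters. There, $\mathrm{wt}(\mathbf v)\ge q+1$ is just the Griesmer bound, and the matching example is the Solomon--Stiffler pattern itself. Griesmer-optimality does not identify the code. Removing any $p-r$ hyperplanes not through $\mathbf e_k$ from $q+1$ copies of $PG(k,p)$ also extends the $(pq+r)$-fold simplex code with the same parameters. For instance, take $p=3$, $k=1$, $d=10$ ($q=3$, $r=1$), and a last row with four new $1$'s. Its values $0,1,2$ on the ten old coordinates can occur $4,4,2$ times (what the algorithm produces) or $3,3,4$ times. Both give $[14,2,10]_3$ codes, with different weight distributions.

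So ``compare with the Solomon--Stiffler description'' is the whole theorem, and you give no mechanism for deciding which $\mathbf z$ is lexicographically minimal. For $k\ge2$ this is hard, because the column order of the old code is itself produced recursively by the algorithm. The paper argues by induction on $k$:
\begin{itemize}
\item Restrict to the complements $\Phi_0,\Phi_1$ of the supports of $w^-(1)$ and $w^-(p)$. This identifies the restricted rows with lexicographic generators for $\lceil d/p\rceil$ (Lemmas \ref{org7ac4c8a}, \ref{org2014e3d}). It fixes the counts on all column classes beginning with $0$ or with $(1,0)$.
\item The remaining classes are forced by summing the distances from $w^-(p^k)$ to the words $w^-(\alpha+\beta p+p^k-p^2)$.
\item The case $k=1$ uses the $\Theta$-avoiding redistribution of Lemma \ref{org2f35cc7} together with the order lemma.
\end{itemize}

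Second, even your length argument is justified only for $F=E$. The reduction ``as few new coordinates as possible, placed as early as possible'' presumes every coordinate outside the old support lies above it. For $F(\xi,\eta)$ with $\xi<\eta$ the old support can have holes. In Example \ref{org86ab854}(1) the support of $w(1)$ is $\{0,1,3,7\}$, so a lexicographically smaller vector could spend extra new coordinates inside the holes. Ruling this out needs the paper's order facts. Every old coordinate above a non-support coordinate lies in $\Theta$ (Lemma \ref{org0bbad96}), and surplus new coordinates can be traded for a redistribution that avoids $\Theta$. That trade is where $q+1\ge2$ and $q+r-p+1\ge2$ are used; the existence of the extension needs only $q+r-p+1\ge0$. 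Your closing remark that $\xi$ lies in the old support is unjustified, and even when it holds (as in that example) the holes remain.

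Finally, the per-copy coset bound in Step 2 is false in general. For $p=5$, $k=2$ it says the $[6,2,5]_5$ simplex code has covering radius at most $1$, yet $25\,(1+6\cdot4)=625<5^6$. It is also unnecessary, since the upper bound you need is exactly Griesmer's.
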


\subsection{Organization}
\label{sec:org2b10730}

This paper is organized as follows.
In Section \ref{org0746dba},
we prove Theorem \ref{org100bcd6}.
After reviewing the Griesmer bound
in Section \ref{org05ef9f9},
we prove Theorem \ref{org56f1397}
in Section \ref{orgb6599eb}.
In Section \ref{orgd92423c},
we show that
\(\mLex|\ccF, \ccd|{\nIk - 1}\) is linear
if \(\ccp = 3\) and \(\ccd = 3^{\nIk - 1} \ccd'\) with \(\ccd' \ge 2\).
Section \ref{org67955c0} proves Theorem \ref{org3214412}.
Finally, we  prove Theorem \ref{org9ad3185} in Section \ref{org2aaef62}.

\section{A characterization of linear lexicographic codes}
\label{sec:org209446d}
\label{org0746dba}

We fix a prime \(p\), an integer \(d\) with \(d \ge 2\), and
a basis \(\ccF\) of \(\FF_p^\NN\).
For \(\cca, \ccb \in \NN\), let \(\cca \oplus_p \ccb = \sum_{\nIi \in \NN} (\cca_\nIi \oplus_p \ccb_\nIi) \ccp^{\nIi}\)
and \(\cca \ominus_p \ccb = \sum_{\nIi \in \NN} (\cca_\nIi \ominus_p \ccb_\nIi) \ccp^{\nIi}\),
where \(\cca_\nIi \oplus_p \ccb_\nIi\) and \(\cca_\nIi \ominus_p \ccb_\nIi\) are the addition and subtraction of \(\cca_\nIi\) and \(\ccb_\nIi\) in \(\FF_{p}\), respectively.
For example, if \(p = 3\), then 
\begin{align*}
 7 \oplus_3 13 &= (1 + 2 \cdot 3 + 0 \cdot 3^2) \oplus_3 (1 + 1 \cdot 3 + 1 \cdot 3^2) \\
 &= 
(1 \oplus_3 1) + (2 \oplus_3 1) \cdot 3 + (0 \oplus 1) \cdot 3^2 =  2 + 0 \cdot 3 + 1 \cdot 3^2 = 11
\end{align*}
and 
\[
7 \ominus_3 13 = (1 + 2 \cdot 3) \ominus_3 (1 + 3 + 3^2) = 3 + 2 \cdot 3^2 = 21.
\]
Note that
\begin{align*}
\word+(\cca \oplus_{\ccp} \ccb) &= 
\sum (\ithComp{\cca}<\nIi> \oplus_{\ccp} \ithComp{\ccb}<\nIi>) 
\word(\ccp^{\nIi})
 = \sum \ithComp{\cca}<\nIi> \word(\ccp^{\nIi}) + \sum \ithComp{\ccb}<\nIi> \word(\ccp^{\nIi}) = \word+(\cca) + \word+(\ccb),\\
\word-(\cca \oplus_{\ccp} \ccb) &= 
\sum (\ithComp{\cca}<\nIi> \oplus_{\ccp} \ithComp{\ccb}<\nIi>) 
\word-(\ccp^{\nIi})
 = \sum \ithComp{\cca}<\nIi> \word-(\ccp^{\nIi}) + \sum \ithComp{\ccb}<\nIi> \word-(\ccp^{\nIi}) = \word-(\cca) + \word-(\ccb).
\end{align*}

We will repeatedly use the following simple lemma.

\begin{lemma}
 \comment{Lem.}
\label{sec:org76a0d82}
\label{org6f41f57}
Let \(\bdy \in \FF_\ccp^\NN\).

\begin{enumerate}
\item If \(d(\bdy,\ \word(\ccb)) \ge \ccd\) for \(0 \le \ccb < \cca\), then \(\bdy \ge_{\ccF} \word(\cca)\).
\item If \(d(\bdy,\ \word-(\ccb)) \ge \ccd\) for \(0 \le \ccb < \ccp^{\nIk}\), then \(\bdy \ge_{\ccF} \word-(\ccp^{\nIk})\).
\end{enumerate}
 
\end{lemma}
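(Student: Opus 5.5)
Both parts follow directly from the definitions of \(\word(\cca)\) and \(\word-(\ccp^{\nIk})\) as minima with respect to the total order \(<_{\ccF}\).

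For (1), recall that \(\word(\cca)\) is the minimum of \(\bdz \in \FF_\ccp^\NN\) with respect to \(<_{\ccF}\) subject to \(\ccd(\bdz, \word(\ccb)) \ge \ccd\) for \(0 \le \ccb < \cca\). The hypothesis says that \(\bdy\) satisfies this constraint. So \(\bdy\) belongs to the set over which the minimum is taken. Since \(<_{\ccF}\) is a total order, the minimum of a set is \(\le_{\ccF}\) each of its elements, and hence \(\bdy \ge_{\ccF} \word(\cca)\).

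It remains to check that the minimum exists, so that \(\word(\cca)\) is well defined. I would argue this briefly. The set of admissible \(\bdz\) is nonempty: any \(\bdz\) whose support is disjoint from the finitely many supports of \(\word(\ccb)\), \(\ccb < \cca\), and has at least \(\ccd\) nonzero coordinates qualifies. Moreover, \(<_{\ccF}\) is a well-order on \(\FF_\ccp^\NN\). To see this, fix an admissible element with largest \(\ccF\)-index \(N\). Every element below it also has largest index at most \(N\), and the set of such vectors is finite. A nonempty subset of a totally ordered set in which every element has only finitely many predecessors has a minimum.

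For (2), I would repeat the argument with the variant sequence. Since \(\ccp^{\nIk}\) is a power of \(\ccp\), the definition makes \(\word-(\ccp^{\nIk})\) the minimum of \(\bdz\) with \(\ccd(\bdz, \word-(\ccb)) \ge \ccd\) for \(0 \le \ccb < \ccp^{\nIk}\). The hypothesis places \(\bdy\) in this admissible set, so again \(\bdy \ge_{\ccF} \word-(\ccp^{\nIk})\). Existence of the minimum follows by the same well-ordering argument.

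There is no real obstacle here. The only point needing care is that the minima in the definitions are attained, which is the well-ordering remark above.
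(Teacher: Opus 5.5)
Your proof is correct and follows the paper's own argument: \(\bdy\) belongs to the set whose \(<_{\ccF}\)-minimum defines \(\word(\cca)\), so \(\word(\cca) \le_{\ccF} \bdy\), and part (2) is handled identically. Your extra well-ordering remark is also correct — anything \(<_{\ccF}\)-below a vector whose largest \(\ccF\)-index is \(N\) has \(\ccF\)-support in \(\{0,\dots,N\}\), so every vector has only finitely many predecessors — and it justifies the existence of these minima, which the paper takes for granted in its definitions.
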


\begin{proof}
 \comment{Proof.}
\label{sec:org4f5484b}
By definition, \(\word(\cca)\) is the minimum of \(\bdz\) such that \(d(\bdz, \word(\ccb)) \ge \ccd\) for \(0 \le \ccb < \cca\).
Thus \(\word(\cca) \le_{\ccF} \bdy\). The proof of (2) is the same. 
\end{proof}

\comment{connect}
\label{sec:org665d2c6}
For \(\nIi, \cca \in \NN\), we define \(\word+[\cci](\cca^{})\) by \(\word+(\cca) = \sum_{\cci} \word+[\cci](\cca) \bdf_{\cci}\).
The next lemma is a key to prove Theorem \ref{org100bcd6}.
\begin{lemma}
 \comment{Lem.}
\label{sec:org1b18e78}
\label{org13a9541}
Suppose that \(\cca\) is the smallest integer satisfying
\(\word(\cca) \neq \word+(\cca)\). Let
\[
 \ccN = \max \set{\cci \in \NN : \word[\cci](\cca) \neq \word+[\cci](\cca)}.
\]
If \(\ccb\) is a positive integer such that 
\(\ithComp{\ccb}<\nIi> \le \ithComp{\cca}<\nIi>\) for \(\nIi \in \NN\), then
\begin{equation}
\label{equ:lem-linearity}
 \word+[\ccN](\ccb) \neq 0.
\end{equation}
 
\end{lemma}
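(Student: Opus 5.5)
The plan is a translation argument. It uses the minimality of \(\cca\), Lemma \ref{org6f41f57}, and the additivity \(\word+(\cca \oplus_p \ccb) = \word+(\cca) + \word+(\ccb)\) noted above. Fix a positive \(\ccb\) with \(\ccb_\nIi \le \cca_\nIi\) for all \(\nIi\), and put \(\ccc = \cca \ominus_p \ccb\). Digitwise \(\ccc_\nIi = \cca_\nIi - \ccb_\nIi\) with no carries, so \(\cca = \ccb \oplus_p \ccc\) and \(\word+(\cca) = \word+(\ccb) + \word+(\ccc)\). Since \(\ccb > 0\) we have \(\ccc < \cca\), so \(\word(\ccc) = \word+(\ccc)\) by minimality. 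Likewise \(\word(\ccb) = \word+(\ccb)\) whenever \(\ccb < \cca\). The case \(\ccb = \cca\) needs separate treatment; I would handle it at the end by decomposing \(\cca\) further, for instance as \(\cca = \ccp^{\nIj} \oplus_p \cca'\).

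The first key step is the inequality
\[
 \bdy := \word(\cca) - \word+(\ccb) \ \ge_{\ccF}\ \word(\ccc),
\]
obtained from Lemma \ref{org6f41f57}(1) applied to \(\ccc\). Take \(\ccx < \ccc\), and let \(\nIj\) be the top digit where \(\ccx\) and \(\ccc\) differ, so \(\ccx_\nIj < \ccc_\nIj\). Then \(\ccb_\nIj + \ccx_\nIj < \ccb_\nIj + \ccc_\nIj = \cca_\nIj \le \ccp - 1\), so there is no wrap-around at digit \(\nIj\). The higher digits of \(\ccb \oplus_p \ccx\) and \(\cca\) agree, hence \(\ccb \oplus_p \ccx < \cca\). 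By minimality and additivity, \(\word(\ccb \oplus_p \ccx) = \word+(\ccb) + \word(\ccx)\). Therefore
\[
\ccd(\bdy, \word(\ccx)) = \ccd(\word(\cca), \word(\ccb \oplus_p \ccx)) \ge \ccd .
\]

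Next, suppose for contradiction that \(\word+[\ccN](\ccb) = 0\). We have \(\bdy - \word(\ccc) = \word(\cca) - \word+(\cca)\), whose highest nonzero \(\ccF\)-coordinate is \(\ccN\). So \(\bdy \neq \word(\ccc)\), and the inequality above forces \(\bdy[\ccN] > \word+[\ccN](\ccc)\). Because \(\word+[\ccN](\ccb) = 0\), this says \(\word[\ccN](\cca) > \word+[\ccN](\cca)\). The coordinates above \(\ccN\) agree, so \(\word(\cca) >_{\ccF} \word+(\cca)\), and in particular \(\word+(\cca) \neq \word(\cca)\) is strictly smaller.

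The remaining task is to contradict \(\word(\cca) >_{\ccF} \word+(\cca)\). I expect this step to be the main obstacle. The natural route is to show that \(\word+(\cca)\) itself is an admissible candidate, i.e.\ \(\ccd(\word+(\cca), \word(\ccx)) \ge \ccd\) for \(\ccx < \cca\); then Lemma \ref{org6f41f57}(1) gives \(\word(\cca) \le_{\ccF} \word+(\cca)\). Directly, this distance is the weight of \(\word+(\cca \ominus_p \ccx)\), and \(\cca \ominus_p \ccx\) need not be below \(\cca\), so this is not immediate.

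What I would try first is to run the translation argument in reverse. Consider \(\bdz := \word+(\ccb) + \word(\ccc)\) as a candidate for position \(\cca\). For \(\ccx < \cca\), I would split \(\ccx\) digitwise against the pair \((\ccb, \ccc)\) and reduce the distance to one between \(\word(\ccc)\) and some \(\word(\ccx')\) with \(\ccx' < \ccc\), possibly arguing by induction on \(\cca\) or on \(\ccb\). For the leftover configurations I would use the freedom in choosing \(\ccb\): pick \(\ccb\) minimal among the offending ones, so that the relevant \(\word+[\ccN]\) values vanish consistently. Combining \(\word(\cca) \le_{\ccF} \bdz = \word+(\cca)\) with the strict inequality from the previous step gives the contradiction, and hence \(\word+[\ccN](\ccb) \neq 0\).
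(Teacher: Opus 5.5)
Your first two steps are correct and reproduce the second half of the paper's proof. The inequality $\word(a) - \word+(b) \ge_F \word(a \ominus_p b)$ is the paper's claim $\word+(a) - \word(b) <_F \word(a) - \word(b)$ in non-strict form; the paper gets it the same way, from Lemma~\ref{org6f41f57} and $\word(x \oplus_p b) = \word(x) + \word(b)$. Your deduction that $\word+[N](b) = 0$ would force $\word(a) >_F \word+(a)$ is the contrapositive of the paper's last step. The case $b = a$ needs no separate treatment: then $a \ominus_p b = 0$, the appeal to Lemma~\ref{org6f41f57} is vacuous, and your argument goes through unchanged.

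The gap is the step you flag yourself. You never prove $\word(a) \le_F \word+(a)$, so step~2 produces no contradiction. Your sketched route does not close it. Writing $c = a \ominus_p b$, for $x < a$ you get $d(\word+(a), \word(x)) = d(\word(c), \word+(x \ominus_p b))$, and $x \ominus_p b$ need not lie below $c$; this is the same obstruction you found for $a \ominus_p x$. Choosing $b$ minimal cannot help, since whether $\word+(a)$ is admissible does not involve $b$.

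The missing idea, which is the first half of the paper's proof, is that Hamming weight is invariant under nonzero scalars, combined with splitting off the top digit of $a$.
\begin{itemize}
\item Let $p^{k-1} \le a < p^k$. Since $\word(p^{k-1}) = \word+(p^{k-1})$, in fact $p^{k-1} < a$, so by minimality every $e < p^{k-1}$ satisfies $\word(e) = \word+(e)$.
\item For $x < a$, $d(\word+(a), \word(x))$ is the weight of $\word+(e)$ with $e = a \ominus_p x \neq 0$.
\item If $e_{k-1} = 0$, then $0 < e < p^{k-1}$ and this weight is $d(\word(e), \word(0)) \ge d$.
\item If $\lambda = e_{k-1} \neq 0$, then $\lambda^{-1}\word+(e) = \word(p^{k-1}) - \word+(e') = \word(p^{k-1}) - \word(e')$ for some $e' < p^{k-1}$. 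So the weight is $d(\word(p^{k-1}), \word(e')) \ge d$ by the choice of $\word(p^{k-1})$.
\end{itemize}
Hence $\word+(a)$ is an admissible candidate distinct from $\word(a)$. Lemma~\ref{org6f41f57} then gives $\word(a) <_F \word+(a)$, i.e.\ $\word[\ccN](\cca) < \word+[\ccN](\cca)$, which is exactly what your step~2 contradicts.
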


\begin{proof}
 \comment{Proof.}
\label{sec:org96b84d6}
We first show that \(\word(\cca) <_{\ccF} \word+(\cca)\).
Let \(\ccp^{\nIk - 1} \le \cca < \ccp^{\nIk}\).
Since \(\word(\cca) \neq \word+(\cca)\), it follows that \(\ccp^{\nIk - 1} < \cca\). 
For \(0 \le \ccb < \cca\),
\begin{align*}
  \mDist{\word+(\cca)}{\word(\ccb)} &= \mDist{\word+(\cca)}{\word+(\ccb)} \\
  &= \mBigDist{\, \sum_{\nIi = 0}^{\nIk - 1}\ithComp{\cca}<\nIi> \word(\ccp^\nIi)}{\sum_{\nIi = 0}^{\nIk - 1}\ithComp{\ccb}<\nIi> \word(\ccp^\nIi)}\\
 &= \mBigDist{(\ithComp{a}<\nIk - 1> \ominus_{\ccp} \ithComp{b}<\nIk - 1>)\word(\ccp^{\nIk - 1})}{\ \sum_{\nIi = 0}^{\nIk - 2} (\ithComp{b}<\nIi> \ominus_{\ccp} \ithComp{a}<\nIi>) \word(\ccp^{\nIi})}.
\end{align*}
Let \(\ccc = \sum_{\nIi = 0}^{\nIk - 2} (\ithComp{b}<\nIi> \ominus_{\ccp} \ithComp{a}<\nIi>) \ccp^{\nIi}\). Since \(\ccc < \cca\), we see that
\[
\word(\ccc) = \word+(\ccc) =  \sum_{\nIi = 0}^{\nIk - 2} (\ithComp{b}<\nIi> \ominus_{\ccp} \ithComp{a}<\nIi>) \word(\ccp^{\nIi}).
\]
If \(\ithComp{a}<\nIk - 1> \ominus_{\ccp} \ithComp{b}<\nIk - 1> = 0\), then \(\mDist{\word+(\cca)}{\word(\ccb)} = \mDist{\word(0)}{\word(\ccc)} \ge \ccd\).
If \(\ithComp{a}<\nIk - 1> \ominus_{\ccp} \ithComp{b}<\nIk - 1> \neq 0\), then
\begin{align*}
&\mBigDist{(\ithComp{a}<\nIk - 1> \ominus_{\ccp} \ithComp{b}<\nIk - 1>)\word(\ccp^{\nIk - 1})}{\ \sum_{\nIi = 0}^{\nIk - 2} (\ithComp{b}<\nIi> \ominus_{\ccp} \ithComp{a}<\nIi>) \word(\ccp^{\nIi})}\\
&= \mBigDist{\word(\ccp^{\nIk - 1})}{\frac{1}{(\ithComp{a}<\nIk - 1> \ominus_{\ccp} \ithComp{b}<\nIk - 1>)} \sum_{\nIi = 0}^{\nIk - 2} (\ithComp{b}<\nIi> \ominus_{\ccp} \ithComp{a}<\nIi>) \word(\ccp^{\nIi})} \ge \ccd.
\end{align*}
Therefore \(\mDist{\word+(\cca)}{\word(\ccb)} \ge \ccd\).
Since \(\word+(\cca) \neq \word(\cca)\), it follows from Lemma \ref{org6f41f57} that \(\word(\cca) <_{\ccF} \word+(\cca)\).
In particular,
\begin{equation}
\label{equ:lem-linearity-word-bword-M}
 \word[\ccN](\cca) < \word+[\ccN](\cca).
\end{equation}

We now show \eqref{equ:lem-linearity}.
Suppose that \(\ccb \neq 0\) and \(\ithComp{\ccb}[\nIi] \le \ithComp{\cca}[\nIi]\) for \(\nIi \in \NN\).
If \(\ccb = \cca\), then \eqref{equ:lem-linearity} follows from \eqref{equ:lem-linearity-word-bword-M}.
Suppose that \(\ccb < \cca\).
We claim that
\begin{equation}
\label{equ:lem-linearity-inequality2}
 \word+(\cca) - \word(\ccb) < \word(\cca) - \word(\ccb). 
\end{equation}
Assume \eqref{equ:lem-linearity-inequality2} for the moment.
By the definition of \(\ccN\), for \(\cci > \ccN\),
\[
 \word+[\cci](\cca) - \word[\cci](\ccb) = \word[\cci](\cca) - \word[\cci](\ccb).
\]
By \eqref{equ:lem-linearity-inequality2},
\[
 \word+[\ccN](\cca) - \word[\ccN](\ccb) < \word[\ccN](\cca) - \word[\ccN](\ccb).
\]
It follows from \eqref{equ:lem-linearity-word-bword-M} that
\[
 \word+[\ccN](\ccb) = \word[\ccN](\ccb) \neq 0.
\]

We finally show \eqref{equ:lem-linearity-inequality2}.
Since \(\ithComp{\ccb}[\nIi] \le \ithComp{\cca}[\nIi]\), we see that
\(a \ominus_{\ccp} b = a - b < a\). Hence
\[
 \word+(\cca) - \word(\ccb) = \word+(\cca) - \word+(\ccb) = 
 \word+(\cca \ominus_{\ccp} \ccb) = \word(\cca \ominus_{\ccp} \ccb).
\]
Hence to prove \eqref{equ:lem-linearity-inequality2},
it suffices to show that,
for \(c < a \ominus_{\ccp} b = a - b\), 
\[
 \ccd(\word(\cca) - \word(\ccb), \word(\ccc)) \ge \ccd.
\]
Since \(c \oplus b \le c + b < a\), we see that
\[
  \word(\ccc \oplus \ccb) = \word(\ccc) + \word(\ccb)
\]
and
\[
 \ccd(\word(\cca) - \word(\ccb), \word(\ccc)) = \ccd(\word(\cca), \word(\ccc) + \word(\ccb)) = \ccd(\word(\cca), \word(\ccc \oplus \ccb)) \ge \ccd.
\]
It follows from Lemma \ref{org6f41f57} 
that \eqref{equ:lem-linearity-inequality2} holds.
\end{proof}

\begin{lemma}
 \comment{Lem.}
\label{sec:org616b6c6}
\label{org5db7ec7}
For \(0 \le \nIh \le \nIk - 1\), let
\[
 \ccM_{\nIh} = \max \set{\cci \in \NN : \word[\cci](\ccp^{\nIh}) \neq 0}.
\]
Then the following assertions hold.
\begin{enumerate}
\item \(\ccM_0 < \ccM_1 < \dotsb < \ccM_{\nIk - 1}\).
\item For \(0 \le \nIh, \nIl < \nIk - 1\), \[\word-[\ccM_{\nIh}](\ccp^{\nIl}) = \delta_{\nIh, \nIl} = \begin{cases} 
 1 & \tif \nIh = \nIl, \\ 
 0 & \tif \nIh \neq \nIl.
 \end{cases}\]
\item If \(\mLex{\nIk - 1}\) is a linear code,
then \(\word(\cca) = \word+(\cca) = \word-(\cca)\) for \(0 \le \cca < \ccp^{\nIk}\).
\end{enumerate}
 
\end{lemma}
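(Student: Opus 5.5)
Throughout, I read the maxima \(\ccM_\nIh\) as computed from the words \(\word-(\ccp^{\nIh})\) (for powers of \(\ccp\) below the first nonlinearity these coincide with \(\word(\ccp^{\nIh})\)). The plan is to prove (1) and (2) together by induction on \(\nIh\), and then deduce (3) from Lemma \ref{org13a9541}.

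\textbf{Items (1) and (2).} Let \(V_\nIh\) be the span of \(\word-(\ccp^0), \ldots, \word-(\ccp^{\nIh-1})\). By definition \(V_\nIh = \set{\word-(\ccb) : \ccb < \ccp^{\nIh}}\). The first step is to check inductively that \(V_\nIh\) is a linear code of minimum distance \(\ge \ccd\). This uses
\[
\ccd(\alpha \bdx + \bdv, \bdv') = \ccd\bigl(\bdx, \alpha^{-1}(\bdv' - \bdv)\bigr) \quad (\alpha \neq 0),
\]
together with the fact that \(\word-(\ccp^{\nIh})\) has distance \(\ge \ccd\) from all of \(V_\nIh\). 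Consequently, the set of admissible candidates for \(\word-(\ccp^{\nIh})\), namely \(\set{\bdz : \ccd(\bdz, V_\nIh) \ge \ccd}\), has three properties. It is invariant under \(\bdz \mapsto c\bdz\) for \(c \neq 0\). It is invariant under \(\bdz \mapsto \bdz + \bdv\) for \(\bdv \in V_\nIh\). It is contained in the candidate set at stage \(\nIh - 1\).

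Put \(\bdz = \word-(\ccp^{\nIh})\) and let \(\ccM\) be its top \(\ccF\)-index. We compare \(\ccM\) with \(\ccM_{\nIh-1}\).
\begin{itemize}
\item If \(\ccM < \ccM_{\nIh-1}\), then \(\bdz <_\ccF \word-(\ccp^{\nIh-1})\). But \(\bdz\) was an admissible candidate at stage \(\nIh - 1\), so this contradicts the minimality of \(\word-(\ccp^{\nIh-1})\).
\item If \(\ccM = \ccM_{\nIh-1}\), subtract a suitable multiple of \(\word-(\ccp^{\nIh-1})\) to kill coordinate \(\ccM\). The result is still admissible and strictly smaller, again a contradiction.
\end{itemize}
This gives (1).

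For (2), the same translation and scaling moves apply to \(\word-(\ccp^{\nIl})\).
\begin{itemize}
\item Scaling by the inverse of its leading digit gives an admissible vector with leading digit \(1\), the smallest nonzero value. Hence minimality forces the coefficient at \(\ccM_\nIl\) to be \(1\).
\item For \(\nIh < \nIl\), subtracting the appropriate multiple of \(\word-(\ccp^{\nIh})\) kills coordinate \(\ccM_\nIh\). This does not change any coordinate above \(\ccM_\nIh\), in particular not the top coordinate \(\ccM_\nIl > \ccM_\nIh\). So minimality forces \(\word-[\ccM_\nIh](\ccp^{\nIl}) = 0\).
\item For \(\nIh > \nIl\), the coordinate \(\ccM_\nIh\) vanishes automatically, since \(\ccM_\nIh > \ccM_\nIl\) is above the top index of \(\word-(\ccp^{\nIl})\).
\end{itemize}

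\textbf{Item (3).} Since \(\mLex{\nIk-1}\) is linear, \(\word(\ccb) = \word-(\ccb)\) for \(\ccb < \ccp^{\nIk-1}\). The two defining minimizations for \(\ccp^{\nIk-1}\) then run over the same set, so \(\word(\ccp^{\nIk-1}) = \word-(\ccp^{\nIk-1})\). It follows that \(\word+ = \word-\) on \([0, \ccp^{\nIk})\).

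Now suppose, for contradiction, that \(\cca < \ccp^{\nIk}\) is the smallest integer with \(\word(\cca) \neq \word+(\cca)\), and let \(\ccN\) be as in Lemma \ref{org13a9541}. To finish, I will exhibit a nonzero \(\ccb\) with \(\ccb_\nIi \le \cca_\nIi\) for all \(\nIi\) and \(\word+[\ccN](\ccb) = 0\), contradicting \eqref{equ:lem-linearity}. By item (2), \(\word+[\ccM_j](\ccb) = \ccb_j\), and \(\word+(\ccb)\) is supported at or below \(\ccM_J\), where \(J\) is the top nonzero digit of \(\ccb\).
\begin{itemize}
\item If \(\ccN = \ccM_j\) and \(\cca\) has a nonzero digit at some \(j' \neq j\), take \(\ccb = \ccp^{j'}\); then \(\word+[\ccN](\ccb) = 0\).
\item If \(\ccN < \ccM_{j_0}\), where \(j_0\) is the lowest nonzero digit of \(\cca\), take \(\ccb = \ccp^{j_0}\) if its \(\ccN\)-coordinate vanishes.
\end{itemize}

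\textbf{Main obstacle.} The difficulty is choosing \(\ccb\) when neither simple choice works. This includes the single-digit case \(\cca = c\,\ccp^{j}\), where \(\ccb = \ccp^j\) gives \(\word+[\ccN](\ccb) \neq 0\) whenever \(\ccN = \ccM_j\). There I would first try to argue directly that \(\word(c\,\ccp^j) = c\,\word(\ccp^j)\) by the scaling-invariance argument above, rather than going through Lemma \ref{org13a9541}. In the remaining multi-digit cases I would try translating by elements of \(\ccF\)-span below \(\ccM_j\) to force the \(\ccN\)-coordinate to vanish. I expect this case analysis on \(\ccN\) to be the delicate part of the argument.
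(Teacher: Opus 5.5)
Your proof of (1) and (2) is correct and is the paper's argument. Like you, the paper works with $\bar w(p^h)$: its statement writes $w$, but its proof uses $\bar w$. It gets $M_{k-1}\ge M_{k-2}$ because the admissible sets shrink, and kills the coordinate $M_h$ of $\bar w(p^{k-1})$ by subtracting $\alpha\,\bar w(p^h)$. The leading coefficient $1$ comes from minimality under scaling.

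The gap is in (3). Your contradiction argument through Lemma~\ref{org13a9541} never uses the hypothesis that $\{w(0),\dots,w(p^k-1)\}$ is linear. Apart from Lemma~\ref{org13a9541} itself, it only uses (1), (2) and $w=\bar w$ on $[0,p^{k-1}]$. All of these hold in Example~\ref{orgcb2e1cd} ($p=3$, $d=2$, $F=E$, $k=2$), where nevertheless $w(4)=[0\,1\,1\,0\,\cdots]\ne[2\,1\,1\,0\,\cdots]=w^+(4)$. There $a=4$ is the minimal counterexample and $N=0$. Moreover $w^{+}_{[0]}(b)=1,1,2$ for $b=1,3,4$, so the $b$ you intend to exhibit does not exist.

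Lemma~\ref{org13a9541} yields a contradiction automatically only when the digit sum of $a$ is at least $p$; that is the pigeonhole step in the proof of Theorem~\ref{org100bcd6}. For smaller digit sums the missing information has to come from linearity, and no step of your outline supplies it. This includes your single-digit case: scaling only shows that $c\,w(p^j)$ is admissible, i.e.\ $w(c\,p^j)\le_F c\,w(p^j)$.

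The paper uses linearity through the order instead. By (1) and (2), $\bar w(a)$ has coordinate $a_j$ at $M_j$ and vanishes above $M_J$, where $J$ is the top nonzero digit of $a$. Hence $a\mapsto\bar w(a)=w^+(a)$ is strictly $<_F$-increasing on $[0,p^k)$. Linearity puts these $p^k$ distinct vectors into the code, so they exhaust it. Since $w(0)<_F w(1)<_F\cdots$ enumerates the same set increasingly, $w(a)=w^+(a)$ for all $a<p^k$.

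Your opening claim $w(b)=\bar w(b)$ for $b<p^{k-1}$ also needs justification (the paper's induction on $k$ passes over this point too). It requires the fact that the $p^{k-1}$ $<_F$-smallest words of a linear code form a subcode, combined with induction on $k$.

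Both points follow at once from a reduced echelon basis $g_0,\dots,g_{k-1}$ of the code in $F$-coordinates, with top indices $t_0<\dots<t_{k-1}$ and $g_j$ having coordinate $\delta_{ij}$ at $t_i$. Then $a\mapsto\sum_j a_j g_j$ is an order isomorphism onto the code, so $w(a)=\sum_j a_j g_j=w^+(a)$. Finally, $\bar w=w$ follows by induction on $a$, since at powers of $p$ both minimizations then run over the same constraint set.
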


\begin{proof}
 \comment{Proof.}
\label{sec:orgb7d4359}
We show the lemma by induction on \(\nIk\).

Suppose that \(\nIk = 1\); then (1) is obvious.
By the definition of \(\ccM_0\), 
we see that \(\word[\ccM_{0}](\ccp^{0}) \neq 0\).
From the minimality of \(\word(\ccp^0)\), we see that \(\word[\ccM_{0}](\ccp^{0}) = 1\).
Thus (2) holds. This implies that 
\begin{equation}
\label{equ:lem:linearity-order-k-1}
\word+(1) < \word+(2) < \cdots < \word+(\ccp - 1).
\end{equation}
Suppose that \(\mLex{0}\) is a linear code. Then \(\mLex{0} = \set{\word+(0), \word+(1), \ldots, \word+(\ccp - 1)}\).
It follows from \eqref{equ:lem:linearity-order-k-1} that \(\word(\cca) = \word+(\cca) = \word-(\cca)\) for \(0 \le \cca < \ccp\).

Suppose that \(\nIk \ge 2\).
By the induction hypothesis, \(\ccM_0 < \ccM_1 \dotsb < \ccM_{\nIk - 2}\).
Since \(\word-(\ccp^{\nIk - 1}) >_{\ccF} \word-(\ccp^{\nIk - 2})\), we see that
\(\ccM_{\nIk - 1}\ge \ccM_{\nIk - 2}\).
Let \(0 \le \nIh < \nIk - 1\), \(\ccM = \ccM_{\nIh}\),
\(\alpha = \word-[\ccM](\ccp^{\nIk - 1})\), and \(\bdy = \word-(\ccp^{\nIk - 1}) - \alpha \word-(\ccp^{\nIh})\).
By the induction hypothesis, \(\word-[\ccM](\ccp^{\nIh}) = 1\),
so \(\ithComp{\ccy}[\ccM] = 0\) and \(\bdy \le_{\ccF} \word-(\ccp^{\nIh})\).
For \(0 \le \cca < \ccp^{\nIk - 1}\),
\begin{align*}
 \ccd(\bdy, \word-(\cca)) &= \ccd\bigl(\word-(\ccp^{\nIk - 1}) - \alpha \word-(\ccp^{\nIh}),\ \word-(\cca) \bigr)\\
&= \ccd\bigl(\word-(\ccp^{\nIk - 1}),\ \word-(\cca) + \alpha \word-(\ccp^{\nIh})\bigr) \\
&= \ccd \bigl(\word-(\ccp^{\nIk - 1}),\ \word-(\cca \oplus_\ccp \alpha \ccp^{\nIh})\bigr) \ge \ccd. 
\end{align*}
Lemma \ref{org6f41f57} implies that \(\bdy \ge_{\ccF} \word-(\ccp^{\nIk - 1})\).
Hence \(\bdy = \word-(\ccp^{\nIk - 1})\), that is, \(\word[\ccM](\ccp^{\nIk - 1}) = \alpha = 0\).
Therefore (1) and (2) hold.
From (1) and (2), if \(0 \le \ccb < \cca < \ccp^{\nIk}\), then \(\word+(\ccb) < \word+(\cca)\).
Therefore (3) holds.
\end{proof}

\begin{proof}[proof of Theorem \ref{org100bcd6}]
 \comment{Proof. [proof of Theorem \ref{org100bcd6}]}
\label{sec:orga4b7b7e}
By Lemma \ref{org5db7ec7}, we see that (1) implies (2).
It is obvious that (2) implies (1) and (3).
We show that (3) implies (2).
Assume that \(\cca\) is the smallest integer satisfying
\(\word(\cca) \neq \word+(\cca)\) and \(\cca < \ccp^{\nIk}\).
Let \(\ccN\) be as in Lemma \ref{org13a9541}
and \(\alpha_{\nIi} = \word[\ccN](\ccp^{\nIi})\).
We show that
\begin{equation}
\label{equ:lem:linearity-order-k-ge-2}
\ithComp{\ccb}<0> \alpha_0 \oplus_\ccp \cdots \oplus_\ccp \ithComp{\ccb}<\nIk - 1> \alpha_{\nIk - 1} = 0 \tforsome 0 < \ccb < \cca \text{\ satisfying \ } \ithComp{\ccb}<\nIi> \le \ithComp{\cca}<\nIi> \text{\ for \ } \nIi \in \NN,
\end{equation}
which contradicts Lemma \ref{org13a9541} since
\(\word+[\ccN](\ccb) = \ithComp{\ccb}<0> \alpha_0 \oplus_\ccp \cdots \oplus_\ccp \ithComp{\ccb}<\nIk - 1> \alpha_{\nIk - 1}\).
Consider the sequence
\[
 (\beta_i)_{i = 1, 2, \ldots, n}= (\underbrace{\alpha_0, \ldots, \alpha_0}_{\ithComp{\cca}<0>}, \underbrace{\alpha_1, \ldots, \alpha_1}_{\ithComp{\cca}<1>}, \ldots, \underbrace{\alpha_{\nIk - 1}, \ldots, \alpha_{\nIk - 1}}_{\ithComp{\cca}<\nIk - 1>})
\]
and let \(\gamma_j = \beta_1 \oplus_\ccp \beta_2 \oplus_\ccp \cdots \oplus_\ccp \beta_\ccj\). 
If \(\set{\gamma_j : 1 \le j \le \ccn} = \FF_p\), then \eqref{equ:lem:linearity-order-k-ge-2} is obvious.
Suppose that \(\set{\gamma_j : 1 \le j \le \ccn} \neq \FF_p\).
Since \(\ccn \ge \ccp\), it follows that \(\gamma_j = \gamma_h\) for some \(j < h\),
so \(\gamma_h - \gamma_j = \beta_{\ccj + 1} \oplus_\ccp \cdots \oplus_\ccp \beta_h = 0\), which yields \eqref{equ:lem:linearity-order-k-ge-2}.
Therefore \(\word+(\cca) = \word(\cca)\) for \(0 \le \cca < \ccp^{\nIk}\).
\end{proof}

\begin{remark}
 \comment{Rem.}
\label{sec:orgfa1d313}
\label{orgfd98e8a}
Let \(\ccp = 3\), and let \(\cca\) and \(\ccN\) be the same as in Lemma \ref{org13a9541}.
Theorem \ref{org100bcd6} implies that \(\cca = 3^{\nIh} + 3^{\nIl}\) for some \(\nIh\) and \(\nIl\).
Moreover, \((\word[\ccN](3^{\nIh}), \word[\ccN](3^{\nIl})) \in \set{(1,1), (2,2)}\).
Indeed, it follows from Lemma \ref{org13a9541} that \(\word[\ccN](3^{\nIh})\),
\(\word[\ccN](3^{\nIl})\),
and \(\word[\ccN](3^{\nIl}) + \word[\ccN](3^{\nIh})\) are all nonzero.
Therefore \((\word[\ccN](3^{\nIh}), \word[\ccN](3^{\nIl})) \in \set{(1,1), (2,2)}\).
 
\end{remark}

\comment{connect}
\label{sec:orgf22e1cf}
We introduce some notation before moving on to the next section.
For \(\nIk \in \NN\), let \(\marray{\nIk} = \set{0, 1, \ldots, \nIk}\).
For a \(\marray{\nIk} \times \NN\) matrix \(\mnGen\) and \(\nIi \in \NN\), let \(\mnGen<\nIi>\) denote the \(\nIi\)-th column of \(\mnGen\).
We write \(\mnGen<\nIi> \in \FF_\ccp^{\marray{\nIk} \times 1}\) or \(\mnGen<\nIi> \in \FF_\ccp^{\marray{\nIk}}\).
For \(\mvecA \in \FF_\ccp^{\marray{\nIk}}\),
let \(\Cdn[\mnGen](\mvecA)\) denote the set of column indexes \(\cci\) such that
the \(\nIi\)-th column \(\mnGen<\nIi>\) equals \(\mvecA\), that is,
\[
 \Cdn[\mnGen](\mvecA) = \set{\cci \in \NN : \mnGen<\nIi> = \mvecA}.
\]
Let \(\cdn[\mnGen](\mvecA) = \Size{\Cdn[\mnGen](\mvecA)}\).

\begin{example}
 \comment{Exm.}
\label{sec:orgd506153}
\label{org3080d68}
Let \(\ccp = 3\), \(\ccd = d\), and \(\ccF = \ccE\). Let
\[
 \mGen{1} = \begin{bmatrix} \word(3^0) \\ \word(3^1) \end{bmatrix}
= \begin{bNiceMatrix}[first-row, last-col] 0 & 1 & 2 & 3 & 4 & & \\ 1 & 1 & 1 & 0 & 0 & \cdots & 0  \\
                  2 & 1 & 0 & 1 & 0 & \cdots & 1 \end{bNiceMatrix}\ \ .
\]
Then
\[
\mGen{1}<0> = \mVec{1 \\ 2}, \quad
\mGen{1}<1> = \mVec{1 \\ 1},\quad
\mGen{1}<2> = \mVec{1 \\ 0},\quad
\mGen{1}<3> = \mVec{0 \\ 1},
\]
\[
\Cdn[\mGen{1}](\mVec{1 \\ 2}) = \set{0},\quad
\Cdn[\mGen{1}](\mVec{1 \\ 1}) = \set{1},
\]
\[
\Cdn[\mGen{1}](\mVec{1 \\ 0}) = \set{2},\quad
\Cdn[\mGen{1}](\mVec{0 \\ 1}) = \set{3},\quad
\Cdn[\mGen{1}](\mVec{0 \\ 0}) = \set{4, 5, \ldots}.
\]
 
\end{example}

\section{Griesmer bound}
\label{sec:org12f2076}
\label{org05ef9f9}
In this section, we recall the Griesmer bound.
Let
\[
 \ccg_\ccp(\cck, \ccd) = \sum_{\cci = 0}^{\nIk - 1} \Bigl \lceil \frac{\ccd}{\ccp^\cci} \Bigr \rceil
\]

\begin{theorem}[Griesmer Bound \cite{griesmer-bound-1960, solomon-algebraically-1965}] \comment{Thm. [Griesmer Bound \cite{griesmer-bound-1960, solomon-algebraically-1965}]}
\label{sec:org69366c6}
\label{org496dbdd}
If \(\cC\) is an \([\ccn, \nIk, \ccd]\) code over \(\FF_{\ccp}\), then
\(\ccn \ge \ccg_\ccp(\cck, \ccd)\).

\end{theorem}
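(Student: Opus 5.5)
We prove the Griesmer bound by induction on \(\nIk\), using the residual code with respect to a minimum-weight codeword. For \(\nIk = 1\) the statement reads \(\ccn \ge \ccd\), which is immediate since a nonzero codeword has at most \(\ccn\) nonzero coordinates. For \(\nIk \ge 2\), fix a codeword \(\bdy \in \cC\) of weight exactly \(\ccd\) and let \(S\) be its support, so \(\Size{S} = \ccd\). Let \(\cC'\) be the code obtained by restricting every codeword of \(\cC\) to the \(\ccn - \ccd\) coordinates outside \(S\). The goal is to show that \(\cC'\) is an \([\ccn - \ccd, \nIk - 1, \ccd']\) code with \(\ccd' \ge \lceil \ccd / \ccp \rceil\).

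\textbf{Key step: the averaging argument.} Let \(\bdx \in \cC\) be a codeword that is not a scalar multiple of \(\bdy\). For each coordinate \(\cci \in S\), exactly one \(\lambda \in \FF_\ccp\) makes \((\bdx + \lambda \bdy)_\cci = 0\), because \(\ccy_\cci \neq 0\). Summing over all \(\lambda\), the total weight of the vectors \(\bdx + \lambda\bdy\) on \(S\) equals \((\ccp - 1)\ccd\). Hence some \(\lambda\) gives weight at most \((\ccp-1)\ccd/\ccp\) on \(S\).

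That vector \(\bdx + \lambda \bdy\) is nonzero, since \(\bdx\) is not a multiple of \(\bdy\). It is also a codeword, so its weight is at least \(\ccd\). Its weight outside \(S\), which is the same for every \(\lambda\), is therefore at least
\[
\ccd - \frac{(\ccp-1)\ccd}{\ccp} = \frac{\ccd}{\ccp},
\]
and being an integer it is at least \(\lceil \ccd/\ccp \rceil\).

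\textbf{Consequences for \(\cC'\).} The argument shows that the kernel of the restriction map \(\cC \to \cC'\) consists exactly of the multiples of \(\bdy\). It therefore has dimension \(1\), so \(\dim \cC' = \nIk - 1\). The same argument shows that every nonzero codeword of \(\cC'\) has weight at least \(\lceil \ccd/\ccp\rceil\).

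\textbf{Induction step.} The induction hypothesis gives
\[
\ccn - \ccd \ge \ccg_\ccp\bigl(\nIk - 1, \lceil \ccd/\ccp \rceil\bigr) = \sum_{\cci=0}^{\nIk-2} \Bigl\lceil \frac{\lceil \ccd/\ccp\rceil}{\ccp^\cci}\Bigr\rceil .
\]
By the standard identity \(\lceil \lceil \ccd/\ccp \rceil / \ccp^\cci \rceil = \lceil \ccd / \ccp^{\cci+1}\rceil\), the right-hand side equals \(\sum_{\cci=1}^{\nIk-1} \lceil \ccd/\ccp^\cci\rceil\). Adding \(\ccd = \lceil \ccd/\ccp^0 \rceil\) to both sides yields \(\ccn \ge \ccg_\ccp(\nIk, \ccd)\).

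The only real content is the averaging over \(\lambda\) that bounds the residual minimum distance. The same computation also delivers the dimension count, so I expect no further obstacle.
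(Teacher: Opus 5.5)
Your proof is correct. It is the classical residual-code argument of Griesmer, and it works verbatim over any $\mathbb{F}_q$. The averaging over $\lambda$ does give both $\dim C' = k-1$ and the residual distance bound $\lceil d/p\rceil$.

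The paper does not prove this theorem; it only cites it. Your averaging estimate $\lceil d/p\rceil \le d - \lfloor (p-1)d/p\rfloor$ for a residual code is the same one the paper invokes in Step 1 of the proof of Lemma~\ref{org7ac4c8a}.

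One step should be made explicit. The residual code is only known to have minimum distance $d' \ge \lceil d/p\rceil$, not exactly $\lceil d/p\rceil$. So the induction hypothesis gives $n-d \ge g_p(k-1,d')$, and you need that $g_p(k-1,\cdot)$ is nondecreasing, which holds termwise. Alternatively, phrase the induction for codes of minimum distance at least $d$.
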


\begin{corollary}
 \comment{Cor.}
\label{sec:org1f3f645}
\label{orgb141d90}
Let \(\cC\) be an \([\ccn, \nIk, \ccd]\) code over \(\FF_{\ccp}\) with generator matrix \(\mnGen\).
If \(\cC\) meets the Griesmer bound and \(\ccd \le \ccd' \ccp^{\nIk - 1}\), 
then \(\cdn[\mnGen](\mvecA) \le \ccd'\) for \(\mvecA \in \FF_{\ccp}^{\marray{\nIk - 1}}\).
 
\end{corollary}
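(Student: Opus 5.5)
The plan is to argue by contradiction: assume some column occurs more than \(\ccd'\) times and construct a code that beats the Griesmer bound (Theorem \ref{org496dbdd}). Suppose \(\cdn[\mnGen](\mvecA) \ge \ccd' + 1\) for some \(\mvecA \in \FF_\ccp^{\marray{\nIk - 1}}\). Let \(\cC\) be the \([\ccn, \nIk, \ccd]\) code generated by \(\mnGen\), and fix \(\ccd' + 1\) indices \(\cci\) in \(\Cdn[\mnGen](\mvecA)\). Two cases arise.

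First suppose \(\mvecA = \mathbf{0}\). Deleting one zero column gives an \([\ccn - 1, \nIk, \ccd]\) code, because the weights of all codewords are unchanged. This code would satisfy \(\ccn - 1 < \ccg_\ccp(\nIk, \ccd)\), contradicting Theorem \ref{org496dbdd}. So we may assume \(\mvecA \neq \mathbf{0}\).

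Now suppose \(\mvecA \neq \mathbf{0}\). The main step is a counting argument with linear functionals. Every nonzero codeword has the form \(\bdu\mnGen\) for some nonzero \(\bdu \in \FF_\ccp^{\marray{\nIk-1}}\). Summing weights over all nonzero \(\bdu\), each nonzero column contributes \(\ccp^{\nIk} - \ccp^{\nIk - 1}\) to the total. Hence the average weight is
\[
\frac{(\ccp^{\nIk} - \ccp^{\nIk-1}) n^*}{\ccp^{\nIk} - 1},
\]
where \(n^*\) is the number of nonzero columns. It is cleaner, though, to puncture directly. Delete the \(\ccd' + 1\) copies of \(\mvecA\). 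A codeword \(\bdu\mnGen\) loses weight exactly \(\ccd' + 1\) if \(\bdu \cdot \mvecA \neq 0\), and loses nothing otherwise. The resulting code has length \(\ccn - \ccd' - 1\) and minimum distance at least \(\ccd - \ccd' - 1\). Its dimension might drop, but only if \(\ccd - \ccd' - 1 \le 0\).

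This inequality alone is too weak, so the key is to compare Griesmer sums. We need \(\ccg_\ccp(\nIk, \ccd) - \ccg_\ccp(\nIk, \ccd - \ccd' - 1) < \ccd' + 1\) in order to get a contradiction. But each term \(\lceil \ccd/\ccp^\cci\rceil\) drops by roughly \((\ccd'+1)/\ccp^\cci\), so the difference is larger than \(\ccd' + 1\). Naive puncturing therefore fails, and I expect this to be the main obstacle.

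To get around it, I would use instead the standard structural fact for Griesmer codes. For a code meeting the bound, the residual code with respect to a codeword of weight \(\ccd\) again meets the Griesmer bound with parameters \([\ccn - \ccd, \nIk - 1, \lceil \ccd/\ccp\rceil]\). The argument then goes by induction on \(\nIk\). For \(\nIk = 1\), \(\ccn = \ccd\) and all columns are nonzero multiples of a single scalar. Since \(\ccd \le \ccd'\), each column value appears at most \(\ccd'\) times.

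For the inductive step, choose a codeword \(\bdc = \bdu\mnGen\) of weight \(\ccd\) with \(\bdu\cdot\mvecA = 0\). Such \(\bdu\) exist among the hyperplane of functionals vanishing on \(\mvecA\); verifying that a minimum-weight word can be found there is the delicate point. One way is to note that the minimum-weight words of a Griesmer code span \(\cC\), which is plausible via the same residual argument, so some minimum-weight functional must kill \(\mvecA\) once \(\nIk \ge 2\). With this choice, all copies of \(\mvecA\) survive into the residual code. There they project to copies of a single column \(\bar{\mvecA}\) in \(\FF_\ccp^{\nIk - 1}\). Since \(\lceil \ccd/\ccp\rceil \le \ccd'\ccp^{\nIk-2}\), the induction hypothesis bounds that multiplicity by \(\ccd'\). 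Other columns could also project to \(\bar{\mvecA}\), but this only strengthens the bound, which gives \(\cdn[\mnGen](\mvecA) \le \ccd'\).
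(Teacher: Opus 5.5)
Your zero-column case is fine. The residual-code induction would also go through \emph{if} you had a minimum-weight codeword \(\mathbf{u}G\) with \(\mathbf{u}\cdot\mathbf{a}=0\). That step is a genuine gap, however.

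First, it does not follow from the minimum-weight words spanning the code: a spanning set of functionals \(\mathbf{u}\) need not contain any element of the hyperplane \(\{\mathbf{u} : \mathbf{u}\cdot\mathbf{a}=0\}\). Second, the claim is false in general. Take the binary \([5,2,3]\) code whose generator matrix has columns \((1,0)^{T},(1,0)^{T},(0,1)^{T},(0,1)^{T},(1,1)^{T}\). It meets the Griesmer bound, since \(g_2(2,3)=3+2=5\). Its minimum-weight words \(11001\) and \(00111\) span it, yet both are nonzero in the last coordinate, where the column is \(\mathbf{a}=(1,1)^{T}\). The only nonzero \(\mathbf{u}\) with \(\mathbf{u}\cdot\mathbf{a}=0\) is \(\mathbf{u}=(1,1)\), which gives \(11110\), of weight \(4\).

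Your inductive step is applied to an arbitrary column \(\mathbf{a}\), so it breaks on this example. Restricting it to a hypothetical \(\mathbf{a}\) of multiplicity \(\ge d'+1\) does not rescue the argument. Showing that such an \(\mathbf{a}\) lies in a maximal hyperplane is no easier than proving the corollary itself.

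The missing idea is already in your puncturing paragraph: the codewords \(\mathbf{u}G\) with \(\mathbf{u}\cdot\mathbf{a}=0\) lose no weight when the copies of \(\mathbf{a}\) are deleted. So shorten instead of puncturing the whole code.

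These codewords form a subcode of dimension \(k-1\), because \(\mathbf{a}\neq\mathbf{0}\). This subcode vanishes on at least \(d'+1\) coordinates, and deleting them gives an \([n-d'-1,\,k-1,\,\ge d]\) code. The Griesmer bound then yields
\(n \ge d'+1+g_p(k-1,d) > g_p(k-1,d)+\lceil d/p^{k-1}\rceil = g_p(k,d)\).
The strict inequality uses \(\lceil d/p^{k-1}\rceil\le d'\), which follows from \(d\le d'p^{k-1}\). This contradicts the assumption that the code meets the bound.

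This is exactly the paper's argument. It needs neither residual codes nor induction on \(k\).
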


\begin{proof}
 \comment{Proof.}
\label{sec:orgffccc03}
Suppose that \(\cdn[\mnGen](\mvecB) \ge \ccd' + 1\) for some \(\mvecB \in \FF_{\ccp}^{\marray{\nIk - 1}}\).
Since \(\cC\) meets the Griesmer bound, \(\mvecB \neq \allzero\).
Hence we may assume that

\smallskip
\medskip
\[
\mnGen =\begin{bNiceMatrix}
  & 1 & \dotso & 1 & \dotso &\\
  & 0 & \dotso & 0 & & \\
  & \vdots & \ddots & \vdots & \mnGen' &\\
  & 0 & \dotso & 0 &  &\\
\CodeAfter
\OverBrace[shorten,yshift=3pt]{1-2}{2-4}{d' + 1}
\end{bNiceMatrix}.
\]
The matrix \(\mnGen'\) generates an \([\ccn - \ccd' - 1, \nIk - 1, \ccd]\) code.
By the Griesmer bound, \(\ccn - \ccd' - 1 \ge \ccg_\ccp(\nIk - 1, \ccd)\).
Thus \(\ccn \ge \ccd' + 1 + \ccg_\ccp(\nIk - 1, \ccd) > \ccg_\ccp(\nIk, \ccd)\),
a contradiction.
\end{proof}

\comment{connect}
\label{sec:orga621cc1}
Let
\begin{align*}
\mVct{\nIk} &= \set{\mvecA \in \FF_\ccp^{\marray{\nIk}} : \mvecA<0> = \cdots = \mvecA<\cci - 1> = 0, \mvecA<\cci> = 1 \text{\ for some\ } \cci},\\
 \mVct*{\nIk} &= \mVct{\nIk} \cup \set{\allzero},\qquad
\mEvec{\nIk} = \mVec{0 \\ 0 \\ \svdots \\ 0 \\ 1} \in \mVct{\nIk}.
\end{align*}
For example, if \(\ccp = 3\), then
\[
 \mVct{0} = \set{[1]},\quad \mVct{1} = \set{\mVec{1\\2},\ \mVec{1\\1},\ \mVec{1\\0}, \ \mVec{0\\1}}.
\]
Note that
\[
 \size{\mVct{\nIk - 1}} = \frac{\ccp^{\nIk} - 1}{\ccp - 1}.
\]
As we have seen in Example \ref{org3080d68}, a nonzero column of \(\mGen{1}\) is in \(\mVct{1}\).
We will show this is always true in Section 4.

\begin{corollary}
 \comment{Cor.}
\label{sec:org35b185c}
\label{org09650b1}
Let \(\cC\) be an \([\ccn, \nIk, \ccp^{\nIk - 1} \ccd']\) code over \(\FF_{\ccp}\) with generator matrix \(\mnGen\).
Let \(\ccV\) be a subset of \(\FF_\ccp^{\marray{\nIk}}\) of size \(\frac{\ccp^{\nIk} - 1}{\ccp - 1}\).
Suppose that 
\begin{equation}
\label{equ:cor-griesmer-bound-size}
 \Cdn[\mnGen](\mvecA) = \emptyset \tfor \mvecA \in \FF_p^{\marray{\nIk - 1}} \setminus \ccV.
\end{equation}
Then the following two conditions are equivalent.
\begin{enumerate}
\item \(\cC\) meets the Griesmer bound.
\item \(\cdn[\mnGen](\mvecA) = \ccd'\) for \(\mvecA \in \ccV\).
\end{enumerate}
 
\end{corollary}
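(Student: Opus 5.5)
Write $\ccn$ for the length and put $m = \frac{\ccp^{\nIk} - 1}{\ccp - 1}$. The first step is to compute the Griesmer bound in this situation. Since $\ccd = \ccp^{\nIk - 1}\ccd'$, every term $\ccd/\ccp^{\cci}$ with $0 \le \cci \le \nIk - 1$ is an integer, and
\[
\ccg_\ccp(\nIk, \ccp^{\nIk-1}\ccd') = \sum_{\cci=0}^{\nIk-1} \ccp^{\nIk-1-\cci}\ccd' = \ccd' m .
\]
Hence $\cC$ meets the Griesmer bound exactly when $\ccn = \ccd' m$.

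Next I would relate $\ccn$ to the column counts. I read $\ccn$ as the number of columns that are not identically zero; zero columns never change weights, and the restriction operator discards them anyway. Under this reading, hypothesis \eqref{equ:cor-griesmer-bound-size} says every nonzero column of $\mnGen$ lies in $\ccV$. Assuming also $\allzero \notin \ccV$ (true in the intended application $\ccV = \mVct{\nIk-1}$), we get
\[
\ccn = \sum_{\mvecA \in \ccV} \cdn[\mnGen](\mvecA).
\]

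For (2) $\Rightarrow$ (1), I would substitute $\cdn[\mnGen](\mvecA) = \ccd'$ into this sum. Since $\size{\ccV} = m$, this gives $\ccn = \ccd' m = \ccg_\ccp(\nIk, \ccd)$, so $\cC$ meets the bound.

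For (1) $\Rightarrow$ (2), I would first apply Corollary \ref{orgb141d90} with $\ccd = \ccd'\ccp^{\nIk-1}$. It gives $\cdn[\mnGen](\mvecA) \le \ccd'$ for every $\mvecA$, and in particular for every $\mvecA \in \ccV$. Then $\ccd' m = \ccn = \sum_{\mvecA\in\ccV}\cdn[\mnGen](\mvecA)$ is a sum of $m$ terms, each at most $\ccd'$. A sum of $m$ terms each at most $\ccd'$ equals $\ccd' m$ only if every term equals $\ccd'$, so $\cdn[\mnGen](\mvecA) = \ccd'$ for all $\mvecA\in\ccV$.

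The only delicate point is the bookkeeping between $\ccn$ and the infinitely many zero columns of the $\marray{\nIk-1}\times\NN$ matrix $\mnGen$. I would handle it by identifying $\cC$ with its restriction to the support, so that $\ccn$ counts the nonzero columns. If $\ccV$ were allowed to contain $\allzero$, the count would have to be taken with the zero vector excluded; I would state the convention $\allzero\notin\ccV$ explicitly.
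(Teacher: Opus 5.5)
Your proposal is correct and is essentially the paper's proof. You evaluate the Griesmer bound as $\frac{p^k-1}{p-1}d'$, use Corollary~\ref{orgb141d90} to bound each column multiplicity on $V$ by $d'$ and force equality by counting, and get the converse by summing the multiplicities. Your explicit convention that zero columns are discarded and $\mathbf{0}\notin V$ only makes precise what the paper leaves implicit when it applies the corollary to the restricted matrices, with $V$ the set of normalized nonzero vectors.
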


\begin{proof}
 \comment{Proof.}
\label{sec:orgadbb69b}
By the Griesmer bound,
\(\ccn \ge \ccg_\ccp(\nIk, \ccp^{\nIk - 1} \ccd') = \frac{\ccp^{\nIk} - 1}{\ccp - 1} \ccd'\).
Suppose that \(\cC\) meets the Griesmer bound.
Then \(\ccn = \frac{\ccp^{\nIk} - 1}{\ccp - 1} \ccd'\) and \(\cdn[\mnGen](\mvecA) \le \ccd'\) for \(\mvecA \in \ccV\) by Corollary \ref{orgb141d90}.
Since \(\size{\ccV} = \frac{\ccp^{\nIk} - 1}{\ccp - 1}\),
it follows that \(\cdn[\mnGen](\mvecA) = \ccd'\).
Conversely, suppose that \(\cdn[\mnGen](\mvecA) = \ccd'\) for \(\mvecA \in \ccV\).
Then \(\ccn = \frac{\ccp^{\nIk} - 1}{\ccp - 1} \ccd'\), so \(\cC\) meets the Griesmer bound.
\end{proof}

\comment{connect}
\label{sec:org25a14cd}
Note that
we can apply Corollary \ref{org09650b1} for \(\ccV = \mVct{\nIk - 1}\)
since \(\mVct{\nIk - 1} = \frac{\ccp^{\nIk} - 1}{\ccp - 1}\).

\section{Lexicographic codes meeting the Griesmer bound}
\label{sec:orgfa28b1b}
\label{orgb6599eb}
Fix \(\ccF = \ccF(\xi, \eta) \in \sF\) and an integer \(\ccd\) such that \(\ccd \ge 2\).
Let 
\[
 \Theta = \set{\xi, \eta}.
\]
In this section, we prove Theorem \ref{org56f1397},
which states that \(\mLex-{\nIk - 1}\) meets the Griesmer bound,  by using Corollary \ref{org09650b1}. 

Let
\[
 \mGen{\nIk}  = \mGen|\ccF, \ccd|{\nIk} = \begin{bmatrix} 
 \word(\ccp^0) \\
 \word(\ccp^1) \\
 \vdots\\
 \word(\ccp^{\nIk})
\end{bmatrix} \quad
\tand \mRes(\mGen{\nIk}) = 
 \begin{bmatrix} 
 \mRes[\ccS](\word(\ccp^0)) \\
  \mRes[\ccS](\word(\ccp^1)) \\
 \vdots\\
 \mRes[\ccS](\word(\ccp^{\nIk}))
 \end{bmatrix},
\]
where \(\ccS =  \msupp|\ccE|(\mGen{\nIk}) = \bigcup_{\nIi \in \marray{\nIk}} \msupp|\ccE|(\word(\ccp^\nIi))\).
Similarly, let \(\mGen-{\nIk} = \begin{bmatrix} \word-(\ccp^{\nIi}) \end{bmatrix}_{\nIi \in \marray{\nIk}}\) and
\(\mRes(\mGen-{\nIk}) = \begin{bmatrix} \mRes[\overline{\ccS}](\word(\ccp^{\nIi})) \end{bmatrix}_{\nIi \in \marray{\nIk}}\),
where \(\overline{\ccS} = \msupp|\ccE|(\mGen-{\nIk})\).
For \(\cca \in \NN\) and \(\mvecA \in \FF_{\ccp}^{\marray{\nIk}}\), let
\[
 \innerproduct{\cca}{\mvecA} = \bigoplusp[\nIi \in \marray{\nIk}][][\ccp] \ithComp{\cca}<\nIi> \mvecA<\nIi> \in \FF_{\ccp}.
\]
For example, if \(\ccp = 3\), \(\cca = 7 = 1 + 2 \cdot 3\), and \(\mvecA = \mVec{1 \\ 1 \\ 2} \in \FF_{\ccp}^{\marray{2}}\), then
\[
\innerproduct{\cca}{\mvecA} = 1 \cdot 1 \oplus_3 2 \cdot 1 \oplus_3 0 \cdot 2 = 0.
\]

We begin with an easy lemma to calculate distance.

\begin{lemma}
 \comment{Lem.}
\label{sec:orge9a622a}
\label{org4a0db5b}
Let \(\mnGen\) be a \(\marray{\nIk} \times \NN\) matrix. For \(\cca \in \NN\),
\begin{equation}
\label{equ:lem-simplex-distance}
\mBigDist{\bdx_{\nIk}}{\sum_{\nIi \in \marray{\nIk - 1}} \cca_{\nIi} \bdx_{\nIi}}  = 
\sum_{\mvecA \in \FF_{\ccp}^{\marray{\nIk}}, \mvecA<\nIk> \neq \innerproduct{\mvecA}{\cca}} \cdn[\mnGen](\mvecA).
\end{equation}
where \(\mnGen = \mVec{\bdx_0 \\ \vdots \\ \bdx_{\nIk}}\).
 
\end{lemma}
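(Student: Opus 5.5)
The plan is to compute the Hamming distance column by column and then group the columns according to their value. Write $\mnGen = \mVec{\bdx_0 \\ \vdots \\ \bdx_{\nIk}}$. For a column index $\cci \in \NN$, the $\cci$-th coordinate of $\bdx_{\nIk}$ is the last entry $\mnGen<\cci><\nIk>$ of the column $\mnGen<\cci>$. The $\cci$-th coordinate of $\sum_{\nIi \in \marray{\nIk - 1}} \cca_{\nIi} \bdx_{\nIi}$ is $\sum_{\nIi \le \nIk - 1} \cca_{\nIi}\, \mnGen<\cci><\nIi>$, computed in $\FF_\ccp$.

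First I interpret the pairing $\innerproduct{\mvecA}{\cca}$ in the statement. It should be read with $\cca$ truncated to the digits $\cca_0, \ldots, \cca_{\nIk-1}$, so that it equals $\bigoplus_{\nIi \le \nIk - 1} \cca_{\nIi} \mvecA<\nIi>$. Equivalently, it is $\innerproduct{\cca}{\mvecA}$ in the paper's notation when $\cca < \ccp^{\nIk}$, which is the situation in which the lemma is used. With this reading, the $\cci$-th coordinates of the two vectors differ exactly when $\mnGen<\cci><\nIk> \neq \innerproduct{\mnGen<\cci>}{\cca}$.

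The Hamming distance is the number of such $\cci$. Next I partition $\NN$ into the sets $\Cdn[\mnGen](\mvecA)$ for $\mvecA \in \FF_\ccp^{\marray{\nIk}}$. Whether index $\cci$ is counted depends only on the column $\mvecA = \mnGen<\cci>$. So the count equals the sum of $\cdn[\mnGen](\mvecA)$ over those $\mvecA$ with $\mvecA<\nIk> \neq \innerproduct{\mvecA}{\cca}$, which is \eqref{equ:lem-simplex-distance}.

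There is also a finiteness point to address. Each $\bdx_{\nIi}$ lies in $\FF_\ccp^\NN$, so all but finitely many columns are $\allzero$. For $\mvecA = \allzero$ we have $\mvecA<\nIk> = 0 = \innerproduct{\mvecA}{\cca}$, so the possibly infinite $\cdn[\mnGen](\allzero)$ never appears in the sum. Every other $\cdn[\mnGen](\mvecA)$ is finite, so the right-hand side is a finite sum of finite numbers.

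The argument is routine bookkeeping. The only step needing care is this handling of the zero column, together with fixing the convention that the digits $\cca_{\nIi}$ with $\nIi \ge \nIk$ are ignored.
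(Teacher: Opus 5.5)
Your proof is correct and follows the paper's own argument. For each coordinate $i$ with column $\mvecA$ of $\mnGen$, the two vectors have $i$-th entries $\mvecA<\nIk>$ and $\innerproduct{\cca}{\mvecA}$, so grouping coordinates by their column gives the sum of $\cdn[\mnGen](\mvecA)$ over those $\mvecA$ where the two entries differ. Your extra remarks only make explicit what the paper leaves implicit: that the pairing must be read ignoring the digit $\cca_{\nIk}$ (automatic when $\cca < \ccp^{\nIk}$, as in all the paper's uses), and that the infinite class $\Cdn[\mnGen](\allzero)$ never enters the sum.
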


\begin{proof}
 \comment{Proof.}
\label{sec:orgc698c2e}
Let \(\bdy = \sum_{\nIi \in \marray{\nIk - 1}} \cca_{\nIi} \bdx_{\nIi}\).
We count the number of coordinates in which \(\bdx_{\nIk}\) and \(\bdy\) agree.
Let \(\mvecA = \mnGen<\cci>\), that is, \(\cci \in \Cdn[\mnGen](\mvecA)\).
Then the \(\cci\)-th component of \(\bdx_{\nIk}\) is \(\mvecA<\nIk>\)
and that of \(\bdy\) is \(\innerproduct{\cca}{\mvecA}\).
Therefore \eqref{equ:lem-simplex-distance} follows.
\end{proof}

\comment{connect}
\label{sec:org83c41dd}
The next lemma enables us to show \eqref{equ:cor-griesmer-bound-size} for \(\mRes(\mGen-{\nIk})\).

\begin{lemma}
 \comment{Lem.}
\label{sec:org4364dd1}
\label{orgedc6ece}
Let \(F = F(\xi, \eta) \in \sF\).
\begin{enumerate}
\item If \(\xi \in \msupp|\ccE|(\mLex-{\nIk})\), then \(\eta \in \msupp|\ccE|(\mLex-{\nIk})\).
\item If \(\eta \in \msupp|\ccE|(\mLex-{\nIk})\) and \(\xi < \eta\), then \(\xi \in \msupp|\ccE|(\mLex-{\nIk})\).
\end{enumerate}
 
\end{lemma}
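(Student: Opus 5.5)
The plan is to argue by contradiction, using the minimality that defines \(\word-(\ccp^{\nIh})\) (Lemma~\ref{org6f41f57}(2)). Every codeword of \(\mLex-{\nIk}\) is a linear combination of the generators \(\word-(\ccp^{\nIh})\), \(0 \le \nIh \le \nIk\). Hence \(\msupp|\ccE|(\mLex-{\nIk}) = \bigcup_{\nIh} \msupp|\ccE|(\word-(\ccp^{\nIh}))\), and it suffices to reason about the generators. Throughout we use how \(\ccE\)-coordinates and \(\ccF(\xi,\eta)\)-coordinates are related. Since \(\bdf_\xi = \bde_\xi + \bde_\eta\) and all other \(\bdf_\cci = \bde_\cci\), we have
\[
\ithComp{\ccx}[\xi] = \ithComp{\ccx}<\xi>, \qquad \ithComp{\ccx}[\eta] = \ithComp{\ccx}<\eta> - \ithComp{\ccx}<\xi>,
\]
and \(\ithComp{\ccx}[\cci] = \ithComp{\ccx}<\cci>\) for \(\cci \notin \Theta\).

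For (1), suppose \(\xi\) lies in the support but \(\eta\) does not, so every codeword has \(\eta\)-coordinate \(0\). Let \(\nIh\) be the least index with \(c := \ithComp{\ccx}<\xi> \neq 0\), where \(\bdx = \word-(\ccp^{\nIh})\). Every \(\word-(\ccb)\) with \(\ccb < \ccp^{\nIh}\) vanishes at both \(\xi\) and \(\eta\). We will build a \(\bdy\) that agrees with \(\bdx\) off \(\Theta\), is nonzero at \(\xi\) or at \(\eta\) (at both where indicated), and satisfies \(\bdy <_{\ccF} \bdx\). On \(\Theta\) the word \(\bdx\) differs from each earlier codeword in exactly one place, and \(\bdy\) does in at least one. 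So \(\ccd(\bdy, \word-(\ccb)) \ge \ccd(\bdx, \word-(\ccb)) \ge \ccd\) for all \(\ccb < \ccp^{\nIh}\), and Lemma~\ref{org6f41f57}(2) then gives \(\bdy \ge_{\ccF} \bdx\), a contradiction.

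We split on the order of \(\xi\) and \(\eta\).
\begin{itemize}
\item If \(\xi > \eta\), let \(\bdy\) be obtained by swapping the \(\ccE\)-coordinates \(\xi\) and \(\eta\), so \(\ithComp{\ccy}<\xi> = 0\) and \(\ithComp{\ccy}<\eta> = c\). The highest differing \(\ccF\)-index is \(\xi\), where \(\ithComp{\ccy}[\xi] = 0 < c = \ithComp{\ccx}[\xi]\).
\item If \(\eta > \xi\), set \(\ithComp{\ccy}<\eta> = c\) and keep \(\ithComp{\ccy}<\xi> = c\). Then \(\ithComp{\ccy}[\eta] = 0\), while \(\ithComp{\ccx}[\eta] = -c \neq 0\). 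The highest differing index is \(\eta\), so again \(\bdy <_{\ccF} \bdx\).
\end{itemize}

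For (2), assume \(\xi < \eta\), that \(\eta\) lies in the support, and that \(\xi\) does not. Let \(\nIh\) be least with \(c := \ithComp{\ccx}<\eta> \neq 0\), where \(\bdx = \word-(\ccp^{\nIh})\). Earlier codewords vanish on \(\Theta\), and \(\ithComp{\ccx}<\xi> = 0\), so \(\ithComp{\ccx}[\eta] = c\). Take \(\bdy\) equal to \(\bdx\) except \(\ithComp{\ccy}<\xi> = c\). Then distances to earlier codewords do not decrease, the only \(\ccF\)-coordinates that change are \(\xi\) and \(\eta\), and \(\ithComp{\ccy}[\eta] = 0 < c\). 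Since \(\eta > \xi\), this gives \(\bdy <_{\ccF} \bdx\), contradicting Lemma~\ref{org6f41f57}(2).

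The main obstacle is the case \(\eta > \xi\) in (1). There the naive swap fails, because comparing \(c\) with \(-c\) at index \(\eta\) need not favour \(\bdy\). The fix is to make \(\bdy\) nonzero at both coordinates of \(\Theta\), chosen so that its \(\eta\)-th \(\ccF\)-coordinate vanishes.
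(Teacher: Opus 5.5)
Your argument is correct and is essentially the paper's: take the first generator whose relevant $E$-coordinate is nonzero, change it only on $\Theta=\{\xi,\eta\}$ so that it becomes $F$-smaller while its distances to all earlier codewords do not drop, and contradict Lemma~\ref{org6f41f57}(2). In part (2) the paper adds $\mathbf{e}_\xi$ rather than $c\,\mathbf{e}_\xi$, which makes no difference; in part (1) your construction for $\eta>\xi$ changes only the $\eta$-th $F$-coordinate (from $-c$ to $0$), so it works for either order of $\xi$ and $\eta$ and is exactly the paper's single construction, which makes your case split and the swap unnecessary.
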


\begin{proof}
 \comment{Proof.}
\label{sec:org5f45f4f}
(1) Assume that \(\eta \not \in \msupp|\ccE|(\mLex-{\nIk})\).
Let \(\nIh\) be the smallest integer with \(\xi \in \supp_{\ccE} (\word-(\ccp^{\nIh}))\). Then \(\nIh \le \nIk\).
Note that
\[
 \word-<\nIi>(\ccp^{\nIh}) = 
\begin{cases}
 \word-[\nIi](\ccp^{\nIh}) & \tif \nIi \neq \eta, \\
 \word-[\eta](\ccp^{\nIh}) + \word-[\xi](\ccp^{\nIh}) & \tif \nIi = \eta. \\
\end{cases}
\]
Since \(\eta \not \in \msupp|\ccE|(\mLex-{\nIk})\),
it follows that \(0 = \word-<\eta>(\ccp^{\nIh}) = \word-[\eta](\ccp^{\nIh}) + \word-[\xi](\ccp^{\nIh})\),
and hence \(\word-[\eta](\ccp^{\nIh}) = - \word-[\xi](\ccp^{\nIh}) \neq 0\).
Let \(\bdy = \word-(\ccp^{\nIh}) - \word-[\eta](\ccp^{\nIh}) \bdf_{\eta}\); then \(\bdy <_{\ccF} \word-(\ccp^{\nIh})\).
We show that \(\mDist{\bdy}{\word-(\cca)} \ge \ccd\) for \(0 \le \cca < \ccp^{\nIh}\).
For \(\cci \neq \eta\), we see that \(\ithComp{\ccy}<\cci> = \word-<\cci>(\ccp^{\nIh})\).
Since
\[
 \ithComp{\ccy}<\eta> = \word-[\eta](\ccp^{\nIh}) + \word-[\xi](\ccp^{\nIh}) - \word-[\eta](\ccp^{\nIh}) = \word-[\xi](\ccp^{\nIh}) \neq 0 \quad \tand
\]
\[
\word-<\eta>(\ccp^{\nIl}) = 0 \tfor 0 \le \nIl \le \nIk,
\]
it follows that
\[
 \mDist{\bdy}{\word-(\cca)} = \ccd(\word-(\ccp^{\nIh}), \word-(\cca)) + 1 > \ccd \tfor 0 \le \cca < \ccp^{\nIh}.
\]
Thus Lemma \ref{org6f41f57} implies that \(\bdy \ge_{\ccF} \word-(\ccp^{\nIh})\), which is impossible.
Therefore \(\eta \in \msupp|\ccE|(\mLex-{\nIk})\).

\medskip
\noindent
(2) Assume that \(\xi \not \in \msupp|\ccE|(\mLex-{\nIk})\).
Let \(\nIh\) be the smallest integer such that \(\eta \in \msupp|\ccE|(\word-(\ccp^{\nIh}))\);
then \(\nIh \le \nIk\) and \(0 \neq \word-<\eta>(\ccp^{\nIh}) = 
\word-[\eta](\ccp^{\nIh}) + \word-[\xi](\ccp^{\nIh}) = \word-[\eta](\ccp^{\nIh})\).
Let
\[
 \bdy = \word-(\ccp^{\nIh}) - \bdf_\eta + \bdf_\xi = \word-(\ccp^{\nIh}) + \bde_\xi.
\]
Since \(\xi < \eta\) and \(\word-[\eta](\ccp^{\nIh}) > 0\), it follows that
\(\bdy <_{\ccF} \word-(\ccp^{\nIh})\).
Moreover, since \(\ithComp{\ccy}<\xi> = \word-<\xi>(\ccp^{\nIh}) + 1 = 1\)
and \(\word-<\xi>(\ccp^{\nIl}) = 0\) for \(0 \le \nIl \le \nIk\), we see that
\(\ccd(\bdy, \word-(\cca)) \ge \ccd(\word-(\ccp^{\nIh}), \word-(\cca)) \ge \ccd\)
for \(0 \le \cca < \ccp^{\nIh}\).
Therefore \(\bdy \ge_{\ccF} \word-(\ccp^{\nIh})\), a contradiction.
\end{proof}

\begin{remark}
 \comment{Rem.}
\label{sec:orgbd67778}
\label{org04e980f}
From Lemma \ref{orgedc6ece},
we see that \(\msupp|\ccF|(\mLex-{\nIk}) \subseteq \msupp|\ccE|(\mLex-{\nIk})\).
Indeed, let \(\ccN \in \NN \setminus \msupp|\ccE|(\mLex-{\nIk})\).
If \(\ccN \neq \eta\), then 
\(\ccN \in \NN \setminus \msupp|\ccF|(\mLex-{\nIk})\).
By Lemma \ref{orgedc6ece}, \(\xi \in \NN \setminus \msupp|\ccE|(\mLex-{\nIk})\).
Hence \(\word-[\eta](\ccp^{\nIi}) = \word-<\eta>(\ccp^{\nIi}) = 0\) for \(0 \le \nIi \le \nIk\).
Therefore \(\ccN = \eta \in \NN \setminus \msupp|\ccF|(\mLex-{\nIk})\).
 
\end{remark}

\begin{lemma}
 \comment{Lem.}
\label{sec:orgb2f2edc}
\label{orgf9637b2}
If \(\mvecA \in \FF_{\ccp}^{\marray{\nIk}} \setminus \mVct*{\nIk}\),
then \(\Cdn[\mGen-{\nIk}](\mvecA) = \emptyset\). In particular,
\eqref{equ:cor-griesmer-bound-size} holds for \(\mRes(\mGen-{\nIk})\).
 
\end{lemma}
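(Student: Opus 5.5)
The plan is to argue by contradiction and exhibit a vector strictly smaller than one of the generators \(\word-(\ccp^{\nIh})\) that still satisfies all the required distance conditions; Lemma \ref{org6f41f57}(2) then gives the contradiction. Suppose some column \(\mGen-{\nIk}<\cci> = \mvecA\) is nonzero but not in \(\mVct{\nIk}\). Let \(\nIh\) be the first row with \(\mvecA<\nIh> \neq 0\). Then \(\alpha := \mvecA<\nIh> = \word-<\cci>(\ccp^{\nIh}) \notin \set{0,1}\), while \(\word-<\cci>(\ccp^{\nIl}) = 0\) for \(\nIl < \nIh\).

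The first step is to note a consequence of linearity. Since \(\word-(\cca) = \sum \cca_{\nIl} \word-(\ccp^{\nIl})\), we get \(\word-<\cci>(\cca) = 0\) for all \(0 \le \cca < \ccp^{\nIh}\). Therefore, if we replace the \(\cci\)-th standard coordinate of \(\word-(\ccp^{\nIh})\) by any \emph{nonzero} value \(\beta\), none of the distances \(\ccd(\cdot, \word-(\cca))\) with \(\cca < \ccp^{\nIh}\) changes. Write \(\bdy_\beta = \word-(\ccp^{\nIh}) + (\beta - \alpha)\bde_\cci\). Lemma \ref{org6f41f57}(2) then says \(\bdy_\beta \ge_{\ccF} \word-(\ccp^{\nIh})\) for every \(\beta \neq 0\). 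It remains to find some \(\beta \neq 0\) with \(\bdy_\beta <_{\ccF} \word-(\ccp^{\nIh})\).

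The second step is a case split according to the position of \(\cci\) relative to \(\Theta = \set{\xi, \eta}\). If \(\cci \notin \Theta\), then \(\bde_\cci = \bdf_\cci\), so only the \(\cci\)-th \(\ccF\)-coordinate changes, from \(\alpha\) to \(\beta\); taking \(\beta = 1 < \alpha\) finishes. If \(\cci = \eta\), then again \(\bde_\eta = \bdf_\eta\), and only the \(\eta\)-th \(\ccF\)-coordinate changes. Its value is \(\beta - c\), where \(c = \word-[\xi](\ccp^{\nIh})\). As \(\beta\) ranges over the nonzero values, \(\beta - c\) takes every value except \(-c\). If \(c = 0\), choose \(\beta = 1\). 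If \(c \neq 0\) and \(\alpha \neq c\), choose \(\beta = c\) to make this coordinate \(0 < \alpha - c\). The residual subcase \(\alpha = c\) needs separate treatment. There I would also use the \(\xi\)-row: as in the proof of Lemma \ref{orgedc6ece}(2), compare with \(\word-(\ccp^{\nIh}) + t\bdf_\xi - t\bdf_\eta = \word-(\ccp^{\nIh}) + t\bde_\xi\). I would also use the fact that \(\word-<\xi>(\ccp^{\nIl})\) and \(\word-<\eta>(\ccp^{\nIl})\) vanish for \(\nIl < \nIh\), which follows from the computation \(\word-<\eta> = \word-[\eta] + \word-[\xi]\).

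The main obstacle is the case \(\cci = \xi\) (and the leftover subcase above). Here \(\bde_\xi = \bdf_\xi - \bdf_\eta\), so changing the standard coordinate \(\xi\) moves both the \(\xi\)- and the \(\eta\)-coordinates in the \(\ccF\)-basis. Which move decides the comparison depends on whether \(\xi < \eta\) or \(\xi > \eta\). I would first try to compensate by simultaneously changing the standard coordinate \(\eta\), so that the \(\eta\)-th \(\ccF\)-coordinate stays fixed and only the \(\xi\)-th one drops. This is legitimate only if the standard coordinate \(\eta\) of all earlier codewords is zero, or if it remains nonzero after the change. Lemma \ref{orgedc6ece} and Remark \ref{org04e980f} would be used to control this. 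Finally, the ``in particular'' claim is immediate: every column of \(\mRes(\mGen-{\nIk})\) is nonzero and therefore lies in \(\mVct{\nIk}\), whose size is \(\frac{\ccp^{\nIk+1}-1}{\ccp-1}\). So \eqref{equ:cor-griesmer-bound-size} holds with \(\ccV = \mVct{\nIk}\).
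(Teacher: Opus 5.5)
Your reduction is sound. For every nonzero \(\beta\), the vector \(\word-(\ccp^{\nIh}) + (\beta-\alpha)\bde_\cci\) has the same distance to each \(\word-(\cca)\) with \(\cca<\ccp^{\nIh}\), so by Lemma~\ref{org6f41f57} it suffices to make it smaller in \(<_{\ccF}\). This settles \(\cci\notin\Theta\) and the case \(\cci=\eta\) with \(\word-[\eta](\ccp^{\nIh})\neq 0\). It also settles \(\cci=\xi>\eta\): take \(\beta=1\), and the comparison is decided at \(\xi\).

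There is a genuine gap, however. The cases you call the main obstacle are exactly those where changing one standard coordinate cannot work, you leave them open, and the moves you sketch fail in some subcases.
\begin{itemize}
\item Residual case \(\cci=\eta\), \(\word-[\eta](\ccp^{\nIh})=0\), \(\word-[\xi](\ccp^{\nIh})=\alpha\). Your vector \(\word-(\ccp^{\nIh})+t\bde_\xi\) has \(\eta\)-th \(\ccF\)-coordinate \(-t\neq 0\). So when \(\xi<\eta\) it is \emph{larger} than \(\word-(\ccp^{\nIh})\), and there is no contradiction.
\item Case \(\cci=\xi<\eta\). The compensated vector \(\word-(\ccp^{\nIh})+(\beta-\alpha)\bdf_\xi\) is smaller, and Lemma~\ref{orgedc6ece}(2) does make the earlier codewords vanish at \(\eta\). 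But you also need its \(\eta\)-th standard coordinate \(\word-<\eta>(\ccp^{\nIh})+\beta-\alpha\) to be nonzero. This can fail: \(\alpha=2\) forces \(\beta=1\), and nothing excludes \(\word-<\eta>(\ccp^{\nIh})=1\). In that case every distance to an earlier codeword drops by one.
\item In the case \(\cci=\eta\), the vanishing of \(\word-<\xi>(\ccp^{\nIl})\) for \(\nIl<\nIh\) does not follow from \(\word-<\eta>(\ccp^{\nIl})=\word-[\eta](\ccp^{\nIl})+\word-[\xi](\ccp^{\nIl})\). It needs Lemma~\ref{orgedc6ece}(1) applied to \(\mLex-{\nIh - 1}\).
\end{itemize}

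The missing idea is to move \(\ccF\)-coordinates so that \emph{both} standard coordinates \(\xi,\eta\) of the new vector become \(\alpha-1\neq 0\).
\begin{itemize}
\item For \(\cci=\eta\) with \(\word-[\eta](\ccp^{\nIh})=0\), take \(\bdy = \word-(\ccp^{\nIh}) - \bdf_\xi\). Only the \(\xi\)-th \(\ccF\)-coordinate changes (from \(\alpha\) to \(\alpha-1\)), so \(\bdy <_{\ccF} \word-(\ccp^{\nIh})\) whatever the order of \(\xi\) and \(\eta\). Moreover \(\ithComp{\ccy}<\xi> = \ithComp{\ccy}<\eta> = \alpha-1\neq 0\). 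Since \(\eta\notin\msupp|\ccE|(\mLex-{\nIh - 1})\), Lemma~\ref{orgedc6ece}(1) gives \(\xi\notin\msupp|\ccE|(\mLex-{\nIh - 1})\).
\item For \(\cci=\xi<\eta\), take \(\bdy = \word-(\ccp^{\nIh}) - \bdf_\xi - \word-[\eta](\ccp^{\nIh})\bdf_\eta\). Setting the \(\eta\)-th \(\ccF\)-coordinate to \(0\) can only lower the order, because \(\eta>\xi\). This again gives \(\ithComp{\ccy}<\xi> = \ithComp{\ccy}<\eta> = \alpha-1\neq 0\), and Lemma~\ref{orgedc6ece}(2) gives \(\eta\notin\msupp|\ccE|(\mLex-{\nIh - 1})\).
\end{itemize}
In both cases no distance to an earlier codeword decreases, so Lemma~\ref{org6f41f57} yields the contradiction. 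With these two constructions your outline becomes the paper's proof. Your treatment of the ``in particular'' part is fine.
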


\begin{proof}
 \comment{Proof.}
\label{sec:org568f891}
Since \(\mvecA \not \in \mVct*{\nIk}\),
we see that there exists \(\nIh\) such that
\(\mvecA<\nIh> \ge 2\) and \(\mvecA<\nIl> = 0\) for \(0 \le \nIl < \nIh\).
Assume that \(\nIi \in \Cdn[\mGen-{\nIk}](\mvecA)\).
Then \(\word-<\nIi>(\ccp^{\nIl}) = \mvecA<\nIl>\) for \(0 \le \nIl \le \nIk\).

\begin{mycase}[\(\cci \neq \xi\)]
 \comment{Case. [\(\cci \neq \xi\)]}
\label{sec:org7748ad8}
Suppose that \(\word-[\cci](\ccp^{\nIh}) \neq 0\).
Let \(\bdy = \word-(\ccp^{\nIh}) - \bdf_\cci\);
then \(\bdy <_{\ccF} \word-(\ccp^{\nIh})\) and
\(\ithComp{\ccy}<\cci> = \word-<\cci>(\ccp^{\nIh}) - 1 = \mvecA<\nIh> - 1 \neq 0\).
Since \(\word-<\cci>(\ccp^{\nIl}) = \mvecA<\nIl> = 0\) for \(0 \le \nIl < \nIh\),
it follows that
\(\mDist{\bdy}{\word-(\cca}) = \mDist{\word(\ccp^{\nIh})}{\word-(\cca)} \ge \ccd\)
for  \(0 \le \cca < \ccp^{\nIh}\), which is impossible.

Suppose that \(\word-[\cci](\ccp^{\nIh}) = 0\).
Since \(\word-<\cci>(\ccp^{\nIh}) = \mvecA<\nIh> \ge 2\),
it follows that \(\cci = \eta\) and 
\(\word-<\xi>(\ccp^{\nIh}) = \word-<\eta>(\ccp^{\nIh}) \ge 2\).
Let \(\bdy = \word-(\ccp^{\nIh}) - \bdf_\xi\); then 
\(\bdy <_{\ccF} \word-(\ccp^{\nIh})\).
Since \(\eta \not \in \msupp|\ccE|(\Lex-{\nIh - 1})\),
it follows from Lemma \ref{orgedc6ece} that \(\xi \not \in \msupp|\ccE|(\mLex-{\nIh - 1})\).
Hence
\(\word-<\eta>(\ccp^{\nIl}) = \word-<\xi>(\ccp^{\nIl}) = 0\) for \(0 \le \nIl < \nIh\).
Since \(\ithComp{\ccy}<\eta> = \ithComp{\ccy}<\xi> = \word-<\xi>(\ccp^{\nIh}) - 1 \neq 0\),
it follows that \(\mDist{\bdy}{\word-(\cca)}\ge \ccd\) for \(0 \le \cca < \ccp^{\nIh}\), a contradiction.
 
\end{mycase} 

\begin{mycase}[\(\cci = \xi\)]
 \comment{Case. [\(\cci = \xi\)]}
\label{sec:org3b3317d}
Suppose that \(\xi > \eta\).
Let \(\bdy = \word-(\ccp^{\nIh}) - \bdf_\xi + \bdf_\eta = \word(\ccp^{\nIh}) - \bde_\xi\);
then \(\bdy <_{\ccF} \word-(\ccp^{\nIh})\).
Since \(\word-<\xi>(\ccp^{\nIl}) = 0\) for \(0 \le \nIl < \nIh\),
it follows that
\(\mDist{\bdy}{\word-(\cca)} = \mDist{\word-(\ccp^{\nIh})}{\word-(\cca)} \ge \ccd\)
for \(0 \le \cca < \nIh\), which is impossible.

Suppose that \(\xi < \eta\).
Let
\(\bdy = \word-(\ccp^{\nIh}) - \bdf_\xi - \word-[\eta](\ccp^{\nIh})\bdf_\eta\);
then \(\bdy <_{\ccF} \word-(\ccp^{\nIh})\) and
\(\ithComp{\ccy}<\eta> = \ithComp{\ccy}<\xi> = \word-<\xi>(\ccp^{\nIh}) - 1 = 
\mvecA<\nIh> - 1 \neq 0\).
Since \(\xi \not \in \msupp|\ccE|(\mLex-{\nIh - 1})\) and \(\xi < \eta\),
it follows Lemma \ref{orgedc6ece} that \(\eta \not \in \msupp|\ccE|(\mLex-{\nIh - 1})\).
Therefore \(\mDist{\bdy}{\word-(\cca)} \ge \ccd\) for \(0 \le \cca < \nIh\), a contradiction.\qedhere
 
\end{mycase} 
\end{proof}

\comment{connect}
\label{sec:orgd98ef07}
For \(\mvecA \in \FF_{\ccp}^{\marray{\nIk - 1}}\) and \(\alpha \in \FF_\ccp\),
let 
\begin{equation}
\mvecA[\alpha] = \begin{bmatrix}\mvecA \\ \alpha\end{bmatrix} \in \FF_{\ccp}^{\marray{\nIk}}.
\end{equation}
For example,
if \(\mvecA = \mVec{0 \\ 2}\) and \(\alpha = 1\), then \(\mvecA[\alpha] = \mVec{0 \\ 2 \\ 1}\).
Similarly, for a \(\marray{\nIk - 1} \times \NN\) matrix \(\mnGen\) and \(\bdy \in \FF_\ccp^{\NN}\),
let 
\begin{equation}
\mnGen[\bdy] = \begin{bmatrix} \mnGen \\ \bdy\end{bmatrix} \in \FF_{\ccp}^{\marray{\nIk} \times \NN}.
\end{equation}

\begin{lemma}[Order lemma]
 \comment{Lem. [Order lemma]}
\label{sec:org83d1b27}
\label{orgfc26eca}
Let \(\ccM \in \Cdn[\mGen{\nIk}](\mvecA[\alpha]) \setminus \set{\xi}\) and
\(\ccN \in \Cdn[\mGen{\nIk}](\mvecA[\beta]) \setminus \set{\xi}\),
where \(\mvecA \in \FF_\ccp^{\marray{\nIk - 1}}\), and  \(\alpha\), \(\beta \in \FF_\ccp\).
If \(\alpha < \beta\) and
\(\word-[\ccN](\ccp^{\nIk}) = \word-<\ccN>(\ccp^{\nIk})\) \((= \beta)\), then
\(\ccM > \ccN\).
 
\end{lemma}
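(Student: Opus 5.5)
The plan is to argue by contradiction using the minimality of \(\word-(\ccp^{\nIk})\) given by Lemma~\ref{org6f41f57}(2). I will exchange the entries of \(\word-(\ccp^{\nIk})\) in the two columns \(\ccM\) and \(\ccN\). This produces a word that keeps distance at least \(\ccd\) from all earlier codewords but is strictly smaller with respect to \(<_{\ccF}\).

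Throughout, I read the columns of \(\mGen{\nIk}\) as being built from the rows \(\word-(\ccp^{\nIi})\), \(\nIi \in \marray{\nIk}\). This is the reading under which the hypothesis \(\word-<\ccN>(\ccp^{\nIk}) = \beta\) makes sense. In the situation where the lemma will be applied, \(\word(\ccp^\nIi) = \word-(\ccp^\nIi)\) for \(\nIi \le \nIk\), so the two readings agree. If this identification fails, the argument needs adjusting.

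Suppose, for contradiction, that \(\ccM < \ccN\). Note \(\ccM \neq \ccN\), since \(\alpha \neq \beta\) means the two columns differ. Define
\[
 \bdy = \word-(\ccp^{\nIk}) + (\alpha - \beta)\bde_\ccN + (\beta - \alpha)\bde_\ccM .
\]
So \(\bdy\) agrees with \(\word-(\ccp^{\nIk})\) except that its \(\ccM\)- and \(\ccN\)-coordinates (in \(\ccE\)) are swapped, from \(\alpha, \beta\) to \(\beta, \alpha\).

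The first step is to check \(\mDist{\bdy}{\word-(\cca)} \ge \ccd\) for \(0 \le \cca < \ccp^{\nIk}\). By definition of \(\word-\), each such \(\word-(\cca)\) equals \(\sum_{\nIi < \nIk} \cca_\nIi \word-(\ccp^\nIi)\). Its \(\ccM\)- and \(\ccN\)-coordinates are both \(\innerproduct{\cca}{\mvecA}\), because the first \(\nIk\) entries of the columns \(\ccM\) and \(\ccN\) both equal \(\mvecA\). Swapping two coordinates of \(\word-(\ccp^{\nIk})\) in which the comparison word has equal entries does not change the number of disagreements. Hence \(\mDist{\bdy}{\word-(\cca)} = \mDist{\word-(\ccp^{\nIk})}{\word-(\cca)} \ge \ccd\), and Lemma~\ref{org6f41f57}(2) gives \(\bdy \ge_{\ccF} \word-(\ccp^{\nIk})\).

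The second step, which is the only delicate point, is to compare \(\bdy\) and \(\word-(\ccp^{\nIk})\) in the \(\ccF\)-coordinates, where the hypotheses \(\ccM, \ccN \neq \xi\) are used. For \(\nIi \neq \xi\) we have \(\bde_\nIi = \bdf_\nIi\), so
\[
\bdy = \word-(\ccp^{\nIk}) + (\alpha-\beta)\bdf_\ccN + (\beta-\alpha)\bdf_\ccM .
\]
Consequently the \(\ccF\)-coordinates of \(\bdy\) and \(\word-(\ccp^{\nIk})\) differ only at \(\ccM\) and \(\ccN\). Since \(\ccM < \ccN\), the coordinate \(\ccN\) is the highest place where they differ. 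There the \(\ccF\)-coordinate of \(\bdy\) is \(\word-[\ccN](\ccp^{\nIk}) + \alpha - \beta = \beta + \alpha - \beta = \alpha\), using the hypothesis \(\word-[\ccN](\ccp^{\nIk}) = \beta\); this is the only place that hypothesis is needed. Because \(\alpha < \beta\), we get \(\bdy <_{\ccF} \word-(\ccp^{\nIk})\), contradicting the first step. Therefore \(\ccM > \ccN\).
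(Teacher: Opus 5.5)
Your proof is correct and is essentially the paper's argument. The paper uses the same swapped word \(\bdy = \word-(\ccp^{\nIk}) + (\beta - \alpha)\bdf_\ccM + (\alpha - \beta)\bdf_\ccN\), notes that \(\bdy <_{\ccF} \word-(\ccp^{\nIk})\) when \(\ccM < \ccN\), and obtains \(\mDist{\bdy}{\word-(\cca)} = \mDist{\word-(\ccp^{\nIk})}{\word-(\cca)}\) from Lemma~\ref{org4a0db5b} because the column counts are unchanged; this is the same fact you verify coordinatewise, and your reading of the columns as those of \(\mGen-{\nIk}\) is the one the paper's proof implicitly uses.
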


\begin{proof}
 \comment{Proof.}
\label{sec:org051f218}
Let
\begin{align*}
 \bdy &= \word-(\ccp^{\nIk}) + (\beta - \alpha) \bdf_\ccM + (\alpha - \beta) \bdf_\ccN\\
 &= \word-(\ccp^{\nIk}) + (\beta - \alpha) \bde_\ccM + (\alpha - \beta) \bde_\ccN.
\end{align*}
Note that if \(\ccM < \ccN\), then \(\bdy <_{\ccF} \word-(\ccp^{\nIk})\)
since \(\ithComp{\ccy}[\ccN] = \alpha < \beta = \word-[\ccn](\ccp^{\nIk})\).
By the definition of \(\bdy\),
\[
\Cdn[\mGen-{\nIk - 1}[\bdy]](\mvecA[\gamma]) = 
\begin{cases}
\Cdn[\mGen-{\nIk}](\mvecA[\alpha]) \cup \set{\ccN} \setminus \set{\ccM} & \tif \gamma = \alpha\\
\Cdn[\mGen-{\nIk}](\mvecA[\beta]) \cup \set{\ccM} \setminus \set{\ccN} & \tif \gamma = \beta\\
\Cdn[\mGen-{\nIk}](\mvecA[\gamma]) & \totherwise.
\end{cases}
\]
In particular, \(\cdn[\mGen-{\nIk - 1}[\bdy]](\mvecA[\gamma]) = \cdn[\mGen-{\nIk}](\mvecA[\gamma])\).
It follows from Lemma \ref{org4a0db5b} that 
\(\mDist{\bdy}{\word-(\cca)} = \mDist{\word-(\ccp^{\nIk})}{\word-(\cca)} \ge \ccd\)
for \(0 \le \cca < \ccp^{\nIk}\). Therefore \(\ccM > \ccN\).
\end{proof}

\comment{connect}
\label{sec:org4820bc3}
As we have seen in the proof of Lemma \ref{orgfc26eca},
the distance \(\ccd(\bdy, \word*(\cca))\)
is determined by \(\Cdn[\mGen-{\nIk - 1}[\bdy]](\mvecA[\gamma])\).
The following lemma ensures that there exists a suitable \(\bdy\).

\begin{lemma}
 \comment{Lem.}
\label{sec:orgc37c30b}
\label{org2f35cc7}
Let \(\mnGen\) be a \(\marray{\nIk} \times \NN\) matrix
and \(\mvecA \in \FF_{\ccp}^{\marray{\nIk - 1}}\).
Let \(\ccm_0, \ccm_1, \dotsc, \ccm_{\ccp - 1}\) be integers such that \(\sum_{\alpha \in \FF_\ccp} \ccm_\alpha = \cdn[\mnGen](\mvecA)\) and
\(\ccm_\alpha \ge 2\).
Then, for \(\bdy \in \FF_p^{\NN}\), there exists \(\tilde{\bdy} \in \FF_p^{\NN}\) satisfying the following two conditions:

\begin{enumerate}
\item \(\cdn[\mnGen[\tilde{\bdy}]](\mvecA[\alpha]) = \ccm_{\alpha}\) for \(\alpha \in \FF_p\).
\item \(\msupp|\ccE|(\bdy - \tilde{\bdy}) = \msupp|\ccF|(\bdy - \tilde{\bdy}) \subseteq \Cdn[\mnGen](\mvecA) \setminus \Theta\). 
In particular, \(\Cdn[\mnGen[\tilde{\bdy}]](\mvecB[\alpha]) = \Cdn[\mnGen[\bdy]](\mvecB[\alpha])\) for \(\alpha \in \FF_p\) and \(\mvecB \in \FF_p^{\marray{\nIk - 1}} \setminus \set{\mvecA}\).
\end{enumerate}
 
\end{lemma}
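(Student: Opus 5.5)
The plan is to build \(\tilde{\bdy}\) by changing \(\bdy\) only in its standard coordinates indexed by \(\Cdn[\mnGen](\mvecA) \setminus \Theta\). On those coordinates the values will be chosen so that the last row produces exactly \(\ccm_\alpha\) columns equal to \(\mvecA[\alpha]\), for every \(\alpha \in \FF_p\). Everything except one small counting check is bookkeeping.

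\emph{Step 1 (fixed coordinates).} Let \(C = \Cdn[\mnGen](\mvecA)\) and \(T = C \cap \Theta\). Then \(\size{T} \le 2\), and \(T = \emptyset\) when \(\ccF = \ccE = \ccF(\infty,\infty)\). For \(\cci \in T\) we keep \(\tilde{\ccy}<\cci> = \ithComp{\ccy}<\cci>\). For each \(\alpha\), let \(t_\alpha = \size{\set{\cci \in T : \ithComp{\ccy}<\cci> = \alpha}}\). Since \(t_\alpha \le \size{T} \le 2 \le \ccm_\alpha\), each \(r_\alpha := \ccm_\alpha - t_\alpha\) is non-negative. Moreover
\[
\textstyle\sum_\alpha r_\alpha = \cdn[\mnGen](\mvecA) - \size{T} = \size{C \setminus \Theta}.
\]
This is exactly where the hypothesis \(\ccm_\alpha \ge 2\) is used, and it is the only real obstacle.

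\emph{Step 2 (reassignment).} Partition \(C \setminus \Theta\) arbitrarily into sets \(R_\alpha\) with \(\size{R_\alpha} = r_\alpha\). The counts allow this because \(C\) is finite, as \(\cdn[\mnGen](\mvecA)\) is a finite sum. Set \(\tilde{\ccy}<\cci> = \alpha\) for \(\cci \in R_\alpha\), and \(\tilde{\ccy}<\cci> = \ithComp{\ccy}<\cci>\) for \(\cci \notin C \setminus \Theta\). Then \(\tilde{\bdy} \in \FF_p^\NN\), and the columns of \(\mnGen[\tilde{\bdy}]\) equal to \(\mvecA[\alpha]\) are precisely those indexed by \(R_\alpha\) together with the \(t_\alpha\) indices in \(T\). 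This gives (1).

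\emph{Step 3 (support condition).} The difference \(\bdx = \bdy - \tilde{\bdy}\) has \(\msupp|\ccE|(\bdx) \subseteq C \setminus \Theta\). Writing \(\bdx\) in the basis \(\ccF(\xi,\eta)\), we have \(\ithComp{x}<\cci> = \ithComp{x}[\cci]\) for \(\cci \ne \eta\), and \(\ithComp{x}<\eta> = \ithComp{x}[\eta] + \ithComp{x}[\xi]\). Since \(\ithComp{x}<\xi> = \ithComp{x}<\eta> = 0\), this forces \(\ithComp{x}[\xi] = \ithComp{x}[\eta] = 0\), so the \(\ccF\)-coordinates of \(\bdx\) coincide with its \(\ccE\)-coordinates and \(\msupp|\ccF|(\bdx) = \msupp|\ccE|(\bdx)\).

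\emph{Step 4 (the ``in particular'').} Columns outside \(C\) are untouched, so for \(\mvecB \ne \mvecA\) we get \(\Cdn[\mnGen[\tilde{\bdy}]](\mvecB[\alpha]) = \Cdn[\mnGen[\bdy]](\mvecB[\alpha])\). This completes (2).
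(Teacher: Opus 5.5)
Your proof is correct, but it is organized differently from the paper's. The paper inducts on the discrepancy \(D = \sum_{\alpha} |l_\alpha - m_\alpha|\), where \(l_\alpha\) is the number of indices in \(C\) at which \(\bdy\) takes the value \(\alpha\). It picks an over-full value \(\beta\) (\(l_\beta > m_\beta\)) and an under-full value \(\gamma\), and uses \(l_\beta \ge 3 > |\Theta|\) to find an index \(N \in C \setminus \Theta\) where \(\bdy\) equals \(\beta\). It then changes that single coordinate to \(\gamma\) and recurses. It also deduces the \(E\)-support from the \(F\)-support, whereas you go the other way.

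You instead freeze the at most two indices of \(C \cap \Theta\) and reassign all of \(C \setminus \Theta\) in one stroke. The hypothesis \(m_\alpha \ge 2\) then enters only as \(m_\alpha \ge t_\alpha\), not as the existence of a movable index outside \(\Theta\). Your version is shorter and makes plain that \(m_\alpha \ge t_\alpha\) is all that is really needed.

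What the exchange argument buys is locality. Indices only ever leave over-full classes and enter under-full ones, so each coordinate changes at most once and \(\tilde{\bdy} - \bdy\) is supported on the over-full classes. This is not part of the lemma as stated. However, the paper invokes the lemma with exactly this stronger support condition in Section~\ref{org2aaef62} (case \(p = 3\)), where the support of \(\tilde{\bdy} - \bdy\) is asserted to lie in the single over-full class; that is what makes the comparison \(\bdz <_F \bdy\) work there.

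An arbitrary partition of \(C \setminus \Theta\) does not give this property. You can recover it by choosing each \(R_\alpha\) to contain, or be contained in, the set of indices of \(C \setminus \Theta\) where \(\bdy\) already equals \(\alpha\).
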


\begin{proof}
 \comment{Proof.}
\label{sec:org3a80700}
Let \(\ccl_\alpha = \cdn[\mnGen[\bdy]](\mvecA[\alpha])\)
and \(\ccD = \sum_{\alpha \in \ccF_p} \abs{\ccl_{\alpha} - \ccm_{\alpha}}\).
We show the lemma by induction on \(\ccD\).
If \(\ccD = 0\), then \(\bdy\) itself satisfies the two conditions.
Suppose that \(\ccD \ge 1\).
Since \(\sum_{\alpha \in \FF_\ccp} \ccl_\alpha = \cdn[\mnGen](\mvecA) = \sum_{\alpha \in \FF_\ccp} \ccm_\alpha\),
it follows that there exist \(\beta\), \(\gamma \in \FF_\ccp\) such that
\(\ccl_\beta > \ccm_{\beta}\) and \(\ccl_\gamma < \ccm_{\gamma}\).
Since \(\ccl_\beta > \ccm_{\beta} \ge 2\), the set \(\Cdn[\mnGen[\bdy]](\mvecA[\beta]) \setminus \Theta\) 
contains an element \(\ccN\).
Let \(\bdz = \bdy + (\gamma - \beta) \bdf_{\ccN} = \bdy + (\gamma - \beta) \bde_{\ccN}\)
and \(\ccl'_\alpha = \cdn[\mnGen[\bdz]](\mvecA[\alpha])\). We see that 
\[
 \ccl'_\alpha = \begin{cases}
 \ccl_\beta - 1 & \tif \alpha = \beta,\\
 \ccl_\gamma + 1 & \tif \alpha = \gamma,\\
 \ccl_\alpha & \totherwise.\\
 \end{cases}
\]
By the induction hypothesis, there exists \(\tilde{\bdz} \in \FF_{\ccp}^{\NN}\) satisfying (1) and (2) for \(\bdz\), that is,
\[
 \cdn[\mnGen[\tilde{\bdz}]](\mvecA[\alpha]) = \ccm_{\alpha} \tfor \alpha \in \FF_p
\]
and
\[
\supp_{\ccE} (\bdz - \tilde{\bdz}) = \supp_{\ccF} (\bdz - \tilde{\bdz}) \subseteq \Cdn[\mnGen](\mvecA) \setminus \Theta.
\]
Since
\[
 \supp_{\ccF} (\bdy - \tilde{\bdz}) = \supp_{\ccF} (\bdz + (\alpha - \beta) \bdf_{\ccN} - \tilde{\bdz}) \subseteq \supp_{\ccF} (\bdz - \tilde{\bdz}) \cup \set{\cci} \subseteq \Cdn[\mnGen](\mvecA) \setminus \Theta,
\]
it follows that \(\supp_{\ccF} (\bdy - \tilde{\bdz}) = \supp_{\ccE} (\bdy - \tilde{\bdz})\),
and that \(\tilde{\bdz}\) satisfies (1) and (2) also for \(\bdy\).
\end{proof}

\begin{example}
 \comment{Exm.}
\label{sec:org752686d}
Let \(\ccp = 3\), \(\ccd = 7\), and \(\ccF = \ccF(1, 3)\).
Then
\[\word(1) = \mWord{1 1 1 1 1 1 1 0 0 \cdots}
\]
and \(\cdn[\mGen{0}](1) = 7\).
Consider \(\ccm_0 = 3\), \(\ccm_1 = 2\), \(\ccm_2 = 2\),
and \(\bdy = \mWord{2 2 2 2 2 1 1 0 0 \cdots}\).
Since \(\mGen{0}[\bdy] = \begin{bNiceMatrix}[small, light-syntax, first-row]0 1 2 3 4 5 6 7 8 \cdots; 1 1 1 1 1 1 1 0 0 \cdots; 2 2 2 2 2 1 1 0 0 \cdots\end{bNiceMatrix}\), 
we see that
\[\Cdn[\mGen{0}[\bdy]](\mVec{1 \\ 0}) = \emptyset,\ \Cdn[\mGen{0}[\bdy]](\mVec{1 \\ 1}) = \set{5, 6},\ \Cdn[\mGen{0}[\bdy]](\mVec{1 \\ 2}) = \set{0,1,2,3,4}.
\]
Let \(\tilde{\bdy} = \mWord{0 2 0 2 0 1 1 0 0 \cdots}\). Then 
\(\mGen{0}[\tilde{\bdy}] = \begin{bNiceMatrix}[small, light-syntax, first-row]0 1 2 3 4 5 6 7 8 \cdots; 1 1 1 1 1 1 1 0 0 \cdots; 0 2 0 2 0 1 1 0 0 \cdots\end{bNiceMatrix}\), so
\[\Cdn[\mGen{0}[\tilde{\bdy}]](\mVec{1 \\ 0}) = \set{0, 2, 4},\
\Cdn[\mGen{0}[\tilde{\bdy}]](\mVec{1 \\ 1}) = \set{5, 6},\
\Cdn[\mGen{0}[\tilde{\bdy}]](\mVec{1 \\ 2}) = \set{1, 4}.
\]
Moreover \(\bdy - \tilde{\bdy} = 2 \bde_0 + 2 \bde_2 + 2\bde_4 = 2 \bdf_0 + 2 \bdf_2 + 2 \bdf_4\).
Thus \(\tilde{\bdy}\) satisfies the two conditions (1) and (2).
 
\end{example}

\begin{proof}[Proof of Theorem \ref{org56f1397}]
 \comment{Proof. [Proof of Theorem \ref{org56f1397}]}
\label{sec:org31a644b}
Let \(-1 \le \nIh < \nIk\).
We prove the theorem by induction on \(\nIh\).
Since the length of \(\mRes(\mLex-{\nIh})\) equals
\(\sum_{\mvecA \in \mVct{\nIh}} \cdn[\mGen-{\nIh}](\mvecA)\)
and \(\ccg_\ccp(\nIh, \ccd) = \ccd' (\ccp^{\nIk - 1} + \ccp^{\nIk - 2} + \cdots + \ccp^{\nIk - \nIh - 1})\),
it suffices to show that
\begin{align*}
\sum_{\mvecA \in \mVct{\nIh}} \cdn[\mGen-{\nIh}](\mvecA)
 = \ccd' (\ccp^{\nIk - 1} + \ccp^{\nIk - 2} + \cdots + \ccp^{\nIk - \nIh - 1}).
\end{align*}
If \(\nIh = -1\), then both sides equal zero.
Suppose that \(\nIh \ge 0\).
By the induction hypothesis,
\begin{align*}
\sum_{\mvecA \in \mVct{\nIh - 1}} \cdn[\mGen-{\nIh - 1}](\mvecA)
 = \ccd' (\ccp^{\nIk - 1} + \ccp^{\nIk - 2} + \cdots + \ccp^{\nIk - \nIh}).
\end{align*}
Hence
\begin{align*}
\sum_{\mvecA \in \mVct{\nIh}} \cdn[\mGen-{\nIh}](\mvecA)
&= \sum_{\mvecA \in \mVct{\nIh - 1}} \cdn[\mGen-{\nIh - 1}](\mvecA)
 + \cdn[\mGen-{\nIh}](\mEvec{\nIh}) \\
 &= \ccd' (\ccp^{\nIk - 1} + \ccp^{\nIk - 2} + \cdots + \ccp^{\nIk - \nIh})
 + \cdn[\mGen-{\nIh}](\mEvec{\nIh}).
\end{align*}
Assume that \(\cdn[\mGen-{\nIh}](\mEvec{\nIh}) > \ccd' \ccp^{\nIk - \nIh - 1}\).
Since \(\ccd' \ge 2\), 
there exists \(\ccM \in \Cdn[\mGen-{\nIh}](\mEvec{\nIh}) \setminus \Theta\).
Let
\[
 \bdy = \word-(\ccp^{\nIh}) - \bdf_\ccM = \word-(\ccp^{\nIh}) - \bde_\ccM.
\]
Then
\(\cdn[\mGen-{\nIh - 1}[\bdy]](\mEvec{\nIh}) \ge \ccd' \ccp^{\nIk - \nIh - 1}\) and
\(\bdy <_{\ccF} \word-(\ccp^{\nIh})\) because \(\word-[\ccM](\ccp^{\nIh}) = \word-<\ccM>(\ccp^{\nIh}) = 1\).
Since \(\mRes(\mLex-{\nIh - 1})\) meets the Griesmer bound,
it follows from Lemma \ref{orgf9637b2} and Corollary \ref{org09650b1} that
\[
\cdn[\mGen-{\nIh - 1}](\mvecA) = \frac{\ccd' \ccp^{\nIk - 1}}{\ccp^{\nIh - 1}} = \ccd' \ccp^{\nIk - \nIh} \tfor \mvecA \in \mVct{\nIh - 1}.
\]
Note that
\[
 \ccd' \ccp^{\nIk - \nIh} = \underbrace{\ccd' \ccp^{\nIk - \nIh - 1} + \cdots + \ccd' \ccp^{\nIk - \nIh - 1}}_{\ccp}.
\]
Lemma \ref{org2f35cc7} shows that there exists \(\tilde{\bdy}\) such that
\begin{equation}
\label{equ:sec4-proof-1}
\cdn[\mGen-{\nIh - 1}[\tilde{\bdy}]](\mvecA[\alpha]) = \ccd' \ccp^{\nIk - \nIh - 1} \tfor \mvecA \in \mVct{\nIh - 1},\ \alpha \in \FF_\ccp
\end{equation}
and
\[\msupp|\ccF|(\bdy - \tilde{\bdy}) = \msupp|\ccE|(\bdy - \tilde{\bdy}) \subseteq \bigcup_{\mvecA \in \mVct{\nIh - 1}} \Cdn[\mGen-{\nIh - 1}](\mvecA) \setminus \Theta.\]
Since \(\cdn[\mGen-{\nIh - 1}[\tilde{\bdy}]](\mEvec{\nIh}) = \cdn[\mGen-{\nIh - 1}[\bdy]](\mEvec{\nIh}) \ge \ccd' \ccp^{\nIk - \nIh - 1}\)
and \(\cdn[\mGen-{\nIh - 1}[\tilde{\bdy}]](\mvecB) = 0\) for \(\mvecB \not \in \mVct{\nIh}\),
it follows from \eqref{equ:sec4-proof-1} that
\(\ccd(\tilde{\bdy}, \word-(\cca)) \ge \ccd\) for \(0 \le \cca < \ccp^{\nIh}\),
so \(\tilde{\bdy} \ge_{\ccF} \word-(\ccp^{\nIh})\).
From Lemma \ref{orgfc26eca}, if \(\ccN \in \Cdn[\mGen-{\nIh - 1}](\mvecA) \setminus \Theta\), then
\(\ccN < \ccM\). Thus \(\max \msupp|\ccF|(\tilde{\bdy} - \word-(\ccp^{\nIh})) = \ccM\), and hence \(\tilde{\bdy} <_{\ccF} \word-(\ccp^{\nIh})\),
a contradiction.
\end{proof}

\section{Ternary linear lexicographic codes}
\label{sec:org5aab263}
\label{orgd92423c}

In this section, we show that \(\mLex{\nIk - 1}\) is a linear code when \(\ccp = 3\), \(\ccd = \ccp^{\nIk - 1} \ccd'\), and \(\ccd' \ge 2\).

\subsection{Stronger versions of lemmas in Section \texorpdfstring{\ref{orgb6599eb}}{4}}
\label{sec:org57d50b6}

\begin{lemma}
 \comment{Lem.}
\label{sec:org49c5936}
\label{org45cd32d}
Let \(\nIh \in \NN\).
Suppose that \(\cdn[\mGen-{\nIh}](\mvecA) \ge 2\) for \(\mvecA \in \mVct{\nIh}\).

\begin{enumerate}
\item If \(\word-<\xi>(\ccp^{\nIh}) \neq 0\), then \(\word-<\eta>(\ccp^{\nIh}) \neq 0\).
\item If \(\xi < \eta\), then \(\word-<\xi>(\ccp^{\nIh}) = \word-<\eta>(\ccp^{\nIh})\); in particular, \(\word-[\eta](\ccp^{\nIh}) = 0\).
\item If \(\xi > \eta\) and \(\word-<\xi>(\ccp^{\nIh}) \neq \word-<\eta>(\ccp^{\nIh})\), then \(\word-<\xi>(\ccp^{\nIh}) < \word-<\eta>(\ccp^{\nIh})\); 
in particular, \(\word-[\eta](\ccp^{\nIh}) \neq \ccp - 1\) when \(\word-<\xi>(\ccp^{\nIh}) \neq 0\).
\end{enumerate}
 
\end{lemma}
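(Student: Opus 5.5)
The plan is to use the same kind of ``local modification'' argument as in Lemma~\ref{orgedc6ece} and the Order lemma (Lemma~\ref{orgfc26eca}), made sharper by the hypothesis \(\cdn[\mGen-{\nIh}](\mvecA) \ge 2\).

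The basic tool is the following observation. Write \(\bdw = \word-(\ccp^{\nIh})\). Suppose \(\bdy\) is obtained from \(\bdw\) by \emph{swapping} the \(\ccE\)-values of \(\bdw\) at two positions \(\cci, \ccj\) whose columns in \(\mGen-{\nIh - 1}\) coincide. Then all the numbers \(\cdn[\mGen-{\nIh - 1}[\bdy]](\mvecA[\gamma])\) equal those for \(\mGen-{\nIh}\). So by Lemma~\ref{org4a0db5b}, \(\mDist{\bdy}{\word-(\cca)} = \mDist{\bdw}{\word-(\cca)} \ge \ccd\) for \(\cca < \ccp^{\nIh}\), and Lemma~\ref{org6f41f57} forces \(\bdy \ge_{\ccF} \bdw\). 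The hypothesis \(\cdn \ge 2\) is what lets us always choose the partner \(\ccj\) outside \(\Theta\). By Lemma~\ref{orgf9637b2}, the relevant columns lie in \(\mVct*{\nIh}\), so any column class sharing its top part with the column at \(\xi\) or \(\eta\) has at least two members, hence one member \(\ccN \notin \Theta\). For such \(\ccN\), \(\bdf_\ccN = \bde_\ccN\), so \(\ccE\)- and \(\ccF\)-coordinates agree there. Each contradiction then comes down to showing that the top \(\ccF\)-index where \(\bdy\) and \(\bdw\) differ lies in \(\Theta\), and that \(\bdy\) is smaller there. I will use throughout the relations \(\word-<\eta>(\ccp^{\nIh}) = \word-[\eta](\ccp^{\nIh}) + \word-[\xi](\ccp^{\nIh})\) and \(\word-<\xi>(\ccp^{\nIh}) = \word-[\xi](\ccp^{\nIh})\).

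For (1), I argue by contradiction. Suppose \(\word-<\xi>(\ccp^{\nIh}) = \alpha \neq 0\) while \(\word-<\eta>(\ccp^{\nIh}) = 0\). Then \(\word-[\eta](\ccp^{\nIh}) = -\alpha \neq 0\), so the \(\ccF\)-coordinate at \(\eta\) is nonzero. Let \(\mvecB\) be the column of \(\eta\) in \(\mGen-{\nIh - 1}\), and pick \(\ccN \notin \Theta\) whose column in \(\mGen-{\nIh}\) is \(\mvecB[\alpha]\) (or a suitable nonzero top entry, using \(\cdn \ge 2\)). I then swap the \(\eta\)- and \(\ccN\)-values. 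The \(\ccF\)-coordinate at \(\eta\) then becomes \(\alpha - \alpha = 0\). Depending on whether \(\eta > \ccN\) or not, I either get \(\bdy <_{\ccF} \bdw\) immediately, or I need the Order lemma to compare \(\ccN\) with \(\eta\). I expect to handle this as in the proof of Lemma~\ref{orgedc6ece}(1), which is the special case where no column \(\mvecB[\ast]\) is available.

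For (2), with \(\xi < \eta\), suppose \(\word-<\xi>(\ccp^{\nIh}) \neq \word-<\eta>(\ccp^{\nIh})\), i.e.\ \(\word-[\eta](\ccp^{\nIh}) \neq 0\). Let \(\mvecB\) be the column of \(\eta\) in \(\mGen-{\nIh - 1}\), and take \(\ccN \notin \Theta\) with the same \(\mvecB\) whose last entry matches what \(\eta\) would need. Change the \(\ccE\)-value at \(\eta\) to \(\word-<\xi>(\ccp^{\nIh})\), and compensate at \(\ccN\) (and if necessary at \(\xi\) with a partner of \(\xi\)'s column). This makes the \(\ccF\)-coordinate at \(\eta\) zero, strictly smaller than before. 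The Order lemma shows that the compensating indices \(\ccN\) are not above \(\eta\) in the relevant sense, so the top changed \(\ccF\)-index is \(\eta\) and \(\bdy <_{\ccF} \bdw\), a contradiction.

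For (3), with \(\xi > \eta\), suppose \(\word-<\xi>(\ccp^{\nIh}) > \word-<\eta>(\ccp^{\nIh})\). Swap the \(\xi\)-value down to \(\word-<\eta>(\ccp^{\nIh})\), compensating at some \(\ccN \notin \Theta\) in \(\xi\)'s column class; now the top changed \(\ccF\)-index is \(\xi\), and there \(\bdy\) decreases. The ``in particular'' clause then follows by writing \(\word-[\eta](\ccp^{\nIh}) = \word-<\eta>(\ccp^{\nIh}) - \word-<\xi>(\ccp^{\nIh})\) with \(0 < \word-<\xi>(\ccp^{\nIh}) < \word-<\eta>(\ccp^{\nIh})\).

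The main obstacle is the bookkeeping in each case. One must choose the compensating indices so that the column counts are exactly preserved, check that \(\xi\) and \(\eta\) may sit in different column classes, and above all verify that the maximal index of \(\msupp|\ccF|(\bdy - \bdw)\) is the intended element of \(\Theta\). Where I cannot read this off directly, I would first try the Order lemma (Lemma~\ref{orgfc26eca}) applied to pairs with equal upper part, as in the final step of the proof of Theorem~\ref{org56f1397}.
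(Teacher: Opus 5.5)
Your part (1) is sound, and it takes a different route from the paper, which deduces (1) from (2) and (3). Write $w_{\langle i\rangle}$ and $w_{[i]}$ for the $E$- and $F$-coordinates of $\mathbf w$. If $w_{\langle\xi\rangle}=\alpha\neq 0=w_{\langle\eta\rangle}$, swap the $E$-values at $\eta$ and at some $N\notin\Theta$ that has the same upper column as $\eta$ and last entry $\alpha$. This preserves all column counts. It sends the $F$-coordinate at $\eta$ (from $-\alpha$) and at $N$ (from $\alpha$) both to $0$, so $\mathbf y<_F\mathbf w$ whatever the order of $N$ and $\eta$. The case where $\eta$ has zero upper column is settled by Lemma~\ref{orgedc6ece}(1). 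One correction: ``at least two members, hence one outside $\Theta$'' fails when a class is exactly $\{\xi,\eta\}$, so you must check that each class you use meets $\Theta$ at most once.

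Part (2) has a genuine gap. Put $\alpha=w_{\langle\xi\rangle}\neq\beta=w_{\langle\eta\rangle}$. Your swap of $\eta$ with $N\notin\Theta$ (upper column of $\eta$, last entry $\alpha$) kills the $F$-coordinate at $\eta$. But when $\alpha<\beta$ it raises the $F$-coordinate at $N$ from $\alpha$ to $\beta$. The Order lemma points the opposite way from your claim: within a common upper column, indices with the smaller last entry lie \emph{above} those with the larger one. In fact the swap itself forces $N>\eta$ and $\beta>\alpha$, so there is no contradiction. What this swap proves, for either order of $\xi$ and $\eta$, is $\alpha<\beta$; that is exactly the paper's proof of (3).

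For (2) you need a move that exploits $\xi<\eta$ by \emph{raising} the value at $\xi$. First show, by induction on $h$, that $\xi$ and $\eta$ have the same column $\mathbf a$ in the level-$(h-1)$ matrix. Your plan leaves this open, and without it no count-preserving move involving both positions exists. Then take $\gamma=\max\{\alpha,\beta\}$, $\epsilon=\min\{\alpha,\beta\}$, and $N\notin\Theta$ with column $\mathbf a$ followed by $\gamma$; that class contains exactly one of $\xi,\eta$, so such $N$ exists. Set
\[
\mathbf y=\mathbf w+(\gamma-\alpha)\mathbf f_\xi+(\alpha-\beta)\mathbf f_\eta+(\epsilon-\gamma)\mathbf f_N .
\]
The $E$-values at $\xi,\eta,N$ become $\gamma,\gamma,\epsilon$, the same multiset as $\alpha,\beta,\gamma$, so all distances are preserved. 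The $F$-coordinates drop at $\eta$ and $N$ and can rise only at $\xi<\eta$, hence $\mathbf y<_F\mathbf w$, a contradiction.

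Part (3) also has a gap. Lowering the value at $\xi$ from $\alpha$ to $\beta<\alpha$ forces the compensating $N\notin\Theta$ (upper column of $\xi$, last entry $\beta$) to rise from $\beta$ to $\alpha$. You assert that the top changed index is $\xi$, but nothing gives $N<\xi$. Your swap only shows that every such $N$ satisfies $N>\xi$, which is no contradiction. The fix is to modify at $\eta$ rather than at $\xi$, i.e.\ to use your (2) swap. That settles (3) once you exclude the case where $\eta$ has zero upper column, using Lemmas~\ref{orgedc6ece} and~\ref{orgf9637b2}.
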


\begin{proof}
 \comment{Proof.}
\label{sec:orgc093882}

\noindent
(1) We first show (1) assuming (2) and (3).
Suppose that \(\word-<\xi>(\ccp^{\nIh}) \neq 0\).
Since \(\word-<\eta>(\ccp^{\nIh}) = \word-[\eta](\ccp^{\nIh}) + \word-[\xi](\ccp^{\nIh})\),
we may assume that \(\word-[\eta](\ccp^{\nIh}) \neq 0\).
It follows from (2) and (3) that \(\xi > \eta\) and \(\word-<\xi>(\ccp^{\nIh}) < \word-<\eta>(\ccp^{\nIh})\).
Thus \(\word-<\eta>(\ccp^{\nIh}) \neq 0\).

\medskip
\noindent
(2) 
Suppose that \(\xi < \eta\).
If \(\xi \not \in \msupp|\ccE|(\mLex-{\nIh})\),
then it follows from Lemma \ref{orgedc6ece} that
\(\eta \not \in \msupp|\ccE|(\mLex-{\nIh})\),
and hence \(\word-<\eta>(\ccp^{\nIh}) = \word-<\xi>(\ccp^{\nIh}) = 0\).

Suppose that \(\xi \in \msupp|\ccE|(\mLex-{\nIh})\).
Lemma \ref{orgedc6ece} yields \(\eta \in \msupp|\ccE|(\mLex-{\nIh})\).
Let \(\mvecA[\alpha] = \mGen-{\nIh}<\xi>\), that is, \(\xi \in \Cdn[\mGen-{\nIh}](\mvecA[\alpha])\).
We show that \(\eta \in \Cdn[\mGen-{\nIh}](\mvecA[\alpha])\) by induction on \(\nIh\).

If \(\nIh = 0\), then 
it follows from Lemma \ref{orgf9637b2} that 
\(\mvecA[\alpha] = 1\) and \(\eta \in \Cdn[\mGen-{0}](1)\).

Suppose that \(\nIh \ge 1\).
By the induction hypothesis, \(\word-<\xi>(\ccp^\nIl) = \word-<\eta>(\ccp^\nIl)\) for \(0 \le \nIl < \nIh\),
and hence \(\mGen-{\nIh - 1}<\eta> = \mGen-{\nIh - 1}<\xi> = \mvecA\). 
Thus \(\eta \in \Cdn[\mGen-{\nIh - 1}](\mvecA)\). 
Let \(\beta = \word-<\eta>(\ccp^\nIh)\), that is,  \(\eta \in \Cdn[\mGen-{\nIh}](\mvecA[\beta])\). 
Assume that \(\beta \neq \alpha\) (\(= \word-<\xi>(\ccp^{\nIh})\)).
Since \(\beta = \alpha + \word-[\eta](\ccp^{\nIh})\),
we see that \(\word-[\eta](\ccp^{\nIh}) \neq 0\).
Let \(\gamma = \max \set{\alpha, \beta}\) and \(\epsilon = \min \set{\alpha, \beta}\).
Since \(\cdn[\mGen-{\nIh}](\mvecA[\gamma]) \ge 2\)
and \(\Size{\Cdn[\mGen-{\nIh}](\mvecA[\gamma]) \cap \Theta} = 1\),
there exists \(\ccN \in \Cdn[\mGen-{\nIh}](\mvecA[\gamma]) \setminus \Theta\).
Let
\[
 \bdy = \word-(\ccp^{\nIh}) + (\gamma - \alpha) \bdf_\xi + (\alpha - \beta) \bdf_\eta + (\epsilon - \gamma) \bdf_\ccN.
\]
\begin{center}
\begin{tabular}{c|ccc}
 & \(\xi\) & \(\eta\) & \(\ccN\)\\
\hline
\(\word-[\cci](\ccp^{\nIh})\) & \(\alpha\) & \(\beta - \alpha\) & \(\gamma\)\\
\(\word-<\cci>(\ccp^{\nIh})\) & \(\alpha\) & \(\beta\) & \(\gamma\)\\
\(\ithComp{\ccy}[\cci]\) & \(\gamma\) & \(0\) & \(\epsilon\)\\
\(\ithComp{\ccy}<\cci>\) & \(\gamma\) & \(\gamma\) & \(\epsilon\)\\
\end{tabular}
\end{center}
\noindent
Since \(\xi < \eta\) and \(\epsilon < \gamma\), it follows that \(\bdy <_{\ccF} \word-(\ccp^\nIh)\).
Because \(\set{\gamma, \gamma, \epsilon}\) and \(\set{\alpha, \beta, \gamma}\) are equal as multisets, 
we see that \(\cdn[\mGen-{\nIh - 1}[\bdy]](\mvecB) = \cdn[\mGen-{\nIh}](\mvecB)\) for \(\mvecB \in \mVct{\nIh}\).
It follows from Lemma \ref{org4a0db5b} that \(\mbigDist{\bdy}{\word-(\cca)} = \mbigDist{\word-(\ccp^{\nIh})}{\word-(\cca)} \ge \ccd\).
This implies that \(\bdy >_{\ccF} \word-(\ccp^{\nIh})\), a contradiction.

\medskip
\noindent
(3) If \(\xi \not \in \msupp|\ccE|(\mLex-{\nIh})\), then the assertion is obvious.
Suppose that \(\xi \in \msupp|\ccE|(\mLex-{\nIh})\).
Lemma \ref{orgedc6ece} shows that \(\eta \in \msupp|\ccE|(\mLex-{\nIh})\).
Let \(\mvecA[\alpha] = \mGen-{\nIh}<\xi>\) and \(\mvecB[\beta] = \mGen-{\nIh}<\eta>\),
that is, \(\xi \in \Cdn[\mGen-{\nIh}](\mvecA[\alpha])\) and \(\eta \in \Cdn[\mGen-{\nIh}](\mvecB[\beta])\).
Assume that \(\mvecB = \allzero\). Lemma \ref{orgedc6ece} implies that \(\mvecA = \allzero\).
Since \(\xi, \eta \in \msupp|\ccE|(\mLex-{\nIh})\),
it follows from Lemma \ref{orgf9637b2} that \(\alpha = \beta = 1\), a contradiction.
Thus \(\mvecB \neq \allzero\), so \(\mvecB \in \mVct{\nIh - 1}\) and \(\mvecB[\alpha] \in \mVct{\nIh}\).
Since \(\cdn[\mGen-{\nIh}](\mvecB[\alpha]) \ge 2\),
there exists \(\ccM \in \Cdn[\mGen-{\nIh}](\mvecB[\alpha]) \setminus \Theta\).
Let
\[
\bdy = \word-(\ccp^{\nIh}) + (\alpha - \beta) \bdf_\eta + (\beta - \alpha) \bdf_\ccM.
\]
\begin{center}
\begin{tabular}{c|ccc}
 & \(\eta\) & \(\ccM\) & \(\xi\)\\
\hline
\(\word-[\cci](\ccp^{\nIh})\) & \(\beta - \alpha\) & \(\alpha\) & \(\alpha\)\\
\(\word-<\cci>(\ccp^{\nIh})\) & \(\beta\) & \(\alpha\) & \(\alpha\)\\
\(\ithComp{\ccy}[\cci]\) & \(0\) & \(\beta\) & \(\alpha\)\\
\(\ithComp{\ccy}<\cci>\) & \(\alpha\) & \(\beta\) & \(\alpha\)\\
\end{tabular}
\end{center}
\noindent
It follows from Lemma \ref{org4a0db5b} that 
\(\mbigDist{\bdy}{\word-(\cca)} = \mbigDist{\word-(\ccp^{\nIh}}{\word-(\cca)} \ge \ccd\)
for \(0 \le \cca < \ccp^{\nIh}\),
and hence  \(\bdy >_{\ccF} \word-(\ccp^{\nIh})\). Therefore \(\ccM > \eta\) and \(\beta > \alpha\).
\end{proof}

\begin{lemma}
 \comment{Lem.}
\label{sec:orge8cce5f}
\label{org06db361}
Let \(\nIh \in \NN\).
Suppose that \(\cdn[\mGen-{\nIh}](\mvecB) \ge 2\) for \(\mvecB \in \mVct{\nIh}\).
Let \(\ccM \in \Cdn[\mGen-{\nIh}](\mvecA[\alpha])\) and
\(\ccN \in \Cdn[\mGen-{\nIh}](\mvecA[\beta]) \setminus \set{\xi}\).
If \(\alpha < \beta\) and \(\word-[\ccN](\ccp^{\nIh}) = \word-<\ccN>(\ccp^{\nIh})\),
then \(\ccM > \ccN\).
 
\end{lemma}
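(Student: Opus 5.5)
The plan is to follow the proof of the Order lemma (Lemma \ref{orgfc26eca}) and use Lemma \ref{org45cd32d} to handle the new possibility \(\ccM = \xi\). In every case I argue by contradiction: assume \(\ccM < \ccN\), build \(\bdy\) with \(\bdy <_{\ccF} \word-(\ccp^{\nIh})\) whose column multiset equals that of \(\mGen-{\nIh}\), and conclude. Equal multisets give \(\cdn[\mGen-{\nIh-1}[\bdy]](\mvecB) = \cdn[\mGen-{\nIh}](\mvecB)\) for all \(\mvecB\). Lemma \ref{org4a0db5b} then gives \(\ccd(\bdy, \word-(\cca)) = \ccd(\word-(\ccp^{\nIh}), \word-(\cca)) \ge \ccd\) for \(\cca < \ccp^{\nIh}\). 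Lemma \ref{org6f41f57} now forces \(\bdy \ge_{\ccF} \word-(\ccp^{\nIh})\), a contradiction.

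\emph{Case \(\ccM \neq \xi\).} Here \(\bde_\ccM = \bdf_\ccM\) and \(\bde_\ccN = \bdf_\ccN\), since \(\ccN \neq \xi\). Take
\[
\bdy = \word-(\ccp^{\nIh}) + (\beta-\alpha)\bde_\ccM + (\alpha-\beta)\bde_\ccN .
\]
This swaps the last entries of columns \(\ccM\) and \(\ccN\). Its \(\ccF\)-coordinates differ from those of \(\word-(\ccp^{\nIh})\) only at \(\ccM\) and \(\ccN\). At \(\ccN\) the coordinate drops from \(\beta\) to \(\alpha\), using the hypothesis \(\word-[\ccN](\ccp^{\nIh}) = \beta\). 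So \(\ccM<\ccN\) gives \(\bdy <_{\ccF} \word-(\ccp^{\nIh})\).

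\emph{Case \(\ccM = \xi\), and \(\ccN\) exceeds every index whose \(\ccF\)-coordinate changes.} Use the same \(\bdy\), now with \(\bde_\xi = \bdf_\xi - \bdf_\eta\). The \(\ccF\)-coordinates change only at \(\xi\), \(\eta\) and \(\ccN\). If \(\ccN > \max\{\xi,\eta\}\), the largest changed index is \(\ccN\), where the coordinate decreases, so \(\bdy <_{\ccF} \word-(\ccp^{\nIh})\). This covers \(\xi>\eta\) entirely under the assumption \(\xi<\ccN\).

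\emph{Case \(\xi<\eta\).} By Lemma \ref{org45cd32d}(2), columns \(\xi\) and \(\eta\) of \(\mGen-{\nIh}\) are both \(\mvecA[\alpha]\) and \(\word-[\eta](\ccp^{\nIh}) = 0\).
\begin{itemize}
\item If \(\eta<\ccN\), this is the previous case.
\item If \(\eta = \ccN\), it is impossible, since \(\ccN\)'s column is \(\mvecA[\beta] \neq \mvecA[\alpha]\).
\item If \(\xi<\ccN<\eta\), I move both \(\xi\) and \(\eta\) at once: add \((\beta-\alpha)\bdf_\xi\), which changes no \(\ccF\)-coordinate at \(\eta\). To keep the column counts, I compensate by changing two columns of \(\mvecA[\beta]\) outside \(\Theta\), namely \(\ccN\) and a second index \(\ccN'\), from \(\beta\) to \(\alpha\). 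The index \(\ccN'\) exists because \(\cdn[\mGen-{\nIh}](\mvecA[\beta]) \ge 2\) and \(\ccN \notin \Theta\); if \(\eta\) is one of those columns, I pick \(\ccN'\) among the rest, or argue the count directly.
\end{itemize}
In the third sub-case the changed \(\ccF\)-coordinates are at \(\xi\), \(\ccN\) and \(\ccN'\), and the \(\eta\)-coordinate is untouched. I choose \(\ccN'\) so that \(\max\{\ccN,\ccN'\} > \xi\). Then the top changed coordinate decreases from \(\beta\) to \(\alpha\), provided \(\word-[\ccN'](\ccp^{\nIh}) = \beta\), which holds since \(\ccN' \neq \xi\).

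\emph{Main obstacle.} The delicate part is this last sub-case. I must make sure a second column \(\ccN'\) of type \(\mvecA[\beta]\) outside \(\Theta\) exists, which needs \(\mvecA[\beta] \in \mVct{\nIh}\) so that the hypothesis \(\cdn \ge 2\) applies; I would get this from Lemma \ref{orgf9637b2}. I must also check the coordinate bookkeeping at \(\ccN'\) so that the lexicographic decrease is really decided at \(\ccN\) or \(\ccN'\).
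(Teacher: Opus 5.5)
Your proof is correct and is essentially the paper's argument. The case $\ccM\neq\xi$ is the Order lemma. For $\eta<\xi$, your vector $\word-(\ccp^{\nIh})+(\beta-\alpha)\bde_\xi+(\alpha-\beta)\bde_\ccN$ is the same vector as the paper's $\word-(\ccp^{\nIh})+(\beta-\alpha)\bdf_\xi+(\alpha-\beta)\bdf_\eta+(\alpha-\beta)\bdf_\ccN$. For $\xi<\eta$, your $\ccN'$ is the paper's $\ccL$; the paper uses that three-index vector directly, without sub-casing on where $\ccN$ sits relative to $\eta$.

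Two small corrections, neither affecting validity. First, the $\ccF$-coordinate of $\word-(\ccp^{\nIh})$ at $\ccN'$ equals $\beta$ because $\ccN'\neq\eta$, not because $\ccN'\neq\xi$: the $\ccF$- and $\ccE$-coordinates differ only at index $\eta$, whereas the basis vectors differ at $\xi$. Second, no hedging about $\eta$ being an $\mvecA[\beta]$-column is needed. Lemma~\ref{org45cd32d} already puts both $\xi$ and $\eta$ among the $\mvecA[\alpha]$-columns, so any second $\mvecA[\beta]$-column automatically avoids $\Theta$.
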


\begin{proof}
 \comment{Proof.}
\label{sec:orgfba106d}
When \(\ccM \neq \xi\), the lemma follows from Lemma \ref{orgfc26eca}.
Suppose that \(\ccM = \xi\).

\resetmycase
\begin{mycase}[\(\eta < \xi\)]
 \comment{Case. [\(\eta < \xi\)]}
\label{sec:org7317fbb}
If \(\ccN = \eta\), then \(\ccN = \eta < \xi = \ccM\),
and hence we may assume that \(\ccN \neq \eta\).
Let 
\[
 \bdy = \word-(\ccp^\nIh) + (\beta -\alpha) \bdf_\xi + 
(\alpha - \beta) \bdf_\eta + (\alpha - \beta) \bdf_\ccN
\]
It follows from Lemma \ref{org4a0db5b} that \(\mbigDist{\bdy}{\word(\cca)} \ge \ccd\) for \(0 \le \cca < \ccp^{\nIh}\),
so \(\bdy >_{\ccF} \word(\ccp^{\nIh})\). Therefore \(\ccN < \xi = \ccM\).

\begin{center}
\begin{tabular}{llll}
 & \(\ccN\) & \(\eta\) & \(\xi\)\\
\hline
\(\word-[\cci](\ccp^{\nIh})\) & \(\beta\) & \(\gamma\) & \(\alpha\)\\
\(\word-<\cci>(\ccp^{\nIh})\) & \(\beta\) & \(\gamma + \alpha\) & \(\alpha\)\\
\(\ithComp{\ccy}[\cci]\) & \(\alpha\) & \(\gamma + \alpha - \beta\) & \(\beta\)\\
\(\ithComp{\ccy}<\cci>\) & \(\alpha\) & \(\gamma + \alpha\) & \(\beta\)\\
\end{tabular}
\end{center}
 
\end{mycase}

\begin{mycase}[\(\eta > \xi\)]
 \comment{Case. [\(\eta > \xi\)]}
\label{sec:org506ed81}
By Lemma \ref{org45cd32d}, we see that \(\mGen-{\nIh}<\eta> = \mGen-{\nIh}<\xi> = \mvecA[\alpha]\),
and hence \(\eta \in \Cdn[\mGen-{\nIh}](\mvecA[\alpha])\).
Thus \(\ccN \neq \eta\). 
Since \(\cdn[\mGen-{\nIh}](\mvecA[\beta]) \ge 2\),
there exists \(\ccL \in \Cdn[\mGen-{\nIh}](\mvecA[\beta]) \setminus \set{\ccN}\).
Note that \(\ccL \not \in \Theta\).
Let
\[\bdy = \word-(\ccp^\nIh) + (\beta - \alpha) \bdf_\xi + (\alpha - \beta) \bdf_\ccL + (\alpha - \beta) \bdf_\ccN.
\]
\begin{center}
\begin{tabular}{lllll}
 & \(\ccN\) & \(\ccL\) & \(\xi\) & \(\eta\)\\
\hline
\(\word-[\cci](\ccp^{\nIh})\) & \(\beta\) & \(\beta\) & \(\alpha\) & 0\\
\(\word-<\cci>(\ccp^{\nIh})\) & \(\beta\) & \(\beta\) & \(\alpha\) & \(\alpha\)\\
\(\ithComp{\ccy}[\cci]\) & \(\alpha\) & \(\alpha\) & \(\beta\) & 0\\
\(\ithComp{\ccy}<\cci>\) & \(\alpha\) & \(\alpha\) & \(\beta\) & \(\beta\)\\
\end{tabular}
\end{center}
\noindent
It follows from Lemma \ref{org4a0db5b} that \(\mbigDist{\bdy}{\word(\cca)} \ge \ccd\) for \(0 \le \cca < \ccp^{\nIh}\),
and hence \(\bdy >_{\ccF} \word-(\ccp^{\nIh})\).
Therefore \(\ccN < \xi = \ccM\). \hspace*{\fill} \(\qedhere\)
 
\end{mycase} 
\end{proof}

\begin{lemma}
 \comment{Lem.}
\label{sec:orgd80407f}
\label{orgb44e8dd}
Let \(\ccL < \ccj < \ccM\) and \(\nIh \in \NN \cup \set{-1}\).
Suppose that \(\cdn[\mGen-{\nIh}](\mvecA) \ge 2\) for \(\mvecA \in \mVct*{\nIh}\),
and that \(\ccL, \ccM \in \Cdn[\mGen-{\nIh}](\mvecC)\) for some \(\mvecC \in \mVct*{\nIh}\),
where \(\mVct*{-1} = \set{[\, ]}\) and \(\Cdn[\mGen-{-1}]([\, ]) = \NN\).
If one of the following two conditions holds, then \(\ccj \in \Cdn[\mGen-{\nIh}](\mvecC)\).
\begin{enumerate}
\item \(\ccj, \ccM \not \in \Theta\).
\item \(\ccj, \ccM \in \Theta\).
\end{enumerate}
 
\end{lemma}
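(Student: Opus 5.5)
We argue by induction on \(\nIh\), following how \(\mGen-{\nIh}\) is built from \(\mGen-{\nIh - 1}\) by appending the row \(\word-(\ccp^{\nIh})\). The case \(\nIh = -1\) is trivial, since \(\Cdn[\mGen-{-1}]([\,]) = \NN\) contains every index.

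For \(\nIh \ge 0\), write \(\mvecC = \mvecA[\gamma]\) with \(\mvecA \in \FF_\ccp^{\marray{\nIh - 1}}\). Since \(\mvecC \in \mVct*{\nIh}\), the prefix \(\mvecA\) lies in \(\mVct*{\nIh - 1}\) (possibly \(\mvecA = \allzero\)). We also have \(\cdn[\mGen-{\nIh - 1}](\mvecB) = \sum_{\alpha} \cdn[\mGen-{\nIh}](\mvecB[\alpha]) \ge 2\) for \(\mvecB \in \mVct*{\nIh - 1}\), so the hypotheses hold at level \(\nIh - 1\). As \(\ccL, \ccM \in \Cdn[\mGen-{\nIh - 1}](\mvecA)\), and condition (1) or (2) does not depend on \(\nIh\), the induction hypothesis gives \(\ccj \in \Cdn[\mGen-{\nIh - 1}](\mvecA)\). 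Hence \(\ccj \in \Cdn[\mGen-{\nIh}](\mvecA[\beta])\) for some \(\beta\). It remains to show \(\beta = \gamma\), i.e.\ \(\word-<\ccj>(\ccp^{\nIh}) = \word-<\ccM>(\ccp^{\nIh})\).

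Under condition (2), we have \(\set{\ccj, \ccM} = \Theta\) with \(\ccj < \ccM\), so \(\ccj = \xi < \eta = \ccM\). Lemma \ref{org45cd32d}(2), whose hypothesis is exactly the assumed bound \(\cdn \ge 2\) on \(\mVct{\nIh}\), gives \(\word-<\xi>(\ccp^{\nIh}) = \word-<\eta>(\ccp^{\nIh})\), hence \(\beta = \gamma\). Under condition (1), suppose \(\beta \neq \gamma\) and use Lemma \ref{org06db361}, the strengthened order lemma, which allows the first index to be arbitrary (even \(\xi\)).
\begin{itemize}
\item If \(\gamma < \beta\), apply Lemma \ref{org06db361} with first index \(\ccL \in \Cdn[\mGen-{\nIh}](\mvecA[\gamma])\) and second index \(\ccj \in \Cdn[\mGen-{\nIh}](\mvecA[\beta])\). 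Since \(\ccj \notin \Theta\), we have \(\ccj \ne \xi\) and \(\word-[\ccj](\ccp^{\nIh}) = \word-<\ccj>(\ccp^{\nIh})\). The lemma yields \(\ccL > \ccj\), a contradiction.
\item If \(\beta < \gamma\), apply Lemma \ref{org06db361} with first index \(\ccj\) (in class \(\mvecA[\beta]\)) and second index \(\ccM\) (in class \(\mvecA[\gamma]\)). Since \(\ccM \notin \Theta\), the \(\ccF\)- and \(\ccE\)-coordinates of \(\ccM\) agree. This yields \(\ccj > \ccM\), again a contradiction.
\end{itemize}
Hence \(\beta = \gamma\) in both cases.

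The main obstacle is verifying the side conditions of Lemma \ref{org06db361}, namely \(\ccN \ne \xi\) and \(\word-[\ccN] = \word-<\ccN>\). In each application of case (1) the roles are arranged so that the second index is one of \(\ccj, \ccM \notin \Theta\); this is exactly why the hypothesis requires \(\ccj, \ccM\) (and not \(\ccL\)) to avoid \(\Theta\). The first index, \(\ccL\) or \(\ccj\), may be \(\xi\) or \(\eta\), which Lemma \ref{org06db361} permits. Case (2) bypasses the order lemma entirely through Lemma \ref{org45cd32d}(2).
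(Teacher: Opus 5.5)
Your induction and your handling of condition (1) match the paper: the same two applications of Lemma \ref{org06db361}, with the second index always \(\ccj\) or \(\ccM\), both outside \(\Theta\). The gap is in condition (2). From \(\set{\ccj, \ccM} = \Theta\) and \(\ccj < \ccM\) you conclude \(\ccj = \xi\) and \(\ccM = \eta\), which silently assumes \(\xi < \eta\). The family \(\sF\) also contains bases \(\ccF(\xi, \eta)\) with \(\eta < \xi\), and the paper treats that order separately throughout. In that order, condition (2) means \(\ccj = \eta\) and \(\ccM = \xi\). None of your tools then give the equality:
\begin{enumerate}
\item Lemma \ref{org45cd32d}(2) does not apply.
\item Lemma \ref{org45cd32d}(3) only says that \(\beta \neq \gamma\) forces \(\gamma < \beta\).
\item Lemma \ref{org06db361} cannot take \(\eta\) as second index. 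Because \(\ccM = \xi\), we have \(\word-[\xi](\ccp^{\nIh}) = \word-<\xi>(\ccp^{\nIh}) = \gamma\), so \(\word-[\eta](\ccp^{\nIh}) = \beta - \gamma\), while \(\word-<\eta>(\ccp^{\nIh}) = \beta\).
\end{enumerate}
This case therefore needs its own argument, and it is exactly where \(\ccL < \ccj\) is used under (2).

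The paper closes the case with a direct swap. Since \(\ccL < \eta < \xi\), we have \(\ccL \notin \Theta\), so \(\bdf_{\ccL} = \bde_\ccL\) and \(\bdf_\eta = \bde_\eta\). Put
\[
 \bdy = \word-(\ccp^{\nIh}) + (\gamma - \beta) \bdf_\eta + (\beta - \gamma) \bdf_{\ccL}.
\]
Then \(\ithComp{\ccy}<\ccL> = \beta\) and \(\ithComp{\ccy}<\eta> = \gamma\), and every other \(\ccE\)-coordinate, including the \(\xi\)-th, is unchanged. Columns \(\ccL\) and \(\eta\) both have prefix \(\mvecA\) (the latter by your induction step). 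So \(\mGen-{\nIh - 1}[\bdy]\) is \(\mGen-{\nIh}\) with these two columns interchanged, and all multiplicities are preserved. Lemma \ref{org4a0db5b} then gives \(\ccd(\bdy, \word-(\cca)) \ge \ccd\) for \(0 \le \cca < \ccp^{\nIh}\), hence \(\bdy \ge_{\ccF} \word-(\ccp^{\nIh})\) by Lemma \ref{org6f41f57}. On the other hand, \(\bdy\) differs from \(\word-(\ccp^{\nIh})\) in \(\ccF\)-coordinates only at \(\ccL < \eta\). At \(\eta\) we have \(\ithComp{\ccy}[\eta] = 0\) while \(\word-[\eta](\ccp^{\nIh}) = \beta - \gamma\). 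So \(\beta \neq \gamma\) would give \(\bdy <_{\ccF} \word-(\ccp^{\nIh})\), a contradiction. With this case added, your proof is complete.
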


\begin{proof}
 \comment{Proof.}
\label{sec:org833d937}
We show the lemma by induction on \(\nIh\). 
This is obvious when \(\nIh = -1\).
Suppose that \(\nIh \ge 0\),
and let \(\mvecB[\alpha] = \mvecC\),
where \(\mvecB \in \mVct*{\nIh - 1}\) and \(\alpha \in \FF_{\ccp}\).
By the induction hypothesis,
\(\ccj \in \Cdn[\mGen-{\nIh - 1}](\mvecB)\).
Let \(\beta = \word-<\ccj>(\ccp^{\nIh})\); then
\(\ccj \in \Cdn[\mGen-{\nIh}](\mvecB[\beta])\).

\noindent
(1) First, assume that \(\beta < \alpha\).
Since \(\ccj \in \Cdn[\mGen-{\nIh}](\mvecB[\beta])\) and
\(\ccM \in \Cdn[\mGen-{\nIh}](\mvecB[\alpha]) \setminus \Theta\),
it follows from Lemma \ref{org06db361} that \(\ccj > \ccM\), a contradiction.

Next, assume that \(\alpha < \beta\).
Since \(\ccL \in \Cdn[\mGen-{\nIh}](\mvecB[\alpha])\)
and \(\ccj \in \Cdn[\mGen-{\nIh}](\mvecB[\beta]) \setminus \Theta\),
it follows from Lemma \ref{org06db361} that \(\ccL > \ccj\), a contradiction.
Therefore \(\alpha = \beta\).

\noindent
(2) If \(\ccj = \xi\) and \(\ccM = \eta\),
then \(\xi < \eta\); it follows from Lemma \ref{org45cd32d} that \(\alpha = \beta\).
Suppose that \(\ccj = \eta\) and \(\ccM = \xi\).
Let
\[\bdy = \word-(\ccp^{\nIh}) + (\alpha - \beta) \bdf_\ccj +  (\beta - \alpha) \bdf_{\ccL}.\]
\begin{center}
\begin{tabular}{llll}
 & \(\ccL\) & \(\quad\ccj\quad\) & \(\ccM\)\\
\hline
\(\word-[\cci](\ccp^{\nIh})\) & \(\alpha\) & \(\beta - \alpha\) & \(\alpha\)\\
\(\word-<\cci>(\ccp^{\nIh})\) & \(\alpha\) & \(\beta\) & \(\alpha\)\\
\(\ithComp{\ccy}[\cci]\) & \(\beta\) & \(0\) & \(\alpha\)\\
\(\ithComp{\ccy}<\cci>\) & \(\beta\) & \(\alpha\) & \(\alpha\)\\
\end{tabular}
\end{center}
Note that \(\bdy \le_{\ccF} \word-(\ccp^{\nIh})\).
Since \(\mDist{\bdy}{\word-(\cca)} \ge \ccd\) for \(0 \le \cca < \ccp^{\nIh}\),
it follows that \(\bdy \ge_{\ccF} \word-(\ccp^{\nIh})\). Therefore \(\alpha = \beta\).
\end{proof}

\comment{connect}
\label{sec:orgaf81c86}
As seen in Lemma \ref{org06db361}, if \(\ccM \in \Cdn[\mGen-{\nIh}](\mvecA[\alpha])\),
\(\ccN \in \Cdn[\mGen-{\nIh}](\mvecA[\beta])\), and \(\alpha < \beta\), then \(\ccM > \ccN\) in most cases.
The following lemma provides necessary conditions for \(\ccM < \ccN\).

\begin{lemma}
 \comment{Lem.}
\label{sec:org5f81f27}
\label{org69880d5}
Let \(\nIh \in \NN\).
Suppose that 
\(\cdn[\mGen-{\nIh}](\mvecB) \ge 2\) for \(\mvecB \in \mVct{\nIh}\).
Let \(\ccM \in \Cdn[\mGen-{\nIh}](\mvecA[\alpha])\) and
\(\ccN \in \Cdn[\mGen-{\nIh}](\mvecA[\beta])\).
Suppose that \(\alpha < \beta\) and \(\ccM < \ccN\). Then the following statements hold.

\begin{enumerate}
\item \(\word-[\eta](\ccp^{\nIh}) = 0\) and \(\word<\eta>(\ccp^{\nIh}) = \word<\xi>(\ccp^{\nIh}) = \beta \neq 0\).
\item If \(\eta < \xi\), then \(\ccN = \eta\) and \(\beta = \word-<\eta>(\ccp^{\nIh}) = \word-<\xi>(\ccp^{\nIh})\).
\item If \(\xi < \eta\), then \(\ccN \in \Theta\); moreover, if \(\ccN = \xi\), then \(\alpha = \beta - 1\) and \(\ccN = \ccM + 1\).
\end{enumerate}
 
\end{lemma}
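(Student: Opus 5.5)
The plan is to derive everything from Lemma \ref{org06db361} together with Lemma \ref{org45cd32d}. By hypothesis, \(\alpha < \beta\) and \(\ccM < \ccN\). Lemma \ref{org06db361} would force \(\ccM > \ccN\) whenever \(\ccN \neq \xi\) and \(\word-[\ccN](\ccp^{\nIh}) = \word-<\ccN>(\ccp^{\nIh})\). So at least one of these must fail: either \(\ccN = \xi\), or \(\word-[\ccN](\ccp^{\nIh}) \neq \word-<\ccN>(\ccp^{\nIh})\). The \(\ccF\)- and \(\ccE\)-coordinates differ only at \(\eta\), so the second alternative means \(\ccN = \eta\) and \(\word-[\xi](\ccp^{\nIh}) \neq 0\). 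Hence \(\ccN \in \Theta\) in every case, which already gives the first claim of (3). I then split according to the order of \(\xi,\eta\) and which element of \(\Theta\) equals \(\ccN\).

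For (2), assume \(\eta < \xi\). I first rule out \(\ccN = \xi\). In that case \(\ccM\) lies in the same \(\mvecA\)-block, and if \(\ccM = \eta\) then \(\ccM < \ccN\) is consistent. So I would reuse the Case~1 construction of Lemma \ref{org06db361} with the roles adapted: swap the values \(\alpha,\beta\) between \(\ccM\) and \(\xi\) via a vector \(\bdy = \word-(\ccp^{\nIh}) + (\alpha-\beta)\bdf_\xi + (\beta-\alpha)\bdf_\eta + (\beta-\alpha)\bdf_\ccM\), or a similar one. Lemma \ref{org4a0db5b} keeps the column counts, hence the distances, unchanged, while \(\bdy <_{\ccF} \word-(\ccp^{\nIh})\) because \(\xi\) is the top changed \(\ccF\)-index and its value drops. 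This contradicts Lemma \ref{org6f41f57}. Thus \(\ccN = \eta\), and the column values give \(\word-<\eta>(\ccp^{\nIh}) = \beta\). It remains to show \(\word-<\xi>(\ccp^{\nIh}) = \beta\) and \(\word-[\eta](\ccp^{\nIh}) = 0\). Lemma \ref{org45cd32d}(3) says that if these differ then \(\word-<\xi> < \word-<\eta>\). In that situation I would again exhibit a smaller \(\bdy\): lower the \(\ccF\)-coordinate at \(\eta\) to \(0\) and exchange with \(\ccM\) (value \(\alpha\)) so that the multiset of column values in each block is preserved. This contradiction forces equality, hence \(\word-[\eta] = 0\). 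Nonzeroness of \(\beta\) is automatic from \(\beta > \alpha \ge 0\).

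For (3), assume \(\xi < \eta\). Lemma \ref{org45cd32d}(2) gives directly \(\word-<\xi>(\ccp^{\nIh}) = \word-<\eta>(\ccp^{\nIh})\) and \(\word-[\eta](\ccp^{\nIh}) = 0\). Since \(\ccN \in \Theta\) has value \(\beta\), both values equal \(\beta\), which yields (1) in this case; together with (2), this gives (1) in general. Now suppose \(\ccN = \xi\). To get \(\ccN = \ccM + 1\), I argue that any index \(j\) with \(\ccM < j < \xi\) lies outside \(\Theta\) and must sit in block \(\mvecA\) with some value \(\gamma\); the block membership comes from Lemma \ref{orgb44e8dd} applied at level \(\nIh - 1\). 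Comparing \(j\) with \(\ccM\) and with \(\xi\) via Lemma \ref{org06db361}, in both directions, leaves no admissible \(\gamma\), so no such \(j\) exists.

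For \(\alpha = \beta - 1\), I would suppose some \(\gamma\) with \(\alpha < \gamma < \beta\); this is possible only for \(p \ge 3\) with a gap. I take \(\ccL\) in block \(\mvecA[\gamma]\) outside \(\Theta\), using \(\cdn \ge 2\), and shift values along \(\ccM,\ccL,\xi,\eta\) to produce a smaller vector with the same counts, reaching a contradiction. The main obstacle is this final step together with the exclusion of \(\ccN = \xi\) in (2). Both require choosing the right compensating coordinates so that the multisets of column values per block are preserved while the highest changed \(\ccF\)-coordinate decreases, and I expect to spend most effort tabulating these as in the previous proofs.
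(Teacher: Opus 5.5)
Your reduction to $N\in\Theta$ via Lemma~\ref{org06db361} matches the paper, as does your exclusion of $N=\xi$ when $\eta<\xi$ (your vector is exactly the paper's) and your treatment of the $\xi<\eta$ case of (1) via Lemma~\ref{org45cd32d}(2). The genuine gap is the proof of $N=M+1$ in (3).

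\emph{Block membership fails.} Lemma~\ref{orgb44e8dd} at level $h-1$ cannot be applied with endpoints $M$ and $\xi$, or $M$ and $\eta$. It needs $j$ and the upper endpoint both outside $\Theta$ or both inside, and you have not produced any index of $\Cdn[\mGen-{\nIh - 1}](\mvecA)\setminus\Theta$ above $j$.

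\emph{The value comparison fails.} Even granting $j\in\Cdn[\mGen-{\nIh}](\mvecA[\gamma])$, Lemma~\ref{org06db361} only rules out $\gamma>\alpha$. In that lemma the larger-valued index must differ from $\xi$ and have equal $\ccF$- and $\ccE$-coordinates. So neither $\xi$ nor $\eta$ can play that role, since \(\word-[\eta](\ccp^{\nIh})=0\) while \(\word-<\eta>(\ccp^{\nIh})=\beta\). The remaining case $\gamma\le\alpha$ is exactly the ``wrong order'' configuration the present lemma is about.

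\emph{The missing idea.} You need the claim that every $L\in\Cdn[\mGen-{\nIh}](\mvecA[\alpha])\setminus\{M\}$ satisfies $L>\xi$. The paper proves it with the new vector \(\bdy = \word-(\ccp^{\nIh}) + (\beta - \alpha)(\bdf_\ccM + \bdf_\ccL) + (\alpha - \beta)\bdf_\xi\). Since \(\word-[\eta](\ccp^{\nIh}) = 0\), changing the $\ccF$-coordinate at $\xi$ moves both $\xi$ and $\eta$ from $\beta$ to $\alpha$. Two indices ($M$ and $L$) must therefore move the other way to keep the column counts. If $L<\xi$, the top changed coordinate is $\xi$ and it decreases, which contradicts minimality. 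With this claim, apply Lemma~\ref{orgb44e8dd} at level $h$ (not $h-1$) to $M<M+1<L$, where both $M+1$ and $L$ lie outside $\Theta$. This gives $M+1\in\Cdn[\mGen-{\nIh}](\mvecA[\alpha])$, hence $M+1>\xi$, a contradiction.

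Then $\alpha=\beta-1$ is cheap. Any $M'\in\Cdn[\mGen-{\nIh}](\mvecA[\beta - 1])$ lies outside $\Theta$ and below $M$ by Lemma~\ref{org06db361}. So $M'$ satisfies the same hypotheses as $M$, giving $M'=N-1=M$, which is impossible. Your direct swap for this step can be made to work instead, but for the same reason it needs two indices of an intermediate value $\delta$, not one.

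There is a second gap in (2), in showing \(\word-<\xi>(\ccp^{\nIh})=\beta\). Write $\gamma$ for \(\word-<\xi>(\ccp^{\nIh})\), which is a hypothetical value below $\beta$.
\begin{itemize}
\item Setting the $\ccF$-coordinate at $\eta$ to $0$ gives $\eta$ the $\ccE$-value $\gamma$. Exchanging with $M$ then preserves the column counts only if $\gamma=\alpha$.
\item If you instead give $\eta$ the value $\alpha$ (new $\ccF$-coordinate $\alpha-\gamma$), this is a decrease at the top index $\eta$ only when $\gamma\le\alpha$. It fails for $\alpha<\gamma<\beta$ because of wrap-around modulo $p$.
\end{itemize}
The paper removes the dependence on $\gamma$ by decrementing by exactly one. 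It takes $L\in\Cdn[\mGen-{\nIh}](\mvecA[\beta - 1])\setminus\Theta$ with $L\le M$: either $L=M$ if $\alpha=\beta-1$, or otherwise such an $L$ exists and lies below $M$ by Lemma~\ref{org06db361}. It then sets \(\bdy = \word-(\ccp^{\nIh}) - \bdf_\ccN + \bdf_\ccL\). This moves $\eta$ from $\beta$ to $\beta-1$ and $L$ from $\beta-1$ to $\beta$ for every $\gamma$. Minimality then forces $\beta-\gamma-1>\beta-\gamma$ as elements of $\{0,\dots,p-1\}$, which means $\gamma=\beta$.
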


\begin{proof}
 \comment{Proof.}
\label{sec:orgf767128}
Lemma \ref{org06db361} implies that \(\ccN \in \Theta\).

\noindent
(1) We show \(\word-[\eta](\ccp^{\nIh}) = 0\) by assuming (2). 
If \(\eta < \xi\), then \(\beta = \word-<\eta>(\ccp^{\nIh}) = \word-<\xi>(\ccp^{\nIh})\) by (2), and hence \(\word-[\eta](\ccp^{\nIh}) = 0\).
Suppose that \(\xi < \eta\). Lemma \ref{org45cd32d} shows that \(\word-[\eta](\ccp^{\nIh}) = 0\).
Hence \(\word-<\eta>(\ccp^{\nIh}) = \word-<\xi>(\ccp^{\nIh}) = \word-<\ccN>(\ccp^\nIh) = \beta\) since \(\ccN \in \Theta\).

\medskip
\noindent
(2) Assume that \(\ccN = \xi\).
Let
\[
 \bdy = \word-(\ccp^{\nIh}) + (\alpha - \beta) \bdf_\xi + (\beta - \alpha) (\bdf_\eta + \bdf_\ccM).
\]
Note that \(\bdy <_{\ccF} \word-(\ccp^{\nIh})\).
Moreover, 
\(\mbigDist{\bdy}{\word-(\cca)} = \mbigDist{\word-(\ccp^{\nIh})}{\word-(\cca)} \ge \ccd\) for \(0 \le \cca < \ccp^{\nIh}\)
even if \(\ccM = \eta\). Thus \(\bdy >_{\ccF} \word-(\ccp^{\nIh})\), a contradiction.

\begin{minipage}{.45\textwidth}

\begin{center}
\begin{tabular}{@{}ccc@{}}
 & \(\quad\ccM = \eta\quad\) & \(\quad\ccN = \xi\quad\)\\
\hline
\(\word-[\cci](\ccp^{\nIh})\) & \(\alpha - \beta\) & \(\beta\)\\
\(\word-<\cci>(\ccp^{\nIh})\) & \(\alpha\) & \(\beta\)\\
\(\ithComp{\ccy}[\cci]\) & \(\beta - \alpha\) & \(\alpha\)\\
\(\ithComp{\ccy}<\cci>\) & \(\beta\) & \(\alpha\)\\
\end{tabular}
\end{center}

\end{minipage}\begin{minipage}{.51\textwidth}

\begin{center}
\begin{tabular}{@{}cccc@{}}
 & \(\quad\ccM\quad\) & \(\quad\eta\quad\) & \(\quad\ccN = \xi\quad\)\\
\hline
\(\word-[\cci](\ccp^{\nIh})\) & \(\alpha\) & \(\gamma\) & \(\beta\)\\
\(\word-<\cci>(\ccp^{\nIh})\) & \(\alpha\) & \(\gamma + \beta\) & \(\beta\)\\
\(\ithComp{\ccy}[\cci]\) & \(\beta\) & \(\gamma + \beta - \alpha\) & \(\alpha\)\\
\(\ithComp{\ccy}<\cci>\) & \(\beta\) & \(\gamma + \beta\) & \(\alpha\)\\
\end{tabular}
\end{center}

\end{minipage}

Therefore \(\ccN = \eta\).
Since \(\ccM < \ccN = \eta < \xi\),
it follows that \(\ccM \neq \xi\).
We show that there exists \(\ccL \in \Cdn[\mGen-{\nIh}](\mvecA[\beta  - 1]) \setminus \Theta\) such \(\ccL \le \ccM\).
Indeed, if \(\alpha = \beta - 1\), then \(\ccL = \ccM\) satisfies the condition.
If \(\alpha < \beta - 1\), then there exists \(\ccL \in \Cdn[\mGen-{\nIh}](\mvecA[\beta - 1]) \setminus \Theta\),
and we see that \(\ccL < \ccM\) by Lemma \ref{org06db361}.
Let \(\bdy = \word-(\ccp^{\nIh}) - \bdf_\ccN + \bdf_\ccL\).
\begin{center}
\begin{tabular}{lccc}
 & \(\quad\ccL\quad\) & \(\quad\ccN = \eta\quad\) & \(\quad\xi\quad\)\\
\hline
\(\word-[\cci](\ccp^{\nIh})\) & \(\beta - 1\) & \(\beta - \gamma\) & \(\gamma\)\\
\(\word-<\cci>(\ccp^{\nIh})\) & \(\beta - 1\) & \(\beta\) & \(\gamma\)\\
\(\ithComp{\ccy}[\cci]\) & \(\beta\) & \(\beta - \gamma - 1\) & \(\gamma\)\\
\(\ithComp{\ccy}<\cci>\) & \(\beta\) & \(\beta - 1\) & \(\gamma\)\\
\end{tabular}
\end{center}
It follows from Lemma \ref{org4a0db5b} that \(\ccd(\bdy, \word-(\cca)) \ge \ccd\) for \(0 \le \cca < \ccp^{\nIh}\),
and hence \(\bdy >_{\ccF} \word-(\ccp^{\nIh})\) and \(\beta - \gamma < \beta - \gamma - 1\).
Therefore \(\beta = \gamma = \word-<\xi>(\ccp^{\nIh})\).

\medskip
\noindent
(3) We have shown \(\ccN \in \Theta\). 
Suppose that \(\ccN = \xi\).
Since \(\xi < \eta\), it follows from Lemma \ref{org45cd32d} that \(\word-[\eta](\ccp^{\nIh}) = 0\)
and \(\eta, \xi \in \Cdn[\mGen-{\nIh}](\mvecA[\beta])\).
We first show that 
\[
\text{if\ } \ccL \in \Cdn[\mGen-{\nIh}](\mvecA[\alpha]) \setminus \set{\ccM},
\text{\ then\ } \ccL > \ccN. \tag{$*$}
\]
Let 
\[
 \bdy = \word-(\ccp^{\nIh}) + (\beta - \alpha)(\bdf_\ccM + \bdf_\ccL) + (\alpha - \beta)\bdf_\xi.
\]
\begin{center}
\begin{tabular}{lcccc}
 & \(\quad\ccM\quad\) & \(\quad\ccL\quad\) & \(\quad\ccN = \xi\quad\) & \(\quad\eta\quad\)\\
\hline
\(\word-[\cci](\ccp^{\nIh})\) & \(\alpha\) & \(\alpha\) & \(\beta\) & 0\\
\(\word-<\cci>(\ccp^{\nIh})\) & \(\alpha\) & \(\alpha\) & \(\beta\) & \(\beta\)\\
\(\ithComp{\ccy}[\cci]\) & \(\beta\) & \(\beta\) & \(\alpha\) & 0\\
\(\ithComp{\ccy}<\cci>\) & \(\beta\) & \(\beta\) & \(\alpha\) & \(\alpha\)\\
\end{tabular}
\end{center}
It follows from Lemma \ref{org4a0db5b} that \(\mbigDist{\bdy}{\word-(\cca)}\) for \(0 \le \cca < \ccp^{\nIh}\),
and hence \(\bdy >_{\ccF} \word-(\ccp^\nIh)\). Therefore \(\ccL > \ccN\).

We next show that \(\ccN = \ccM + 1\).
Assume that \(\ccN \neq \ccM + 1\).
Since \(\ccM < \ccN\), we see that
\(\ccM < \ccM + 1 < \ccN\).
Let \(\ccL \in \Cdn[\mGen-{\nIh}](\mvecA[\alpha]) \setminus \set{\ccM}\);
then \(\ccL > \ccN\) by \((*)\). 
Since \(\ccM < \ccM + 1 < \ccL\) and \(\ccM + 1, \ccL \not \in \Theta\),
it follows from Lemma \ref{orgb44e8dd} that
\(\ccM + 1 \in \Cdn[\mGen-{\nIh}](\mvecA[\alpha])\).
From (\(*\)), we see that \(\ccM + 1 > \ccN\), a contradiction.
Therefore \(\ccN = \ccM + 1\).

Finally, we show that \(\alpha = \beta - 1\).
Assume that \(\alpha < \beta - 1\),
and let \(\ccM' \in \Cdn[\mGen-{\nIh}](\mvecA[\beta - 1])\).
Since \(\ccM' \not \in \Theta\), it follows from Lemma \ref{org06db361} that \(\ccM' < \ccM\),
and hence \(\ccM' < \ccN\).
Therefore \(\ccM'\) satisfies the same conditions as \(\ccM\).
This implies that \(\ccM' = \ccN - 1\), a contradiction.
\end{proof}

\begin{example}
 \comment{Exm.}
\label{sec:orgebc46fd}
\label{org86ab854}
(1) Let \(\ccp = 3\), \(\ccd = 4\), \(\xi = 3\), and \(\eta = 7\).
Then
\[
 \word(1) = \mWord{1 1 0 1 0 0 0 1 0  \cdots}.
\]
Note that \(2,4,5,6 \in \Cdn[\mGen-{0}](0)\) and \(0,1,3,7 \in \Cdn[\mGen-{0}](1)\).
Thus, for example, the cases \((\ccM, \ccN) = (2, 3), (4, 7)\) correspond to Lemma \ref{org69880d5}(3).

\medskip
\noindent
(2) Let \(\ccp = 3\), \(\ccd = 6\), \(\xi = 6\), and \(\eta = 5\).
Then
\begin{align*}
 \word(1) &= \mWord{1 1 1 1 1 1 0 0 0  \cdots},\\
 \word(3) &= \mWord{2 2 1 0 0 1 1 1 0  \cdots}.
\end{align*}
Since \(3, 4 \in \Cdn[\mGen-{1}](\mVec{1 \\ 0})\) and \(\eta = 5 \in \Cdn[\mGen-{1}](\mVec{1\\1})\),
we see that the cases \((\ccM, \ccN) = (3,5), (4, 5)\) correspond to Lemma \ref{org69880d5}(2).
 
\end{example}

\begin{lemma}
 \comment{Lem.}
\label{sec:org7f13b66}
\label{org0bbad96}
Let \(\nIh \in \NN\).
Suppose that \(\cdn[\mGen-{\nIh}](\mvecB) \ge 2\) for \(\mvecB \in \mVct{\nIh}\).
Let
\(\ccM \in \NN \setminus \msupp|\ccE|(\mLex-{\nIh})\) and
\(\ccN \in \msupp|\ccE|(\mLex-{\nIh})\).
If \(\ccM < \ccN\), then \(\xi \le \ccM + 1\), \(\xi < \eta\), \(\ccM \not \in \Theta\), and \(\ccN \in \Theta\).
 
\end{lemma}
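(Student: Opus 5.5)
The plan is to reduce to the case \(\nIh' \le \nIh\) where \(\ccN\) first enters the support, and then compare \(\ccM\), which lies in \(\Cdn[\mGen-{\nIh}](\allzero)\), with \(\ccN\) using the exchange arguments of this section. Let \(\nIl\) be the smallest index with \(\ccN \in \msupp|\ccE|(\word-(\ccp^{\nIl}))\). Then \(\ccM, \ccN \in \Cdn[\mGen-{\nIl - 1}](\allzero)\). Also \(\mGen-{\nIl}<\ccM> = \allzero[0]\), while \(\mGen-{\nIl}<\ccN> = \allzero[\beta]\) with \(\beta \neq 0\); by Lemma \ref{orgf9637b2}, \(\beta = 1\), since \(\allzero[\beta]\) must lie in \(\mVct*{\nIl}\). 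The hypothesis \(\cdn \ge 2\) on \(\mVct{\nIh}\) restricts to \(\mVct{\nIl}\): a column class of \(\mGen-{\nIl}\) is a union of classes of \(\mGen-{\nIh}\), and every extension of a vector of \(\mVct{\nIl}\) lies in \(\mVct{\nIh}\). So we are in the situation of Lemma \ref{org69880d5} at level \(\nIl\), with \(\mvecA = \allzero\), \(\alpha = 0 < \beta = 1\), and \(\ccM < \ccN\).

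First we get \(\ccN \in \Theta\). This comes from Lemma \ref{org06db361}. If \(\ccN \notin \set{\xi}\) and \(\word-[\ccN](\ccp^{\nIl}) = \word-<\ccN>(\ccp^{\nIl})\), that lemma forces \(\ccM > \ccN\), a contradiction. The two coordinate systems agree off \(\eta\), so \(\ccN \in \Theta\). This is exactly the opening line of the proof of Lemma \ref{org69880d5}.

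Next we show \(\xi < \eta\). Suppose instead \(\eta < \xi\). Lemma \ref{org69880d5}(2) gives \(\ccN = \eta\) and \(\word-<\xi>(\ccp^{\nIl}) = \word-<\eta>(\ccp^{\nIl}) = 1\). Since \(\xi > \eta\), I would build a smaller competitor: take \(\bdy = \word-(\ccp^{\nIl}) - \bde_\eta + \bde_\ccM\), written in \(\ccF\)-coordinates, which only changes coordinates \(\eta\) and \(\ccM\). Every earlier word is zero at both \(\ccM\) and \(\eta\), so distances to \(\word-(\cca)\), \(\cca < \ccp^{\nIl}\), are preserved. The top change in \(\ccF\)-coordinates is at \(\eta\) (the \(\xi\)-coordinate is unchanged), where the entry drops by one. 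Hence \(\bdy <_{\ccF} \word-(\ccp^{\nIl})\), contradicting Lemma \ref{org6f41f57}. So \(\xi < \eta\).

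Then Lemma \ref{org69880d5}(1),(3) and Lemma \ref{org45cd32d}(2) give \(\word-[\eta](\ccp^{\nIl}) = 0\) and \(\xi, \eta\) in the same class. In particular \(\xi \in \msupp|\ccE|\), and \(\xi < \eta\).

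For \(\ccM \notin \Theta\), note that \(\ccM = \eta\) is impossible because \(\eta\) is in the support. If \(\ccM = \xi\), then \(\xi\) would lie outside the support, contradicting what was just shown.

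It remains to prove \(\xi \le \ccM + 1\). Suppose \(\xi > \ccM + 1\), and apply the same swap with \(\ccN\) replaced by \(\xi\). We have \(\xi \in \Cdn(\allzero[1])\), \(\ccM \in \Cdn(\allzero[0])\), and \(\ccM < \xi\), so Lemma \ref{org69880d5}(3) yields \(\xi = \ccM + 1\). If \(\xi < \ccM\) there is nothing to prove. The main obstacle is making sure that \(\ccM\), which is outside the support at level \(\nIh\), still falls under Lemma \ref{org69880d5} at level \(\nIl\). This holds because \(\ccM \in \Cdn[\mGen-{\nIl}](\allzero[0])\) directly. A second point to check is that the zero-column count hypothesis is not needed there, since the lemma only assumes the bound on \(\mVct{\nIl}\).
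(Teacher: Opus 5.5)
Your outline matches the paper's: reduce to the level $l$ where $N$ first enters the support, get $N\in\Theta$ from Lemma~\ref{org06db361}, exclude $\eta<\xi$ by an exchange, then apply Lemma~\ref{org69880d5}(3). However, the exchange you use to exclude $\eta<\xi$ fails.

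Write $\mathbf{w}$ for the word at index $p^{l}$, with $F$-coordinates $w_{[i]}$ and $E$-coordinates $w_{\langle i\rangle}$. If $\eta<\xi$ and $w_{\langle\xi\rangle}=w_{\langle\eta\rangle}=1$, then $w_{[\xi]}=1$ and $w_{[\eta]}=w_{\langle\eta\rangle}-w_{[\xi]}=0$; this is also Lemma~\ref{org69880d5}(1). Your competitor is $\mathbf{y}=\mathbf{w}-\mathbf{e}_\eta+\mathbf{e}_M=\mathbf{w}-\mathbf{f}_\eta+\mathbf{f}_M$. It changes the $F$-coordinates only at $M<\eta$, and at the top one it gives $y_{[\eta]}=-1=p-1>0=w_{[\eta]}$. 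So $\mathbf{y}>_F\mathbf{w}$, and Lemma~\ref{org6f41f57} yields no contradiction. The entry does not ``drop by one''; it wraps around to $p-1$. Your distance computation is fine; only the order comparison is wrong.

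The repair is to move the $1$ off $\xi$ instead of off $\eta$. Take $\mathbf{y}=\mathbf{w}-\mathbf{e}_\xi+\mathbf{e}_M=\mathbf{w}-\mathbf{f}_\xi+\mathbf{f}_\eta+\mathbf{f}_M$, which is exactly the paper's choice.
\begin{itemize}
\item Its $F$-coordinates change at $M<\eta<\xi$, with $y_{[\xi]}=0<1=w_{[\xi]}$ at the top, so $\mathbf{y}<_F\mathbf{w}$.
\item In $E$-coordinates only $\xi$ (from $1$ to $0$) and $M$ (from $0$ to $1$) change.
\item Both coordinates vanish on every word of index below $p^{l}$. For $\xi$ this follows from $\eta=N$ vanishing there, via Lemma~\ref{orgedc6ece}(1).
\end{itemize}
Hence all distances to earlier words are preserved, which gives the desired contradiction. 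With this competitor the rest of your argument goes through.

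Also, $M\notin\Theta$ follows directly from Lemma~\ref{org69880d5}(1), since $w_{\langle\xi\rangle}=w_{\langle\eta\rangle}=1\ne0=w_{\langle M\rangle}$. So that step does not depend on the $\xi<\eta$ step.
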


\begin{proof}
 \comment{Proof.}
\label{sec:orgba7835e}
We may assume that \(\ccN \in \Cdn[\mGen-{\nIh}](\mEvec{\nIh})\) and \(\ccM \in \Cdn[\mGen-{\nIh}](\allzero)\).
Since \(\ccM < \ccN\), it follows from Lemma \ref{org69880d5} that \(\ccN \in \Theta\)
and \(\word-<\eta>(\ccp^{\nIh}) = \word-<\xi>(\ccp^{\nIh}) = \word-<\ccN>(\ccp^{\nIh}) = 1\).
Hence \(\ccM \not \in \Theta\) since \(\word-<\ccM>(\ccp^{\nIh}) = 0\).

We show that \(\xi < \eta\).
Assume that \(\eta < \xi\).
It follows from Lemma \ref{org69880d5} that \(\ccN = \eta\).
Let \(\bdy = \word-(\ccp^{\nIh}) - \bdf_\xi + \bdf_\ccM + \bdf_\eta\); then \(\bdy <_{\ccF} \word-(\ccp^{\nIh})\).
\begin{center}
\begin{tabular}{lccc}
 & \(\quad\ccM\quad\) & \(\quad\ccN = \eta\quad\) & \(\quad\xi\quad\)\\
\hline
\(\word-[\cci](\ccp^{\nIh})\) & \(0\) & \(0\) & \(1\)\\
\(\word-<\cci>(\ccp^{\nIh})\) & \(0\) & \(1\) & \(1\)\\
\(\ithComp{\ccy}[\cci]\) & \(1\) & \(1\) & \(0\)\\
\(\ithComp{\ccy}<\cci>\) & \(1\) & \(1\) & \(0\)\\
\end{tabular}
\end{center}
Since \(\ccM, \ccN \in \Cdn[\mGen-{\nIh - 1}](\allzero)\),
it follows that \(\word-<\ccM>(\cca) = \word-<\ccN>(\cca) = 0\)
for \(0 \le \cca < \ccp^{\nIh}\), and
that \(\mDist{\bdy}{\word-(\cca)} \ge \ccd\).
This implies that \(\bdy >_{\ccF} \word-(\ccp^{\nIh})\), a contradiction.
Therefore \(\xi < \eta\).

It remains to show that \(\xi \le \ccM + 1\).
If \(\xi \le \ccM\), then it is obvious.
Suppose that \(\xi \ge \ccM + 1\).
By applying Lemma \ref{org69880d5} for \(\ccM\) and \(\xi\),
we see that \(\xi = \ccM + 1\).
Therefore \(\xi \le \ccM + 1\). 
\end{proof}

\comment{connect}
\label{sec:orgc3cfa31}

\subsection{Linearity breaking}
\label{sec:org48190d7}

In this subsection, we use the following notation.
Let \(\ccp = 3\) and \(\ccd = 3^{\nIk - 1} \ccd'\) with \(\ccd' \ge 2\).
Let \(\cca\) be the smallest integer \(\cca\) such that \(\word(\cca) \neq \word-(\cca)\).
It follows from Theorem \ref{org100bcd6} that \(\cca = 3^{\nIh} + 3^{\nIl}\) for some \(\nIh\) and \(\nIl\),
where \(\nIl \le \nIh\).
Let \(\bdy = \word(3^{\nIh} + 3^{\nIl})\), \(\bdz = \bdy - \word(3^{\nIh})\), and
\begin{align*}
 \ccN &= \max\, \msupp|\ccF|(\bdy - \word(3^\nIh) - \word(3^\nIl)) \\
     &= \max \Set{\cci \in \NN : \ithComp{\ccy}[\cci] \neq \word[\cci](3^{\nIh}) + \word[\cci](3^{\nIl})}.
\end{align*}
By Lemma \ref{org13a9541},
\begin{equation}
\label{equ:linear-breaking-support-1}
 (\word[\ccN](3^{\nIh}), \word[\ccN](3^{\nIl})) = (1, 1) \tor (2, 2).
\end{equation}

\begin{lemma}
 \comment{Lem.}
\label{sec:org7ced33d}
\label{org5e6c6e3}
Suppose that \(\nIh < \nIk\).
If \(\ccN = \eta\), then \(\eta < \xi\).
 
\end{lemma}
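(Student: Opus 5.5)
We argue by contradiction: assume \(\ccN = \eta\) and \(\xi < \eta\). The aim is to contradict \eqref{equ:linear-breaking-support-1}, which says that \(\word[\eta](3^{\nIh})\) and \(\word[\eta](3^{\nIl})\) are equal and nonzero. The tool will be Lemma \ref{org45cd32d}(2), which forces these coefficients to vanish.

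First we check that Lemma \ref{org45cd32d} applies at levels \(\nIh\) and \(\nIl\). Since \(\cca = 3^{\nIh} + 3^{\nIl}\) is the smallest index where \(\word\) and \(\word-\) differ, we have \(\word(3^{\nIi}) = \word-(3^{\nIi})\) for every \(3^{\nIi} < \cca\). In particular this holds for \(\nIi \le \nIh\), because \(3^{\nIh} < \cca\). Moreover \(\nIh < \nIk\) and \(\ccd = 3^{\nIk-1}\ccd'\) with \(\ccd' \ge 2\). Theorem \ref{org56f1397} then shows that \(\mRes(\mLex-{\nIh})\) meets the Griesmer bound, so it is a repeated simplex code. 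Combining Lemma \ref{orgf9637b2} with Corollary \ref{org09650b1} (as in the proof of Theorem \ref{org56f1397}), we get
\[
\cdn[\mGen-{\nIh}](\mvecB) = \ccd' 3^{\nIk - \nIh - 1} \ge 2 \quad \text{for } \mvecB \in \mVct{\nIh}.
\]
The same holds at level \(\nIl \le \nIh\). Thus the hypothesis of Lemma \ref{org45cd32d} is satisfied at both levels \(\nIh\) and \(\nIl\).

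Next we apply Lemma \ref{org45cd32d}(2) under the assumption \(\xi < \eta\). It gives \(\word-[\eta](3^{\nIh}) = 0\) and \(\word-[\eta](3^{\nIl}) = 0\). Since \(\word(3^{\nIh}) = \word-(3^{\nIh})\) and \(\word(3^{\nIl}) = \word-(3^{\nIl})\), this means \(\word[\eta](3^{\nIh}) = \word[\eta](3^{\nIl}) = 0\). But \eqref{equ:linear-breaking-support-1} requires \((\word[\ccN](3^{\nIh}), \word[\ccN](3^{\nIl})) \in \set{(1,1),(2,2)}\) with \(\ccN = \eta\). This is a contradiction, so \(\eta < \xi\).

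The main point to get right is the bookkeeping in the first step: that \(\word\) and \(\word-\) agree on \(3^{\nIh}\) and \(3^{\nIl}\), and that the count \(\cdn \ge 2\) is available at level \(\nIh\). The latter needs \(\nIh \le \nIk - 1\), which is exactly where the hypothesis \(\nIh < \nIk\) is used. If \(\nIh = \nIl\), then \(\cca = 2 \cdot 3^{\nIh}\) and Theorem \ref{org100bcd6} would already give \(\word(\cca) = \word+(\cca)\). Either way, only level \(\nIh\) is needed to run the argument.
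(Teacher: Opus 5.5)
Your proof is correct and follows the paper's argument. \eqref{equ:linear-breaking-support-1} gives \(\word[\eta](3^{\nIh}) \neq 0\), which Lemma \ref{org45cd32d}(2) forbids when \(\xi < \eta\), and you also make explicit that the lemma's hypothesis \(\cdn[\mGen-{\nIh}](\mvecB) \ge 2\) holds via Theorem \ref{org56f1397} and Corollary \ref{org09650b1}, using \(\nIh < \nIk\). Drop your aside that \(\nIl = \nIh\) is excluded: Theorem \ref{org100bcd6} does not give this, since \(\cca = 2 \cdot 3^{\nIh}\) is one of the cases tested in its condition (3) and the paper explicitly allows \(\nIl \le \nIh\). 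Your argument never uses that claim, so the proof is unaffected.
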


\begin{proof}
 \comment{Proof.}
\label{sec:org5d779d8}
It follows from \eqref{equ:linear-breaking-support-1} that
\(\word[\ccN](3^\nIh) = \word[\eta](3^\nIh) \neq 0\).
By Lemma \ref{org45cd32d}, we see that \(\eta < \xi\).
\end{proof}

\begin{lemma}
 \comment{Lem.}
\label{sec:org5c810b0}
\label{orgc29f204}
If \(\nIh < \nIk\), then the minimum distances of \(\mLex-{\nIh - 1}[\bdy]\) and \(\mLex-{\nIl - 1}[\bdz]\) are at least \(\ccd\),
where \(\mLex-{\nIh - 1}[\bdy]\) and \(\mLex-{\nIl - 1}[\bdz]\) are linear codes with generator matrices \(\mGen-{\nIh - 1}[\bdy]\) and \(\mGen-{\nIl - 1}[\bdz]\), respectively.
 
\end{lemma}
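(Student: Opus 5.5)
The plan is to reduce both claims to the defining property of \(\bdy = \word(\cca)\), namely \(\mDist{\bdy}{\word(\ccb)} \ge \ccd\) for \(0 \le \ccb < \cca\), combined with the minimality of \(\cca = 3^{\nIh} + 3^{\nIl}\). By the choice of \(\cca\), we have \(\word(\ccb) = \word-(\ccb)\) for every \(\ccb < \cca\). We also have \(\word-(\ccb \oplus_3 \ccc) = \word-(\ccb) + \word-(\ccc)\) for all \(\ccb, \ccc\).

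Since \(3^{\nIh} < \cca\) and \(3^{\nIl} \le 3^{\nIh} < \cca\), the code \(\mLex-{\nIh - 1}\) equals \(\set{\word(\ccb) : 0 \le \ccb < 3^{\nIh}}\), and it is linear. The same holds for \(\mLex-{\nIl - 1}\). Its minimum distance is at least \(\ccd\), because distinct \(\word(\ccb)\) with \(\ccb < \cca\) are at distance at least \(\ccd\) by construction. The hypothesis \(\nIh < \nIk\) only ensures that we are in the range where these objects are defined and where the earlier lemmas apply; I do not expect it to be used more substantially.

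For \(\mLex-{\nIh - 1}[\bdy]\), a nonzero codeword has the form \(\bdc + t\bdy\) with \(\bdc \in \mLex-{\nIh - 1}\) and \(t \in \FF_3\). If \(t = 0\), the weight bound is the one above. If \(t \neq 0\), multiply by \(t^{-1}\), which preserves Hamming weight. The weight then becomes \(\mDist{\bdy}{-t^{-1}\bdc}\). Here \(-t^{-1}\bdc = \word-(\ccb) = \word(\ccb)\) for some \(\ccb < 3^{\nIh} < \cca\), so the distance is at least \(\ccd\) by the definition of \(\word(\cca)\).

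For \(\mLex-{\nIl - 1}[\bdz]\), take \(\bdc + t\bdz\) with \(t \neq 0\) and \(\bdc \in \mLex-{\nIl - 1}\). As before, its weight is \(\mDist{\bdz}{\bdc'}\) with \(\bdc' = -t^{-1}\bdc = \word-(\ccb')\) for some \(\ccb' < 3^{\nIl}\). Then
\[
\mDist{\bdz}{\bdc'} = \mDist{\bdy - \word(3^{\nIh})}{\word-(\ccb')} = \mDist{\bdy}{\word-(3^{\nIh}) + \word-(\ccb')} = \mDist{\bdy}{\word-(3^{\nIh} \oplus_3 \ccb')}.
\]
Since \(\ccb' < 3^{\nIl} \le 3^{\nIh}\), there is no carry, so \(3^{\nIh} \oplus_3 \ccb' = 3^{\nIh} + \ccb' < 3^{\nIh} + 3^{\nIl} = \cca\). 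This also covers the case \(\nIl = \nIh\). Hence \(\word-(3^{\nIh} + \ccb') = \word(3^{\nIh} + \ccb')\), and the distance is at least \(\ccd\).

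The only delicate point is the no-carry bookkeeping in the last step. One must make sure that \(3^{\nIh} \oplus_3 \ccb'\) is genuinely smaller than \(\cca\), in particular when \(\nIl = \nIh\), and that \(\word\) and \(\word-\) agree at that index, which follows from the minimality of \(\cca\). Everything else is a direct unwinding of the definitions.
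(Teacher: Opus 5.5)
Your proposal is correct and follows essentially the same route as the paper. Minimality of $a = 3^{h} + 3^{l}$ lets you identify the linearized words with the lexicographic ones below $a$. Codewords involving $\bdy$ are then handled by the defining property of $\bdy$, and those involving $\bdz$ by shifting by the word at index $3^{h}$ and using the carry-free sum $3^{h} \oplus_3 b = 3^{h} + b < a$. The paper states this more tersely. Your reduction of a codeword $c + t\bdy$ (or $c + t\bdz$) with $t \neq 0$ to a distance via multiplication by $t^{-1}$, together with the case $t = 0$, just fills in steps the paper leaves implicit.
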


\begin{proof}
 \comment{Proof.}
\label{sec:org199f787}
Let \(0 \le \cca < 3^\nIh + 3^\nIl\).
Since \(\word(\cca) = \word-(\cca)\),
it follows that
\(\mDist{\bdy}{\word-(\cca)} = \mDist{\bdy}{\word(\cca)} \ge \ccd\) for \(0 \le \cca < 3^{\nIh} + 3^{\nIl}\).
Thus the minimum distance of \(\mLex-{\nIh - 1}[\bdy]\) is at least \(\ccd\).

If \(0 \le \cca < 3^{\nIl}\), then \(\cca \oplus_3 3^{\nIh} < 3^{\nIl} + 3^{\nIh}\), and hence
\begin{align*}
\mDist{\bdz}{\word-(\cca)} &= \mBigDist{\bdy - \word-(3^{\nIh})}{\word-(\cca)}\\
&= \mBigDist{\bdy}{\word-(\cca \oplus_3 3^{\nIh})} \ge \ccd.
\end{align*}
Therefore the minimum distance of \(\mLex-{\nIl - 1}[\bdz]\) is at least \(\ccd\).
\end{proof}

\begin{lemma}
 \comment{Lem.}
\label{sec:org0e31367}
\label{orgab95765}
If \(\nIh < \nIk\), then \(\msupp|\ccE|(\bdy) \subseteq \msupp|\ccE|(\mLex-{\nIh})\) and \(\msupp|\ccE|(\bdz) \subseteq \msupp|\ccE|(\mLex-{\nIl})\).
 
\end{lemma}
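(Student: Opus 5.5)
Suppose some coordinate lies outside the target support; the plan is to delete it and contradict minimality in \(<_{\ccF}\). For \(\bdy\), assume \(\ccN' \in \msupp|\ccE|(\bdy) \setminus \msupp|\ccE|(\mLex-{\nIh})\), so \(\word-<\ccN'>(3^{\nIi}) = 0\) for every \(0 \le \nIi \le \nIh\). Define \(\bdy' = \bdy - \ithComp{\ccy}<\ccN'> \bde_{\ccN'}\). Every \(\word(\cca)\) with \(\cca < 3^{\nIh} + 3^{\nIl}\) equals \(\word-(\cca)\), which is a combination of \(\word-(3^{\nIi})\) with \(\nIi \le \nIh\), so it vanishes at \(\ccN'\). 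Hence \(\bdy\) and \(\word(\cca)\) agree at \(\ccN'\) only if \(\ithComp{\ccy}<\ccN'> = 0\). Removing the entry therefore does not decrease the distance, and \(\mDist{\bdy'}{\word(\cca)} \ge \ccd\) for all \(\cca < 3^{\nIh}+3^{\nIl}\). By Lemma \ref{org6f41f57}(1) this gives \(\bdy' \ge_{\ccF} \bdy\).

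To reach a contradiction we need \(\bdy' <_{\ccF} \bdy\). This is immediate when \(\ccN' \notin \Theta\), because then \(\bde_{\ccN'} = \bdf_{\ccN'}\) and the \(\ccF\)-coordinate at \(\ccN'\) drops to \(0\).

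The main obstacle is \(\ccN' \in \Theta\). I would use Remark \ref{org04e980f} together with Lemma \ref{orgedc6ece} to control which of \(\xi,\eta\) can lie outside the support.
\begin{itemize}
\item If \(\ccN' = \eta\), then \(\bde_\eta = \bdf_\eta\), so the same deletion works.
\item If \(\ccN' = \xi\) and \(\xi \notin \msupp|\ccE|(\mLex-{\nIh})\), then Lemma \ref{orgedc6ece}(2) (when \(\xi<\eta\)), or a direct look at the \(\eta\)-coordinate (when \(\eta<\xi\)), shows that \(\eta\) is either also outside the support or lies in it only in a controlled way. In this case I would instead set \(\bdy' = \bdy - \ithComp{\ccy}[\xi]\bdf_\xi\), possibly followed by a correction at \(\eta\). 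This changes \(\bdy\) only at coordinates \(\xi\) and \(\eta\).
\item If \(\eta\) is also outside the support, the Hamming argument above applies to both coordinates at once. The \(\ccF\)-order then drops because the top changed \(\ccF\)-coordinate is \(\xi\), since \(\bdf_\xi\) is the only basis vector touched.
\item If \(\eta \in \msupp|\ccE|(\mLex-{\nIh})\) but \(\xi\) is not, Lemma \ref{orgedc6ece}(2) forces \(\eta<\xi\). Then subtracting \(\ithComp{\ccy}[\xi]\bdf_\xi\) lowers the \(\ccF\)-coordinate at \(\xi\), the largest index changed, while changing only \(\ithComp{\ccy}<\xi>\) to \(0\) and shifting \(\ithComp{\ccy}<\eta>\).
\end{itemize}
For the last sub-case I would follow the strategy of the Case analyses in Lemma \ref{orgf9637b2}. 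If shifting \(\ithComp{\ccy}<\eta>\) could destroy distance, add a compensating \(\bdf_\eta\) multiple restoring \(\ithComp{\ccy}<\eta>\). This stays \(<_{\ccF}\bdy\) since \(\eta<\xi\).

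The statement for \(\bdz\) follows by the same argument applied to \(\bdz = \bdy - \word(3^{\nIh})\). Here Lemma \ref{orgc29f204} supplies \(\mDist{\bdz}{\word-(\cca)} \ge \ccd\) for \(\cca < 3^{\nIl}\). The comparison object is \(\bdz' = \bdz - (\text{entry at } \ccN')\), and \(\bdz' + \word(3^{\nIh})\) is then shown to beat \(\bdy\). More precisely, for \(\ccN' \notin \msupp|\ccE|(\mLex-{\nIl})\), the vector \(\bdy'' = \bdy - \ithComp{\ccz}<\ccN'>\bde_{\ccN'}\) keeps distance \(\ge \ccd\) from every \(\word(\cca)\), \(\cca<3^{\nIh}+3^{\nIl}\). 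To see this, write such \(\cca\) as \(\cca = c_h 3^{\nIh} \oplus_3 \cca'\) with \(\cca'<3^{\nIl}\) or \(\cca'\) small. The distances from \(\bdy\) to the words with \(c_h\neq 1\) still need care: if \(\ccN'\) lies in the support of \(\word(3^{\nIh})\), deleting there may reduce agreement count incorrectly. So the first thing I would try is to restrict to \(\ccN' \notin \msupp|\ccE|(\mLex-{\nIh})\), which the first part already handles. For \(\ccN' \in \msupp|\ccE|(\mLex-{\nIh})\setminus\msupp|\ccE|(\mLex-{\nIl})\), I would instead modify \(\bdz\) so that \(\ithComp{\ccz}<\ccN'>\) becomes \(0\) and compare within the code generated by \(\mGen-{\nIl-1}[\bdz]\) together with \(\word(3^{\nIh})\). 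Lemma \ref{org4a0db5b} shows that the column multiset counts are unchanged where needed.
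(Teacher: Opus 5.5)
Your deletion step runs in the wrong direction, so the proposal does not prove the lemma. You pick \(\ccN' \in \msupp|\ccE|(\bdy)\) at which every \(\word(\cca)\) with \(\cca < 3^{\nIh}+3^{\nIl}\) vanishes. So \(\bdy\) and \(\word(\cca)\) \emph{disagree} at \(\ccN'\), and once you set that entry to \(0\) they agree there. Deleting the entry therefore lowers \(\mDist{\bdy}{\word(\cca)}\) by exactly one for every such \(\cca\), and Lemma \ref{org6f41f57} gives no contradiction. It is inserting a nonzero entry at a coordinate outside all earlier supports that raises distances, as in Lemma \ref{orgedc6ece}, not removing one. Your argument uses only the minimality of \(\bdy\), and it proves too much. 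Run on \(\word(1)\), whose only predecessor is \(\word(0)\), it would show that \(\word(1)\) is supported in \(\Theta\), which is absurd once \(\ccd \ge 3\). Consequently your case analysis on \(\Theta\), which only asks whether the modified vector is \(<_{\ccF}\bdy\), is moot. The part for \(\bdz\), which you leave unresolved, has the same defect.

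The missing ingredient is a global counting argument using the tight column counts \(\cdn[\mGen-{\nIh}](\mvecA) = 3^{\nIk-\nIh-1}\ccd'\) for \(\mvecA \in \mVct{\nIh}\), which come from Theorem \ref{org56f1397}. The paper proves only the \(\bdz\)-part. The \(\bdy\)-part then follows because \(\msupp|\ccE|(\bdy) \subseteq \msupp|\ccE|(\bdz) \cup \msupp|\ccE|(\word(3^{\nIh}))\) and \(\nIl \le \nIh\).

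\begin{itemize}
\item The paper compares an offending \(\ccM \in \msupp|\ccE|(\bdz) \setminus \msupp|\ccE|(\mLex-{\nIl})\) with \(\ccN\). The latter lies in \(\msupp|\ccE|(\mLex-{\nIl})\) by \eqref{equ:linear-breaking-support-1} and Remark \ref{org04e980f}.
\item If \(\ccM > \ccN\), then \(\ithComp{\ccz}[\ccM] = \word[\ccM](3^{\nIl}) = 0\) forces \(\ccM = \eta\). Lemmas \ref{orgedc6ece} and \ref{org0bbad96} then give a contradiction.
\item If \(\ccM < \ccN\), Lemmas \ref{org0bbad96}, \ref{org5e6c6e3}, \ref{org69880d5} and \ref{org5db7ec7} force \(\ccN = \xi < \eta\), \(\nIl = \nIh\) and \(\ccM = \ccN - 1\).
\item In that case choose \(\bdw \in \set{\bdy, \bdz}\) according to whether \(\ithComp{\ccy}<\ccN> = 0\). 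The coordinates of \(\msupp|\ccE|(\mLex-{\nIh - 1}[\bdw])\) outside \(\msupp|\ccE|(\mLex-{\nIh - 1})\) lie in \(\Cdn[\mGen{\nIh}](\mEvec{\nIh}) \cup \set{\ccN - 1}\). They also avoid \(\xi, \eta\), both of which lie in \(\Cdn[\mGen{\nIh}](\mEvec{\nIh})\).
\item So there are at most \(3^{\nIk-\nIh-1}\ccd' - 1\) such coordinates. But Lemma \ref{orgc29f204} and the Griesmer bound require at least \(3^{\nIk-\nIh-1}\ccd'\).
\end{itemize}

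Some count of this kind is unavoidable. A local exchange at a single coordinate cannot show that \(\bdy\) needs no fresh coordinates.
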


\begin{proof}
 \comment{Proof.}
\label{sec:org73907d4}
Theorem \ref{org56f1397} shows that
\(\mLex-{\nIk - 1}\) meets the Griesmer bound.
It follows from Corollary \ref{org09650b1}
that \(\cdn[\mGen-{\nIk - 1}](\mvecA) = \ccd'\) for
\(\mvecA \in \mVct{\nIk - 1}\).
Hence \(\cdn[\mGen-{\nIh}](\mvecA) = 3^{\nIk - \nIh - 1} \ccd'\)
for \(\mvecA \in \mVct{\nIh}\).

Note that \(\mGen-{\nIh} = \mGen{\nIh}\) and
\begin{equation}
\label{equ:linear-breaking-overN}
 \ithComp{\ccy}[\nIi] = \word[\nIi](3^{\nIh}) + \word[\cci](3^{\nIl}),\quad \ithComp{\ccz}[\nIi] = \word[\nIi](3^{\nIl})
\end{equation}
for \(\cci > \ccN\).
By \eqref{equ:linear-breaking-support-1},
\[
 (\word[\ccN](3^{\nIh}), \word[\ccN](3^{\nIl})) = (1, 1) \tor (2, 2).
\]
Thus \(\ccN \in \msupp|\ccF|(\mLex-{\nIl}) \subseteq \msupp|\ccE|(\mLex-{\nIl})\)
by Remark \ref{org04e980f}.
Now assume that there exists \(\ccM \in \msupp|\ccE|(\bdz) \setminus \msupp|\ccE|(\mLex-{\nIl})\).
We see that \(\ccM \neq \ccN\) and \(\ccM \not \in \msupp|\ccF|(\mLex-{\nIl})\).

\resetmycase
\begin{mycase}[\(\ccN < \ccM\)]
 \comment{Case. [\(\ccN < \ccM\)]}
\label{sec:org99a7b48}
Note that \(\ithComp{\ccz}<\ccM> \neq 0\).
Since \(\ccM > \ccN\), it follows that
\(\ithComp{\ccz}[\ccM] = \word[\ccM](3^{\nIl}) = 0\).
Hence \(\ccM = \eta\) and
\(\ithComp{\ccz}<\xi> = \ithComp{\ccz}<\eta> \neq 0\).
Since \(\eta = \ccM \not \in \msupp|\ccE|(\mLex-{\nIl})\)
it follows from Lemma \ref{orgedc6ece} that
\(\xi \not \in \msupp|\ccE|(\mLex-{\nIl})\), 
so
\(\word<\xi>(3^\nIl) = \word[\xi](3^\nIl) = 0 \neq \ithComp{\ccz}[\xi]\).
By the definition of \(\ccN\), we see that \(\xi < \ccN\).
Since \(\xi \not \in \msupp|\ccE|(\mLex-{\nIl})\) and \(\ccN \in \msupp|\ccE|(\mLex-{\nIl})\),
it follows from Lemma \ref{org0bbad96} that \(\xi \not \in \Theta\),
a contradiction.
 
\end{mycase} 

\begin{mycase}[\(\ccN > \ccM\)]
 \comment{Case. [\(\ccN > \ccM\)]}
\label{sec:org02f7caa}
It follows from Lemma \ref{org0bbad96} that \(\xi \le \ccM + 1\), \(\xi < \eta\), and \(\ccN \in \Theta\).
By Lemma \ref{org5e6c6e3}, \(\ccN = \xi\).

For \(\bdw = \bdy, \bdz\), we will compute
\begin{equation}
\label{equ:linear-breaking-target}
\Size{\msupp|\ccE|(\mLex-{\nIh - 1}[\bdw]) \setminus \msupp|\ccE|(\mLex-{\nIh - 1})}
\end{equation}
in two ways,
where \(\mLex-{\nIh - 1}[\bdw]\) is the code with generator matrix \(\mGen-{\nIh - 1}[\bdw]\). 
In the first way, the results will be at least \(3^{\nIk - \nIh - 1} \ccd'\), but in the second way, one of them will be less than \(3^{\nIk - \nIh - 1} \ccd'\).

\resetstep
 
 \begin{step}
 \comment{Step.}
\label{sec:orgd9a4ebe}
We show that \(\nIl = \nIh\) and
\begin{equation}
\label{equ:linear-breaking-N}
 \ccN \in \Cdn[\mGen{\nIh}](\mEvec{\nIh}).
\end{equation}
From Lemma \ref{org5db7ec7}, it suffices to show that
\[
 \ccN = \max \set{\cci \in \NN : \word[\cci](3^{\nIl}) \neq 0}.
\]
For \(\nIi > \ccN\),
we see that \(\word[\nIi](3^{\nIl}) = 0\).
Indeed, if \(\nIi = \eta\), then \(\word[\eta](3^{\nIl}) = 0\) since \(\xi < \eta\).
Suppose that \(\nIi \neq \eta\).
Since \(\ccM \not \in \msupp|\ccE|(\mLex-{\nIl})\),
it follows from Lemma \ref{org0bbad96} that \(\msupp|\ccE|(\mLex-{\nIl}) \setminus \Theta \subseteq \NN_{< \ccM}\),
and hence \(\word[\nIi](3^{\nIl}) = 0\).
Therefore \(\ccN = \max \set{\nIi \in \NN : \word[\nIi](3^{\nIl}) \neq 0}\).
Lemma \ref{org5db7ec7} implies that
\(\word[\ccN](3^{\nIm}) = \delta_{\nIl, \nIm}\).
Since \(\word[\ccN](3^{\nIh}) \neq 0\),
it follows that \(\nIh = \nIl\).
 
\end{step}

 \begin{step}
 \comment{Step.}
\label{sec:orgc7b8a06}
Let us compute \eqref{equ:linear-breaking-target}.
Lemma \ref{orgc29f204} shows that \(\mLex-{\nIh - 1}[\bdw]\) is
a linear codes of dimension \(\nIh + 1\) whose minimum distance is at least \(\ccd\).
By the Griesmer bound, we see that
\[
\Size{\msupp|\ccE|(\mLex-{\nIh - 1}[\bdw])} \ge 
 \ccg_3(\nIh + 1, \ccd)
 = (3^{\nIk - 1} + \cdots + 3^{\nIk - \nIh - 1}) \ccd'.
\]
Since \(\mLex-{\nIh - 1}\) meets the Griesmer bound, it follows that
\begin{equation}
\label{equ:linear-breaking-bound}
\Size{\msupp|\ccE|(\mLex-{\nIh - 1}[\bdw])
  \setminus \msupp|\ccE|(\mLex-{\nIh - 1})} \ge 3^{\nIk - \nIh - 1} \ccd'.\end{equation}
 
\end{step}

 \begin{step}
 \comment{Step.}
\label{sec:org3eda583}
From Lemma \ref{org69880d5}, we see that \(\ccN = \ccM + 1\).
Since \(\ccM\) is an arbitrary element of \(\msupp|\ccE|(\bdz) \setminus \msupp|\ccE|(\mLex-{\nIh})\),
it follows that
\begin{equation}
\label{equ:linear-breaking-supp-z}
\msupp|\ccE|(\bdz) \setminus \msupp|\ccE|(\mLex-{\nIh}) = \set{\ccN - 1}.
\end{equation}
Recall that \(\bdz = \bdy - \word(3^{\nIh})\).
This implies that \(\msupp|\ccE|(\bdy) \subseteq \msupp|\ccE|(\bdz) \cup \msupp|\ccE|(\mLex-{\nIh})\).
Therefore
\begin{equation}
\label{equ:linear-breaking-supp-y}
\msupp|\ccE|(\bdy) \setminus \msupp|\ccE|(\mLex-{\nIh}) \subseteq \set{\ccN - 1}.
\end{equation}
 
\end{step}

 \begin{step}
 \comment{Step.}
\label{sec:orgc2ffce8}
For \(\bdw \in \set{\bdy, \bdz}\), we show that
\begin{equation}
\label{equ:linear-breaking-supp-w}
  \msupp|\ccE|(\mLex-{\nIh - 1}[\bdw]) \setminus \msupp|\ccE|(\mLex-{\nIh - 1}) 
  \subseteq \Cdn[\mGen{\nIh}](\mEvec{\nIh}) \cup \set{\ccN - 1}.
\end{equation}
Note that
\[
  \msupp|\ccE|(\mLex-{\nIh - 1}[\bdw]) \setminus \msupp|\ccE|(\mLex-{\nIh - 1}) 
  = \msupp|\ccE|(\bdw) \setminus  \msupp|\ccE|(\mLex-{\nIh - 1}).
\]
By \eqref{equ:linear-breaking-supp-z} and \eqref{equ:linear-breaking-supp-y}, we see that
\[
\msupp|\ccE|(\bdw) \setminus \msupp|\ccE|(\mLex-{\nIh}) \subseteq \set{\ccN - 1}.
\]
Since
\[
 \msupp|\ccE|(\mLex-{\nIh}) \setminus \msupp|\ccE|(\mLex-{\nIh - 1}) = \Cdn[\mGen{\nIh}](\mEvec{\nIh}),
\]
it follows that
\[
  \msupp|\ccE|(\mLex-{\nIh - 1}[\bdw]) \setminus \msupp|\ccE|(\mLex-{\nIh - 1}) 
  \subseteq \Cdn[\mGen{\nIh}](\mEvec{\nIh}) \cup \set{\ccN - 1}.
\]
 
\end{step}

 \begin{step}
 \comment{Step.}
\label{sec:org289eb78}

Define \(\bdw\) as follows:
\begin{equation}
\label{equ:linear-breaking-def-w}
 \bdw = \begin{cases}
 \bdy & \tif \ithComp{\ccy}<\ccN> = 0, \\
 \bdz & \tif \ithComp{\ccy}<\ccN> \neq 0. \\
 \end{cases}
\end{equation}
We prove that
\begin{align*}
  \Size{\msupp|\ccE|(\mLex-{\nIh - 1}[\bdw]) \setminus \msupp|\ccE|(\mLex-{\nIh - 1})}  < 3^{\nIk - \nIh - 1} \ccd'.
\end{align*}
We first show that \(\ithComp{\ccw}<\xi> = \ithComp{\ccw}<\eta> = 0\).
Since \(\eta > \xi = \ccN\),
it follows from \eqref{equ:linear-breaking-overN} and Lemma \ref{org45cd32d} that
\[\ithComp{\ccw}[\eta] = \begin{cases}
 \word[\eta](3^{\nIh}) + \word[\eta](3^{\nIl}) = 0 & \tif \bdw = \bdy \\
 \word[\eta](3^{\nIl}) = 0 & \tif \bdw = \bdz. \\
\end{cases}
\]
Hence \(\ithComp{\ccw}<\eta> = \ithComp{\ccw}<\xi>\).
If \(\bdw = \bdy\), then \(\ithComp{\ccw}<\xi> = \ithComp{\ccy}<\ccN> = 0\)
by \eqref{equ:linear-breaking-def-w}.
Suppose that \(\bdw = \bdz\). Then \(\ithComp{\ccy}<\xi> = \ithComp{\ccy}<\ccN>  \neq 0\).
Since \(\ithComp{\ccy}<\xi> < \word<\xi>(3^\nIh) + \word<\xi>(3^\nIl) = 1 + 1\),
it follows that \(\ithComp{\ccy}<\xi> = 1\),
and hence \(\ithComp{\ccz}<\xi> = 0\).
Therefore \(\ithComp{\ccw}<\xi> = \ithComp{\ccw}<\eta> = 0\).
By \eqref{equ:linear-breaking-supp-w},
\[
  \msupp|\ccE|(\mLex-{\nIh - 1}[\bdw]) \setminus \msupp|\ccE|(\mLex-{\nIh - 1}) 
  \subseteq \Cdn[\mGen{\nIh}](\mEvec{\nIh}) \cup \set{\ccN - 1} \setminus \set{\xi, \eta}.
\]
From \eqref{equ:linear-breaking-N},
we see that \(\xi, \eta \in \Cdn[\mGen-{\nIh}](\mEvec{\nIh})\), and hence
\[
  \Size{\msupp|\ccE|(\mLex-{\nIh - 1}[\bdw]) \setminus \msupp|\ccE|(\mLex-{\nIh - 1})}
  \le 3^{\nIk - \nIh - 1} \ccd' + 1 - 2 < 3^{\nIk - \nIh - 1} \ccd',
\]
contrary to \eqref{equ:linear-breaking-bound}. 
Therefore we conclude that \(\msupp|\ccE|(\bdz) \subseteq \msupp|\ccE|(\mLex-{\nIl})\) and \(\msupp|\ccE|(\bdy) \subseteq \msupp|\ccE|(\mLex-{\nIh})\). 
\qedhere
 
\end{step} 
 
\end{mycase} 
\end{proof}

\begin{corollary}
 \comment{Cor.}
\label{sec:orgd1bf050}
\label{org7f53679}
If \(\nIh < \nIk\), then \(\mRes(\mLex-{\nIh - 1}[\bdy])\) and \(\mRes(\mLex-{\nIl - 1}[\bdz])\) meet the Griesmer bound.
 
\end{corollary}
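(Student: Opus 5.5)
The plan is to combine the Griesmer lower bound, which Lemma \ref{orgc29f204} makes available, with an upper bound on the length, which Lemma \ref{orgab95765} supplies by confining supports. I treat \(\bdy\) in detail; the argument for \(\bdz\) is the same with \(\nIl\) in place of \(\nIh\).

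First, the code \(\mLex-{\nIh - 1}[\bdy]\) has dimension \(\nIh + 1\). The rows \(\word-(3^0), \dots, \word-(3^{\nIh - 1})\) are independent by Lemma \ref{org5db7ec7}. Moreover, \(\bdy\) is not in their span, since otherwise \(\bdy = \word-(\ccc)\) for some \(\ccc < 3^{\nIh}\), and this contradicts \(\mDist{\bdy}{\word-(\ccc)} \ge \ccd\).

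By Lemma \ref{orgc29f204}, the minimum distance is at least \(\ccd = 3^{\nIk-1}\ccd'\). The Griesmer bound (Theorem \ref{org496dbdd}) then gives
\[
\Size{\msupp|\ccE|(\mLex-{\nIh - 1}[\bdy])} \ge \ccg_3(\nIh+1,\ccd) = (3^{\nIk-1}+\cdots+3^{\nIk-\nIh-1})\ccd'.
\]
Here the minimum distance may exceed \(\ccd\), but the bound is monotone in the distance, so the inequality still holds.

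For the reverse inequality, Lemma \ref{orgab95765} gives \(\msupp|\ccE|(\bdy) \subseteq \msupp|\ccE|(\mLex-{\nIh})\). Hence
\[
\msupp|\ccE|(\mLex-{\nIh - 1}[\bdy]) \subseteq \msupp|\ccE|(\mLex-{\nIh}).
\]
By Theorem \ref{org56f1397} applied with \(\nIh+1 \le \nIk\), the code \(\mRes(\mLex-{\nIh})\) has length \(\ccg_3(\nIh+1,\ccd)\). Its length is computed, as in the proof of that theorem, as \(\sum_{\mvecA \in \mVct{\nIh}} \cdn[\mGen-{\nIh}](\mvecA)\), which equals \((3^{\nIk-1}+\cdots+3^{\nIk-\nIh-1})\ccd'\).

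The two inequalities force equality. So the length of \(\mRes(\mLex-{\nIh - 1}[\bdy])\) is exactly \(\ccg_3(\nIh+1, \ccd)\). The minimum distance must then also be exactly \(\ccd\): a larger distance \(\ccd_1 > \ccd\) would require length at least \(\ccg_3(\nIh+1,\ccd_1) > \ccg_3(\nIh+1,\ccd)\). Therefore the code meets the Griesmer bound.

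For \(\bdz\) the same two steps apply. Lemma \ref{orgc29f204} gives distance at least \(\ccd\) for the \((\nIl+1)\)-dimensional code \(\mLex-{\nIl - 1}[\bdz]\). Here \(\bdz \notin \operatorname{span}\{\word-(3^i)\}_{i<\nIl}\) by the same distance argument. Lemma \ref{orgab95765} gives \(\msupp|\ccE|(\bdz) \subseteq \msupp|\ccE|(\mLex-{\nIl})\), and Theorem \ref{org56f1397} with \(\nIl \le \nIh < \nIk\) bounds the length from above.

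The main obstacle is the upper bound on the length, which rests entirely on Lemma \ref{orgab95765}. Since that lemma is available, only bookkeeping remains. The one thing to check is that the length of \(\mRes(\mLex-{\nIh})\) is counted via \(\msupp|\ccE|\), which is consistent with the definition of \(\mRes\).
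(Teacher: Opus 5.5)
Your proof is correct and follows the paper's argument. The paper also bounds the length of \(\mRes(\mLex-{\nIh - 1}[\bdy])\) from below by the Griesmer bound, using the distance from Lemma \ref{orgc29f204}, and from above by the length \(\ccg_3(\nIh + 1, \ccd)\) of \(\mRes(\mLex-{\nIh})\), using Lemma \ref{orgab95765} and Theorem \ref{org56f1397}; the case of \(\bdz\) is handled the same way. Your additional checks, that the dimension is \(\nIh+1\) and that the minimum distance is exactly \(\ccd\) because \(\ccg_3\) is strictly increasing in the distance, spell out what the paper's two-line proof leaves implicit.
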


\begin{proof}
 \comment{Proof.}
\label{sec:org55ce329}
By Lemma \ref{orgc29f204}, 
the minimum distances of
\(\mLex-{\nIh - 1}[\bdy]\) and \(\mLex-{\nIl - 1}[\bdz]\) are at least \(\ccd\).
Lemma \ref{orgab95765} shows that
\[
 \msupp|\ccE|(\mLex-{\nIh - 1}[\bdy]) \subseteq \msupp|\ccE|(\mLex-{\nIh})
\tand
\msupp|\ccE|(\mLex-{\nIl - 1}[\bdz]) \subseteq \msupp|\ccE|(\mLex-{\nIl}).
\]
Since \(\mRes(\mLex-{\nIh})\) and \(\mRes(\mLex-{\nIl})\) 
meet the Griesmer bound by Theorem \ref{org56f1397},
it follows that \(\mRes(\mLex-{\nIh - 1}[\bdy])\) and
\(\mRes(\mLex-{\nIl - 1}[\bdz])\) also attain the Griesmer bound.
\end{proof}

\begin{lemma}
 \comment{Lem.}
\label{sec:orgf17e8d6}
\label{org3ed1c0d}
Suppose that \(\nIh < \nIk\).
Let
\[\mVct{\nIh - 1}[\bdy] =
 \begin{cases}
 \mVct{\nIh} & \tif \nIl < \nIh,\\
 \mVct{\nIh} \cup \set{2 \mEvec{\nIh}} \setminus \set{\mEvec{\nIh}} & \tif \nIl = \nIh, \\
 \end{cases}
\]
and
\[
 \mVct*{\nIh - 1}[\bdy] = \mVct{\nIh - 1}[\bdy] \cup \set{\allzero^{\marray{\nIh}}}.
\]
If \(\mvecB[\alpha] \in \FF_\ccp^{\marray{\nIh}} \setminus \mVct*{\nIh - 1}[\bdy]\),
then \(\Cdn[\mGen{\nIh - 1}[\bdy]](\mvecB[\alpha]) = \emptyset\).
In particular, 
\(\cdn[\mGen{\nIh - 1}[\bdy]](\mvecA) = 3^{\nIk - \nIh - 1} \ccd'\)
for \(\mvecA \in \mVct{\nIh - 1}[\bdy]\).
 
\end{lemma}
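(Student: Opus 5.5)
The plan is to run the argument of Lemma \ref{orgf9637b2} on the matrix \(\mGen{\nIh - 1}[\bdy]\) instead of \(\mGen-{\nIh}\), and then apply Corollary \ref{org09650b1} via Corollary \ref{org7f53679}. First note that \(\mGen{\nIh - 1} = \mGen-{\nIh - 1}\), since \(3^{\nIh} + 3^{\nIl}\) is the first index where linearity breaks. So every column \(\mvecB\) occurring in \(\mGen{\nIh-1}\) lies in \(\mVct*{\nIh - 1}\) by Lemma \ref{orgf9637b2}, and we only need to control the last entry \(\alpha = \ithComp{\ccy}<\cci>\). By Lemma \ref{orgab95765}, \(\msupp|\ccE|(\bdy) \subseteq \msupp|\ccE|(\mLex-{\nIh})\). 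Hence a column with \(\mvecB = \allzero\) and \(\alpha \ne 0\) must sit in \(\Cdn[\mGen{\nIh}](\mEvec{\nIh})\). It therefore suffices to show two things. If \(\mvecB \neq \allzero\), a column \(\mvecB[\alpha]\) may carry any \(\alpha\), so the real content is the zero-prefix case. If \(\mvecB = \allzero\), we must have \(\alpha = 1\) when \(\nIl < \nIh\), and \(\alpha = 2\) when \(\nIl = \nIh\).

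For \(\mvecB = \allzero\), the coordinates \(\cci \in \Cdn[\mGen{\nIh}](\mEvec{\nIh})\) are exactly where \(\word(3^{\nIh})\) has entry \(1\) and the earlier generators vanish.

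When \(\nIl < \nIh\), on these coordinates \(\word(3^{\nIl})\) is zero. Suppose \(\ithComp{\ccy}<\cci> \in \{0, 2\}\) for some such \(\cci \notin \Theta\). We replace that entry by \(1\) and compare. Since \(\bdy - \word(3^{\nIh})\) is then closer to a lexicographically smaller word, we use exchanges as in the Order Lemma \ref{orgfc26eca}: swap values between \(\cci\) and some \(\ccN' \in \Cdn(\mEvec{\nIh})\) that carries the larger value. This keeps all counts \(\cdn[\mGen{\nIh - 1}[\bdy]](\cdot)\) unchanged, so by Lemma \ref{org4a0db5b} the distances to \(\word(\cca)\), \(\cca < 3^{\nIh}+3^{\nIl}\), are preserved, and the modified word is smaller, contradicting the minimality of \(\bdy\). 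A cleaner route is counting. By Corollary \ref{org7f53679} the code \(\mRes(\mLex-{\nIh-1}[\bdy])\) meets the Griesmer bound. Corollary \ref{orgb141d90} (with \(\ccd \le 3^{\nIk-1}\ccd'\)) gives \(\cdn \le 3^{\nIk-\nIh-1}\ccd'\) for each column type. The total support length is \(\sum_{\mvecA\in\mVct{\nIh}}\), which is the Griesmer length \(\frac{3^{\nIh+1}-1}{2}\cdot 3^{\nIk-\nIh-1}\ccd'\). Since the supports coincide, exactly \(\frac{3^{\nIh+1}-1}{2}\) projective classes each receive the maximal count, and no column is a multiple of another.

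This multiplicity-freeness is the key consequence. In \(\mGen{\nIh - 1}[\bdy]\) the columns \(\mvecA\) and \(2\mvecA\) cannot both appear, because the \(\Cdn\) of a column and that of its scalar multiple add up in the Griesmer count (as in the proof of Corollary \ref{orgb141d90}, after a change of basis). So on each prefix class \(\mvecB \neq \allzero\), where every one of the three extensions \(\mvecB[0], \mvecB[1], \mvecB[2]\) is projectively distinct, all three counts are forced to be equal. With \(\mvecB = \allzero\), only one of \(\mEvec{\nIh}\) and \(2\mEvec{\nIh}\) can occur.

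The remaining task is to decide which one occurs. For \(\nIl<\nIh\) we take \(\cci = \ccN\) from \eqref{equ:linear-breaking-support-1}. This gives \(\ccN > \ccM_{\nIh}\)-type information, and by Lemma \ref{org5db7ec7} the entry there is \(\word(3^{\nIh})\), which is \(1\), so \(\mEvec{\nIh}\) appears. For \(\nIl=\nIh\), \(\bdy\) agrees with \(2\word(3^{\nIh})\) above \(\ccN\), and the maximal index of \(\Cdn(\mEvec{\nIh})\) carries the entry \(2\), so \(2\mEvec{\nIh}\) appears.

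The main obstacle is justifying the scalar-multiple additivity: Corollary \ref{orgb141d90} is stated for identical columns, not for proportional ones. I would first try to re-derive it via a basis change (multiplying a row of the generator matrix), which is harmless because only the code matters. After that, the final equality \(\cdn = 3^{\nIk-\nIh-1}\ccd'\) follows from Corollary \ref{org09650b1} with \(\ccV = \mVct{\nIh-1}[\bdy]\).
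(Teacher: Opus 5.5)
Your reduction is fine: columns of \(\mGen{\nIh - 1}[\bdy]\) with nonzero prefix automatically lie in \(\mVct{\nIh - 1}[\bdy]\), and a column with zero prefix and nonzero last entry sits at some \(\ccL \in \Cdn[\mGen{\nIh}](\mEvec{\nIh})\). The gap is in how you settle that zero-prefix case.

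Griesmer counting cannot do it. A projective version of Corollary \ref{orgb141d90} is true, via column scaling rather than row operations. But it yields at most \(\cdn[\mGen{\nIh - 1}[\bdy]](\mEvec{\nIh}) + \cdn[\mGen{\nIh - 1}[\bdy]](2 \mEvec{\nIh}) = 3^{\nIk - \nIh - 1} \ccd'\). It does not yield ``no column is a multiple of another''. Scaling one column preserves all weights, so Griesmer codes can contain both \(\mvecA\) and \(2\mvecA\). For example, the ternary code generated by \((1,2,0,0,1,1,1,1)\) and \((0,0,1,1,1,1,2,2)\) has every nonzero weight equal to \(6\) and length \(8 = \ccg_3(2,6)\), yet its first two columns are \((1,0)^{T}\) and \((2,0)^{T}\).

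Hence ``only one of \(\mEvec{\nIh}\) and \(2\mEvec{\nIh}\) can occur'' is unproved, and it is precisely the first assertion of the lemma. No weight count can decide how the class \(\{\mEvec{\nIh}, 2\mEvec{\nIh}\}\) is split. Your ``decide which one occurs'' step does not repair this: nothing forces \(\ccN\) into \(\Cdn[\mGen{\nIh}](\mEvec{\nIh})\), and exhibiting one column of the right type would not exclude the other. Likewise, your exchange sketch only swaps entries of \(\bdy\) between coordinates with identical columns in \(\mGen{\nIh}\), which preserves every relevant distance. Such swaps give ordering information, as in Lemma \ref{orgfc26eca}, never a restriction on the values themselves.

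The missing idea is to use \(\bdz = \bdy - \word(3^{\nIh})\) and the second inclusion of Lemma \ref{orgab95765}, which you never invoke. For \(\ccL \in \Cdn[\mGen{\nIh}](\mEvec{\nIh})\) with \(\ithComp{\ccy}<\ccL> = \alpha \neq 0\), one has \(\ithComp{\ccz}<\ccL> = \alpha - 1\).

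\emph{Case \(\nIl < \nIh\).} If \(\alpha = 2\), then \(\ccL \in \msupp|\ccE|(\bdz) \subseteq \msupp|\ccE|(\mLex-{\nIl})\). This is impossible, because \(\ccL \in \Cdn[\mGen{\nIh - 1}](\allzero)\) and \(\nIl \le \nIh - 1\).

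\emph{Case \(\nIl = \nIh\).} If \(\alpha = 1\), then \(\ithComp{\ccz}<\ccL> = 0\), so \(\ccL\) is a zero column of \(\mGen{\nIh - 1}[\bdz]\) although \(\ccL \in \msupp|\ccE|(\mLex-{\nIh})\). This contradicts \(\msupp|\ccE|(\mLex-{\nIh - 1}[\bdz]) = \msupp|\ccE|(\mLex-{\nIh})\). The inclusion is Lemma \ref{orgab95765}, and equality holds because \(\mRes(\mLex-{\nIh - 1}[\bdz])\) meets the Griesmer bound (Corollary \ref{org7f53679}), so both supports have size \(\ccg_3(\nIh + 1, \ccd)\).

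After that, the count follows from Corollary \ref{org09650b1} with \(\ccV = \mVct{\nIh - 1}[\bdy]\), as you planned. No projective strengthening of Corollary \ref{orgb141d90} is needed.
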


\begin{proof}
 \comment{Proof.}
\label{sec:org10edf21}
If \(\mvecB \not \in \mVct*{\nIh - 1}\), then
\(\Cdn[\mGen{\nIh - 1}[\bdy]](\mvecB[\alpha]) \subseteq \Cdn[\mGen{\nIh - 1}](\mvecB) = \emptyset\).
Suppose that \(\mvecB = \allzero \in \FF_{\ccp}^{\marray{\nIh - 1}}\).
Assume that there exists \(\ccL \in \Cdn[\mGen{\nIh - 1}[\bdy]](\allzero^{\alpha})\).
Since \(\mvecB[\alpha] \neq \allzero\), it follows that \(\ccL \in \msupp|\ccE|(\bdy) \subseteq \msupp|\ccE|(\mLex-{\nIh})\).
Therefore \(\ccL \in \Cdn[\mGen{\nIh}](\mEvec{\nIh})\) since \(\ccL \in \Cdn[\mGen{\nIh - 1}](\allzero)\).
Moreover,
\[
 \ithComp{\ccz}<\ccL> = \ithComp{\ccy}<\ccL> - \word<\ccL>(\ccp^\nIh) = \alpha - 1.
\]

\resetmycase
\begin{mycase}[\(\nIl < \nIh\)]
 \comment{Case. [\(\nIl < \nIh\)]}
\label{sec:org8540104}

Since \(\allzero^\alpha \not \in \mVct*{\nIh}\), we see that \(\alpha = 2\).
Hence \(\ithComp{\ccz}<\ccL> = \alpha - 1 = 1\) and \(\ccL \in \msupp|\ccE|(\bdz)\).
Since \(\ccL \in \Cdn[\mGen{\nIh - 1}](\allzero)\) and \(\nIl < \nIh\), it follows that \(\ccL \not \in \msupp|\ccE|(\mLex-{\nIl})\),
however, \(\msupp|\ccE|(\bdz) \subseteq \msupp|\ccE|(\mLex-{\nIl})\), a contradiction.
 
\end{mycase} 

\begin{mycase}[\(\nIl = \nIh\)]
 \comment{Case. [\(\nIl = \nIh\)]}
\label{sec:org5247225}

Since \(\allzero^\alpha \not \in \mVct*{\nIh - 1}[\bdy]\), we see that \(\alpha = 1\).
Hence \(\ithComp{\ccz}<\ccL> = \alpha - 1 = 0\)  and \(\ccL \not \in \msupp|\ccE|(\bdz)\).
Since \(\nIl = \nIh\) and \(\ccL \in \Cdn[\mGen{\nIh}](\mEvec{\nIh})\), it follows that
\(\ccL \not \in \msupp|\ccE|(\mLex-{\nIh - 1}[\bdz])\)
and \(\ccL  \in \msupp|\ccE|(\mLex-{\nIh})\), 
contrary to \(\msupp|\ccE|(\mLex-{\nIh - 1}[\bdz]) = \msupp|\ccE|(\mLex-{\nIh})\).

Therefore 
\(\Cdn[\mGen{\nIh - 1}[\bdy]](\mvecB[\alpha]) = \emptyset\)
for \(\mvecB[\alpha] \in \FF_\ccp^{\marray{\nIh}} \setminus \mVct*{\nIh - 1}[\bdy]\).
Since \(\Size{\mVct{\nIh - 1}[\bdy]} = \Size{\mVct{\nIh}}\),
it follows from Corollary \ref{org09650b1} that 
\(\cdn[\mGen{\nIh - 1}[\bdy]](\mvecA) = 3^{\nIk - \nIh - 1} \ccd'\) for \(\mvecA \in \mVct{\nIh - 1}[\bdy]\).
\hspace*{\fill} \(\qedhere\)
 
\end{mycase} 
\end{proof}

\comment{connect}
\label{sec:org328cf82}
We will examine the components of \(\bdy\) and \(\bdz\), ultimately leading to a contradiction.

\begin{lemma}
 \comment{Lem.}
\label{sec:orgf976a91}
\label{org85cba49}
Suppose that \(\nIh < \nIk\).

\begin{enumerate}
\item Let \(0 \le \nIm \le \nIh\) and \(\mvecB \in \mVct*{\nIm}\). 
If \(\Cdn[\mGen{\nIm}](\mvecB) \subseteq \ZZ_{> \ccN}\), then
\(\ithComp{\ccy}<\nIi> = \word<\nIi>(3^{\nIh}) + \word<\nIi>(3^{\nIl})\)
for \(\nIi \in \Cdn[\mGen{\nIm}](\mvecB)\).
\item If \(\Cdn[\mGen{\nIh}](\mvecA[\alpha]) \subseteq \ZZ_{> \ccN}\), then
\(\Cdn[\mGen{\nIh}](\mvecA[\alpha]) = \Cdn[\mGen{\nIh - 1}[\bdy]](\mvecA[\alpha + \gamma])\),
where \(\gamma\) is the \(\nIl\)-th component of \(\mvecA[\alpha]\); in particular,
\(\ithComp{\ccy}<\nIi> \neq \alpha +  \gamma\) for \(\nIi \in \Cdn[\mGen{\nIh - 1}](\mvecA) \cap \ZZ_{\le \ccN}\).
\item If \(\Cdn[\mGen{\nIl}](\mvecA[\alpha]) \subseteq \ZZ_{> \ccN}\), then
\(\Cdn[\mGen{\nIl}](\mvecA[\alpha]) = \Cdn[\mGen{\nIl - 1}[\bdz]](\mvecA[\alpha])\);
in particular, \(\ithComp{\ccz}<\nIi> \neq \alpha\) for \(\nIi \in \Cdn[\mGen{\nIl - 1}](\mvecA) \cap \ZZ_{\le \ccN}\).
\end{enumerate}
 
\end{lemma}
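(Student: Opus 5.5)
The plan is to prove (1) directly from the definition of \(\ccN\). Then (2) and (3) follow by comparing column multiplicities: the Griesmer bound, via Corollaries \ref{orgb141d90} and \ref{org7f53679} together with Lemma \ref{org3ed1c0d}, gives matching sizes. Throughout, \(\nIh < \nIk\), so \(\mGen{\nIh} = \mGen-{\nIh}\). By Theorem \ref{org56f1397} and Corollary \ref{org09650b1}, \(\cdn[\mGen{\nIm}](\mvecA) = 3^{\nIk - \nIm - 1}\ccd'\) for \(\mvecA \in \mVct{\nIm}\).

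For (1), let \(\cci \in \Cdn[\mGen{\nIm}](\mvecB)\), so \(\cci > \ccN\). By definition of \(\ccN\), \(\ithComp{\ccy}[\cci] = \word[\cci](3^{\nIh}) + \word[\cci](3^{\nIl})\). The \(\ccE\)- and \(\ccF\)-coordinates agree except at \(\eta\), where \(\ithComp{\ccx}<\eta> = \ithComp{\ccx}[\eta] + \ithComp{\ccx}[\xi]\). So the claim is immediate for \(\cci \neq \eta\). If \(\cci = \eta\), we also need \(\xi > \ccN\), so that the \(\xi\)-coordinates satisfy the same relation.
\begin{itemize}
\item If \(\eta < \xi\), then \(\xi > \eta > \ccN\).
\item If \(\xi < \eta\), Lemma \ref{org45cd32d}(2), applied inductively at each level \(0,\dots,\nIm\) (its hypothesis holds since all multiplicities are \(\ge \ccd' \ge 2\)), shows that the columns \(\xi\) and \(\eta\) of \(\mGen{\nIm}\) coincide. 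Hence \(\xi \in \Cdn[\mGen{\nIm}](\mvecB) \subseteq \ZZ_{>\ccN}\).
\end{itemize}
In both cases the \(\ccE\)-coordinate at \(\eta\) satisfies the additive relation. This case check on \(\eta\) is the only delicate point of (1).

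For (2), take \(\cci \in \Cdn[\mGen{\nIh}](\mvecA[\alpha])\). Then \(\word<\cci>(3^{\nIh}) = \alpha\) and \(\word<\cci>(3^{\nIl}) = \gamma\); when \(\nIl = \nIh\), we have \(\gamma = \alpha\). By (1), \(\ithComp{\ccy}<\cci> = \alpha + \gamma\), which gives the inclusion \(\Cdn[\mGen{\nIh}](\mvecA[\alpha]) \subseteq \Cdn[\mGen{\nIh - 1}[\bdy]](\mvecA[\alpha+\gamma])\). For the reverse inclusion I compare sizes. The zero column is excluded, since \(\Cdn(\allzero)\) is infinite and cannot lie in \(\ZZ_{>\ccN}\) together with small indices; more simply, the hypothesis forces \(\mvecA[\alpha] \neq \allzero\) by Lemma \ref{orgf9637b2}. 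The left side then has size \(3^{\nIk - \nIh - 1}\ccd'\). The vector \(\mvecA[\alpha+\gamma]\) lies in \(\mVct{\nIh-1}[\bdy]\); in the case \(\nIl = \nIh\), \(\mvecA = \allzero\), \(\alpha = 1\) it becomes \(2\mEvec{\nIh}\). So by Lemma \ref{org3ed1c0d} the right side also has size \(3^{\nIk - \nIh - 1}\ccd'\). (Alternatively, Corollary \ref{orgb141d90} applied to the Griesmer code of Corollary \ref{org7f53679} gives the upper bound \(\le 3^{\nIk-\nIh-1}\ccd'\).) Equality of finite sets follows. For the ``in particular'', an index \(\cci \in \Cdn[\mGen{\nIh-1}](\mvecA)\) with \(\cci \le \ccN\) is not in \(\Cdn[\mGen{\nIh}](\mvecA[\alpha])\), hence not in \(\Cdn[\mGen{\nIh-1}[\bdy]](\mvecA[\alpha+\gamma])\). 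Since its first \(\nIh\) entries are \(\mvecA\), this forces \(\ithComp{\ccy}<\cci> \neq \alpha+\gamma\).

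For (3) I argue identically with \(\bdz = \bdy - \word(3^{\nIh})\). For \(\cci \in \Cdn[\mGen{\nIl}](\mvecA[\alpha])\), part (1) with \(\nIm = \nIl\) gives \(\ithComp{\ccz}<\cci> = \word<\cci>(3^{\nIl}) = \alpha\), hence the inclusion \(\Cdn[\mGen{\nIl}](\mvecA[\alpha]) \subseteq \Cdn[\mGen{\nIl-1}[\bdz]](\mvecA[\alpha])\). The left side has size \(3^{\nIk-\nIl-1}\ccd'\). By Corollary \ref{org7f53679}, \(\mRes(\mLex-{\nIl-1}[\bdz])\) is an \([\ccn,\nIl+1,\ccd]\) code meeting the Griesmer bound with \(\ccd = 3^{\nIl}\cdot 3^{\nIk-\nIl-1}\ccd'\). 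Corollary \ref{orgb141d90} therefore bounds the right side by \(3^{\nIk-\nIl-1}\ccd'\), forcing equality. The ``in particular'' follows as in (2). I expect the main obstacle to be justifying the size comparisons, in particular that the target column vectors are nonzero and that the right multiplicity bound applies in the degenerate case \(\nIl = \nIh\). Invoking Corollary \ref{orgb141d90} through Corollary \ref{org7f53679} sidesteps most of this, since it only needs an upper bound.
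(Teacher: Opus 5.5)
Your route is the paper's: (1) from the definition of \(\ccN\), with the only delicate case \(\cci = \eta\), \(\xi < \eta\) handled by Lemma~\ref{org45cd32d}, and (2), (3) from the inclusion given by (1) plus a Griesmer-type multiplicity bound. Using the exact count of Lemma~\ref{org3ed1c0d} for (2), instead of Corollaries~\ref{org7f53679} and~\ref{orgb141d90}, is a harmless variant.

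The one wrong step is your dismissal of the zero vector, and it leaves a genuine (small) gap.
\begin{itemize}
\item Lemma~\ref{orgf9637b2} only excludes columns outside \(\mVct*{\nIh}\), and \(\allzero \in \mVct*{\nIh}\).
\item Infiniteness does not help either. \(\Cdn[\mGen{\nIh}](\allzero) = \NN \setminus \msupp|\ccE|(\mLex-{\nIh})\) lies in \(\ZZ_{> \ccN}\) as soon as the support contains \(\set{0, \ldots, \ccN}\).
\item This happens for instance when \(\ccF = \ccE\): by Lemma~\ref{org0bbad96} the support is then an initial segment containing \(\ccN\), so the hypothesis of (2) is satisfied by \(\allzero\).
\item The case is needed later. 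In Case 1 of the proof of Theorem~\ref{orga034f76}, part (3) is applied to \(\mvecC[0]\) with \(\mvecC = \mGen{\nIl - 1}<\ccN>\), which a priori may be \(\allzero\).
\end{itemize}

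Your size comparison cannot work there, since both sets are infinite, but the fix is short. Take \(\mvecA[\alpha] = \allzero\), so \(\alpha = \gamma = 0\).
\begin{itemize}
\item The inclusion from (1) says precisely \(\msupp|\ccE|(\mLex-{\nIh - 1}[\bdy]) \subseteq \msupp|\ccE|(\mLex-{\nIh})\).
\item \(\mRes(\mLex-{\nIh - 1}[\bdy])\) (Corollary~\ref{org7f53679}) and \(\mRes(\mLex-{\nIh})\) (Theorem~\ref{org56f1397}) both meet the Griesmer bound with dimension \(\nIh + 1\) and minimum distance \(\ccd\).
\item Hence both supports have size \(\ccg_3(\nIh + 1, \ccd)\). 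The inclusion is therefore an equality, and the zero-column sets coincide.
\item The same argument with \(\bdz\) and \(\nIl\) settles (3).
\end{itemize}
The paper's own proof is equally silent on this case: Corollary~\ref{orgb141d90} concerns the restricted code, which has no zero columns.
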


\begin{proof}
 \comment{Proof.}
\label{sec:orgeda3ef0}

(1) This is obvious when \(\nIi \neq \eta\).
Suppose that \(\nIi = \eta\).
We first show that \(\xi > \ccN\).
If \(\xi > \eta\), then \(\xi > \eta = \nIi > \ccN\).
Suppose that \(\xi < \eta\).
Recall that \(\mGen{\nIh} = \mGen-{\nIh}\).
It follows from Theorem \ref{org56f1397} and Corollary \ref{org09650b1} that
\(\cdn[\mGen{\nIm}](\mvecB) = 3^{\nIk - \nIm - 1} \ccd' \ge 2\).
Hence \(\mGen{\nIm}<\xi> = \mGen{\nIm}<\eta> = \mvecB\) by Lemma \ref{org45cd32d}.
Thus \(\xi \in \Cdn[\mGen{\nIm}](\mvecB) \subseteq \ZZ_{> \ccN}\) and \(\xi > \ccN\).
Therefore
\begin{align*}
 \ithComp{\ccy}<\eta> &= \ithComp{\ccy}[\eta] + \ithComp{\ccy}[\xi]\\
&= \word[\eta](3^{\nIh}) + \word[\eta](3^{\nIl}) + 
 \word[\xi](3^{\nIh}) + \word[\xi](3^{\nIl})\\
&= \word<\eta>(3^{\nIh}) + \word<\eta>(3^{\nIl}).
\end{align*}

\noindent
(2) Corollary \ref{org7f53679} shows that \(\mRes(\mLex-{\nIh - 1}[\bdy])\) meets the Griesmer bound.
By (1), we see that \(\ithComp{\ccy}<\nIi> = \word<\nIi>(3^{\nIh}) + \word<\nIi>(3^{\nIl})
= \alpha + \gamma\)  for \(\nIi \in \Cdn[\mGen{\nIh}](\mvecA[\alpha])\).
Therefore \(\nIi \in \Cdn[\mGen{\nIh - 1}[\bdy]](\mvecA[\alpha + \gamma])\), that is,
\(\Cdn[\mGen{\nIh}](\mvecA[\alpha]) \subseteq \Cdn[\mGen{\nIh - 1}[\bdy]](\mvecA[\alpha + \gamma])\).
Since \(\mRes(\mLex-{\nIh - 1}[\bdy])\) meets the Griesmer bound,
it follows from Corollary \ref{orgb141d90} that
\(\Cdn[\mGen{\nIh}](\mvecA[\alpha]) = \Cdn[\mGen{\nIh - 1}[\bdy]](\mvecA[\alpha + \gamma])\).
In particular, if 
\(\nIi \in \Cdn[\mGen{\nIh - 1}](\mvecA) \cap \ZZ_{\le \ccN}\),
then \(\nIi \not \in \Cdn[\mGen{\nIh - 1}[\bdy]](\mvecA[\alpha + \gamma])\),
so \(\ithComp{\ccy}<\nIi> \neq \alpha +  \gamma\).

\noindent
(3) Corollary \ref{org7f53679} shows that \(\mRes(\mLex-{\nIl - 1}[\bdz])\) meets the Griesmer bound.
By (1), we see that \(\ithComp{\ccz}<\nIi> = \word<\nIi>(3^{\nIl}) = \alpha\) for  \(\nIi \in \Cdn[\mGen{\nIl}](\mvecA[\alpha])\).
Therefore \(\Cdn[\mGen{\nIl}](\mvecA[\alpha]) \subseteq \Cdn[\mGen{\nIl - 1}[\bdz]](\mvecA[\alpha])\).
Since \(\mRes(\mLex-{\nIl - 1}[\bdz])\) meets the Griesmer bound, 
it follows that \(\Cdn[\mGen{\nIl}](\mvecA[\alpha]) = \Cdn[\mGen{\nIl - 1}[\bdz]](\mvecA[\alpha])\).
\end{proof}

\begin{lemma}
 \comment{Lem.}
\label{sec:org3e06dd7}
\label{orgdc2a2bc}
Suppose that \(\nIh < \nIk\).

\begin{enumerate}
\item \(\ithComp{\ccy}<\ccN> \neq \word<\ccN>(3^{\nIh}) + \word<\ccN>(3^{\nIl})\).
\item If \(N \in \Cdn[\mGen{\nIh}](\mvecA[\beta])\), then there exist \(\ccM \in \NN\) and \(\alpha \in \FF_\ccp\) satisfying the following conditions:  \(\ccM \in \Cdn[\mGen{\nIh}](\mvecA[\alpha])\), \(\alpha > \beta\), \(\ccM < \ccN\), and \(\ithComp{\ccy}<\ccM> = \word<\ccN>(3^{\nIh}) + \word<\ccN>(3^{\nIl})\).
\end{enumerate}
 
\end{lemma}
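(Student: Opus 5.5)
The plan is to work in the $E$-coordinates and compare, column by column, the matrices $\mGen{\nIh}$ and $\mGen{\nIh - 1}[\bdy]$. Recall that $\bdy = \word(3^{\nIh}+3^{\nIl}) <_{\ccF} \word+(3^{\nIh}+3^{\nIl})$, that $\ccN$ is the largest $\ccF$-index at which they differ, and that $\ithComp{\ccy}[\ccN] < \word+[\ccN](3^{\nIh}+3^{\nIl})$. The $E$- and $\ccF$-coordinates of any vector agree except at $\eta$, where $\ithComp{\ccx}<\eta> = \ithComp{\ccx}[\eta] + \ithComp{\ccx}[\xi]$.

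\textbf{Proof of (1).} If $\ccN \neq \eta$, then $\ithComp{\ccy}<\ccN> = \ithComp{\ccy}[\ccN]$. Likewise $\word<\ccN>(3^{\nIh}) + \word<\ccN>(3^{\nIl}) = \word+[\ccN](3^{\nIh}+3^{\nIl})$, so the two sides differ by the definition of $\ccN$. If $\ccN = \eta$, Lemma \ref{org5e6c6e3} gives $\eta < \xi$, so $\xi > \ccN$ and therefore $\ithComp{\ccy}[\xi] = \word[\xi](3^{\nIh}) + \word[\xi](3^{\nIl})$. Adding the $\xi$-coordinates to the $\eta$-coordinates preserves the inequality at $\ccN$, which again gives (1).

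\textbf{Proof of (2), counting step.} Put $c = \word<\ccN>(3^{\nIh}) + \word<\ccN>(3^{\nIl}) = \beta + \gamma$, where $\gamma$ is the $\nIl$-th component of $\mvecA[\beta]$. By Theorem \ref{org56f1397} and Corollary \ref{org09650b1}, $\cdn[\mGen{\nIh}](\mvecA[\delta]) = 3^{\nIk - \nIh - 1}\ccd'$ for every $\mvecA[\delta] \in \mVct{\nIh}$. By Lemma \ref{org3ed1c0d}, $\cdn[\mGen{\nIh - 1}[\bdy]](\mvecA[\delta']) = 3^{\nIk-\nIh-1}\ccd'$ for every $\mvecA[\delta']$ in the twisted set $\mVct{\nIh - 1}[\bdy]$. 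Consequently, inside $\Cdn[\mGen{\nIh - 1}](\mvecA)$ the number of indices $\cci$ with $\ithComp{\ccy}<\cci> = c$ equals the number with $\word<\cci>(3^{\nIh}) = \beta$. For $\mvecA \neq \allzero$ this is direct. For $\mvecA = \allzero$, the case $\nIl = \nIh$ needs the replacement of $\mEvec{\nIh}$ by $2\mEvec{\nIh}$, which matches $c = 1 + 1 = 2$.

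\textbf{Proof of (2), locating and ordering $\ccM$.} By (1), $\ccN$ lies in the second set but not the first. Hence some $\ccM \in \Cdn[\mGen{\nIh-1}](\mvecA)$ has $\ithComp{\ccy}<\ccM> = c$ and $\word<\ccM>(3^{\nIh}) = \alpha \neq \beta$. For every index $\cci > \ccN$ we have $\ithComp{\ccy}<\cci> = \word<\cci>(3^{\nIh}) + \word<\cci>(3^{\nIl})$. This is immediate for $\cci \neq \eta$; for $\cci = \eta$ I would argue as in Lemma \ref{org85cba49}(1), using Lemma \ref{org45cd32d} to show $\xi > \ccN$ when needed. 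So an index $\cci > \ccN$ with $\ithComp{\ccy}<\cci> = c$ has $\word<\cci>(3^{\nIh}) = \beta$, and the count can be matched above $\ccN$. This lets me choose $\ccM < \ccN$. It remains to show $\alpha > \beta$. If instead $\alpha < \beta$ with $\ccM < \ccN$, Lemma \ref{org06db361} forces $\ccN = \xi$ or $\word-[\ccN](3^{\nIh}) \neq \word-<\ccN>(3^{\nIh})$, hence $\ccN \in \Theta$. Lemma \ref{org69880d5} then pins down the configuration: either $\ccN = \eta < \xi$, or $\ccN = \xi < \eta$ with $\ccN = \ccM + 1$ and $\alpha = \beta - 1$. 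In these cases I would compare the values at $\xi$ and $\eta$ using \eqref{equ:linear-breaking-support-1}, and either pick a different $\ccM$ from the same count or reach a contradiction.

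I expect the main obstacle to be exactly these exceptional configurations with $\ccN \in \Theta$, together with the bookkeeping for $\mvecA = \allzero$ when $\nIl = \nIh$.
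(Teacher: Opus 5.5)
Your part (1) and your counting step are correct and match the paper's proof (its Steps 1--2). Write $c=\beta+\gamma$, $A=\{i\in C_{\Lambda_{h-1}}(\mathbf a): y_{\langle i\rangle}=c\}$ and $B=C_{\Lambda_h}(\mathbf a^{\beta})$. You correctly get $|A|=|B|$ and $N\in B\setminus A$.

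\textbf{First gap: locating $M<N$.} Your claim that $y_{\langle i\rangle}=\lambda_{\langle i\rangle}(3^h)+\lambda_{\langle i\rangle}(3^l)$ for every $i>N$ is false at $i=\eta$ when $\xi\le N<\eta$. In that case Lemma \ref{org45cd32d}(2) gives $\lambda_{[\eta]}(3^h)=\lambda_{[\eta]}(3^l)=0$, so $y_{[\eta]}=0$ and $y_{\langle\eta\rangle}=y_{\langle\xi\rangle}$. Nothing forces $\xi>N$: Lemma \ref{org85cba49}(1) assumes the whole column class lies above $N$, and you do not have that.
\begin{itemize}
\item In your own configuration $N=\xi<\eta$ the identity already fails: $y_{\langle\eta\rangle}=y_{\langle N\rangle}\neq c$ even though $\eta\in B$. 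This is harmless there, since $\eta\notin A$.
\item It is harmful when $\xi<N<\eta$ and $\xi\in A\setminus B$. Then $\eta$ also lies in $A\setminus B$ and sits above $N$, so ``every element of $A\setminus B$ is below $N$'' fails. Nothing you cite excludes this case.
\end{itemize}
The repair is the paper's Step 5. Suppose the chosen $M$ is $\eta>N$.
\begin{itemize}
\item Then $\xi<\eta$; otherwise $\xi>N$, the identity holds at $\eta$, and $y_{\langle\eta\rangle}=c$ would put $\eta$ in $B$.
\item Lemma \ref{org45cd32d} gives $\xi$ the same $\Lambda_h$-column $\mathbf a^{\alpha}$ as $\eta$, and $y_{\langle\xi\rangle}=y_{\langle\eta\rangle}=c$.
\item Moreover $\xi<N$: we have $\xi\neq N$ because $N\notin A$, and $\xi>N$ would again put $\eta$ in $B$.
\end{itemize}
So replace $M$ by $\xi$; this keeps the same $\alpha$.

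\textbf{Second gap: showing $\alpha>\beta$.} You leave this as ``pick a different $M$ or reach a contradiction,'' but that choice is the missing idea. Your reduction is right, and in fact sharper than you state. Lemma \ref{org69880d5}(1) gives $\lambda_{[\eta]}(3^h)=0$, which contradicts \eqref{equ:linear-breaking-support-1} if $N=\eta$. Hence $N=\xi<\eta$, $M=N-1$ and $\alpha=\beta-1$.

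What you still need is why a second candidate exists and why it has $\alpha'>\beta$ (the paper's Step 4):
\begin{itemize}
\item Since $\xi<\eta$, Lemma \ref{org45cd32d} puts $\eta$ in $B$.
\item Since $\eta>N$, $y_{[\eta]}=0$, so $y_{\langle\eta\rangle}=y_{\langle N\rangle}\neq c$.
\item Thus both $\xi$ and $\eta$ lie in $B\setminus A$, so $|A\setminus B|\ge 2$, and there is some $M'\in A\setminus B$ with $M'\neq M$.
\item Because $\eta\in B$, $M'\neq\eta$. The identity holds for $i>N$ with $i\neq\eta$, so $M'<N$.
\item If the last coordinate $\alpha'$ of $M'$ were $<\beta$, Lemma \ref{org69880d5}(3) would force $M'=N-1=M$. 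Hence $\alpha'>\beta$.
\end{itemize}
The paper works in the opposite order. It first secures $\alpha>\beta$ (Steps 3--4, where $M<N$ follows from $M\neq\eta$), then moves $M$ below $N$ by the $\eta\to\xi$ swap, which preserves $\alpha$. With your order, you must recheck $M'<N$ after switching candidates, as done above.
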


\begin{proof}
 \comment{Proof.}
\label{sec:orgae8cf6a}

(1) This is obvious when \(\ccN \neq \eta\)
since \(\ithComp{\ccy}[\ccN] \neq \word[\ccN](3^{\nIh}) + \word[\ccN](3^{\nIl})\).
Suppose that \(\ccN = \eta\).
Lemma \ref{org5e6c6e3} shows that \(\ccN = \eta < \xi\),
and hence \(\ithComp{\ccy}<\xi> = \word[\xi](3^{\nIh}) + \word[\xi](3^{\nIl})\).
Therefore
\begin{align*}
 \ithComp{\ccy}<\ccN> &= \ithComp{\ccy}[\eta] + \ithComp{\ccy}[\xi]\\
& = \ithComp{\ccy}[\eta] + \word[\xi](3^{\nIh}) + \word[\xi](3^{\nIl})\\
&\neq \word[\eta](3^{\nIh}) + \word[\eta](3^{\nIl}) + \word[\xi](3^{\nIh}) + \word[\xi](3^{\nIl})  = \word<\eta>(3^{\nIh}) + \word<\eta>(3^{\nIl}).
\end{align*}

\noindent
(2) Let \(\gamma = \word<\ccN>(3^{\nIl})\); then \(\ithComp{\ccy}<\ccN> \neq \beta + \gamma\) by (1).
Note that
\begin{align*}
\Cdn[\mGen{\nIh - 1}](\mvecA) &= \bigcup_{\delta \in \FF_3} \Cdn[\mGen{\nIh}](\mvecA[\delta])\\
&= \bigcup_{\delta \in \FF_3} \Cdn[\mGen{\nIh - 1}[\bdy]](\mvecA[\delta]).
\end{align*}

\resetstep
 
 \begin{step}
 \comment{Step.}
\label{sec:org0eec552}
We show that
\(\mvecA[\beta + \gamma] \in \mVct{\nIh - 1}[\bdy]\) and
\(\cdn[\mGen{\nIh - 1}[\bdy]](\mvecA[\beta + \gamma]) = 3^{\nIk - \nIh - 1} \ccd'\).
Suppose that \(\mvecA \neq \allzero\).
Since \(\ccN \in \Cdn[\mGen{\nIh}](\mvecA[\beta])\),
it follows that \(\mvecA \in \mVct{\nIh - 1}\), and hence \(\mvecA[\beta + \gamma] \in \mVct{\nIh - 1}[\bdy]\).
Suppose that \(\mvecA = \allzero\).
Then \(\ccN \in \Cdn[\mGen{\nIh}](\allzero^\beta)\). 
Since \(\word[\ccN](3^{\nIl}) \in \set{1, 2}\) by \eqref{equ:linear-breaking-support-1},
it follows that \(\ccN \in \msupp|\ccF|(\mLex-{\nIl}) \subseteq \msupp|\ccE|(\mLex-{\nIl})\),
and hence \(\beta = 1\) and \(\nIl = \nIh\).
Therefore \(\gamma = \word<\ccN>(3^\nIl) = 1\) and \(\mvecA[\beta + \gamma] = 2 \mEvec{\nIh} \in \mVct{\nIh - 1}[\bdy]\).
Lemma \ref{org3ed1c0d} shows that \(\cdn[\mGen{\nIh - 1}[\bdy]](\mvecA[\beta + \gamma]) = 3^{\nIk - \nIh - 1} \ccd'\).
 
\end{step}

 \begin{step}
 \comment{Step.}
\label{sec:org1c98560}
We show that there exist \(\ccM \in \NN\) and \(\alpha \in \FF_\ccp\) satisfying the following three conditions.

\begin{enumerate}[label=(\roman*)]
  \item $\alpha \neq \beta$.  
  \item $\ccM \in \Cdn[\mGen{\nIh}](\mvecA[\alpha])$.
  \item $\ccM \in \Cdn[\mGen{\nIh - 1}[\bdy]](\mvecA[\beta + \gamma])$, that is, $\ithComp{\ccy}<\ccM> = \beta + \gamma$.
\end{enumerate}

By (1), we see that \(\ccN \in \Cdn[\mGen{\nIh}](\mvecA[\beta]) \setminus \Cdn[\mGen{\nIh - 1}[\bdy]](\mvecA[\beta + \gamma])\).
Thus there exists \(\ccM \in \Cdn[\mGen{\nIh - 1}[\bdy]](\mvecA[\beta + \gamma]) \setminus \Cdn[\mGen{\nIh}](\mvecA[\beta])\)
since \(\cdn[\mGen{\nIh - 1}[\bdy]](\mvecA[\beta + \gamma]) = \cdn[\mGen{\nIh}](\mvecA[\beta]) = 3^{\nIk - \nIh - 1} \ccd'\) by Step 1.
Let \(\mvecA[\alpha] = \mGen{\nIh}<\ccM>\).
Since \(\ccM \not \in \Cdn[\mGen{\nIh}](\mvecA[\beta])\),
we see that \(\alpha \neq \beta\). Thus \(\ccM\) and \(\alpha\) satisfy the conditions (i)--(iii).

Note that
\begin{equation}
\label{equ:lem-existence-1}
\word<\ccM>(3^{\nIh}) + \word<\ccM>(3^{\nIl}) \neq \beta + \gamma.
\end{equation}
Indeed, if \(\nIl < \nIh\), then \(\word<\ccM>(3^{\nIl}) = \word<\ccN>(3^{\nIl}) = \gamma\)
since \(\ccM, \ccN \in \Cdn[\mGen{\nIh - 1}](\mvecA)\), and hence
\(\word<\ccM>(3^{\nIh}) + \word<\ccM>(3^{\nIl}) = \alpha + \gamma \neq \beta + \gamma\).
If \(\nIl = \nIh\), then \(\gamma = \word<\ccN>(3^{\nIl}) = \word<\ccN>(3^{\nIh}) = \beta\), and hence
\(\word<\ccM>(3^{\nIh}) + \word<\ccM>(3^{\nIl}) = 2 \alpha \neq 2 \beta = \beta + \gamma\).
Therefore \eqref{equ:lem-existence-1} holds.
 
\end{step}

 \begin{step}
 \comment{Step.}
\label{sec:org5088b3c}
We show that if \(\ccM\) and \(\alpha\) satisfy (i)--(iii) and \(\alpha < \beta\), then \(\ccN = \xi\), \(\xi < \eta\), and \(\ccM = \ccN - 1\).

Assume that \(\Cdn[\mGen{\nIh}](\mvecA[\alpha]) \subseteq \ZZ_{> \ccN}\).
Since \(\ccM \in \Cdn[\mGen{\nIh}](\mvecA[\alpha])\),
it follows from Lemma \ref{org85cba49} that
\(\ithComp{\ccy}<\ccM> = \word<\ccM>(3^{\nIh}) + \word<\ccM>(3^{\nIl})\).
However, \(\ithComp{\ccy}<\ccM> = \beta + \gamma\) by (iii), contrary to \eqref{equ:lem-existence-1}.
Hence \(\Cdn[\mGen{\nIh}](\mvecA[\alpha]) \not \subseteq \ZZ_{> \ccN}\).
Thus there exists \(\ccL \in \Cdn[\mGen{\nIh}](\mvecA[\alpha])\) such that \(\ccL < \ccN\).
Since \(\alpha < \beta\), it follows from Lemma \ref{org69880d5} that \(\ccN \in \Theta\) and \(\word[\eta](3^{\nIh}) = 0\).
Since \(\word[\ccN](3^{\nIh}) \in \set{1, 2}\), we see that \(\ccN \neq \eta\), and hence \(\ccN = \xi\).
Lemmas \ref{org69880d5} and \ref{org45cd32d} imply that \(\xi < \eta\) and \(\word<\eta>(3^{\nIh}) = \word<\xi>(3^{\nIh}) = \beta\).

We show that \(\ccM < \ccN\).
Since \(\word<\ccM>(3^{\nIh}) = \alpha\) and \(\word<\eta>(3^{\nIh}) = \beta\),
it follows that \(\ccM \neq \eta\).
If \(\ccM > \ccN\), then 
\[
\ithComp{\ccy}<\ccM> = \ithComp{\ccy}[\ccM] = \word[\ccM](3^{\nIh}) + \word[\ccM](3^{\nIl})
= \word<\ccM>(3^{\nIh}) + \word<\ccM>(3^{\nIl}),
\]
contrary to \eqref{equ:lem-existence-1} since \(\ithComp{\ccy}<\ccM> = \beta + \gamma\).
Therefore \(\ccM < \ccN\).
Lemma \ref{org69880d5} shows that \(\ccM = \ccN - 1\).
 
\end{step}

 \begin{step}
 \comment{Step.}
\label{sec:orga823a49}
We show that if \(\ccM\) and \(\alpha\) satisfy (i)--(iii) and \(\alpha < \beta\), then 
there exist \(\ccM'\) and \(\alpha'\) satisfying (i)--(iii) and \(\ccM' \neq \ccM\); in particular \(\alpha' > \beta\).

By Step 3, we see that \(\ccN = \xi\) and \(\xi < \eta\).
Lemma \ref{org45cd32d} implies that
\(\mGen{\nIh}<\eta> = \mGen{\nIh}<\xi> = \mvecA[\beta]\).
Since \(\eta > \xi = \ccN\), it follows that
\(\ithComp{\ccy}[\eta] = \word[\eta](3^{\nIh}) + \word[\eta](3^{\nIl}) = 0 + 0\),
and that \(\ithComp{\ccy}<\eta> = \ithComp{\ccy}<\xi> = \ithComp{\ccy}<\ccN> \neq \beta + \gamma\) by (1).
Hence
\(\mGen{\nIh - 1}[\bdy]<\eta> = \mGen{\nIh - 1}[\bdy]<\xi> \neq \mvecA[\beta + \gamma]\).
Therefore
\(\xi, \eta \in \Cdn[\mGen{\nIh}](\mvecA[\beta])\) and
\(\xi, \eta \not \in \Cdn[\mGen{\nIh - 1}[\bdy]](\mvecA[\beta + \gamma])\).
This implies that the size of \(\Cdn[\mGen{\nIh - 1}[\bdy]](\mvecA[\beta + \gamma]) \setminus \Cdn[\mGen{\nIh}](\mvecA[\beta])\) 
is at least two.
Hence there exists \(\ccM' \in \Cdn[\mGen{\nIh - 1}[\bdy]](\mvecA[\beta + \gamma]) \setminus \Cdn[\mGen{\nIh}](\mvecA[\beta])\) 
such that \(\ccM' \neq \ccM\).
Let \(\mvecA[\alpha'] = \mGen{\nIh}<\ccM'>\).
It follows from Step 3 that \(\alpha' > \beta\) since otherwise \(\ccM' = \ccN - 1 = \ccM\).

From Steps 2--4, 
we conclude that there exist \(\alpha\) and \(\ccM\) satisfying (i)--(iii) and \(\alpha > \beta\).
 
\end{step}

 \begin{step}
 \comment{Step.}
\label{sec:org14312cf}
If \(\ccM < \ccN\), then \(\ccM\) and \(\alpha\) satisfy the conditions.
Suppose that \(\ccM > \ccN\).
We show that \(\xi\) and \(\alpha\) satisfy the conditions.
Since \(\ccM > \ccN\), we see that
\(\ithComp{\ccy}[\ccM] = \word[\ccM](3^\nIh) + \word[\ccM](3^\nIl)\).
Hence \(\ccM = \eta\) because \(\ithComp{\ccy}<\ccM> = \beta + \gamma \neq \word<\ccM>(3^\nIh) + \word<\ccM>(3^\nIl)\).
Since \(\ccM \in \Cdn[\mGen{\nIh}](\mvecA[\alpha])\),
\(\ccN \in \Cdn[\mGen{\nIh}](\mvecA[\beta])\), 
\(\alpha > \beta\), and \(\ccM > \ccN\), it follows from Lemma \ref{org69880d5} that
\(\word[\ccM](3^{\nIh}) = \word[\eta](3^{\nIh}) = 0\).
If \(\xi > \ccN\), then
\begin{align*}
\ithComp{\ccy}<\ccM> = \ithComp{\ccy}<\eta> = \ithComp{\ccy}[\eta] + \ithComp{\ccy}[\xi]
&= \word[\eta](3^\nIh) + \word[\eta](3^\nIl) + \word[\xi](3^\nIh) + \word[\xi](3^\nIl)\\
&= \word<\eta>(3^\nIh) + \word<\eta>(3^\nIl)\\
&= \word<\ccM>(3^\nIh) + \word<\ccM>(3^\nIl) \neq \beta + \gamma,
\end{align*}
a contradiction.
Thus \(\xi < \ccN < \ccM = \eta\).
Lemma \ref{org45cd32d} shows that \(\mGen{\nIh}<\xi> = \mGen{\nIh}<\eta> = \mvecA[\alpha]\).
Hence \(\xi \in \Cdn[\mGen{\nIh}](\mvecA[\alpha])\).
Since \(\ithComp{\ccy}[\eta] = \word[\eta](3^{\nIh}) + \word[\eta](3^{\nIl}) = 0\),
it follows that
\(\ithComp{\ccy}<\xi> = \ithComp{\ccy}<\eta> = \ithComp{\ccy}<\ccM> = \beta + \gamma\).
Therefore \(\xi\) and \(\alpha\) satisfy the conditions.
\hspace*{\fill} \(\qedhere\)
 
\end{step} 
\end{proof}

\begin{corollary}
 \comment{Cor.}
\label{sec:org5fcb25a}
\label{org6d4e5d6}
Suppose that \(\nIh < \nIk\).
Let \(\alpha\) and \(\beta\) be as in Lemma \ref{orgdc2a2bc}.
Then \(\word[\ccN](3^{\nIh}) = \word[\ccN](3^{\nIl}) = 1\), \(\alpha = 2\), and \(\beta = 1\).
 
\end{corollary}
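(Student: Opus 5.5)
The plan is to exploit the fact that \(\ccp = 3\) leaves very few possibilities, and to pin down the common value \(c\) in \eqref{equ:linear-breaking-support-1}, i.e.\ \((\word[\ccN](3^{\nIh}), \word[\ccN](3^{\nIl})) = (c, c)\) with \(c \in \set{1,2}\). By Lemma \ref{orgdc2a2bc}(2), \(\ccN \in \Cdn[\mGen{\nIh}](\mvecA[\beta])\), so \(\beta = \word<\ccN>(3^{\nIh})\); moreover \(\alpha > \beta\) in \(\set{0,1,2}\). The latter forces \(\beta \in \set{0,1}\), and \(\alpha = 2\) as soon as \(\beta = 1\). It therefore suffices to show that \(c = 1\) and \(\beta = 1\). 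I will split according to whether \(\ccN = \eta\).

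\emph{Case \(\ccN \neq \eta\).} Here the \(\ccN\)-th coordinates with respect to \(\ccE\) and \(\ccF\) agree; this holds also for \(\ccN = \xi\), since \(\bdf_\xi = \bde_\xi + \bde_\eta\) only affects the \(\eta\)-th \(\ccE\)-coordinate. Hence \(\beta = \word<\ccN>(3^{\nIh}) = \word[\ccN](3^{\nIh}) = c \neq 0\). Combined with \(\beta \in \set{0,1}\), this gives \(c = \beta = 1\), and then \(\alpha = 2\).

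\emph{Case \(\ccN = \eta\).} By Lemma \ref{org5e6c6e3}, \(\eta < \xi\). Then \(\beta = \word<\eta>(3^{\nIh}) = c + \word<\xi>(3^{\nIh})\), because \(\word[\xi] = \word<\xi>\). The tool here is Lemma \ref{org45cd32d}(3), applied at level \(\nIh\). Its hypothesis \(\cdn[\mGen-{\nIh}](\mvecB) \ge 2\) holds because \(\cdn[\mGen{\nIh}](\mvecB) = 3^{\nIk-\nIh-1}\ccd' \ge 2\) by Theorem \ref{org56f1397} and Corollary \ref{org09650b1}, together with \(\mGen{\nIh} = \mGen-{\nIh}\). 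I then run through \(t = \word<\xi>(3^{\nIh}) \in \set{0,1,2}\):
\begin{itemize}
\item If \(t = 0\), then \(\beta = c\), and \(\beta \in \set{0,1}\) with \(c \neq 0\) gives \(c = \beta = 1\).
\item If \(t \neq 0\), Lemma \ref{org45cd32d}(3) says \(\word[\eta](3^{\nIh}) \neq 2\), so \(c = 1\) and \(\beta = 1 + t\).
\item For \(t = 1\) this gives \(\beta = 2\), contradicting \(\alpha > \beta\).
\item For \(t = 2\) this gives \(\beta = 0 \neq t\), so the values at \(\xi\) and \(\eta\) differ, and Lemma \ref{org45cd32d}(3) would require \(2 = t < \beta = 0\), which is absurd.
\end{itemize}
So in every surviving subcase \(c = 1\), \(\beta = 1\) and \(\alpha = 2\). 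Since \(c\) is the common value at \(3^{\nIh}\) and \(3^{\nIl}\), we also get \(\word[\ccN](3^{\nIl}) = 1\).

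The main obstacle I expect is the \(\ccN = \eta\) case: one must be careful to apply Lemma \ref{org45cd32d}(3) at the right level \(\nIh\) and to verify its hypothesis. The bookkeeping between \(\ccE\)- and \(\ccF\)-coordinates at \(\eta\) also needs care, since \(\word<\eta> = \word[\eta] + \word[\xi]\). The rest is a finite check over \(\FF_3\).
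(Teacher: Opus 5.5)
Your proof is correct and takes essentially the same route as the paper: both rest on the fact that $E$- and $F$-coordinates agree away from $\eta$, together with Lemma~\ref{org45cd32d}(2)--(3) applied at level $h$, whose hypothesis you rightly justify via Theorem~\ref{org56f1397}, Corollary~\ref{org09650b1} and the equality of the first $h+1$ rows of the two generator matrices. The paper merely orders the argument differently: it first rules out the value $(2,2)$ and then shows $\beta = 1$. You instead split on $N = \eta$ first, cite Lemma~\ref{org5e6c6e3} for $\eta < \xi$, and enumerate the $\xi$-th standard coordinate.
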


\begin{proof}
 \comment{Proof.}
\label{sec:orgab6460a}

We first show that \(\word[\ccN](3^{\nIh}) = \word[\ccN](3^{\nIl}) = 1\).
By \eqref{equ:linear-breaking-support-1},
\((\word[\ccN](3^\nIh),\ \word[\ccN](3^\nIl)) = (1, 1)\) or \((2, 2)\).
Assume that \((\word[\ccN](3^\nIh), \word[\ccN](3^\nIl)) = (2, 2)\).
Since \(\word<\ccN>(3^\nIh) = \beta < \alpha\), it follows that
\(\beta \neq 2\). Hence \(\word<\ccN>(3^\nIh) \neq 2 = \word[\ccN](3^{\nIh})\).
This implies that \(\ccN = \eta\) and \(\word<\xi>(3^{\nIh}) \neq 0\).
By Lemma \ref{org45cd32d}, we see that \(\word[\eta](3^\nIh) = 0\) when \(\xi < \eta\),
and \(\word[\eta](3^\nIh) \neq \ccp - 1 = 2\) when \(\xi > \eta\), a contradiction.
Therefore \((\word[\ccN](3^\nIh), \word[\ccN](3^\nIl)) = (1, 1)\).

We next show that \(\alpha = 2\) and \(\beta = 1\).
If \(\ccN \neq \eta\), then
\(\beta = \word<\ccN>(3^{\nIh}) = \word[\ccN](3^{\nIh}) = 1\),
and hence \(\alpha = 2\) since \(\alpha > \beta\).
Suppose that \(\ccN = \eta\).
Assume that \(\beta = \word<\eta>(3^\nIh) \neq 1\).
Since \(\beta < \alpha \le 2\), we see that \(\word<\eta>(3^\nIh) = 0\).
Since \(\word[\eta](3^\nIh) = 1 \neq 0\),
it follows from Lemma \ref{org45cd32d} that \(\xi > \eta\).
This implies that \(\word<\eta>(3^\nIh) = 0 < \word<\xi>(3^\nIh)\), contrary to Lemma \ref{org45cd32d}.
Therefore \(\word<\eta>(3^\nIh) = \beta = 1\) and \(\alpha = 2\).
\end{proof}

\subsection{Ternary linear lexicographic codes}
\label{sec:org48f7a86}

\begin{theorem} \comment{Thm.}
\label{sec:orgbda3305}
\label{orga034f76}
Let \(\ccp = 3\), \(\ccd = 3^{\nIk - 1} \ccd'\), and \(\ccd' \ge 2\).
Then \(\mLex{\nIk - 1}\) is a linear code.

\end{theorem}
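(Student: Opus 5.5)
We argue by contradiction, using the machinery of the subsection on linearity breaking. Suppose \(\mLex{\nIk - 1}\) is not linear. By Theorem \ref{org100bcd6} and Remark \ref{orgfd98e8a}, there is a smallest \(\cca < 3^{\nIk}\) with \(\word(\cca) \neq \word-(\cca)\), and it has the form \(\cca = 3^{\nIh} + 3^{\nIl}\) with \(\nIl \le \nIh\). Since \(\cca < 3^{\nIk}\), we have \(\nIh < \nIk\); in the edge case \(\nIh = \nIl = \nIk - 1\) we note \(2\cdot 3^{\nIk-1} < 3^{\nIk}\), so again \(\nIh < \nIk\). Hence all the lemmas of that subsection are available. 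Set \(\bdy = \word(\cca)\), \(\bdz = \bdy - \word(3^{\nIh})\), and let \(\ccN\) be as defined there.

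By Lemma \ref{orgdc2a2bc}(2) and Corollary \ref{org6d4e5d6}, with \(\ccN \in \Cdn[\mGen{\nIh}](\mvecA[1])\), there is \(\ccM < \ccN\) with \(\ccM \in \Cdn[\mGen{\nIh}](\mvecA[2])\) and
\[
\ithComp{\ccy}<\ccM> = \word<\ccN>(3^{\nIh}) + \word<\ccN>(3^{\nIl}).
\]
We also know \(\word[\ccN](3^{\nIh}) = \word[\ccN](3^{\nIl}) = 1\). Since \(\alpha = 2 > \beta = 1\) and \(\ccM < \ccN\), Lemma \ref{org69880d5} applies to the pair \((\ccM, \ccN)\). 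It gives \(\ccN \in \Theta\) and \(\word[\eta](3^{\nIh}) = 0\). Because \(\word[\ccN](3^{\nIh}) = 1 \neq 0\), this forces \(\ccN = \xi\), and then part (2) of that lemma (which would require \(\ccN = \eta\)) rules out \(\eta < \xi\), so \(\xi < \eta\). Part (3) then gives \(\alpha = \beta - 1\). But \(\alpha = 2\) and \(\beta = 1\), so \(\alpha = \beta - 1\) fails, which is a contradiction.

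The main obstacle is verifying the hypotheses of Lemma \ref{org69880d5}. That lemma is stated for \(\word-\) and requires \(\cdn[\mGen-{\nIh}](\mvecB) \ge 2\) for all \(\mvecB \in \mVct{\nIh}\). This follows from Theorem \ref{org56f1397} and Corollary \ref{org09650b1}, which give \(\cdn[\mGen-{\nIh}](\mvecB) = 3^{\nIk - \nIh - 1}\ccd' \ge \ccd' \ge 2\). We also need \(\mGen{\nIh} = \mGen-{\nIh}\), which holds by minimality of \(\cca\) because \(3^{\nIh} \le \cca\); if \(3^{\nIh}\) equalled \(\cca\), then \(\word(3^{\nIh}) = \word-(3^{\nIh})\) by definition anyway.

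If the parity bookkeeping in part (3) of Lemma \ref{org69880d5} does not mesh directly with the values of \(\alpha\) and \(\beta\) from Corollary \ref{org6d4e5d6}, the fallback is the following. Use Lemma \ref{org45cd32d}(2) to get \(\word<\xi>(3^{\nIh}) = \word<\eta>(3^{\nIh}) = \beta = 1\). Then compare with \(\word[\xi](3^{\nIh}) = 1\) to check consistency, and look for the contradiction through the relation \(\ccM = \ccN - 1\) together with Lemma \ref{orgb44e8dd}.

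Once non-linearity is refuted, Theorem \ref{org100bcd6} gives \(\word = \word+\) on \([0, 3^{\nIk})\), so \(\mLex{\nIk - 1}\) is linear.
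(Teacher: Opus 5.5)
\textbf{There is a genuine gap: the central step misapplies Lemma~\ref{org69880d5}.} That lemma assumes $M \in C_{\Lambda_h}(\mathbf{a}^{\alpha})$ and $N \in C_{\Lambda_h}(\mathbf{a}^{\beta})$ with $\alpha<\beta$ \emph{and} $M<N$; that is, the smaller index must carry the smaller last entry.

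Lemma~\ref{orgdc2a2bc}(2) and Corollary~\ref{org6d4e5d6} give you the opposite: the smaller index $M$ carries $2$ and the larger index $N$ carries $1$. With your labelling, $\alpha<\beta$ fails. If you swap the roles, you would need $N<M$, which is false.

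In fact this configuration is exactly the ``normal'' ordering described by Lemma~\ref{org06db361}, where the larger last entry sits at the smaller index. So nothing can be extracted from the relative position of $M$ and $N$ at level $h$ alone. Consequently your conclusions $N\in\Theta$, $N=\xi$, $\xi<\eta$ and $\alpha=\beta-1$ are all unsupported. The fallback inherits the same problem, since it uses $N=\xi$, $\xi<\eta$ and $M=N-1$, all obtained from the misapplied lemma.

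\textbf{What is missing is the second vector $\mathbf z=\mathbf y-\mathbf w(3^h)$ and the level-$l$ structure.} The paper uses that the code generated by $\Lambda_{l-1}$ and $\mathbf z$ meets the Griesmer bound (Corollary~\ref{org7f53679}). Through Lemma~\ref{org85cba49}(3) and Corollary~\ref{orgb141d90}, this controls the coordinates of $\mathbf z$. The paper then splits on the position of $N$; let $\mathbf c$ be the level-$(l-1)$ column of $N$, which $M$ shares.

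\begin{itemize}
\item \emph{Case $N\notin\Theta$.} Here $y_{\langle M\rangle}=1+1=2$ and $w_{\langle M\rangle}(3^h)=2$, so $z_{\langle M\rangle}=0$. On the other hand, Lemma~\ref{org06db361} places $C_{\Lambda_l}(\mathbf c^0)$ entirely above $N$. Lemma~\ref{org85cba49}(3) then forbids $z_{\langle i\rangle}=0$ for every $i\le N$ in $C_{\Lambda_{l-1}}(\mathbf c)$, including $i=M$. This is the contradiction.
\item \emph{Case $N=\eta$.} This is handled the same way, with the value $0$ replaced by $\gamma=w_{\langle\xi\rangle}(3^l)$.
\item \emph{Case $N=\xi$.} This case needs more. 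Here $\xi<\eta$ does hold, but it comes from Lemma~\ref{org69880d5} applied to a \emph{different} pair at level $l$ (an index in $C_{\Lambda_l}(\mathbf c^0)$ below $N$). Splitting on $y_{\langle N\rangle}\in\{0,1\}$, the paper exhibits at least $3^{k-l-1}d'+1$ coordinates in $C_{\Lambda_{l-1}[\mathbf z]}(\mathbf c^0)$. These are either $\xi,\eta$, or $M$ and a second $M'$, in each case together with $C_{\Lambda_l}(\mathbf c^0)\setminus\{N-1\}$. This contradicts Corollary~\ref{orgb141d90}.
\end{itemize}

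None of this can be bypassed using only the level-$h$ ordering lemmas you invoked.
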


\begin{proof}
 \comment{Proof.}
\label{sec:orgcadec69}
Assume that \(\mLex{\nIk - 1}\) is not linear.
Let \(\nIh\), \(\nIl\), \(\bdy\), \(\bdz\), \(\ccN\), \(\ccM\), \(\alpha\), and \(\beta\) be as in Lemma \ref{orgdc2a2bc}.
Then \(\word[\ccN](3^{\nIh}) = \word[\ccN](3^{\nIl}) = 1\), \(\alpha = 2\), and \(\beta = 1\).
Let \(\mvecC = \mGen{\nIl - 1}<\ccN>\) and \(\mvecA = \mGen{\nIh - 1}<\ccN>\).

\resetmycase

\begin{mycase}[\(\ccN \not \in \Theta\)]
 \comment{Case. [\(\ccN \not \in \Theta\)]}
\label{sec:org955a1e9}

Note that \(\ithComp{\ccy}<\ccM> = \word<\ccN>(3^\nIh) + \word<\ccN>(3^\nIl) = 2\) and
\(\ccM \in \Cdn[\mGen{\nIh}](\mvecA[2])\).
This implies that \(\ithComp{\ccz}<\ccM> = \ithComp{\ccy}<\ccM> - \word<\ccM>(3^\nIh) = 2 - 2 = 0\).
Since \(\ccN \not \in \Theta\) and \(\ccN \in \Cdn[\mGen{\nIl}](\mvecC[1])\),
it follows from Lemma \ref{org06db361} that \(\Cdn[\mGen{\nIl}](\mvecC[0]) \subseteq \ZZ_{> \ccN}\).
Since \(\ccM \in \Cdn[\mGen{\nIl - 1}](\mvecC) \cap \ZZ_{< \ccN}\),
it follows from Lemma \ref{org85cba49} that \(\ithComp{\ccz}<\ccM> \neq 0\), a contradiction.
 
\end{mycase}

\begin{mycase}[\(\ccN = \eta\)]
 \comment{Case. [\(\ccN = \eta\)]}
\label{sec:org1f04256}

Since \(\word<\ccN>(3^{\nIh}) = \word[\ccN](3^{\nIh}) = 1\), we see that \(\word<\xi>(3^{\nIh}) = 0\).
By Lemma \ref{org45cd32d}, we see that \(\eta < \xi\).
Let \(\gamma = \word<\xi>(3^{\nIl})\).
Then \(\word<\ccN>(3^{\nIl}) = \word[\eta](3^{\nIl}) + \word<\xi>(3^{\nIl}) = 1 + \gamma\).
Lemma \ref{org45cd32d} implies that \(\gamma + 1 > \gamma\).

Since \(\ccN = \eta\), \(\ccN \in \Cdn[\mGen{\nIl}](\mvecC[\gamma + 1])\),
\(\gamma < \gamma + 1\),
and \(\word[\ccN](3^\nIl) = \word<\ccN>(3^\nIl) = 1\),
it follows from Lemma \ref{org06db361} that
\(\Cdn[\mGen{\nIl}](\mvecC[\gamma]) \subseteq \ZZ_{> \ccN}\).
Lemma \ref{org85cba49} shows that 
\begin{equation}
\label{equ:thm-5-linear-case-2}
 \ithComp{\ccz}<\ccL> \neq \gamma \tfor \ccL \in \Cdn[\mGen{\nIl - 1}](\mvecC) \cap \ZZ_{\le \ccN}.
\end{equation}
Since \(\ithComp{\ccy}<\ccM> = \word<\ccN>(3^\nIh) + \word<\ccN>(3^\nIl) = 1 + \gamma + 1 = \gamma + 2\),
it follows that \(\ithComp{\ccz}<\ccM> = \ithComp{\ccy}<\ccM> - \word<\ccM>(3^\nIh) = \gamma + 2 - 2 = \gamma\).
However \(\ccM \in \Cdn[\mGen{\nIl - 1}](\mvecC)\) and \(\ccM < \ccN\), contrary to \eqref{equ:thm-5-linear-case-2}.
 
\end{mycase}

\begin{mycase}[\(\ccN = \xi\)]
 \comment{Case. [\(\ccN = \xi\)]}
\label{sec:org0a15ab8}

Since \(\ithComp{\ccy}<\ccN> = \beta < \alpha\), we see that \(\ithComp{\ccy}<\ccN> = 0, 1\).

\noindent
\textbf{Case 3.1 (\(\ithComp{\ccy}<\ccN> = 1\)).}

We first show that
\begin{equation}
\label{equ:equ-thm-5-linear-case3-1-1}
 \Cdn[\mGen{\nIl}](\mvecC[0]) \setminus \set{\ccN - 1} \subseteq \Cdn[\mGen{\nIl - 1}[\bdz]](\mvecC[0]).
\end{equation}
Note that \(\ithComp{\ccz}<\ccN> = \ithComp{\ccy}<\ccN> - \word<\ccN>(3^{\nIh}) = 1 - 1 = 0\).
By Lemma \ref{org85cba49}, we see that
\(\Cdn[\mGen{\nIl}](\mvecC[0]) \not \subseteq \ZZ_{> \ccN}\).
Since \(\xi = N \in \Cdn[\mGen{\nIl}](\mvecC[1])\), it follows from Lemma \ref{org69880d5} that
\(\xi < \eta\) and \(\Cdn[\mGen{\nIl}](\mvecC[0]) \setminus \set{\ccN - 1} \subseteq \ZZ_{> \ccN}\).
This implies that if \(\cci \in \Cdn[\mGen{\nIl}](\mvecC[0]) \setminus \set{\ccN - 1}\),
then \(\cci > \ccN\), and hence \(\ithComp{\ccz}<\cci> = \word<\cci>(\ccp^{\nIl}) = 0\).
Therefore \eqref{equ:equ-thm-5-linear-case3-1-1} holds.

We next show that
\begin{equation}
\label{equ:equ-thm-5-linear-case3-1-2}
 \set{\xi, \eta} \subseteq \Cdn[\mGen{\nIl - 1}[\bdz]](\mvecC[0]).
\end{equation}
Since \(\eta > \xi\),
we see that \(\mGen{\nIl}<\eta> = \mGen{\nIl}<\xi> = \mvecC[1]\), that is,
\begin{equation}
\label{equ:equ-thm-5-linear-case3-1-3}
 \eta, \xi \in \Cdn[\mGen{\nIl}](\mvecC[1]).
\end{equation}
Since \(\eta > \xi = \ccN\), it follows that that \(\ithComp{\ccz}[\eta] = \word[\eta](3^{\nIl}) = 0\).
Hence \(\ithComp{\ccz}<\eta> = \ithComp{\ccz}<\xi> + \ithComp{\ccz}[\eta] = 0 + 0 = 0\).
Therefore \eqref{equ:equ-thm-5-linear-case3-1-2} holds.

From \eqref{equ:equ-thm-5-linear-case3-1-1} -- \eqref{equ:equ-thm-5-linear-case3-1-3},
we conclude that 
\[
\cdn[\mGen{\nIl - 1}[\bdz]](\mvecC[0]) \ge 2 + \cdn[\mGen{\nIl}](\mvecC[0]) - 1 = 3^{\nIk - \nIl - 1} \ccd' + 1,
\]
contrary to Corollaries \ref{org7f53679} and \ref{orgb141d90}.

\noindent
\textbf{Case 3.2} (\(\ithComp{\ccy}<\ccN> = 0\)).

\resetstep
 
 \begin{step}
 \comment{Step.}
\label{sec:org3a4a08d}

We show that \(\xi < \eta\) and
\begin{equation}
\label{equ:thm-ternarly-linear-z}
\Cdn[\mGen{\nIl}](\mvecC[0]) \setminus \set{\ccN - 1} \subseteq \ccZ_{> \ccN}.
\end{equation}
Let \(\gamma = \word<\ccM>(3^{\nIl})\).
Since \(\ccM \in \Cdn[\mGen{\nIh}](\mvecA[2])\), 
we see that
\(\gamma = 2\) when
if \(\nIl = \nIh\).
Moreover,
since \(\ccN \in \Cdn[\mGen{\nIh}](\mvecA[2])\), 
it follows that
if \(\nIl < \nIh\), then
\(\gamma = \word<\ccM>(3^{\nIl}) = \word<\ccN>(3^{\nIl}) = 1\).
Therefore
\[
 \gamma = \begin{cases} 2 & \tif \nIl = \nIh \\ 1 & \tif \nIl < \nIh. \end{cases}
\]
Note that
\begin{align*}
 \ithComp{\ccz}<\ccM> &= \ithComp{\ccy}<\ccM> - \word<\ccM>(3^\nIh) \\
&= \word<\ccN>(3^\nIh) + \word<\ccN>(3^\nIl) - 2 \\
&= 1 + 1 - 2 = 0.
\end{align*}
Thus \(\ccM \in \Cdn[\mGen{\nIl - 1}[\bdz]](\mvecC[0])\).
It follows that \(\Cdn[\mGen{\nIl - 1}[\bdz]](\mvecC[0]) \neq \Cdn[\mGen{\nIl}](\mvecC[0])\)
since \(\ccM \not \in \Cdn[\mGen{\nIl}](\mvecC[0])\).
By Lemma \ref{org85cba49}, we see that
\(\Cdn[\mGen{\nIl}](\mvecC[0]) \not \subseteq \ZZ_{> \ccN}\).
Thus there exists \(\ccL \in \Cdn[\mGen{\nIl}](\mvecC[0])\) such that \(\ccL < \ccN\).
Lemma \ref{org69880d5} shows that \(\ccL = \ccN - 1\) and \(\xi < \eta\). 
Moreover, \eqref{equ:thm-ternarly-linear-z} holds.
 
\end{step} 
 
\end{mycase}

 \begin{step}
 \comment{Step.}
\label{sec:orgcc99721}

We show that (1) \(\xi, \eta \in \Cdn[\mGen{\nIh - 1}[\bdy]](\mvecA[0])\) and (2) \(\mvecA \neq \allzero^{\marray{\nIh - 1}}\)

(1) Since \(\ccN = \xi < \eta\), we see that \(\mGen{\nIh}<\xi> = \mGen{\nIh}<\eta> = \mvecA[1]\).
Moreover, \(\ithComp{\ccy}[\eta] = \word[\eta](3^{\nIh}) + \word[\eta](3^{\nIl}) = 0 + 0\),
and hence \(\ithComp{\ccy}<\eta> = \ithComp{\ccy}<\xi> = 0\).
Therefore \(\xi, \eta \in \Cdn[\mGen{\nIh - 1}[\bdy]](\mvecA[0])\).

(2) If \(\mvecA = \allzero^{\marray{\nIh - 1}}\), then \(\ccN \in \Cdn[\mGen{\nIh - 1}[\bdy]](\allzero{\nIh})\) and
\(\ccN \not \in \msupp|\ccE|(\mLex-{\nIh - 1}[\bdy])\), contrary to \(\ccN \in \Cdn[\mGen{\nIh}](\mEvec{\nIh}) \subseteq \msupp|\ccE|(\mLex-{\nIh}) = \msupp|\ccE|(\mLex-{\nIh - 1}[\bdy])\).
Therefore \(\mvecA \neq \allzero^{\marray{\nIh - 1}}\).
 
\end{step}

 \begin{step}
 \comment{Step.}
\label{sec:org04304bc}
We show that \(\Cdn[\mGen{\nIh}](\mvecA[0]) \subseteq \ZZ_{> \ccN}\).

Assume that there exists \(\ccL' \in \Cdn[\mGen{\nIh}](\mvecA[0]) \cap \ZZ_{\le \ccN}\).
Since \(\ccN \in \Cdn[\mGen{\nIh}](\mvecA[1])\), we see that \(\ccL' < \ccN\).
Lemma \ref{org69880d5} shows that \(\ccL' = \ccN - 1\), and hence \(\ccL' = \ccL \in \Cdn[\mGen{\nIl}](\mvecC[0])\).
Assume that \(\nIl < \nIh\). Then
\(\word<\ccL>(3^\nIl) = \mvecA<\nIl> = \word<\ccN>(3^\nIl) = 1\).
However, since \(\mGen{\nIl}<\ccL>  = \mvecC[0]\), we see that \(\word<\ccL>(3^\nIl) = 0\), a contradiction.
Thus \(\nIl = \nIh\).
Therefore \(\Cdn[\mGen{\nIh}](\mvecA[0]) \setminus \set{\ccN - 1} \cup \set{\xi, \eta} \subseteq \Cdn[\mGen{\nIh - 1}[\bdy]](\mvecA[0])\),
contrary to Corollaries \ref{org7f53679} and \ref{orgb141d90}.
Therefore \(\Cdn[\mGen{\nIh}](\mvecA[0]) \subseteq \ZZ_{> \ccN}\).
 
\end{step}

 \begin{step}
 \comment{Step.}
\label{sec:org90f75df}

We show that \(\nIl < \nIh\) and
\begin{equation}
\label{equ:thm-ternarly-linear-lex-size-over2}
\Size{\Cdn[\mGen{\nIh - 1}[\bdy]](\mvecA[2]) \cap \Cdn[\mGen{\nIh}](\mvecA[2])} \ge 2.
\end{equation}

From Step 3 and Lemma \ref{org85cba49}, we see that
\(\ithComp{\ccy}<\ccN> \neq \gamma\), where \(\gamma\) is the \(\nIl\)-th component of \(\mvecA[0]\).
Since \(\ithComp{\ccy}<\ccN> = 0\) and the \(\nIh\)-th component of \(\mvecA[0]\) is 0, we see that
\(\nIl < \nIh\) and \(\gamma = \word<\ccN>(3^{\nIl}) = 1\).
Lemma \ref{org85cba49} shows that
\(\Cdn[\mGen{\nIh - 1}[\bdy]](\mvecA[1]) = \Cdn[\mGen{\nIh}](\mvecA[0])\).
Therefore
\begin{equation}
\label{equ:thm-ternarly-linear-lex-0112}
\begin{array}{ccc}
\Cdn[\mGen{\nIh - 1}](\mvecA) \setminus \Cdn[\mGen{\nIh - 1}[\bdy]](\mvecA[1]) & = &
\Cdn[\mGen{\nIh - 1}](\mvecA) \setminus\Cdn[\mGen{\nIh}](\mvecA[0])\\
\text{\rotatebox{90}{$=$}}& & \text{\rotatebox{90}{$=$}} \\
\Cdn[\mGen{\nIh - 1}[\bdy]](\mvecA[0]) \cup \Cdn[\mGen{\nIh - 1}[\bdy]](\mvecA[2]) & & \Cdn[\mGen{\nIh}](\mvecA[1]) \cup \Cdn[\mGen{\nIh}](\mvecA[2]).
\end{array}
\end{equation}
From Step 2, we know that \(\eta, \xi \in \Cdn[\mGen{\nIh - 1}[\bdy]](\mvecA[0])\).
Since \(\eta, \xi \in \Cdn[\mGen{\nIh}](\mvecA[1])\),
it follows from 
\eqref{equ:thm-ternarly-linear-lex-0112} that
\eqref{equ:thm-ternarly-linear-lex-size-over2} holds.
 
\end{step}

 \begin{step}
 \comment{Step.}
\label{sec:org6246034}
We show that \(\mLex{\nIk - 1}\) is a linear code.

Since \(\mGen{\nIh}<\ccM> = \mvecA[2]\)
and \(\ithComp{\bdy}<\ccM> = \word<\ccN>(3^{\nIh}) + \word<\ccN>(3^{\nIl}) = 2\), it follows that
\[\ccM \in \Cdn[\mGen{\nIh - 1}[\bdy]](\mvecA[2]) \cap \Cdn[\mGen{\nIh}](\mvecA[2]).\]
By \eqref{equ:thm-ternarly-linear-lex-size-over2},
we see that there exists \(\ccM'\) such that \(\ccM' \neq \ccM\) and 
\[\ccM' \in \Cdn[\mGen{\nIh}](\mvecA[2]) \cap \Cdn[\mGen{\nIh - 1}[\bdy]](\mvecA[2]).
\]
Note that
\begin{align*}
 \ithComp{\ccz}<\ccM> &= \ithComp{\ccy}<\ccM> - \word<\ccM>(3^{\nIh}) = 2 - 2 = 0,\\
 \ithComp{\ccz}<\ccM'> &= \ithComp{\ccy}<\ccM'> - \word<\ccM'>(3^{\nIh}) = 2 - 2 = 0.
\end{align*}
This implies that \(\ccM, \ccM' \in \Cdn[\mGen{\nIl - 1}[\bdz]](\mvecC[0])\).

We show that \(\Cdn[\mGen{\nIl}](\mvecC[0]) \setminus \set{\ccN - 1} \subseteq \Cdn[\mGen{\nIl - 1}[\bdz]](\mvecC[0])\).
Let \(\ccL \in \Cdn[\mGen{\nIl}](\mvecC[0]) \setminus \set{\ccN - 1}\).
By Step 1,  \(\ccL > \ccN\).
Since \(\mGen{\nIl}<\eta> = \mGen{\nIl}<\xi> = \mGen{\nIl}<\ccN> = \mvecC[1]\), it follows that \(\ccL \neq \eta\).
Thus
\(\ithComp{\ccz}<\ccL> = \ithComp{\ccz}[\ccL] = \ithComp{\ccy}[\ccL] - \word[\ccL](3^{\nIh}) =
\word[\ccL](3^{\nIh}) + \word[\ccL](3^{\nIl}) - \word[\ccL](3^{\nIh}) = \word[\ccL](3^{\nIl})\).
Since \(\mGen{\nIl}<\ccL> = \mvecC[0]\), it follows that \(\word[\ccL](3^{\nIl}) = 0\), and that
\(\ithComp{\ccz}<\ccL> = \word[\ccL](3^{\nIl}) = 0\).
Therefore 
\[\Bigl(\Cdn[\mGen{\nIl}](\mvecC[0]) \setminus \set{\ccN - 1} \Bigr) \cup \set{\ccM, \ccM'} \subseteq \Cdn[\mGen{\nIl - 1}[\bdz]](\mvecC[0]).\]
Since \(\ccM, \ccM' \in \Cdn[\mGen{\nIl}](\mvecC[1])\),
it follows that \(\cdn[\mGen{\nIl - 1}[\bdz]](\mvecC[0]) \ge \cdn[\mGen{\nIl}](\mvecC[0]) - 1 + 2 = 3^{\nIk - \nIl - 1} + 1\) , a contradiction.
Therefore we conclude that \(\mLex{\nIk - 1}\) is a linear code.\qedhere
 
\end{step} 
\end{proof}

\section{Solomon-Stiffler codes}
\label{sec:orgcff722a}
\label{org67955c0}
Let \(\ccp = 3\) and \(\ccd = 3^{\nIk - 1} \ccd' = 3^{\nIk - 1}(3 \mQuo + \mRem)\),
where \(0 < \mRem < 3\).
In the next section,
we prove that \(\word(3^{\nIk} + 1) \neq \word(3^{\nIk}) + \word(1)\) when \(\mQuo + \mRem \ge 2\); in particular,
\(\mLex{\nIk}\) is not a linear code.
To this end, we study \(\word(3^{\nIk})\) in this section. Note that \(\word(\cca) = \word-(\cca)\) for \(0 \le a \le 3^{\nIk}\).

\subsection{\texorpdfstring{\(\mDu{\ccd}\)-distributed matrices}{pi-d-distributed matrices}}
\label{sec:org42a1662}

\begin{example}
 \comment{Exm.}
\label{sec:org7ba1c31}
\label{orga22878d}
Let \(\ccp = 3\), \(\ccF = \ccE\), and \(\ccd = 15\). Then
\[
 \mGen{2} = \mGen-{2} =  \begin{bmatrix}
 1  1  1  1  1  &
 1  1  1  1  1  &
 1  1  1  1  1  &
 0  0  0  0  0  &
 0  0  0  0  0 \cdots\\
 2  2  2  2  2  &
 1  1  1  1  1  &
 0  0  0  0  0  &
 1  1  1  1  1  &
 0  0  0  0  0 \cdots\\
 2  2  1  1  0  &
 2  2  1  0  0  &
 2  1  1  0  0  &
 2  1  1  0  0  &
 1  1  0  0  0 \cdots\\
 \end{bmatrix}.
\]
As we have shown in Section \ref{orgd92423c},
\(\cdn[\mGen{1}](\mvecA) = 5\) for \(\mvecA \in \mVct{1}\).
Observe that
\[
 \cdn[\mGen{2}](\mVec{1 \\ 2 \\ 2}) = 
 \cdn[\mGen{2}](\mVec{1 \\ 2 \\ 1}) = 2 \ \tand \
 \cdn[\mGen{2}](\mVec{1 \\ 2 \\ 0}) = 1.
\]
Actually, for \(\mvecA \in \mVct{2}\),
\[
\cdn[\mGen{2}](\mvecA) = 
\begin{cases}
 1 & \tif  \innerproduct{\mvecA}{\allone} = 0,\\
 2 & \tif  \innerproduct{\mvecA}{\allone} \neq 0.\\
\end{cases}
\]
 
\end{example}

\comment{Connect}
\label{sec:org0ed26f3}
Based on the above example, we introduce the following notation.
Let \(\ccd = \ccp^{\nIk - 1} \ccd' = \ccp^{\nIk - 1}(\ccp \mQuo + \mRem)\),
where \(0 \le \mRem < \ccp\). 
For \(0 \le \nIl \le \nIk\) and 
\(\mvecA \in \FF_p^{\marray{\nIl}} \setminus \set{\allzero}\),
we define
\(\mDu{\ccd}(\mvecA)\) as follows:
if \(\mvecA \not \in \mVct{\nIl}\), then \(\mDu{\ccd}(\mvecA) = 0\);
if \(\mvecA \in \mVct{\nIl}\), then
\begin{align*}
 \mDu{\ccd}(\mvecA) = 
 \begin{cases}
 \left \lceil \dfrac{\ccd}{\ccp^\nIl} \right \rceil & \tif  \nIl < \nIk\ \text{or}\ \innerproduct{\mvecA}{\allone} \neq 0,\\
  \mQuo + \mRem - \ccp + 1 & \tif \nIl = \nIk \text{\ and\ } \innerproduct{\mvecA}{\allone}  = 0. \\
 \end{cases}
\end{align*}
Note that if \(\mvecA \in \mVct{\nIk - 1}\), then
\[
 \sum_{\alpha \in \FF_\ccp} \mDu{\ccd}(\mvecA[\alpha]) = \mQuo + \mRem - \ccp + 1 + (\ccp - 1) \left \lceil \dfrac{\ccd}{\ccp^\nIk} \right \rceil = pq + r = \mDu{\ccd}(\mvecA). 
\]
Let \(\bdx_i \in \FF_\ccp^\NN\) for \(0 \le \cci \le \nIl\) and
\(\mnGen = \mVec{\bdx_0 \\ \bdx_1 \\ \svdots \\ \bdx_\nIl} \in \FF_{\ccp}^{\marray{\nIl} \times \NN}\).
For \(\ccW \subseteq \FF_\ccp^{\marray{\nIl}} \setminus \set{\allzero}\),
the matrix \(\mnGen\) is said to be \emph{\(\mDu{\ccd}\)-distributed} on \(\ccW\) if
\[
 \cdn[\mnGen](\mvecA) = \mDu{\ccd}(\mvecA) \tfor
\mvecA \in \ccW.
\]
If \(\mnGen\) is \(\mDu{\ccd}\)-distributed on \(\FF_\ccp^{\marray{\nIl}} \setminus \set{\allzero}\),
then \(\mnGen\) is said to be \(\mDu{\ccd}\)-distributed.

\begin{example}
 \comment{Exm.}
\label{sec:orga83ee08}
\label{org73b0799}
Let \(\ccp = 3\) and \(\ccd = 15 = 3 \cdot (3 \cdot 1 + 2)\). Then \(\mQuo = 1\) and \(\mRem = 2\),
so \(\mQuo + 1 = 2\) and \(\mQuo + \mRem - \ccp + 1 = 1\).
Thus \(\mGen{2}\) is \(\mDu{15}\)-distributed.

\comment{Connect}
\label{sec:org253d78d}
Our goal is to show the next result.
 
\end{example}

\begin{proposition}
 \comment{Prop.}
\label{sec:org252dd97}
\label{org6d2e6f0}

Let \(\ccd = \ccp^{\nIk - 1} (\ccp \mQuo + \mRem)\).
If \(\mQuo + \mRem - \ccp + 1 \ge 2\), then \(\mGen-{\nIk}\) is \(\mDu{\ccd}\)-distributed.
 
\end{proposition}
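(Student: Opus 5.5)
The plan is to pin down the column counts of \(\mGen-{\nIk}\) in two stages. First we get lower bounds from the distance condition. Then we get exact values from the minimality of \(\word-(\ccp^{\nIk})\), using the exchange arguments of Section~\ref{orgb6599eb}.

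By Theorem~\ref{org56f1397} and Corollary~\ref{org09650b1}, \(\cdn[\mGen-{\nIk-1}](\mvecA) = \ccd' = \ccp\mQuo+\mRem\) for \(\mvecA \in \mVct{\nIk-1}\), and these columns together with the zero columns exhaust \(\NN\). Lemma~\ref{orgf9637b2} gives \(\Cdn[\mGen-{\nIk}](\mvecB)=\emptyset\) for \(\mvecB \notin \mVct*{\nIk}\). Hence it suffices to determine two things:
\begin{itemize}
\item the split of each block \(\Cdn[\mGen-{\nIk-1}](\mvecA)\) into the \(\ccp\) classes \(\mvecA[\alpha]\);
\item the count of \(\mEvec{\nIk}\), which must be \(\lceil \ccd/\ccp^{\nIk}\rceil\).
\end{itemize}
For fixed \(\mvecA\), exactly one \(\alpha\) satisfies \(\innerproduct{\mvecA[\alpha]}{\allone}=0\). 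The target is therefore \(\mQuo+\mRem-\ccp+1\) for that \(\alpha\) and \(\lceil\ccd/\ccp^{\nIk}\rceil\) for the other \(\ccp-1\) values, which sum to \(\ccd'\) as noted after the definition of \(\mDu{\ccd}\).

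\emph{Step 1 (lower bounds).} By Lemma~\ref{org4a0db5b}, \(\mDist{\word-(\ccp^{\nIk})}{\word-(\cca)}\) equals the number of columns \(\mvecB\) with \(\mvecB<\nIk> \neq \innerproduct{\cca}{\mvecB}\). Equivalently, it is the number of columns off the hyperplane \(H_\cca=\set{\mvecB : \mvecB<\nIk>=\innerproduct{\cca}{\mvecB}}\).

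Taking \(\cca = 0\) gives \(\cdn(\mEvec{\nIk}) + \sum_{\mvecA}\sum_{\alpha\neq 0}\cdn(\mvecA[\alpha]) \ge \ccd\). Averaging these hyperplane inequalities over suitable families of \(\cca\), exactly as in the proof of the Griesmer bound, should force \(\cdn(\mEvec{\nIk}) \ge \lceil \ccd/\ccp^{\nIk}\rceil\). Combined with Corollary~\ref{orgb141d90}, applied to the \((\nIk+1)\)-dimensional code generated by \(\mGen-{\nIk}\), it should also force every class to respect the per-class upper bound.

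\emph{Step 2 (a competitor).} Start from \(\bdy = \word-(\ccp^{\nIk})\). Apply Lemma~\ref{org2f35cc7} on each block \(\Cdn[\mGen-{\nIk-1}](\mvecA)\) with \(\ccm_\alpha = \mDu{\ccd}(\mvecA[\alpha])\). This is exactly where the hypothesis \(\mQuo+\mRem-\ccp+1\ge 2\) enters: it guarantees \(\ccm_\alpha\ge 2\) for all \(\alpha\), and \(\lceil\ccd/\ccp^{\nIk}\rceil \ge \mQuo+1 \ge 2\) holds automatically.

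Next adjust the \(\mEvec{\nIk}\)-count to \(\lceil\ccd/\ccp^{\nIk}\rceil\) by deleting surplus indices outside \(\Theta\). This yields \(\tilde\bdy\) whose matrix \(\mGen-{\nIk-1}[\tilde\bdy]\) is \(\mDu{\ccd}\)-distributed. One then checks, via Lemma~\ref{org4a0db5b}, that every hyperplane \(H_\cca\) with last coefficient nonzero misses at least \(\ccd\) of its columns. This is the Solomon--Stiffler computation: a \(\mDu{\ccd}\)-distributed matrix is \(\ccd'\) copies of the simplex code, minus the affine-hyperplane part \(\innerproduct{\cdot}{\allone}=0\) with suitable multiplicity. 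Consequently \(\mDist{\tilde\bdy}{\word-(\cca)}\ge\ccd\) for \(\cca<\ccp^{\nIk}\), and Lemma~\ref{org6f41f57} gives \(\tilde\bdy \ge_\ccF \word-(\ccp^{\nIk})\).

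\emph{Step 3 (exactness, the main obstacle).} Suppose some class \(\mvecA[\beta]\) deviates from \(\mDu{\ccd}\). By Step~1 and the fixed block sizes, there are then \(\beta\neq\gamma\) with class \(\beta\) over target and class \(\gamma\) under target. I would move a single index \(\ccN\in\Cdn(\mvecA[\beta])\setminus\Theta\), which exists since the count is \(\ge 2\), into class \(\gamma\). The index should be chosen by the Order lemma (Lemma~\ref{orgfc26eca}) and Lemma~\ref{org06db361}. These lemmas say that larger last entries sit at smaller indices within a block, except for the controlled \(\Theta\)-exceptions of Lemma~\ref{org69880d5}. If \(\gamma<\beta\), take \(\ccN\) to be the maximal such index; then the modified word is \(<_\ccF \word-(\ccp^{\nIk})\).

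The delicate point is verifying that this one-step move does not drop any hyperplane count below \(\ccd\). I would handle it by showing that the deficit at \(\gamma\) (or the surplus at \(\beta\)) leaves slack in every hyperplane through \(\mvecA[\beta]\) but not through \(\mvecA[\gamma]\), using the averaging identities from Step~1. Minimality then gives the contradiction. If instead the surplus is at the smaller value, run the symmetric argument, comparing with \(\tilde\bdy\) from Step~2. There the Order lemma shows that \(\max\msupp|\ccF|(\tilde\bdy-\word-(\ccp^{\nIk}))\) is an index at which \(\tilde\bdy\) is smaller, so \(\tilde\bdy<_\ccF\word-(\ccp^{\nIk})\), contradicting Step~2. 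The \(\Theta\)-exceptions are handled as in Lemma~\ref{org45cd32d}, and I expect the bookkeeping there to be the most error-prone part.
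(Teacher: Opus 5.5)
\textbf{There is a genuine gap in your Step~3 for $k\ge 2$.} It sits exactly where the content of the proposition lies: deciding \emph{which} class of each block $\Cdn[\mGen-{\nIk-1}](\mvecA)$ carries the deficit.

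For a linear form $\ell$ on $\FF_\ccp^{\marray{\nIk-1}}$, let $\mathrm{SS}(\ell)$ be the extension of $\mGen-{\nIk-1}$ with $q+1$ columns $\mEvec{\nIk}$, $q+r-p+1$ columns $\mvecA[\ell(\mvecA)]$, and $q+1$ columns $\mvecA[\alpha]$ for each $\alpha\neq\ell(\mvecA)$, $\mvecA\in\mVct{\nIk-1}$. The proposition says $\word-(\ccp^{\nIk})$ realizes $\mathrm{SS}(\ell_0)$ with $\ell_0(\mvecA)=-\innerproduct{\mvecA}{\allone}$. However, every $\mathrm{SS}(\ell)$ is a Griesmer-meeting Solomon--Stiffler code of minimum distance $d$, so nothing in your Step~1 separates them.

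Worse, by Lemma~\ref{org4a0db5b}, in $\mathrm{SS}(\ell')$ the distance to $\word-(\cca)$ is $(q+1)p^k$ when $\innerproduct{\cca}{\mvecA}=\ell'(\mvecA)$ for all $\mvecA$, and exactly $d$ for every other $\cca<p^k$. Moving one index from $\mvecA[\beta]$ to $\mvecA[\gamma]$ lowers by one the distance to all $p^{k-1}$ words $\word-(\cca)$ with $\innerproduct{\cca}{\mvecA}=\gamma$, and at least $p^{k-1}-1\ge 1$ of these are tight. So suppose $\word-(\ccp^{\nIk})$ were $\mathrm{SS}(\ell')$ with $\ell'\ne\ell_0$. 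At a block with $\gamma=\ell'(\mvecA)<\beta=\ell_0(\mvecA)$, your competitor is lex-smaller but infeasible. The slack you want to extract from the averaging identities does not exist. (For $k=1$ each class lies on a single hyperplane $x_1=cx_0$, so the single move works; this is essentially the paper's case $k=1$.)

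\textbf{The other branch fails too.} The Order lemma only orders indices \emph{inside} one block: classes decrease as indices increase, up to $\Theta$-exceptions. From this one checks the following. If the smallest deviating class of a block is in surplus in $\word-(\ccp^{\nIk})$ (for example, your second branch with only $\beta<\gamma$ deviating), then every rearrangement of that block with the target counts is lex-\emph{larger} there. Hence whether $\tilde{\bdy}<_{\ccF}\word-(\ccp^{\nIk})$ holds depends on which deviating block contains the largest changed index, i.e.\ on how different blocks interleave in $\NN$. Lemmas~\ref{orgfc26eca} and~\ref{org06db361} say nothing about this, and Lemma~\ref{org2f35cc7} gives no control over which indices change. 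Yet this cross-block information is exactly what selects $\ell_0$; for instance, on the top block $\Cdn[\mGen-{\nIk-1}](\mEvec{\nIk-1})$ lex-minimality pushes the deficit to $p-1=\ell_0(\mEvec{\nIk-1})$.

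\textbf{Step~1 also overreaches.} Corollary~\ref{orgb141d90} needs the Griesmer bound to be met, i.e.\ $\cdn[\mGen-{\nIk}](\mEvec{\nIk})=q+1$, but averaging gives only $\ge q+1$. The upper bound needs its own minimality argument, as in the proof of Theorem~\ref{org56f1397}. Delete an index of $\Cdn[\mGen-{\nIk}](\mEvec{\nIk})\setminus\Theta$; by Lemma~\ref{org0bbad96} it lies above $\msupp|\ccE|(\mLex-{\nIk-1})\setminus\Theta$. Then rebalance the lower blocks.

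\textbf{How the paper supplies the missing global information.} It argues by induction on $k$ through restriction. The $\Phi_0$ and $\Phi_1$ lemmas show that the words of $\mLex-{\nIk}$, restricted to the zero set of $\word-(1)$ (resp.\ $\word-(\ccp)$), are the words of the lexicographic code for the restricted basis and distance $\lceil d/p\rceil$. With the induction hypothesis, this gives $\mDu{\ccd}$-distribution on $\mVct{\nIk}(0)\cup\mVct{\nIk}(\mVec{1 \\ 0})$, which includes the top blocks. The remaining classes are then forced by summing the inequalities $d\bigl(\word-(\ccp^{\nIk}),\word-(\alpha+\beta\ccp+\ccp^{\nIk}-\ccp^2)\bigr)\ge d$ over a suitable family of $(\alpha,\beta)$.
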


\comment{connect}
\label{sec:orgd1e3c73}
Note that a \(\mDu{\ccd}\)-distributed matrix generates a code with minimum distance \(\ccd\)
because this code is a Solomon-Stiffler code\cite{solomon-algebraically-1965}. Thus the following lemma holds.

\begin{lemma}
 \comment{Lem.}
\label{sec:orgec97e95}
\label{org65ab687}
Let \(\ccd = \ccp^{\nIk - 1} \ccd' = \ccp^{\nIk - 1}(\ccp \mQuo + \mRem)\),
\(0 \le \nIl \le \nIk\), 
and \(\mnGen = \mVec{\bdx_0 \\ \bdx_1 \\ \svdots \\ \bdx_\nIl} \in \FF_{\ccp}^{\marray{\nIl} \times \NN}\).
Let \(\cC\) be the code generated by the rows of \(\mnGen\).
If \(\mnGen\) is \(\mDu{\ccd}\)-distributed, then the minimum distance of \(\cC\) equals \(\ccd\)
and \(\wt(\bdx_\cci) = \ccd\) for \(0 \le \cci \le \nIl\).
 
\end{lemma}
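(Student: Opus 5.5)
Via Lemma~\ref{org4a0db5b}, every codeword weight becomes a count of columns, and the minimum distance and row weights can then be computed directly from the counts \(\mDu{\ccd}\). A nonzero codeword of \(\cC\) is \(\sum_{\cci} \ccc_\cci \bdx_\cci\) with \(\mvecC = (\ccc_0, \ldots, \ccc_\nIl) \neq \allzero\). Its weight is the number of columns \(\mvecA\) of \(\mnGen\) with \(\innerproduct{\mvecC}{\mvecA} \neq 0\), counted with multiplicity \(\cdn[\mnGen](\mvecA)\). Because \(\mnGen\) is \(\mDu{\ccd}\)-distributed, \(\cdn[\mnGen](\mvecA) = \mDu{\ccd}(\mvecA)\) for every nonzero \(\mvecA\), and this is \(0\) off \(\mVct{\nIl}\). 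Hence
\[
\wt\Bigl(\sum_\cci \ccc_\cci \bdx_\cci\Bigr) = \sum_{\mvecA \in \mVct{\nIl},\ \innerproduct{\mvecC}{\mvecA} \neq 0} \mDu{\ccd}(\mvecA).
\]
The columns in \(\mVct{\nIl}\) are exactly one representative of each point of the projective space \(\mathrm{PG}(\nIl, \ccp)\). The task is therefore to evaluate this sum for every hyperplane \(H_{\mvecC} = \set{\mvecA : \innerproduct{\mvecC}{\mvecA} = 0}\) and show it is \(\ge \ccd\), with equality attained.

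First case: \(\nIl < \nIk\). Here \(\mDu{\ccd}\) is the constant \(\lceil \ccd/\ccp^\nIl \rceil = \ccp^{\nIk - 1 - \nIl}\ccd'\) on \(\mVct{\nIl}\), so the code is \(\ccd'\ccp^{\nIk - 1 - \nIl}\) copies of the simplex code. Every nonzero codeword then has weight \(\ccp^{\nIl} \cdot \ccp^{\nIk - 1 - \nIl}\ccd' = \ccd\). This gives minimum distance \(\ccd\), and every row, in particular each \(\bdx_\cci\), has weight \(\ccd\).

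Second case: \(\nIl = \nIk\). The multiplicity is \(\ccp\mQuo + \ccp\) off the hyperplane \(H_{\allone}\) (i.e.\ on points with \(\innerproduct{\mvecA}{\allone} \neq 0\)). It is \(\mQuo + \mRem - \ccp + 1\) on \(H_{\allone}\) itself. Note \(\lceil \ccd/\ccp^\nIk \rceil = \mQuo + 1\) when \(\mRem > 0\); for \(\mRem = 0\) the same bookkeeping applies with \(\mQuo\) in place of \(\mQuo+1\), and that case needs a separate check. Write \(\mDu{\ccd} = (\mQuo+1)\mathbf{1} - (\ccp - \mRem)\mathbf{1}_{H_{\allone}}\) as functions on points. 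The weight of the codeword for \(\mvecC\) is then
\[
(\mQuo+1)\ccp^{\nIk} - (\ccp - \mRem)\,\Size{H_{\allone} \setminus H_{\mvecC}}.
\]
If \(\mvecC\) is proportional to \(\allone\), then \(\Size{H_{\allone} \setminus H_{\mvecC}} = 0\) and the weight is \((\mQuo+1)\ccp^\nIk > \ccd\). Otherwise the two hyperplanes are distinct and \(\Size{H_{\allone} \setminus H_{\mvecC}} = \ccp^{\nIk - 1}\). The weight is then \(\ccp^{\nIk-1}(\ccp\mQuo + \ccp - \ccp + \mRem) = \ccd\). So the minimum distance is exactly \(\ccd\), attained by every \(\mvecC\) not proportional to \(\allone\).

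For the rows \(\bdx_\cci\), take \(\mvecC = \mEvec{}\)-type unit vectors \(e_\cci\). These are proportional to \(\allone\) only when \(\nIk = 0\), a degenerate case to treat by hand: there \(\mDu{\ccd}(1) = \mQuo+1\) or \(\ccd' = \ccd\) as appropriate. Hence \(\wt(\bdx_\cci) = \ccd\).

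The main obstacle is the point-count bookkeeping: checking \(\Size{H_{\allone} \setminus H_{\mvecC}} = \ccp^{\nIk-1}\), which requires counting only normalized representatives in \(\mVct{\nIk}\). One must also handle the edge cases (\(\mRem = 0\), \(\nIk = 0\)) consistently with the definition of \(\mDu{\ccd}\). This recovers the Solomon–Stiffler construction \cite{solomon-algebraically-1965}: the simplex code with one hyperplane partially removed.
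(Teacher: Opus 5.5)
Your argument is correct for the intended parameters (see below), but it takes a different route from the paper, which gives no computation. The paper only observes that a \(\mDu{\ccd}\)-distributed matrix generates a Solomon--Stiffler code and cites \cite{solomon-algebraically-1965} for the minimum distance, leaving the claim \(\wt(\bdx_i)=d\) implicit.

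Your proof is self-contained instead. Weights become column counts, as in Lemma~\ref{org4a0db5b}, and writing \(\mDu{\ccd} = (q+1)\mathbf{1} - (p-r)\mathbf{1}_{H_{\allone}}\) on the points of \(\mathrm{PG}(k,p)\) reduces everything to the count \(|H_{\allone}\setminus H_{\mathbf{c}}| = p^{k-1}\). This gives the whole weight distribution: the single weight \(d\) when \(l<k\), and when \(l=k\) the weights \(d\) and \((q+1)p^k\), the latter exactly for \(\mathbf{c}\propto\allone\). The row weights then come for free, since a unit vector is not proportional to \(\allone\) once \(k\ge 1\).

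Two corrections. First, the multiplicity off \(H_{\allone}\) is \(q+1\), not \(pq+p\); your displayed formula already uses the right value.

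Second, and more important, \(r=0\) is not a case that merely needs a separate check. Your own bookkeeping gives \(\mDu{\ccd} = q\mathbf{1}-(p-1)\mathbf{1}_{H_{\allone}}\) there. Every \(\mathbf{c}\not\propto\allone\) then has weight \(qp^k-(p-1)p^{k-1} = d-(p-1)p^{k-1}<d\). So for \(l=k\), \(r=0\), \(q\ge p-1\) the statement is false. For \(q<p-1\) it is vacuous, because \(\mDu{\ccd}\) would be negative on \(H_{\allone}\).

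The lemma therefore has to be read with \(0<r<p\), as in Theorem~\ref{org3214412} and at the start of Section~\ref{org67955c0}. The paper's remark that the values of \(\mDu{\ccd}\) over the \(p\) lifts of a point of \(\mathrm{PG}(k-1,p)\) sum to \(pq+r\) also silently needs \(r>0\). Under that hypothesis \(d=p^{k-1}(pq+r)\) is an integer only for \(k\ge 1\), so your \(k=0\) caveat is moot and your proof is complete.
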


\subsection{Restriction}
\label{sec:orge450481}
\comment{Connect}
\label{sec:orgc85a656}
For \(\Phi \subseteq \NN\) and \(\bdy \in \FF_\ccp^\NN\), let
\[
\restWord{\bdy}[\Phi; \ccF] = \sum_{\nIi \in \Phi} \ithComp{\ccy}[\nIi; \ccF] \restWord{(\bdf_\nIi)}<\Phi> \in \FF_\ccp^\Phi.
\]
For simplicity, we write \(\restWord{\bdy}[\Phi]\) instead of \(\restWord{\bdy}[\Phi; \ccF]\).
For \(\nIh \in \NN\), let
\begin{equation}
\label{equ:def-Phi_h}
 \mPhi_{\nIh} = \NN \setminus \msupp|\ccE|(\word(\ccp^{\nIh})) = \Set{\nIi \in \NN \ : \ \word-<\nIi>(\ccp^{\nIh}) = 0}.
\end{equation}
The next lemma allows an inductive approach.

\begin{lemma}
 \comment{Lem.}
\label{sec:orge1ce561}
\label{org5683742}
Let \(\nIh \in \NN\).
If \(\cdn[\mGen-{\nIh}](\mvecA) \ge 2\) for \(\mvecA \in \mVct{\nIh}\), 
then \(\restWord{\bdy}<\mPhi_{\nIh}> = \restWord{\bdy}[\mPhi_{\nIh}]\) for \(\bdy \in \FF_\ccp^\NN\).
 
\end{lemma}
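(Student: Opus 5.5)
The plan is to reduce the statement to Lemma \ref{org45cd32d}(1) by computing the difference between the two restrictions directly from the definitions. If \(\ccF = \ccE\), every \(\bdf_\nIi = \bde_\nIi\) and both sides are visibly equal, so assume \(\ccF = \ccF(\xi, \eta)\).

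\textbf{Step 1: compute the difference.} Write \(\bdy = \sum_{\nIi \in \NN} \ithComp{\ccy}[\nIi] \bdf_\nIi\). Restriction to the coordinates in \(\mPhi_{\nIh}\) is linear, so
\[
\restWord{\bdy}<\mPhi_{\nIh}> = \sum_{\nIi \in \NN} \ithComp{\ccy}[\nIi]\, \restWord{(\bdf_\nIi)}<\mPhi_{\nIh}>.
\]
Subtracting the definition of \(\restWord{\bdy}[\mPhi_{\nIh}]\) gives
\[
\restWord{\bdy}<\mPhi_{\nIh}> - \restWord{\bdy}[\mPhi_{\nIh}] = \sum_{\nIi \in \NN \setminus \mPhi_{\nIh}} \ithComp{\ccy}[\nIi]\, \restWord{(\bdf_\nIi)}<\mPhi_{\nIh}>.
\]

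\textbf{Step 2: identify which terms survive.} For \(\nIi \neq \xi\) we have \(\bdf_\nIi = \bde_\nIi\). If moreover \(\nIi \notin \mPhi_{\nIh}\), then \(\restWord{(\bdf_\nIi)}<\mPhi_{\nIh}> = 0\). So the only possibly nonzero term is the one with \(\nIi = \xi\), and only when \(\xi \notin \mPhi_{\nIh}\). In that case
\[
\restWord{(\bdf_\xi)}<\mPhi_{\nIh}> = \restWord{(\bde_\xi + \bde_\eta)}<\mPhi_{\nIh}> = \restWord{(\bde_\eta)}<\mPhi_{\nIh}>,
\]
which vanishes precisely when \(\eta \notin \mPhi_{\nIh}\).

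\textbf{Step 3: show the surviving term vanishes.} It therefore suffices to prove that \(\xi \notin \mPhi_{\nIh}\) implies \(\eta \notin \mPhi_{\nIh}\). By \eqref{equ:def-Phi_h}, this says: if \(\word-<\xi>(\ccp^{\nIh}) \neq 0\), then \(\word-<\eta>(\ccp^{\nIh}) \neq 0\). This is exactly Lemma \ref{org45cd32d}(1), whose hypothesis \(\cdn[\mGen-{\nIh}](\mvecA) \ge 2\) for \(\mvecA \in \mVct{\nIh}\) coincides with the hypothesis assumed here.

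The only real content is this last implication. Since it is supplied by an earlier lemma under the identical assumption, I do not expect a genuine obstacle; the remaining work is the bookkeeping in Steps 1 and 2 showing that \(\bdf_\xi\) is the unique basis vector whose restriction can leak from outside \(\mPhi_{\nIh}\) into \(\mPhi_{\nIh}\).
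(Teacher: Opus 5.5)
Your proposal is correct and is essentially the paper's proof. Both reduce any discrepancy between the two restrictions to the single case \(\eta \in \Phi_h\), \(\xi \notin \Phi_h\): the paper does this by comparing the \(\eta\)-th \(E\)-components coordinatewise, and you do it by writing the difference as a sum over \(\NN \setminus \Phi_h\). Both then exclude that case with Lemma \ref{org45cd32d}(1), which applies because its hypothesis is the same as the one assumed here.
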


\begin{proof}
 \comment{Proof.}
\label{sec:org8357fb6}
Let \(\Phi = \mPhi_{\nIh}\).
It suffices to show that
\(\ithComp{(\restWord{\bdy}<\Phi>)}<\nIi> = \ithComp{(\restWord{\bdy}[\Phi])}<\nIi>\)
for \(\nIi \in \Phi\).
By definition,
\begin{align*}
\ithComp{(\restWord{\bdy}<\Phi>)}<\nIi>
 = \ithComp{\ccy}<\nIi> &= \begin{cases}
 \ithComp{\ccy}[\eta] + \ithComp{\ccy}[\xi] & \tif \nIi = \eta, \\
 \ithComp{\ccy}[\nIi] & \totherwise, \\
 \end{cases}\\
\ithComp{(\restWord{\bdy}[\Phi])}<\nIi>
 &= \begin{cases}
 \ithComp{\ccy}[\eta] + \ithComp{\ccy}[\xi] & \tif \nIi = \eta \tand \xi \in \Phi, \\
 \ithComp{\ccy}[\nIi] & \totherwise. \\
 \end{cases}
\end{align*}
Thus we may assume that \(\nIi = \eta\) and \(\xi \not \in \Phi\).
Since \(\nIi \in \Phi\) and \(\xi \not \in \Phi\), it follows that \(\word-<\eta>(\ccp^{\nIh}) = 0\) and
\(\word-<\xi>(\ccp^{\nIh}) \neq 0\).
In particular, \(\word-<\eta>(\ccp^{\nIh}) < \word-<\xi>(\ccp^{\nIh})\), which contradicts Lemma~\ref{org45cd32d}.
\end{proof}

\comment{connect}
\label{sec:orge651416}
For \(\Phi \subseteq \NN\), let
\[
 \restWord{\ccF}<\Phi> = (\restWord{\bdf_\cci}<\Phi>)_{\cci \in \Phi}. 
\]
Then \(\restWord{\ccF}<\Phi>\) is a basis of \(\FF_\ccp^\Phi\) since, for \(\cci \in \Phi\),
\[
\restWord{\bdf_\nIi}<\Phi> =
\begin{cases}
\restWord{\bde_\eta}<\Phi> + \restWord{\bde_\xi}<\Phi> & \tif \cci = \eta,\\
\restWord{\bde_\nIi}<\Phi> & \totherwise.
\end{cases}
\]
When \(\Phi = \{i_j \ : \ j \in \NN\}\) with \(i_0 < i_1 < \cdots\),
identifying \(i_j\) with \(j\) allows us to treat \(\restWord{\ccF}<\Phi>\) as an element of \(\cF\).

\subsection{Outline of proof of Proposition \texorpdfstring{\ref{org6d2e6f0}}{6.3}}
\label{sec:org086b474}
We will prove the proposition by induction on \(\nIk\). By the induction hypothesis,

\begin{description}
  \item[($\text{D}_{\nIk}$)] for $0 \le \nIl \le \nIk - 1$ and $\tilde{\ccF} \in \cF$, 
if $\ord_{\ccp}(\tilde{\ccd}) \ge \nIl - 1$, then the matrix $\mGen-|\tilde{\ccF}, \tilde{\ccd}|{\nIl}$ is \(\mDu{\tilde{\ccd}}\)-distributed.
\end{description}

\comment{Connect}
\label{sec:org1d08593}
For \(\nIm \le \nIl\) and \(\mvecB \in \FF_p^{\marray{\nIm}}\),
let
\[
 \mVct{\nIl}(\mvecB) = \set{\mvecA \in \mVct{\nIl} : \mvecA<\nIi> = \mvecB<\nIi> \tfor 0 \le \nIi \le \nIm}.
\]
For example, \(\mVct{1}(0) = \mVct{1}([0]) = \Set{\begin{bsmallmatrix} \mvecA<0> \\ \mvecA<1> \end{bsmallmatrix} \in \mVct{1}  :  \mvecA<0> = 0} = \Set{\begin{bsmallmatrix} 0 \\ 1 \end{bsmallmatrix}}\).
Let 
\begin{align*}
\ccF^* &= \restWord{\ccF}<\Phi_0>, \ \ \ccE^* = \restWord{\ccE}<\Phi_0>,\ \ \ccd^* = \left \lceil\frac{\ccd}{\ccp} \right \rceil.\\
\word*(\cca) &= \word|\ccF^*, \ccd^*|(\cca) \ \ \tand \ \ 
\word-*(\cca) = \word-|\ccF^*, \ccd^*|(\cca).
\end{align*}
We first show that
\[
 \restWord{\word-(\ccp^{\nIl})}<\Phi_0> = \word-*(\ccp^{\nIl - 1})
 \tfor 0 \le \nIl \le \nIk
\]
in Lemma \ref{org7ac4c8a}. This implies that \(\mGen-{\nIk}\) is \(\mDu{\ccd}\)-distributed on
\(\mVct{\nIk}(0)\).
By using the restriction to \(\Phi_1\),
we next show that \(\mGen-{\nIk}\) is \(\mDu{\ccd}\)-distributed on
\(\mVct{\nIk}(\begin{bsmallmatrix} 1 \\ 0 \end{bsmallmatrix})\).
Using these results, we finally prove that 
\(\mGen-{\nIk}\) is \(\mDu{\ccd}\)-distributed when \(\ccp = 3\).

\subsection{\texorpdfstring{\(\mGen-{\nIk}\) is \(\mDu{\ccd}\)-distributed on \(\mVct{\nIk}(0)\)}{Lambda-k is pi-d distributed on Vk(0)}}
\label{sec:org7f7e76f}

For \(\ccW \subseteq \FF_\ccp^{\marray{\nIm}}\),
let 
\[
 \mVct{\nIl}(\ccW) = \bigcup_{\mvecB \in \ccW} \mVct{\nIl}(\mvecB)
\quad \tand \quad
\Cdn[\mnGen](\ccW) = \bigcup_{\mvecB \in \ccW} \Cdn[\mnGen](\mvecB).
\]

\begin{lemma}
 \comment{Lem.}
\label{sec:org17fe70f}
\label{org113beae}
Let \(1 \le \nIl \le \cck\) and \(\mnGen \in  \FF_{\ccp}^{\marray{\nIl} \times \NN}\).
Suppose that \(\mQuo + \mRem - \ccp + 1 \ge 2\).
Let \(\ccW\) be a subset of \(\mVct*{\nIl - 1}\) such that \(\allzero \in \ccW\).
If \(\mnGen\) is \(\mDu{\ccd}\)-distributed and
\(\mnGen[\bdy]\) is \(\mDu{\ccd}\)-distributed on \(\mVct{\nIl}(\ccW)\), then
there exists \(\tilde{\bdy}\) satisfying the following two conditions.

\begin{enumerate}
\item \(\mnGen[\tilde{\bdy}]\) is \(\mDu{\ccd}\)-distributed.
\item \(\supp_{\ccF}(\tilde{\bdy} - \bdy) = \supp_{\ccE}(\tilde{\bdy} - \bdy) \subseteq \NN \setminus (\Theta \cup \Cdn[\mnGen](\ccW))\);
in particular, \(\restWord{\tilde{\bdy}}<\Cdn[\mnGen](\ccW)> = \restWord{\bdy}<\Cdn[\mnGen](\ccW)>\).
\end{enumerate}
 
\end{lemma}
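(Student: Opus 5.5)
The plan is to apply Lemma~\ref{org2f35cc7} once for each column type $\mvecA \in \mVct*{\nIl - 1} \setminus \ccW$. For every such $\mvecA$ we prescribe the target counts
\[
 \ccm_\alpha = \mDu{\ccd}(\mvecA[\alpha]), \qquad \alpha \in \FF_\ccp .
\]
Only those columns are altered, and by condition (2) of Lemma~\ref{org2f35cc7} the only positions that change lie in $\Cdn[\mnGen](\mvecA) \setminus \Theta$. Columns indexed by $\Cdn[\mnGen](\ccW)$, and the positions $\xi, \eta$, are never touched. Since $\mnGen[\bdy]$ is already $\mDu{\ccd}$-distributed on $\mVct{\nIl}(\ccW)$, the adjusted vector will be distributed on all of $\FF_\ccp^{\marray{\nIl}} \setminus \set{\allzero}$.

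First, check the hypotheses of Lemma~\ref{org2f35cc7} for each $\mvecA \in \mVct{\nIl - 1} \setminus \ccW$.
\begin{itemize}
\item Since $\mnGen$ is $\mDu{\ccd}$-distributed, $\cdn[\mnGen](\mvecA) = \mDu{\ccd}(\mvecA)$. This value equals $\sum_{\alpha} \mDu{\ccd}(\mvecA[\alpha])$ by the identity noted after the definition of $\mDu{\ccd}$. That identity is stated for $\nIl - 1 = \nIk - 1$. For $\nIl < \nIk$ it holds because $\lceil \ccd / \ccp^{\nIl - 1} \rceil = \ccp \lceil \ccd / \ccp^{\nIl} \rceil$, as $\ccp^{\nIl} \mid \ccd$.
\item Each $\ccm_\alpha \ge 2$. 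For $\nIl < \nIk$, $\lceil \ccd/\ccp^{\nIl}\rceil \ge \ccd' \ge 2$ or more. For $\nIl = \nIk$, the value is either $\mQuo + \mRem - \ccp + 1 \ge 2$ by hypothesis, or $\lceil \ccd/\ccp^{\nIk}\rceil = \mQuo + 1 \ge 2$.
\item Every $\mvecA[\alpha]$ with $\mvecA \in \mVct{\nIl - 1}$ lies in $\mVct{\nIl}$, so its $\mDu{\ccd}$-value is the one just computed.
\end{itemize}
The types $\mvecA \notin \mVct*{\nIl - 1}$ need no work. For them $\cdn[\mnGen](\mvecA) = 0$, so $\cdn[\mnGen[\bdy]](\mvecA[\alpha]) = 0 = \mDu{\ccd}(\mvecA[\alpha])$ automatically, because $\mvecA[\alpha] \notin \mVct{\nIl}$. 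The zero type $\mvecA = \allzero$ lies in $\ccW$ and is covered by hypothesis; this is exactly why $\allzero \in \ccW$ is assumed, since Lemma~\ref{org2f35cc7} could not be applied to the possibly infinite class $\Cdn[\mnGen](\allzero)$.

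Next, iterate over the finitely many $\mvecA \in \mVct{\nIl - 1} \setminus \ccW$, replacing $\bdy$ by the $\tilde{\bdy}$ that Lemma~\ref{org2f35cc7} produces at each stage. By part (2) of that lemma, each step leaves $\Cdn[\mnGen[\cdot]](\mvecB[\alpha])$ unchanged for every $\mvecB \ne \mvecA$, so earlier adjustments are preserved. Each step's difference is supported in $\Cdn[\mnGen](\mvecA) \setminus \Theta$, where the $\ccE$- and $\ccF$-coordinates agree. These supports are disjoint, so the total difference $\tilde{\bdy} - \bdy$ has $\msupp|\ccF| = \msupp|\ccE|$, contained in $\NN \setminus (\Theta \cup \Cdn[\mnGen](\ccW))$. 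Restricting to $\Cdn[\mnGen](\ccW)$ then leaves $\bdy$ unchanged, which gives (2).

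The main obstacle is the bookkeeping: confirming the summation identity and the bound $\ccm_\alpha \ge 2$ uniformly in $\nIl$, including the boundary case $\nIl = \nIk$ with $\innerproduct{\mvecA[\alpha]}{\allone} = 0$. Everything else is a direct invocation of Lemma~\ref{org2f35cc7}.
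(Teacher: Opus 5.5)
Your proposal is correct and is essentially the paper's proof. The paper also applies Lemma~\ref{org2f35cc7} type by type to the $\mvecA \in \mVct{\nIl - 1} \setminus \ccW$, phrased as an induction on the number of types where $\mnGen[\bdy]$ is not yet $\mDu{\ccd}$-distributed (replacing $\ccW$ by $\ccW \cup \set{\mvecA}$ at each step), and it does the same support bookkeeping. Your explicit check that $\sum_{\alpha} \mDu{\ccd}(\mvecA[\alpha]) = \cdn[\mnGen](\mvecA)$ when $\nIl < \nIk$ fills in a step the paper leaves implicit.

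One caveat is shared with the paper. Since $\mVct{\nIl}(\allzero) = \set{\mEvec{\nIl}}$, the hypothesis does not literally give $\cdn[\mnGen[\bdy]](\alpha \mEvec{\nIl}) = 0$ for $\alpha \ge 2$. Conclusion (1) requires this, and (2) forbids repairing it because $\tilde{\bdy}$ must agree with $\bdy$ on $\Cdn[\mnGen](\allzero)$. So your remark that ``the zero type is covered by hypothesis'' tacitly assumes it, just as the paper's proof does; the condition does hold wherever the paper applies the lemma.
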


\begin{proof}
 \comment{Proof.}
\label{sec:orgd899396}
Let
\[
 \ccn = \# \Set{\mvecA \in \FF_\ccp^{\marray{\nIl - 1}} : \begin{matrix*}[l] \mvecA[\alpha] \neq \allzero \tand\\  \cdn[\mnGen[\bdy]](\mvecA[\alpha]) \neq \mDu{\ccd}(\mvecA[\alpha]) \text{\ for some\ } \alpha \in \FF_p \end{matrix*}}.
\]
We show by induction on \(\ccn\).
If \(\ccn = 0\), then \(\bdy\) satisfies the condition.
Suppose that \(\ccn > 0\) and
\(\cdn[\mnGen[\bdy]](\mvecA[\alpha]) \neq \mDu{\ccd}(\mvecA[\alpha])\).

We first show that \(\mvecA \in \mVct{\ccl - 1}\).
Since \(\mvecA \not \in \ccW\), we see that \(\mvecA \neq \allzero\).
Moreover, \(\cdn[\mnGen](\mvecB) = 0\) for \(\mvecB \not \in \mVct*{\ccl - 1}\) since \(\mnGen\) is \(\mDu{\ccd}\)-distributed,
and hence \(\cdn[\mnGen[\bdy]](\mvecB[\alpha]) = 0 =  \mDu{\ccd}(\mvecB[\alpha])\).
Therefore \(\mvecA \in \mVct{\ccl - 1}\).

Since \(\mnGen\) is \(\mDu{\ccd}\)-distributed,
it follows that \(\cdn[\mnGen](\mvecA) = \ccp^{\cck - \ccl} (\ccp \mQuo + \mRem)\).
Note that
\(\mQuo + \mRem - \ccp + 1 \ge 2\) and \(\mQuo + 1 \ge 2\).
It follows from Lemma \ref{org2f35cc7} that there exists \(\bdz\) satisfying the following two conditions:

\begin{itemize}
\item \(\cdn[\mnGen[\bdz]](\mvecA[\alpha])\ = \mDu{\ccd}(\mvecA[\alpha])\) for \(\alpha \in \FF_p\).
\item \(\supp_{\ccF}(\bdz - \bdy) = \supp_{\ccE}(\bdz - \bdy) \subseteq \Cdn[\mnGen](\mvecA) \setminus \Theta\).
\end{itemize}

\noindent
Since \(\mvecA \not \in \ccW\),
we see that 
\(\restWord{\bdz}<\Cdn[\mnGen](\mvecB)> = \restWord{\bdy}<\Cdn[\mnGen](\mvecB)>\)
for \(\mvecB \in \ccW\);
in particular, \(\cdn[\mnGen[\bdz]](\mvecB[\alpha]) = \cdn[\mnGen[\bdy]](\mvecB[\alpha]) = \mDu{\ccd}(\mvecB[\alpha])\).
Hence \(\mnGen[\bdz]\) is \(\mDu{\ccd}\)-distributed on  \(\mVct{\nIl}(\ccW \cup \set{\mvecA})\).
By the induction hypothesis, there exists \(\tilde{\bdz}\) such that
\(\mnGen[\tilde{\bdz}]\) is \(\mDu{\ccd}\)-distributed and
\begin{align*}
\supp_{\ccF}(\tilde{\bdz} - \bdz) &= \supp_{\ccE}(\tilde{\bdz} - \bdz) \\
 &\subseteq \NN \setminus (\Theta \cup \Cdn[\mnGen](\ccW \cup \set{\mvecA})) \\
 &\subseteq \NN \setminus (\Theta \cup \Cdn[\mnGen](\ccW)).
\end{align*}
Since \(\mvecA \not \in \ccW\), it follows that \(\Cdn[\mnGen](\mvecA) \subseteq \NN \setminus \Cdn[\mnGen](\ccW)\), and that
\begin{align*}
 \supp_{\ccF}(\tilde{\bdz} - \bdy) &\subseteq \supp_{\ccF}(\bdz - \bdy) \cup \supp_{\ccF}(\tilde{\bdz} - \bdz) \\
 &\subseteq \bigl(\Cdn[\mnGen](\mvecA) \setminus \Theta \bigr) \cup \bigl(\NN \setminus (\Theta \cup \Cdn[\mnGen](\ccW)) \bigr)\\
 &= \NN \setminus (\Theta \cup \Cdn[\mnGen](\ccW)).
\end{align*}
Thus \(\supp_{\ccF}(\tilde{\bdz} - \bdy) = \supp_{\ccE}(\tilde{\bdz} - \bdy)\).
Therefore \(\tilde{\bdz}\) satisfies the conditions.
\end{proof}

\begin{lemma}
 \comment{Lem.}
\label{sec:org9fe66cd}
\label{org3a86abe}
Let \(1 \le \nIl \le \nIk\) and  \(\mvecA \in \FF_\ccp^{\marray{\nIl - 1}} \setminus \set{\allzero}\).
Suppose that
\(\mvecB\) is a vector in \(\FF_\ccp^{\marray{\nIl}}\) obtained from 
\(\mvecA\) by inserting 0, that is,
\(\mvecB = \transpose{\begin{bsmallmatrix}\mvecA<0> & \cdots &  \mvecA<\cci - 1> &  0 &  \mvecA<\cci> & \cdots & \mvecA<\nIl - 1>\end{bsmallmatrix}}\)
for some \(\cci\).
Then
\[
 \mDu{\ccd^*}(\mvecA) = \mDu{\ccd}(\mvecB).
\]
 
\end{lemma}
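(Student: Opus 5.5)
The plan is to compare the two sides of the identity case by case, following the definition of \(\mDu{\ccd}\). The comparison rests on three observations.

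First, I fix the parameters of \(\ccd^*\). With \(\ccd = \ccp^{\nIk-1}(\ccp\mQuo+\mRem)\) we have \(\ccd^* = \lceil \ccd/\ccp\rceil = \ccp^{\nIk-2}(\ccp\mQuo+\mRem)\). So \(\mDu{\ccd^*}\) is to be read with \(\nIk\) replaced by \(\nIk-1\) and with the same \(\mQuo\) and \(\mRem\). This is the convention implicit in the induction hypothesis (\(\text{D}_{\nIk}\)) and in the definition of \(\ccd^*\) in the outline. I expect the main point needing care to be exactly this bookkeeping: making sure the "top level" \(\nIl=\nIk\) for \(\ccd\) corresponds to the top level \(\nIl-1=\nIk-1\) for \(\ccd^*\). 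In the degenerate case \(\nIk=1\) one uses the same formal convention, and I would state it explicitly.

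Second, I show that membership in \(\mVct{}\) is preserved by the insertion. Inserting a zero coordinate does not change the first nonzero entry of a vector. Hence \(\mvecB\in\mVct{\nIl}\) if and only if \(\mvecA\in\mVct{\nIl-1}\), and since \(\mvecA\neq\allzero\) we also have \(\mvecB\neq\allzero\). So if \(\mvecA\notin\mVct{\nIl-1}\), both sides equal \(0\). Likewise the coordinate sum is unchanged, so \(\innerproduct{\mvecB}{\allone}=\innerproduct{\mvecA}{\allone}\).

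Third, for \(\mvecA\in\mVct{\nIl-1}\) I split into cases. If \(\nIl<\nIk\), or if \(\nIl=\nIk\) and \(\innerproduct{\mvecA}{\allone}\neq 0\), then \(\nIl-1<\nIk-1\) or the sum is nonzero in the \(\ccd^*\) setting as well. In this case
\[
\mDu{\ccd^*}(\mvecA)=\Bigl\lceil \frac{\lceil \ccd/\ccp\rceil}{\ccp^{\nIl-1}}\Bigr\rceil=\Bigl\lceil\frac{\ccd}{\ccp^{\nIl}}\Bigr\rceil=\mDu{\ccd}(\mvecB),
\]
using the standard identity \(\lceil\lceil x/m\rceil/n\rceil=\lceil x/(mn)\rceil\) for positive integers \(m,n\). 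If \(\nIl=\nIk\) and \(\innerproduct{\mvecA}{\allone}=0\), then both values equal \(\mQuo+\mRem-\ccp+1\), because \(\ccd\) and \(\ccd^*\) share \(\mQuo\) and \(\mRem\). Together these cases give \(\mDu{\ccd^*}(\mvecA)=\mDu{\ccd}(\mvecB)\) in all cases.
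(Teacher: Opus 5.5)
Your proposal is correct and is essentially the paper's proof. The case \(\mvecA \notin \mVct{\nIl - 1}\) gives \(0 = 0\); otherwise one uses \(\lceil \lceil \ccd/\ccp \rceil / \ccp^{\nIl - 1} \rceil = \lceil \ccd/\ccp^{\nIl} \rceil\), reading \(\ccd^*\) with top level \(\nIk - 1\) and the same \(\mQuo\), \(\mRem\), with your remarks that inserting a zero preserves membership in \(\mVct{\nIl}\) and the value of \(\innerproduct{\mvecA}{\allone}\) just making explicit what the paper uses implicitly.
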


\begin{proof}
 \comment{Proof.}
\label{sec:orge2b2a55}
If \(\mvecA \not \in \mVct{\nIl - 1}\),
then \(\mvecB \not \in \mVct{\nIl}\),
and hence \(\mDu{\ccd^*}(\mvecA) = 0 = \mDu{\ccd}(\mvecB)\).
Suppose that \(\mvecA  \in \mVct{\nIl - 1}\).
Then \(\mvecB \in \mVct{\nIl}\).
Since \(\Bigl\lceil \frac{\lceil \frac{\ccd}{\ccp} \rceil }{\ccp^{\nIl - 1}} \Bigr\rceil
= \Bigl\lceil \frac{\ccd}{\ccp^{\nIl}} \Bigr\rceil\), it follows that
\begin{align*}
 \mDu{\ccd^*}(\mvecA) &= 
 \begin{cases}
  \Bigl\lceil \dfrac{\ccd}{\ccp^{\nIl}} \Bigr\rceil & \tif \nIl - 1 < \nIk - 1 \tor \innerproduct{\mvecA}{\allone} \neq 0\\
  \mQuo + \mRem - \ccp + 1 & \totherwise
 \end{cases}\\
 &= \mDu{\ccd}(\mvecB).
\end{align*}
\end{proof}

\begin{lemma}[\(\Phi_0\) lemma]
 \comment{Lem. [\(\Phi_0\) lemma]}
\label{sec:org24ed299}
\label{org7ac4c8a}
Suppose that $\mQuo + \mRem - \ccp + 1 \ge 2$. 
If ($\text{D}_{\nIk}$) holds, then
\[
 \restWord{\word-(\ccp^{\nIl})}<\Phi_0> = \word-*(\ccp^{\nIl - 1})
 \tfor 0 \le \nIl \le \nIk,
\]
where $\word-*(\ccp^{-1}) = \word-*(0)$. Moreover, $\mGen-{\nIk}$ is \(\mDu{\ccd}\)-distributed on $\mVct{\nIk}(0)$.
 
\end{lemma}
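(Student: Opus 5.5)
We prove the identity \(\restWord{\word-(\ccp^{\nIl})}<\Phi_0> = \word-*(\ccp^{\nIl - 1})\) by induction on \(\nIl\); the distribution statement on \(\mVct{\nIk}(0)\) then follows from (\(\text{D}_{\nIk}\)).

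For \(\nIl = 0\), \(\Phi_0\) is by definition the complement of \(\msupp|\ccE|(\word(1))\). So the restriction of \(\word-(1)\) to \(\Phi_0\) is zero, which equals \(\word-*(0)\). Lemma \ref{org5683742} applies with \(\nIh = 0\), since \(\cdn[\mGen-{0}](1) = \ccd \ge 2\). Hence restriction with respect to \(\ccE\) and with respect to \(\ccF\) agree on \(\Phi_0\), so \(\restWord{\,\cdot\,}<\Phi_0>\) is linear and compatible with the order induced by \(\ccF^*\).

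For the inductive step, fix \(\nIl \ge 1\) and assume the identity for all smaller indices. The comparison goes in two directions.

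\textbf{Lower bound.} Let \(\bdu = \restWord{\word-(\ccp^{\nIl})}<\Phi_0>\). For \(\cca < \ccp^{\nIl-1}\), the word \(\word-*(\cca)\) is the restriction of \(\word-(b)\) for some \(b < \ccp^{\nIl}\) with \(b_0 = 0\), by linearity and the induction hypothesis.

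The distance \(\ccd(\bdu, \word-*(\cca))\) is controlled by Lemma \ref{org4a0db5b}. Consider the code generated by \(\word-(1), \dots, \word-(\ccp^{\nIl})\), which by the main results of Section 5 meets the Griesmer bound. Its counts on columns with first entry \(1\) are bounded by Corollary \ref{orgb141d90}. Adding a suitable multiple \(c\,\word-(1)\), the distance between \(\word-(\ccp^{\nIl})\) and \(\word-(b) + c\,\word-(1)\) is at least \(\ccd\). Averaging over \(c \in \FF_\ccp\) shows that at least \(\lceil \ccd/\ccp\rceil = \ccd^*\) of the disagreements lie in \(\Phi_0\).

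Hence \(\bdu\) is at distance at least \(\ccd^*\) from every earlier \(\word-*\). By Lemma \ref{org6f41f57}, \(\bdu \ge_{\ccF^*} \word-*(\ccp^{\nIl-1})\).

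\textbf{Upper bound.} Conversely, start from \(\bdv = \word-*(\ccp^{\nIl-1})\). By (\(\text{D}_{\nIk}\)), the matrix \(\mGen-|\ccF^*,\ccd^*|{\nIl-1}\) is \(\mDu{\ccd^*}\)-distributed. By Lemma \ref{org3a86abe}, this means that the matrix \(\mGen-{\nIl-1}[\bdy]\), for any \(\bdy\) agreeing with \(\bdv\) on \(\Phi_0\), is \(\mDu{\ccd}\)-distributed on \(\mVct{\nIl}(0)\).

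Apply Lemma \ref{org113beae} with \(\ccW = \{\allzero\}\). The matrix \(\mGen-{\nIl-1}\) is \(\mDu{\ccd}\)-distributed by the outer induction, i.e. (\(\text{D}_{\nIk}\)) applied to \((\ccF,\ccd)\) at level \(\nIl-1 < \nIk\). The lemma produces \(\tilde{\bdy}\) such that \(\mGen-{\nIl-1}[\tilde{\bdy}]\) is \(\mDu{\ccd}\)-distributed, and \(\tilde{\bdy}\) still restricts to \(\bdv\) on \(\Cdn[\mGen-{\nIl-1}](\allzero) \supseteq \Phi_0\).

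By Lemma \ref{org65ab687}, \(\tilde{\bdy}\) has distance at least \(\ccd\) from all \(\word-(\cca)\) with \(\cca < \ccp^{\nIl}\). So \(\tilde{\bdy} \ge_{\ccF} \word-(\ccp^{\nIl})\).

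\textbf{Comparing the two.} The main obstacle is turning the two inequalities, one in \(\ccF\)-order on \(\NN\) and one in \(\ccF^*\)-order on \(\Phi_0\), into equality on \(\Phi_0\). We need to know that the highest index where \(\tilde{\bdy}\) and \(\word-(\ccp^{\nIl})\) differ lies in \(\Phi_0\), or else that they agree on \(\Phi_0\).

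We handle this with the order lemmas. Lemma \ref{orgfc26eca} and Lemma \ref{org06db361} show that the free coordinates, those outside \(\Phi_0\) and \(\Theta\), can be rearranged without changing the counts. We should then be able to modify \(\tilde{\bdy}\) off \(\Phi_0\) so that, within each class \(\Cdn(\mvecA)\) with \(\mvecA \ne \allzero\), its entries are arranged in the lexicographically minimal way, matching that forced on \(\word-(\ccp^{\nIl})\). The two vectors then differ only on \(\Phi_0\), and minimality in both orders forces \(\bdu = \bdv\). The coordinates in \(\Theta\) need separate care via Lemma \ref{org45cd32d}, which fixes how \(\xi\) and \(\eta\) behave under restriction.

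\textbf{Distribution on \(\mVct{\nIk}(0)\).} Once the identity holds for all \(\nIl \le \nIk\), the columns of \(\mGen-{\nIk}\) indexed by \(\Phi_0\) are exactly the columns of \(\mGen-|\ccF^*,\ccd^*|{\nIk-1}\) with a \(0\) prepended. That matrix is \(\mDu{\ccd^*}\)-distributed by (\(\text{D}_{\nIk}\)), since \(\ord_\ccp(\ccd^*) \ge \nIk-2\). Lemma \ref{org3a86abe} then gives \(\cdn[\mGen-{\nIk}](\mvecB) = \mDu{\ccd}(\mvecB)\) for every \(\mvecB \in \mVct{\nIk}(0)\).
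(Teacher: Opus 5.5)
\textbf{The gap is in the comparison step, which is the heart of the lemma.} You leave it as ``we should then be able to''. Making \(\tilde{\bdy}\) agree with \(\word-(\ccp^{\nIl})\) off \(\Phi_0\) would force \(\mGen-{\nIl}\) to be \(\mDu{\ccd}\)-distributed on \(\mVct{\nIl}(1)\), because \(\mGen-{\nIl - 1}[\tilde{\bdy}]\) is. For \(\nIl = \nIk\) this is not available; the paper obtains it later, using this very lemma. So ``the two vectors then differ only on \(\Phi_0\)'' does not follow.

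Lemmas \ref{orgfc26eca} and \ref{org06db361} do not rearrange anything. They compare indices, and that comparison closes the argument with no rearrangement at all. Suppose \(\restWord{\word-(\ccp^{\nIl})}<\Phi_0> >_{\ccF^*} \word-*(\ccp^{\nIl - 1})\), and call your modified vector \(\bdz\), so that \(\bdz >_{\ccF} \word-(\ccp^{\nIl})\). Let \(\ccN\) be the largest index with \(\ithComp{\ccz}[\ccN] \neq \word-[\ccN](\ccp^{\nIl})\), and \(\ccM\) the largest such index in \(\Phi_0\).
\begin{itemize}
\item Then \(\ithComp{\ccz}[\ccN] > \word-[\ccN](\ccp^{\nIl})\).
\item Also \(\ithComp{\ccz}[\ccM] < \word-[\ccM](\ccp^{\nIl})\). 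This holds because \(\restWord{\bdz}<\Phi_0> = \word-*(\ccp^{\nIl - 1})\), and by Lemma \ref{org5683742} the \(\ccF^*\)-coordinates on \(\Phi_0\) are the \(\ccF\)-coordinates.
\item Hence \(\ccM < \ccN\) and \(\ccN \notin \Phi_0\). So \(\ithComp{\ccy}[\ccN] = 0 \neq \ithComp{\ccz}[\ccN]\), which puts \(\ccN \in \supp_{\ccF}(\bdz - \bdy) \subseteq \Cdn[\mGen-{0}](1) \setminus \Theta\).
\item Lemma \ref{org06db361} with \(\nIh = 0\), \(\alpha = 0\), \(\beta = 1\) (using \(\cdn[\mGen-{0}](1) = \ccd \ge 2\)) gives \(\ccM > \ccN\), a contradiction.
\end{itemize}
The indices in \(\Theta\) need no appeal to Lemma \ref{org45cd32d}. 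Off \(\Phi_0\) they have \(\ccF\)-coordinate \(0\) in \(\bdz\), so they can never be where \(\bdz\) exceeds \(\word-(\ccp^{\nIl})\).

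\textbf{This argument needs two properties your construction does not deliver as written.}
\begin{itemize}
\item The starting vector must be fixed with \(\supp_{\ccF}(\bdy) \subseteq \Phi_0\), for instance \(\bdy = \sum_{\cci \in \Phi_0} \word-*[\cci](\ccp^{\nIl - 1}) \bdf_{\cci}\). By Lemma \ref{org5683742} this restricts to \(\word-*(\ccp^{\nIl - 1})\). Taking ``any \(\bdy\) agreeing with \(\word-*(\ccp^{\nIl - 1})\) on \(\Phi_0\)'' is not enough.
\item Your choice \(\ccW = \set{\allzero}\) in Lemma \ref{org113beae} is wrong. The set \(\Cdn[\mGen-{\nIl - 1}](\allzero)\) is contained in \(\Phi_0\), not the other way round. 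So the lemma would permit changes on \(\Phi_0\) and lose \(\restWord{\tilde{\bdy}}<\Phi_0> = \word-*(\ccp^{\nIl - 1})\). Take \(\ccW = \mVct*{\nIl - 1}(0)\) instead. Then \(\Cdn[\mGen-{\nIl - 1}](\ccW) = \Phi_0\), and the modification is confined to \(\Cdn[\mGen-{0}](1) \setminus \Theta\), which is exactly what the contradiction uses.
\end{itemize}

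\textbf{Your lower bound is correct.} Averaging \(\wt\bigl(\word-(\ccp^{\nIl}) - \word-(\ccb) - \lambda\,\word-(1)\bigr) \ge \ccd\) over \(\lambda \in \FF_\ccp\) is a clean alternative to the paper's residual-code step. It needs only \(\wt(\word-(1)) = \ccd\). Drop the claim that the span of \(\word-(1), \dots, \word-(\ccp^{\nIl})\) meets the Griesmer bound: for \(\nIl = \nIk\) that is a consequence of Proposition \ref{org6d2e6f0}, not an input to it.
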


\begin{proof}
 \comment{Proof.}
\label{sec:orga33140b}

We show the lemma by induction on $\nIl$.
If $\nIl = 0$, then $\restWord{\word-(\ccp^0)}<\Phi_0> = \allzero = \word-*(0)$.
Suppose that $1 \le \nIl \le \nIk$.

\smallskip
\noindent
\resetstep
 
 \begin{step}
 \comment{Step.}
\label{sec:org3a33333}

We show \(\restWord{\word-(\ccp^{\nIl})}<\Phi_0> \ge_{\ccF^*}  \word-*(\ccp^{\nIl - 1})\).

Since \(\mGen-{\nIk - 1}\) is \(\mDu{\ccd}\)-distributed,
we see that \(\cdn[\mGen-{0}](1) = \ccd\),
so the weight of \(\word-(1)\) equals \(\ccd\). 
This shows that the code generated by
\(\restWord{\word-(\ccp^1)}<\Phi_0>, \ldots, 
\restWord{\word-(\ccp^{\nIl})}<\Phi_0>\),
which is the residual code of \(\mGen-{\nIl}\)
with respect to \(\word-(1)\),
has minimum distance at least \(\ccd^*\)
since 
\[\ccd^* = \left \lceil \frac{\ccd}{\ccp} \right \rceil \le \ccd - \left\lfloor \frac{(\ccp - 1) \ccd}{\ccp} \right \rfloor.
\]
By the induction hypothesis,
\(\restWord{\word-(\ccp^\nIi)}<\Phi_0> = \word-*(\ccp^{\nIi - 1})\) for \(0 \le \nIi \le \nIl - 1\).
Since
\(\mRes(\mLex-{\nIl - 1})\) meets the Griesmer bound,
it follows that
\(\cdn[\mGen-{\nIl}](\mEvec{\nIl}) = \size{\msupp|\ccE|(\mLex-{\nIl})} - \size{\msupp|\ccE|(\mLex-{\nIl - 1})} \ge 1\),
and hence \(\cdn[\mGen-{\nIl}](\mEvec{\nIl}) \ge 1\).
This implies that \(\restWord{\word-(\ccp^{\nIl + 1})}<\Phi_0>\) 
is not generated by \(\word-*(\ccp^\nIi)\) for \(0 \le \nIi \le \nIl - 2\).
Thus \(\restWord{\word-(\ccp^{\nIl})}<\Phi_0> \neq \word-*(\cca)\) (\(0 \le \cca < \ccp^{\nIl - 1}\)).
Therefore \(\restWord{\word-(\ccp^{\nIl})}<\Phi_0> \ge_{\ccF^*}  \word-*(\ccp^{\nIl - 1})\).

\smallskip
Let
\[\bdy =  \sum_{\cci \in \Phi_0} \word-*[\cci](\ccp^{\nIl - 1}) \bdf_{\cci}.\]
Lemma \ref{org5683742} shows that
\[
\restWord{\bdy}<\Phi_0> = \restWord{\bdy}[\Phi_0] = \word-*(\ccp^{\nIl - 1}).
\]
 
\end{step}

 \begin{step}
 \comment{Step.}
\label{sec:org2ef3526}

We show that \(\mGen-{\nIl - 1}[\bdy]\) is \(\mDu{\ccd}\)-distributed on \(\mVct{\nIl}(0)\);
note that this implies that we can apply Lemma \ref{org113beae} to \(\mGen-{\nIl - 1}[\bdy]\)
since \(\mGen-{\nIl - 1}\) is \(\mDu{\ccd}\)-distributed.

Let \(\mGen-*{\nIl - 1} = \mGen-|\ccF^*, \ccd^*|{\nIl - 1}\).
It follows from (\(\text{D}_{\nIk}\)) that
\(\mGen-*{\nIl - 1}\) is \(\ccd^*\)-distributed.
Thus for \(\mvecA \in \mVct{\nIl - 1}\),
\[
  \cdn[\mGen-*{\nIl - 1}](\mvecA) =  \mDu{\ccd^*}(\mvecA).
\]
We see that
\[
 \Cdn[\mGen-*{\nIl - 1}](\mvecA) = 
\Cdn[\mGen-{\nIl - 1}[\bdy]](\mVec{0 \\ \mvecA}).
\]
Indeed,
\begin{align*}
\Cdn[\mGen-{\nIl - 1}[\bdy]](\mVec{0 \\ \mvecA})
&= \Set{\nIi \in \NN : 
  \begin{bsmallmatrix} \word-<\nIi>(1) & \word-<\nIi>(\ccp) & \cdots & \word-<\nIi>(\ccp^{\nIl - 1}) & \ithComp{\ccy}<\nIi>\end{bsmallmatrix} = 
\begin{bsmallmatrix} 0 & \mvecA<0> &  \cdots & \mvecA<\nIl - 2> &  \mvecA<\nIl - 1>\end{bsmallmatrix}}\\
&= \Set{
\nIi \in \Phi_0 :
\begin{bsmallmatrix} \word-*<\nIi>(1) &  \cdots & \word-*<\nIi>(\ccp^{\nIl - 2}) & \word-*<\nIi>(\ccp^{\nIl - 1}) \end{bsmallmatrix} = 
\begin{bsmallmatrix} \mvecA<0> & \cdots & \mvecA<\nIl - 2> & \mvecA<\nIl - 1> \end{bsmallmatrix} }\\
&= \Cdn[\mGen-*{\nIl - 1}](\mvecA).
\end{align*}
Lemma \ref{org3a86abe} yields
\[
 \cdn[\mGen-{\nIl - 1}[\bdy]](\mVec{0 \\ \mvecA}) = \cdn[\mGen-*{\nIl - 1}](\mvecA) =  \mDu{\ccd^*}(\mvecA)= \mDu{\ccd}(\mVec{0 \\ \mvecA}).
\]
Therefore \(\mGen-{\nIl - 1}[\bdy]\) is \(\mDu{\ccd}\)-distributed on \(\mVct{\nIl}(0)\).
 
\end{step}

 \begin{step}
 \comment{Step.}
\label{sec:org795e398}

We show that \(\restWord{\word-(\ccp^{\nIl})}<\Phi_0> = \word-*(\ccp^{\nIl - 1})\).
By Step 1, \(\restWord{\word-(\ccp^{\nIl})}<\Phi_0> \ge_{\ccF^*} \word-*(\ccp^{\nIl - 1})\).
Assume that \(\restWord{\word-(\ccp^{\nIl})}<\Phi_0> >_{\ccF^*} \word-*(\ccp^{\nIl - 1})\).

It follows from Step 2 and Lemma \ref{org113beae} that there exists \(\bdz\) satisfying the following two conditions:
\begin{enumerate}
\item \(\mGen-{\nIl - 1}[\bdz]\) is \(\mDu{\ccd}\)-distributed.
\item \(\supp_{E}(\bdz - \bdy) = \supp_{F}(\bdz - \bdy) 
 \subseteq \NN \setminus (\Phi_0 \cup \Theta) = 
 \Cdn[\mGen-{0}](1) \setminus \Theta\). 
In particular, \(\restWord{\bdz}<\Phi_0> = \restWord{\bdy}<\Phi_0> = \word-*(\ccp^{\nIl - 1})\).
\end{enumerate}
It follows from Lemma \ref{org65ab687} that
\[\ccd(\bdz, \word-(\cca)) \ge \ccd.
 \quad (0 \le \cca < \ccp^{\nIl})
\]
Since
\(\restWord{\bdz}<\Phi_0> = 
 \word-*(\ccp^{\nIl - 1}) 
 \neq \restWord{\word-(\ccp^{\nIl})}<\Phi_0>\)
it follows that \(\bdz \neq  \word-(\ccp^{\nIl})\).
Thus \(\bdz >_{\ccF} \word-(\ccp^{\nIl})\).
Let
\[
 \ccN = \max \set{\cci \in \NN : \bdz_{[\cci]} \neq \word-[\cci](\ccp^{\nIl})}
\]
and
\[
 \ccM = \max \set{\cci \in \Phi_0 : \ithComp{\ccz}[\cci] \neq \word-[\cci](\ccp^{\nIl})}.
\]
Note that \(\ithComp{\ccz}[\ccN] > \word-[\ccN](\ccp^{\nIl})\).
Moreover, \(\ithComp{\ccz}[\ccM]\relax < \word-[\ccM](\ccp^{\nIl})\) since
\[
 \restWord{\bdz}<\Phi_0>  
 = \word-*(\ccp^{\nIl - 1}) <_{\ccF^*} 
 \restWord{\word-(\ccp^{\nIl})}<\Phi_0>.
\]
In particular, \(\ccM < \ccN\) and \(\ccN \not \in \Phi_0\).
Therefore \(\ccN \in \Cdn[\mGen-{0}](1)\).
Moreover, \(\ccN \in \Cdn[\mGen-{0}](1) \setminus \Theta\).
Indeed, since \(\supp_{\ccF}(\bdy) \subseteq \Phi_0\),
it follows that \(\ithComp{\ccy}[\ccN] = 0\).
Since \(\ithComp{\ccz}[\ccN] \neq 0\),
we see that \(\ccN \in \supp_{\ccF}(\bdz - \bdy) \subseteq \Cdn[\mGen-{0}](1) \setminus \Theta\).
Therefore
\(\ccN \in \Cdn[\mGen-{0}](1) \setminus \Theta\) and
\(\ccM \in \Phi_0\).
It follows from Lemma \ref{org06db361} that \(\ccM > \ccN\), which is impossible.
Therefore \(\restWord{\word-(\ccp^{\nIl})}<\Phi_0> = \word-*(\ccp^{\nIl - 1})\).
 
\end{step} 

\comment{connect}
\label{sec:org7cd2ea4}
Finally, since \(\restWord{\bdy}<\Phi_0> = \word-*(\ccp^{\nIl - 1}) = \restWord{\word-(\ccp^{\nIl})}<\Phi_0>\),
it follows from Step 2 that \(\mGen-{\nIl}\) is \(\mDu{\ccd}\)-distributed on \(\mVct{\nIl}(0)\). 
\end{proof}

\subsection{\texorpdfstring{\(\mGen-{\nIk}\) is \(\mDu{\ccd}\)-distributed on \(\mVct{\nIk}(\mVec{1 \\ 0})\)}{Lambda-k is pi-d distributed on Vk(1 0)}}
\label{sec:org59a5233}
\comment{Connect}
\label{sec:orgb40721f}
We introduce the following notation.
\begin{align*}
 &\ccF^{\sharp} = \restWord{\ccF}<\mPhi_1>,\ \ccE^{\sharp} = \restWord{\ccE}<\mPhi_1>,\ \ccd^{\sharp} = \ccd^* = \left \lceil\frac{\ccd}{\ccp} \right \rceil,\\
 &\word!(\cca) = \word|\ccF^{\sharp}, \ccd^{\sharp}|(\cca),\ \word-!(\cca) = \word-|\ccF^{\sharp}, \ccd^{\sharp}|(\cca), \, \tand\\
 & \mGen!{\nIl} = \mGen|\ccF^{\sharp}, \ccd^{\sharp}|{\nIl}.
\end{align*}
For simplicity, we write \(\Cdn[\mnGen-](\mvecA)\) instead of \(\Cdn[\mGen-{\nIl}](\mvecA)\) for \(\mvecA \in \FF_\ccp^{\marray{\nIl}}\)

\begin{lemma}
 \comment{Lem.}
\label{sec:orgdd18843}
\label{org16d4190}
Let \(\cck \ge 2\). Suppose that \(\mQuo + \mRem - \ccp + 1 \ge 2\). 
If (\(\text{D}_{\nIk}\)) holds, then
\[
 \word-(\ccp^{0})|_{\mPhi_1} = \word-!(\ccp^{0}).
\]

\resetstep
 
\end{lemma}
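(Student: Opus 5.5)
The statement concerns the single row \(\word-(\ccp^{0}) = \word-(1)\). The plan is to identify both sides explicitly and show they coincide. Here \(\mPhi_1 = \NN \setminus \msupp|\ccE|(\word(\ccp^{1}))\) as in \eqref{equ:def-Phi_h}. The right-hand side \(\word-!(\ccp^0)\) is, by definition, the \(<_{\ccF^\sharp}\)-minimal vector of \(\FF_\ccp^{\mPhi_1}\) of weight at least \(\ccd^\sharp = \lceil \ccd/\ccp\rceil\). So the whole proof reduces to showing two things about \(\restWord{\word-(\ccp^0)}<\mPhi_1>\): it has weight exactly \(\ccd^\sharp\), and it is that minimal vector, i.e.\ it has entries \(1\) (in \(\ccF^\sharp\)-coordinates) on the \(\ccd^\sharp\) smallest indices of \(\mPhi_1\) and \(0\) elsewhere.

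\textbf{Step 1 (weight).} Since \(\cck \ge 2\), (\(\text{D}_{\nIk}\)) applies to \(\mGen-{1}\), which is therefore \(\mDu{\ccd}\)-distributed. Because \(1 < \cck\), this gives \(\cdn[\mGen-{1}](\mvecA) = \lceil \ccd/\ccp\rceil = \ccd^\sharp\) for every \(\mvecA \in \mVct{1}\); in particular this holds for \(\mvecA = \begin{bsmallmatrix}1\\0\end{bsmallmatrix}\). The indices of \(\msupp|\ccE|(\word-(\ccp^0))\) lying in \(\mPhi_1\) are exactly \(\Cdn[\mGen-{1}](\begin{bsmallmatrix}1\\0\end{bsmallmatrix})\). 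So \(\restWord{\word-(\ccp^0)}<\mPhi_1>\) has weight \(\ccd^\sharp\), and all its \(\ccE\)-entries equal \(1\). By Lemma \ref{org5683742} (valid since \(\cdn[\mGen-{1}] \ge 2\) on \(\mVct{1}\)), its \(\ccE^\sharp\)- and \(\ccF^\sharp\)-restrictions agree with the restriction \(\restWord{\cdot}[\mPhi_1]\), so entries can be read coordinatewise.

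\textbf{Step 2 (position).} Next I show that \(\Cdn[\mGen-{1}](\begin{bsmallmatrix}1\\0\end{bsmallmatrix})\) consists of the \(\ccd^\sharp\) smallest elements of \(\mPhi_1\). The rest of \(\mPhi_1\) is \(\Cdn[\mGen-{1}](\allzero)\), which lies outside \(\msupp|\ccE|(\mLex-{1})\). By Lemma \ref{org0bbad96}, every index outside the support exceeds every index inside it, except in the configuration \(\ccM \notin \Theta\), \(\ccN \in \Theta\), \(\xi = \ccM + 1 < \eta\). In that case Lemma \ref{org45cd32d}(2) forces \(\xi\) and \(\eta\) to lie in the same column class, so I must check that this class is not \(\begin{bsmallmatrix}1\\0\end{bsmallmatrix}\), or else handle it directly. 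As a fallback, I would use Lemma \ref{orgb44e8dd} (convexity of column classes) together with Lemma \ref{org06db361}. The latter gives that the class \(\begin{bsmallmatrix}1\\0\end{bsmallmatrix}\) sits above the classes \(\begin{bsmallmatrix}1\\\beta\end{bsmallmatrix}\) with \(\beta > 0\); that is irrelevant inside \(\mPhi_1\), but it pins down the interleaving near \(\Theta\).

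\textbf{Step 3 (minimality in \(\ccF^\sharp\)).} From Steps 1 and 2, \(\restWord{\word-(\ccp^0)}<\mPhi_1>\) is the all-ones vector on an initial segment of \(\mPhi_1\) of length \(\ccd^\sharp\). Such a vector is the minimal weight-\(\ccd^\sharp\) vector for a basis of the form \(\restWord{\ccF}<\mPhi_1>\), except possibly when \(\xi,\eta \in \mPhi_1\) straddle the segment boundary. In that situation I would compare with \(\word-!(\ccp^0)\) directly. Lemma \ref{orgedc6ece} applied to \(\ccF^\sharp\) gives a statement analogous to Lemma \ref{org45cd32d}(2)/(3): \(\xi\) and \(\eta\) are both in or both out of the support, with matching values when \(\xi<\eta\). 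This forces the same shape for \(\word-!(\ccp^0)\), which yields the equality.

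I expect the main obstacle to be exactly these \(\Theta\)-boundary cases: \(\xi,\eta\) located at the edge between \(\Cdn[\mGen-{1}](\begin{bsmallmatrix}1\\0\end{bsmallmatrix})\) and \(\Cdn[\mGen-{1}](\allzero)\), or one of them in \(\mPhi_1\) and the other not. I would first try to rule these cases out using the equality \(\word-<\xi> = \word-<\eta>\) from Lemma \ref{org45cd32d}.
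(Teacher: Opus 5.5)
Your proposal has a genuine gap, and it is located in the case you postpone. The Step~2 claim, that $S_1:=C_{\Lambda_1}\bigl(\begin{bsmallmatrix}1\\0\end{bsmallmatrix}\bigr)$ is the set of the $d^\sharp$ smallest elements of $\Phi_1$, is false in general. So is your opening description of $\mathbf w^\sharp(1)$ (the first word of the $(F^\sharp,d^\sharp)$-lexicode) as having $F^\sharp$-coordinates $1$ on the $d^\sharp$ smallest indices of $\Phi_1$.

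Here is a counterexample to both. Take $p=3$, $d=24$ (so $k=2$, $q=r=2$, $q+r-p+1=2$) and $F=F(20,100)$. Then $\mathbf w(1)=\mathbf f_0+\cdots+\mathbf f_{22}$, whose $E$-support is $\{0,\dots,22\}\cup\{100\}$, and $\mathbf w(3)=2(\mathbf f_0+\cdots+\mathbf f_7)+(\mathbf f_8+\cdots+\mathbf f_{15})+(\mathbf f_{23}+\cdots+\mathbf f_{30})$. Hence $S_1=\{16,\dots,22,100\}$, while $\Phi_1=\{16,\dots,22\}\cup\{31,32,\dots\}$; the $8$ smallest elements of $\Phi_1$ contain $31$, not $100$. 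After reindexing, $F^\sharp=F(4,76)$ and $\mathbf w^\sharp(1)=\mathbf f'_0+\cdots+\mathbf f'_6$. It has only $d^\sharp-1$ nonzero $F^\sharp$-coordinates, because $\mathbf f'_\xi$ carries weight at both $\xi$ and $\eta$. The lemma holds here only because both sides share the same non-initial shape $\{16,\dots,22\}\cup\{\eta\}$.

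Note also that Lemma~\ref{org0bbad96} gives $\xi\le M+1$, not $\xi=M+1$. With $N=\eta$, the exceptional configuration is not a boundary effect: it occurs when $\xi<\eta$, $\Theta\subseteq S_1$ and $\eta$ is large, so it cannot be ruled out. Your sentence ``this forces the same shape for $\mathbf w^\sharp(1)$'' therefore carries the entire content of the lemma and is only asserted. Knowing that $\xi,\eta$ are both in or both out of each support does not locate the other support elements. Nor does it show that the two supports make the same choice for $\Theta$.

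To complete your route you need the analogous structure for $S_2:=\mathrm{supp}_E(\mathbf w^\sharp(1))$ and then a comparison. Apply Lemma~\ref{org0bbad96} to $\Lambda_1$ (with $h=1$) and to the $\sharp$-code (with $h=0$), together with Lemmas~\ref{orgedc6ece} and~\ref{org45cd32d}. This shows that each $S_i$ is the set of the $d^\sharp$ smallest elements of $\Phi_1$, unless $\xi<\eta$ and $\Theta\subseteq S_i$; in that case $S_i$ is $\Theta$ together with the $d^\sharp-2$ smallest elements of $\Phi_1\setminus\Theta$. The inequality $\xi\le M+1$ then excludes the mixed situation in which $\Theta$ lies in one support and misses the other. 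Since both vectors have all $E$-entries equal to $1$ on their supports (Lemma~\ref{orgf9637b2}), they coincide.

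The paper sidesteps any explicit description of $\mathbf w^\sharp(1)$. Weight $d^\sharp$ gives $\mathbf w(1)|_{\Phi_1}\ge_{F^\sharp}\mathbf w^\sharp(1)$. If the inequality were strict, it splices the $F^\sharp$-coordinates of $\mathbf w^\sharp(1)$ into $\mathbf w(1)$ on $\Phi_1$, producing $\mathbf y<_F\mathbf w(1)$ and hence $\mathrm{wt}(\mathbf y)<d$. Equal weights on $\Phi_1$ force the discrepancy to sit at $\eta\notin\Phi_1$ with $\xi\in\Phi_1$, and then $\eta<\xi$. Finally, $\mathbf y+\mathbf f_\eta$ has weight $d$ yet still lies below $\mathbf w(1)$, a contradiction.
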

\begin{proof}
 \comment{Proof.}
\label{sec:orga51dd79}
The proof will be divided into four steps.

 \begin{step}
 \comment{Step.}
\label{sec:orgb831c31}

We show that \(\wt(\restWord{\word-(\ccp^0)}<\mPhi_1>) = \ccd^{\sharp}\); in particular, \(\restWord{\word-(\ccp^0)}<\mPhi_1> \ge_{\ccF^{\sharp}} \word-!(\ccp^{0})\).

Note that \(\msupp|\ccE|(\restWord{\word-(\ccp^0)}<\mPhi_1>) = \Cdn[\mnGen-](\mVec{1 \\ 0})\).
It follows from (\(\text{D}_k\)) that
\(\mGen-{1}\) is \(\mDu{\ccd}\)-distributed, and hence \(\cdn[\mnGen-](\mVec{1 \\ 0}) = \ccd^{\sharp}\).
Therefore \(\wt(\restWord{\word-(\ccp^0)}<\mPhi_1>) = \ccd^{\sharp}\).
This yields \(\restWord{\word-(\ccp^0)}<\mPhi_1> \ge_{\ccF^{\sharp}} \word-(\ccp^{0})\).

Assume that \(\restWord{\word-(\ccp^0)}<\mPhi_1> >_{\ccF^{\sharp}} \word-(\ccp^{0})\). Let
\[
 \bdy = \sum_{\cci \in \NN \setminus \mPhi_1} \word-[\cci](\ccp^0) \bdf_{\cci} + \sum_{\cci \in \mPhi_1} \word-![\cci](\ccp^0) \bdf_{\cci}.
\]
 
\end{step}

 \begin{step}
 \comment{Step.}
\label{sec:org5598968}
We show that \(\wt(\bdy) < \ccd\).
By Lemma \ref{org5683742},
\[
\restWord{\bdy}<\mPhi_1> = \restWord{\bdy}[\mPhi_1] = \sum_{\cci \in \mPhi_1} \word-![\cci](\ccp^0) \restWord{\bdf_{\cci}}<\mPhi_1> = \word-!(\ccp^0).
\]
Note that
\[
 \supp_{\ccF}(\bdy - \word-(\ccp^0)) \subseteq \mPhi_1.
\]
Since \(\restWord{\bdy}<\mPhi_1> = \word-!(\ccp^0) <_{\ccF^{\sharp}} \restWord{\word-(\ccp^0)}<\mPhi_1>\),
it follows that \(\bdy <_{\ccF} \word-(\ccp^0)\). Hence \(\wt(\bdy) < \ccd\). 
 
\end{step}

 \begin{step}
 \comment{Step.}
\label{sec:org9f062e5}

We show that \(\xi \in \mPhi_1\), \(\eta \in \NN \setminus \mPhi_1\),
\(\word-[\xi](\ccp^0) = 1\), and \(\word-![\xi](\ccp^0) = 0\).

Note that \(\wt(\bdy) < \ccd\) and \(\wt(\word-(\ccp^0)) = \ccd\).
From Step 1, \(\wt(\restWord{\word-(\ccp^0)}<\mPhi_1>) = \ccd^{\sharp}\).
Moreover,
\[
 \wt(\restWord{\bdy}<\mPhi_1>) = \wt(\word-!(\ccp^0)) = \ccd^{\sharp}.
\]
Hence \(\restWord{\bdy}<\NN \setminus \mPhi_1> \neq \restWord{\word-(\ccp^0)}<\NN \setminus \mPhi_1>\). 
Since
\(\restWord{\bdy}[\NN \setminus \mPhi_1] = \restWord{\word-(\ccp^0)}[\NN \setminus \mPhi_1]\), it follows that
\[
\restWord{\bdy}<\NN \setminus (\mPhi_1 \cup \set{\eta})> = \restWord{\word-(\ccp^0)}<\NN \setminus (\mPhi_1 \cup \set{\eta})>.
\]
Therefore \(\eta \in \NN \setminus \mPhi_1\), \(\xi \in \mPhi_1\),
\(\ithComp{\ccy}<\eta> = 0\), and \(\word-<\eta>(\ccp^0) = 1\).
We see that 
\[
\ithComp{\ccy}<\eta> = \ithComp{\ccy}[\eta] + \ithComp{\ccy}[\xi] = \word-[\eta](\ccp^0) + \word-![\xi](\ccp^0) = 0 
\]
and
\[
\word-<\eta>(\ccp^0) = \word-[\eta](\ccp^0) + \word-[\xi](\ccp^0) = 1.
\]
Since \(\ithComp{\ccy}[\eta] = \word-[\eta](\ccp^0) \in \set{0, 1}\) and
\(\word-![\xi](\ccp^0), \word-[\xi](\ccp^0) \in \set{0, 1}\),
it follows that \(\word-[\eta](\ccp^0) = \word-![\xi](\ccp^0) = 0\) and \(\word-[\xi](\ccp^0) = 1\).
 
\end{step}

 \begin{step}
 \comment{Step.}
\label{sec:org6c9efa4}

We finally show that \(\restWord{\word-(\ccp^0)}<\mPhi_1> = \word-!(\ccp^{0})\).
Since \(\word-<\xi>(\ccp^0) = 1\) and \(\xi \in \mPhi_1\), it follows that
\(\xi \in \Cdn[\mnGen-](1) \cap \mPhi_1 = \Cdn[\mnGen-](\mVec{1 \\ 0 })\).
Thus \(\word-<\xi>(\ccp) = 0\).
Since \(\word-<\eta>(\ccp^0) = 1\) and \(\eta \not \in \mPhi_1\), we see that
\(\eta \in \Cdn[\mnGen-](\Set{\mVec{1 \\ 1}, \ldots \mVec{1 \\ \ccp - 1}})\).
Note that \(\cdn[\mnGen-](\mvecA) = \ccp^{\nIk - 2} (\ccp \mQuo + \mRem) \ge 2\) for \(\mvecA \in \mVct{1}\) since \(\mGen-{1}\) is \(\mDu{\ccd}\)-distributed.
If \(\xi < \eta\), then Lemma \ref{org45cd32d} shows that \(\word-<\eta>(\ccp) = \word-<\xi>(\ccp) = 0\),
a contradiction. Thus \(\eta < \xi\).
Let
\[\bdz = \bdy + \bdf_{\eta}.
\]
Then \(\bdz <_{\ccF} \word-(\ccp^0)\) 
because \(\ithComp{\ccz}[\xi] = \ithComp{\ccy}[\xi] = 0\), \(\word-[\xi](\ccp^0) = 1\), and \(\xi > \eta\).
Moreover,
\(\restWord{\bdz}<\NN \setminus \mPhi_1> = \restWord{\word-(\ccp^0)}<\NN \setminus \mPhi_1>\)
since \(\restWord{\bdz}<\NN \setminus (\mPhi_1\cup \set{\eta})> =\restWord{\bdy}<\NN \setminus (\mPhi_1 \cup \set{\eta})> =
\restWord{\word-(\ccp^0)}<\NN \setminus (\mPhi_1 \cup \set{\eta})>\)
and \(\ithComp{\ccz}<\eta> = 1 = \word-<\eta>(\ccp^0)\). It follows that
\begin{align*}
 \wt(\bdz) &= \wt(\restWord{\bdz}<\NN \setminus \mPhi_1>) + \wt(\restWord{\bdz}<\mPhi_1>)\\
 &= \wt(\restWord{\word-(\ccp^0)}<\NN \setminus \mPhi_1>) + \wt(\restWord{\bdy}<\mPhi_1>) = \ccd - \ccd^{\sharp} + \ccd^{\sharp} = \ccd.
\end{align*}
Hence \(\bdz \ge_{\ccF} \word(\ccp^0)\), a contradiction.
Therefore \(\restWord{\word-(\ccp^0)}<\mPhi_1> = \word-!(\ccp^{0})\). \(\hspace*{\fill} \qedhere\)
 
\end{step} 
\end{proof}

\comment{connect}
\label{sec:orgac6a6fb}
Let
\[
 \longword-|\ccF, \ccd|(\cca) = \word-|\ccF, \ccd|(\cca).
\]

\begin{lemma}
 \comment{Lem.}
\label{sec:org616cf08}
\label{org2014e3d}
Let \(\cck \ge 2\).
Suppose that \(\mQuo + \mRem - \ccp + 1 \ge 2\). 
If (\(\text{D}_{\nIk}\)) holds, then
\[
 \restWord{\word-(\ccp^{\nIl})}<\mPhi_1> = \word-!(\ccp^{\nIl^-})
\tfor 0 \le \nIl \le \nIk,
\]
where 
\[\nIl^- = \begin{cases} 
 0 & \nIl = 0, \\
 -1 & \nIl = 1, \\
 \nIl - 1 & \nIl \ge 2.
 \end{cases}
\]
 
\end{lemma}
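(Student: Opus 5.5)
We argue by induction on \(\nIl\), following the structure of the \(\Phi_0\) lemma (Lemma \ref{org7ac4c8a}), with Lemma \ref{org16d4190} serving as the base case. For \(\nIl = 0\) the claim is exactly Lemma \ref{org16d4190}. For \(\nIl = 1\) we need \(\restWord{\word-(\ccp)}<\mPhi_1> = \allzero\). This follows from the definition of \(\mPhi_1\) as the set of coordinates where \(\word-<\cci>(\ccp) = 0\), together with Lemma \ref{org5683742}. That lemma applies because (\(\text{D}_{\nIk}\)) makes \(\mGen-{1}\) \(\mDu{\ccd}\)-distributed, so all counts are at least \(2\); it then identifies the \(\ccF\)- and \(\ccE\)-restrictions.

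For \(\nIl \ge 2\), assume the identity holds for all smaller indices. The inductive step has three parts.

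\textbf{Step 1: lower bound.} We show \(\restWord{\word-(\ccp^{\nIl})}<\mPhi_1> \ge_{\ccF^{\sharp}} \word-!(\ccp^{\nIl - 1})\). Restricting to \(\mPhi_1\) is a residual code with respect to \(\word-(\ccp)\), which has weight \(\ccd\) by Lemma \ref{org65ab687}. Hence the restricted rows generate a code of minimum distance at least \(\ccd^{\sharp}\). By the induction hypothesis, the rows \(\restWord{\word-(\ccp^{\nIi})}<\mPhi_1>\) for \(\nIi \ne 1\), \(\nIi < \nIl\), are exactly \(\word-!(\ccp^{\nIi^-})\), that is, the generators of \(\mLex-|\ccF^{\sharp},\ccd^{\sharp}|{\nIl-2}\). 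The restricted new row is at distance at least \(\ccd^{\sharp}\) from all codewords generated by them. It is also not in their span, since \(\cdn[\mGen-{\nIl}](\mEvec{\nIl}) \ge 1\) by the Griesmer-bound argument of Theorem \ref{org56f1397}. Lemma \ref{org6f41f57} then gives the inequality.

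\textbf{Step 2: a competitor and its distribution.} Define
\[
\bdy = \sum_{\cci \in \NN\setminus \mPhi_1} \word-[\cci](\ccp^{\nIl})\bdf_\cci + \sum_{\cci\in\mPhi_1}\word-![\cci](\ccp^{\nIl-1})\bdf_\cci .
\]
By Lemma \ref{org5683742}, \(\restWord{\bdy}<\mPhi_1> = \word-!(\ccp^{\nIl-1})\). Let \(\mvecA\) range over the vectors whose \(1\)-st coordinate is \(0\). The column classes \(\Cdn[\mGen-{\nIl-1}[\bdy]](\mvecA)\) coincide with classes of \(\mGen!{\nIl-1}\), with the \(1\)-st coordinate deleted. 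By (\(\text{D}_{\nIk}\)) applied to \(\ccF^{\sharp}\) and \(\ccd^{\sharp}\), together with Lemma \ref{org3a86abe} (inserting the zero at position \(1\)), \(\mGen-{\nIl-1}[\bdy]\) is \(\mDu{\ccd}\)-distributed on the vectors with \(1\)-st coordinate zero. Lemma \ref{org7ac4c8a} already gives distribution on \(\mVct{\nIl}(0)\), and that lemma applies to \(\bdy\) too, since \(\bdy\) agrees with \(\word-(\ccp^{\nIl})\) off \(\mPhi_1\). This should upgrade to \(\mDu{\ccd}\)-distribution on \(\mVct{\nIl}(\ccW)\), where \(\ccW\) is the set of prefixes \(\mvecB\) with \(\mvecB<1> = 0\) or \(\mvecB<0> = 0\).

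\textbf{Step 3: contradiction.} Apply Lemma \ref{org113beae} to obtain \(\tilde{\bdy}\) with \(\mGen-{\nIl-1}[\tilde{\bdy}]\) fully \(\mDu{\ccd}\)-distributed and changes confined outside \(\Theta \cup \Cdn(\ccW)\), hence off \(\mPhi_1\). Lemma \ref{org65ab687} then shows that \(\tilde{\bdy}\) is at distance at least \(\ccd\) from \(\word-(\cca)\) for all \(\cca < \ccp^{\nIl}\). Now suppose the Step 1 inequality is strict. Take \(\ccN\), the top index where \(\tilde{\bdy}\) and \(\word-(\ccp^{\nIl})\) differ, and \(\ccM\), the top such index inside \(\mPhi_1\). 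As in Step 3 of Lemma \ref{org7ac4c8a}, we get \(\ccM < \ccN\), with \(\ccN \notin \mPhi_1 \cup \Theta\) and the values ordered oppositely. Lemma \ref{org06db361} then forces \(\ccM > \ccN\), a contradiction.

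The main obstacle is Step 3. Unlike \(\Phi_0\), the complement of \(\mPhi_1\) contains several column classes, namely those with \(1\)-st coordinate \(1,\dots,\ccp-1\). So applying Lemma \ref{org06db361} requires \(\ccN\) and \(\ccM\) to share the same prefix \(\mvecA\) in \(\mGen-{\nIl-1}\), differing only in the last entry. I would try to first show that \(\bdy\) is already correct on the \(\Phi_0\)-part, so that only classes with \(0\)-th coordinate \(1\) matter. Then I would use Lemma \ref{org2f35cc7} class by class, keeping \(\ccN\) in the same class as \(\ccM\). The interaction with \(\xi, \eta\) (as in Step 3 of Lemma \ref{org16d4190}) may require a separate case.
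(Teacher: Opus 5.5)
\textbf{Verdict: genuine gap.} Your architecture matches the paper's: induction on \(\nIl\), the residual-code lower bound, a competitor with \(\restWord{\bdy}<\mPhi_1>=\word-!(\ccp^{\nIl-1})\), Lemma~\ref{org113beae} with \(\ccW=\mVct*{\nIl-1}(0)\cup\mVct{\nIl-1}(\mVec{1 \\ 0})\), and a contradiction from Lemma~\ref{org06db361}. However, you leave the decisive Step 3 open, and the obstacle you name is not real.

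\textbf{Closing Step 3.} Lemma~\ref{org06db361} holds at every level \(\nIh\), so you never need a common prefix in \(\mGen-{\nIl-1}\). You already have \(\ccN\notin\mPhi_1\cup\Theta\). Your \(\bdy\) satisfies \(\ithComp{\ccy}[\ccN]=\word-[\ccN](\ccp^{\nIl})\) off \(\mPhi_1\), so \(\ccN\in\supp_{\ccF}(\tilde{\bdy}-\bdy)\). That set avoids \(\Cdn[\mnGen-](\ccW)\supseteq\mPhi_0\cup\mPhi_1\), hence \(\ccN\in\Cdn[\mnGen-](\mVec{1 \\ \alpha})\setminus\Theta\) for some \(\alpha\neq 0\). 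The paper arranges \(\ccM\in\Cdn[\mnGen-](\mVec{1 \\ 0})\) and applies Lemma~\ref{org06db361} at \(\nIh=1\) with prefix \([1]\), which gives \(\ccM>\ccN\). If instead \(\ccM\) lies in \(\mPhi_0\), the case \(\nIh=0\) gives the same conclusion, with \(\ccM\in\Cdn[\mnGen-](0)\) and \(\ccN\in\Cdn[\mnGen-](1)\setminus\Theta\). This is exactly the comparison used in Lemma~\ref{org7ac4c8a}, so no class-by-class use of Lemma~\ref{org2f35cc7} is needed.

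\textbf{Step 2 as written does not hold.} You say Lemma~\ref{org7ac4c8a} applies to \(\bdy\) on all of \(\mVct{\nIl}(0)\) ``since \(\bdy\) agrees with \(\word-(\ccp^{\nIl})\) off \(\mPhi_1\).'' But the columns for \(\mVct{\nIl}(\mVec{0 \\ 0})\) lie in \(\mPhi_0\cap\mPhi_1\), where \(\bdy\) is built from \(\word-!(\ccp^{\nIl-1})\), not from \(\word-(\ccp^{\nIl})\).
\begin{itemize}
\item Your \(\sharp\)-argument does cover those vectors, since their coordinate \(1\) is zero.
\item For \(\mVct{\nIl}(\mVec{0 \\ 1})\), agreement of \(\ccF\)-coordinates off \(\mPhi_1\) does not control the \(\ccE\)-entry at \(\eta\). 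If \(\eta\in\Cdn[\mnGen-](\mVec{0 \\ 1})\) and \(\xi\in\mPhi_1\), then \(\ithComp{\ccy}<\eta>=\word-[\eta](\ccp^{\nIl})+\word-![\xi](\ccp^{\nIl-1})\). This matches \(\word-(\ccp^{\nIl})\) only if the two words agree at \(\xi\).
\end{itemize}

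\textbf{How the paper handles this.} The paper supplies the ingredient you only gesture at (``show \(\bdy\) is correct on the \(\mPhi_0\)-part''). Its Step 2 proves
\(\word-[\cci](\ccp^{\nIl})=\word-*[\cci](\ccp^{\nIl-1})=\word-![\cci](\ccp^{\nIl-1})\) for \(\cci\in\Cdn[\mnGen-](\mVec{0 \\ 0})=\mPhi_0\cap\mPhi_1\).
It does so by applying Lemma~\ref{org7ac4c8a}, via \((\text{D}_{\nIk-1})\), to both the \(\sharp\)-system and the \(*\)-system. Each restricts to the lexicode for \(\restWord{\ccF}<\mPhi_0\cap\mPhi_1>\) and \(\lceil \ccd/\ccp^{2}\rceil\). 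This gives \(\restWord{\bdy}<\mPhi_0>=\restWord{\word-(\ccp^{\nIl})}<\mPhi_0>\), which settles both the distribution on \(\mVct{\nIl}(0)\) and \(\ccM,\ccN\notin\mPhi_0\).

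Without that identity, you must either prove it at \(\xi\), or correct \(\ithComp{\ccy}[\eta]\) so that the \(\ccE\)-entries match. In the second case you must then also rule out \(\ccN=\eta\) in Step 3.
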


\begin{proof}
 \comment{Proof.}
\label{sec:org34ec5bf}

We show the lemma by induction on \(\nIl\).
If \(\nIl = 0\), then this follows from the previous lemma.
If \(\nIl = 1\), then
\[\restWord{\word-(\ccp^1)}<\mPhi_1> = \allzero = \word-!(0).\]
Suppose that \(2 \le \nIl \le \nIk\).
Then \(\nIl^- = \nIl - 1\).

\resetstep
 
 \begin{step}
 \comment{Step.}
\label{sec:orgc9cf2f6}

We show that \(\restWord{\word-(\ccp^{\nIl})}<\mPhi_1> \ge_{\ccF^{\sharp}} \word-!(\ccp^{\nIl - 1})\).
By Lemma \ref{org5683742}, 
\[
 \restWord{\word-(\ccp^{\nIl})}<\mPhi_1> = \restWord{\word-(\ccp^{\nIl})}[\mPhi_1] = \sum_{\cci \in \mPhi_1} \word-[\cci](\ccp^{\nIl}) \restWord{\bdf_\cci}<\mPhi_1>.
\]
Note that \(\NN \setminus \mPhi_1  = \set{\nIi \in \NN : \word-<\nIi>(\ccp) \neq 0 } = \msupp|\ccE|(\word-(\ccp))\).
Since \(\mGen-{1}\) is \(\mDu{\ccd}\)-distributed, it follows from Lemma \ref{org65ab687} that \(\wt(\word-(\ccp)) = \ccd\).
Therefore the code generated by \(\restWord{\word-(\ccp^0)}<\mPhi_1>\), \(\restWord{\word-(\ccp^2)}<\mPhi_1>\), 
\(\restWord{\word-(\ccp^3)}<\mPhi_1>\), 
\(\ldots\), 
\(\restWord{\word-(\ccp^{\nIl - 1})}<\mPhi_1>\), 
\(\restWord{\word-(\ccp^{\nIl})}<\mPhi_1>\)
has minimum distance at least  \(\ccd^{\sharp}\)
since \(\ccd^\sharp =  \frac{\ccd}{\ccp}  \le \ccd - \left\lfloor \frac{(\ccp - 1) \ccd}{\ccp} \right \rfloor\).
By the induction hypothesis,
we see that
\(\restWord{\word-(\ccp^0)}<\mPhi_1> = \word-!(\ccp^0)\) and \(\restWord{\word-(\ccp^\nIl)}<\mPhi_1> = \word-!(\ccp^{\nIi - 1})\) for \(2 \le \nIi \le \nIl - 1\).
Since \(\cdn[\mnGen-](\mEvec{\nIl}) \ge 1\) 
and \(\Cdn[\mnGen-](\mEvec{\nIl}) \subseteq \mPhi_1\),
it follows that \(\restWord{\word-(\ccp^{\nIl})}<\mPhi_1> \neq \word-!(\cca)\) for \(0 \le \cca < \ccp^{\nIl - 1}\).
Therefore \(\restWord{\word-(\ccp^{\nIl})}<\mPhi_1> \ge_{\ccF^{\sharp}} \word-!(\ccp^{\nIl - 1})\). 
 
\end{step}

 \begin{step}
 \comment{Step.}
\label{sec:org5545a98}
We show that
\begin{equation}
\label{equ:lem-6-Step2-1}
\restWord{\word-!(\ccp^{\nIl - 1})}<\Cdn[\mnGen-!](0)> = \restWord{\word-*(\ccp^{\nIl - 1})}<\Cdn[\mnGen-*](0)>
\end{equation}
and 
\begin{equation}
\label{equ:lem-6-Step2-2}
 \word-[\cci](\ccp^{\nIl}) =  \word-*[\cci](\ccp^{\nIl-1}) = \word-![\cci](\ccp^{\nIl-1})
\ \tfor \cci \in \Cdn[\mnGen-](\mVec{0 \\0}).
\end{equation}
Note that \(\ccd^\mysecondsymbol = \ccp^{\nIk - 2}(\ccp \mQuo + \mRem)\).
Let
\[
 \ccF^{\myfirstsecondsymbols} = \restWord{\ccF}<\Phi_0 \cap \Phi_1> = \restWord{\ccF}<\Cdn[\mnGen-](\mVec{0 \\ 0})>
\quad \tand \quad
\ccd^{\myfirstsecondsymbols} = \left \lceil \frac{\ccd}{\ccp^2} \right \rceil.
\]
Since (\(\text{D}_{k}\)) holds, we see that (\(\text{D}_{k - 1}\)) holds. 
By Lemma \ref{org7ac4c8a},
\begin{align*}
 \restWord{\word-!(\ccp^{\nIl - 1})}<\Cdn[\mnGen-!](0)>  
 &= \restWord{\longword-|\ccF^{\sharp}, \ccd^\sharp|(\ccp^{\nIl - 1})}<\Cdn[\mnGen-!](0)>\\
&= \longword-|\restWord{\ccF^{\sharp}}<\Cdn[\mnGen-!](0)>,\ \ccd^{\myfirstsecondsymbols}|(\ccp^{\nIl - 2}).
\end{align*}
We claim that \(\restWord{\ccF^{\sharp}}<\Cdn[\mnGen-!](0)> = \ccF^{\myfirstsecondsymbols}\).
Indeed, it follows from Lemma \ref{org16d4190} that
\begin{align*}
 \Cdn[\mnGen-!](0) &= \set{\cci \in \mPhi_1 : \word-!<\cci>(\ccp^0) = 0}\\
&= \set{\cci \in \NN : \word-<\cci>(\ccp) = \word-<\cci>(\ccp^0) = 0} 
 = \Cdn[\mnGen-](\mVec{0 \\0 }).
\end{align*}
Hence
\begin{align*}
\restWord{\ccF^{\sharp}}<\Cdn[\mnGen-!](0)> &= \restWord{\left(\restWord{\ccF}<\mPhi_1> \right)}<\Cdn[\mnGen-](\mVec{0 \\ 0 })> 
=\restWord{\ccF}<\Cdn[\mnGen-](\mVec{0 \\ 0})> = \ccF^{\myfirstsecondsymbols}.
\end{align*}
Therefore
\[
\restWord{\word-!(\ccp^{\nIl - 1})}<\Cdn[\mnGen-!](0)> = 
\longword-|\ccF^{\myfirstsecondsymbols}, \ccd^{\myfirstsecondsymbols}|(\ccp^{\nIl - 2}).
\]
We next show that
\[
\longword-|\ccF^{\myfirstsecondsymbols}, \ccd^{\myfirstsecondsymbols}|(\ccp^{\nIl - 2}) = \restWord{\word-*(\ccp^{\nIl - 1})}<\Cdn[\mnGen-*](0)>. 
\]
It follows from Lemma \ref{org7ac4c8a} that
\begin{align*}
\Cdn[\mnGen-*](0) &= \set{\cci \in \Phi_0 : \word-*<\cci>(\ccp^0) = 0}\\
&= \set{\cci \in \NN : \word-<\cci>(\ccp^0) = \word-<\cci>(\ccp^1) = 0} 
= \Cdn[\mnGen-](\mVec{0\\0}).
\end{align*}
Again, Lemma \ref{org7ac4c8a} implies that
\begin{align*}
\restWord{\word-*(\ccp^{\nIl - 1})}<\Cdn[\mnGen-*](0)>
&= \restWord{\longword-|\ccF^*, \ccd^*|(\ccp^{\nIl - 1})}<\Cdn[\mnGen-*](0)>\\
&= \longword-|\restWord{\ccF^*}<\Cdn[\mnGen-*](0)>,\ \ccd^{\myfirstsecondsymbols}|(\ccp^{\nIl - 2})\\
&= \longword-|\ccF^{\myfirstsecondsymbols},\ \ccd^{\myfirstsecondsymbols}|(\ccp^{\nIl - 2}).
\end{align*}
Therefore \eqref{equ:lem-6-Step2-1} holds.
Since \(\Cdn[\mnGen-*](0) = \Cdn[\mnGen-](\mVec{0 \\ 0})\),
it follows from Lemma \ref{org5683742} that
\begin{align*}
\restWord{\word-*(\ccp^{\nIl - 1})}<\Cdn[\mnGen-*](0)> &= \restWord{\word-*(\ccp^{\nIl - 1})}[\Cdn[\mnGen-](\mVec{0 \\ 0})]\\
&= \sum_{\cci \in \Cdn[\mnGen-](\mVec{0 \\ 0})} \word-*[\cci](\ccp^{\nIl - 1}) \restWord{\left(\restWord{\bdf_i}<\Cdn[\mnGen-](0)>\right)}<\Cdn[\mnGen-](\mVec{0\\0})>\\
&= \sum_{\cci \in \Cdn[\mnGen-](\mVec{0 \\ 0})} \word-*[\cci](\ccp^{\nIl - 1}) \restWord{\bdf_i}<\Cdn[\mnGen-](\mVec{0\\0})>
\end{align*}
and
\begin{align*}
\restWord{\word-!(\ccp^{\nIl - 1})}<\Cdn[\mnGen-!](0)>
&= \restWord{\word-!(\ccp^{\nIl - 1})}[\Cdn[\mnGen-](\mVec{0 \\ 0})]\\
&= \sum_{\cci \in \Cdn[\mnGen-](\mVec{0 \\ 0})} \word-![\cci](\ccp^{\nIl - 1}) \restWord{\bdf_i}<\Cdn[\mnGen-](\mVec{0\\0})>.
\end{align*}
By \eqref{equ:lem-6-Step2-1}, \(\word-![\cci](\ccp^{\nIl - 1}) = \word-*[\cci](\ccp^{\nIl - 1})\) for \(\cci \in \Cdn[\mnGen-](\mVec{0 \\ 0})\).
It follows from Lemmas \ref{org5683742} and \ref{org7ac4c8a} that
\(\restWord{\word-(\ccp^{\nIl})}[\Cdn[\mnGen-](0)] = \restWord{\word-(\ccp^{\nIl})}<\Cdn[\mnGen-](0)> = \word-*(\ccp^{\nIl - 1})\).
Therefore \eqref{equ:lem-6-Step2-2} holds.
 
\end{step}

 \begin{step}
 \comment{Step.}
\label{sec:org54f8ef2}

Let
\[
 \bdy = \sum_{\cci \in \mPhi_1} \word-![\cci](\ccp^{\nIl - 1}) \bdf_{\cci} + \sum_{\cci \in \Cdn[\mnGen-](\mVec{0\\1})} \word-[\cci](\ccp^{\nIl}) \bdf_{\cci}
\]
We show the following.
\begin{align*}
\supp_{\ccF}(\bdy) &\subseteq\mPhi_0 \cup \mPhi_1.\\
\restWord{\bdy}<\mPhi_1> &= \word-!(\ccp^{\nIl - 1}).\\
\restWord{\bdy}<\mPhi_0> &=  \word-*(\ccp^{\nIl - 1}).
\end{align*}
By the definition of \(\bdy\), we see that \(\supp_{\ccF}(\bdy) \subseteq \mPhi_0 \cup \mPhi_1\).
Lemma \ref{org5683742} shows that
\[
\restWord{\bdy}<\mPhi_1> = \restWord{\bdy}[\mPhi_1] 
 = \sum_{\cci \in \mPhi_1} \word-![\cci](\ccp^{\nIl - 1}) \restWord{\bdf_i}<\mPhi_1>
 = \word-!(\ccp^{\nIl - 1}).
\]
From Step 2,
\begin{align*}
 \bdy &= \sum_{\cci \in \mPhi_1} \word-![\cci](\ccp^{\nIl - 1}) \bdf_{\cci} + \sum_{\cci \in \Cdn[\mnGen-](\mVec{0 \\1})} \word-[\cci](\ccp^{\nIl}) \bdf_{\cci}\\
 &= \sum_{\cci \in \Cdn[\mnGen-](\mVec{1 \\ 0})} \word-![\cci](\ccp^{\nIl - 1}) \bdf_{\cci}
+ \sum_{\cci \in \Cdn[\mnGen-](\mVec{0 \\0})} \word-![\cci](\ccp^{\nIl - 1}) \bdf_{\cci}  
+ \sum_{\cci \in \Cdn[\mnGen-](\mVec{0 \\ 1})} \word-[\cci](\ccp^{\nIl}) \bdf_{\cci}\\
 &= \sum_{\cci \in \Cdn[\mnGen-](\mVec{1 \\ 0})} \word-![\cci](\ccp^{\nIl - 1}) \bdf_{\cci}
+ \sum_{\cci \in \Cdn[\mnGen-](\mVec{0 \\0})} \word-[\cci](\ccp^{\nIl}) \bdf_{\cci}  
+ \sum_{\cci \in \Cdn[\mnGen-](\mVec{0 \\ 1})} \word-[\cci](\ccp^{\nIl}) \bdf_{\cci}\\
 &= \sum_{\cci \in \Cdn[\mnGen-](\mVec{1 \\0})} \word-![\cci](\ccp^{\nIl - 1}) \bdf_{\cci}
  + \sum_{\cci \in \mPhi_0} \word-[\cci](\ccp^{\nIl}) \bdf_{\cci}.\\
\end{align*}
Therefore
\begin{align*}
\restWord{\bdy}<\mPhi_0> &= \restWord{\bdy}[\mPhi_0] \\
&=   \sum_{\cci \in \mPhi_0} \word-[\cci](\ccp^{\nIl}) \restWord{\bdf_i}<\mPhi_0>\\
& = \restWord{\word-(\ccp^{\nIl})}[\mPhi_0] = \restWord{\word-(\ccp^{\nIl})}<\mPhi_0> = \word-*(\ccp^{\nIl - 1}).
\end{align*}
 
\end{step}

 \begin{step}
 \comment{Step.}
\label{sec:orgc52ea7f}
We show that
\(\mGen-{\nIl - 1}[\bdy]\) is \(\mDu{\ccd}\)-distributed on \(\mVct{\nIl}(0) \cup \mVct{\nIl}(\mVec{1 \\0})\).
We first show that \(\mGen-{\nIl - 1}[\bdy]\) is \(\mDu{\ccd}\)-distributed on \(\mVct{\nIl}(0)\).
Lemma \ref{org7ac4c8a} implies that \(\mGen-{\nIl}\) is \(\mDu{\ccd}\)-distributed on \(\mVct{\nIl}(0)\).
Since
\[
\restWord{\bdy}<\Cdn[\mnGen-](0)> = \restWord{\word-(\ccp^{\nIl})}<\Cdn[\mnGen-](0)>,
\]
it follows that 
\[
\Cdn[\mGen-{\nIl - 1}[\bdy] ](\mVec{0 \\ \mvecA})
= \Cdn[\mGen-{\nIl}](\mVec{0 \\ \mvecA}) \tfor \mVec{0 \\ \mvecA} \in \mVct{\nIl}(0).
\]
Therefore \(\mGen-{\nIl - 1}[\bdy]\) is \(\mDu{\ccd}\)-distributed on \(\mVct{\nIl}(0)\).
We next show that 
\(\mGen-{\nIl - 1}[\bdy]\) is \(\mDu{\ccd}\)-distributed on \(\mVct{\nIl}(\mVec{1 \\0})\).
For \(\mvecB \in \FF_{\ccp}^{\marray{\nIl - 2}}\),
\[
 \cdn[\mGen-{\nIl - 1}[\bdy]](\mVec{1 \\ 0 \\ \mvecB}) = 
\cdn[\mGen-!{\nIl - 1}](\mVec{1 \\ \mvecB}).
\]
Indeed,
\begin{align*}
\Cdn[\mGen-{\nIl - 1}[\bdy] ](\mVec{1 \\  0 \\ \mvecB})
& = \Set{\cci \in \NN \ : \  \begin{bsmallmatrix} \word-<\cci>(\ccp^0) &  \word-<\cci>(\ccp^1) & \word-<\cci>(\ccp^2)& \cdots & \word-<\cci>(\ccp^{\nIl - 1}) &  \ithComp{\ccy}<\cci> \end{bsmallmatrix} = 
\begin{bsmallmatrix} 1 & 0 & \mvecB  \end{bsmallmatrix}} \\
&= \Set{ \cci \in \Cdn[\mnGen-](\mVec{1 \\ 0}) \ : \
\begin{bsmallmatrix} \word-<\cci>(\ccp^0) & \word-<\cci>(\ccp^2) &  \cdots & \word-<\cci>(\ccp^{\nIl - 1}) &  \ithComp{\ccy}<\cci>\end{bsmallmatrix} = 
\begin{bsmallmatrix}1 & \mvecB \end{bsmallmatrix}}\\
&= \Set{ \cci \in \Cdn[\mnGen-](\mVec{1 \\ 0}) \ : \
\begin{bsmallmatrix} \word-!<\cci>(\ccp^0) & \word-!<\cci>(\ccp^1) &  \cdots & \word-!<\cci>(\ccp^{\nIl - 2}) &  \word-!<\cci>(\ccp^{\nIl - 1})\end{bsmallmatrix}
= \begin{bsmallmatrix}1 & \mvecB  \end{bsmallmatrix}}\\
&= \Cdn[\mGen-!{\nIl - 1}](\mVec{1 \\ \mvecB}).
\end{align*}
Since \(\mGen-!{\nIl - 1}\) is \(\mDu{\ccd^\mysecondsymbol}\)-distributed,
it follows from  Lemma \ref{org3a86abe} that
\(\mGen-{\nIl - 1}[\bdy]\) is \(\mDu{\ccd}\)-distributed on \(\mVct{\nIl}(\mVec{1 \\ 0})\).
 
\end{step}

 \begin{step}
 \comment{Step.}
\label{sec:org78e8e63}

We finally show that \(\restWord{\word-(\ccp^{\nIl})}<\mPhi_1> = \word-!(\ccp^{\nIl - 1})\).
From Step 1, \(\restWord{\word-(\ccp^{\nIl})}<\mPhi_1> \ge_{\ccF^{\sharp}} \word-!(\ccp^{\nIl - 1})\). 
Assume that \(\restWord{\word-(\ccp^{\nIl})}<\mPhi_1> >_{\ccF^{\sharp}} \word-!(\ccp^{\nIl - 1})\).
From Step 4, 
\(\mGen-{\nIl - 1}[\bdy]\) is \(\mDu{\ccd}\)-distributed on \(\mVct{\nIl}(0) \cup \mVct{\nIl}(\mVec{1 \\0})\).
Let \(\ccW = \mVct*{\nIl - 1}(0) \cup \mVct{\nIl - 1}(\mVec{1 \\0})\). Then
\[
 \mVct{\nIl}(\ccW) = \mVct{\nIl}(0) \cup \mVct{\nIl}(\mVec{1 \\0})  \tand 
 \Cdn[\mnGen-](\ccW) =  \Cdn[\mnGen-](0) \cup \Cdn[\mnGen-](\mVec{1 \\0}).
\]
It follows Lemma \ref{org113beae} that there exists \(\bdz\) satisfying the following two conditions.

\begin{enumerate}
\item \(\mGen-{\nIl - 1}[\bdz]\) is \(\mDu{\ccd}\)-distributed.
\item \(\msupp|\ccE|(\bdz - \bdy) = \msupp|\ccF|(\bdz - \bdy) \subseteq \NN \setminus (\Cdn[\mnGen-](\ccW) \cup \Theta) = \Cdn[\mnGen-](\Set{\mVec{1 \\ 1}, \ldots,  \mVec{1 \\ \ccp - 1}}) \ \setminus \Theta\). In particular,
\end{enumerate}
\begin{align*}
 \restWord{\bdz}<\mPhi_1> &= \restWord{\bdy}<\mPhi_1> = \word-!(\ccp^{\nIl - 1}) \neq \restWord{\word-(\ccp^\nIl)}<\mPhi_1>,\\
 \restWord{\bdz}<\mPhi_0> &= \restWord{\bdy}<\mPhi_0> = \word-*(\ccp^{\nIl - 1}) = \restWord{\word-(\ccp^{\nIl})}<\mPhi_0>.
\end{align*}
Since \(\mGen-{\nIl - 1}[\bdz]\) is \(\mDu{\ccd}\)-distributed,
it follows that \(\ccd(\bdz, \word-(\cca)) \ge \ccd\) for \(0 \le \cca < \ccp^{\nIl}\).
Hence \(\bdz >_{\ccF} \word-(\ccp^{\nIl})\).
Let
\begin{align*}
 \ccN &= \max \set{\cci \in \NN : \ithComp{\ccz}[\cci] \neq \word-[\cci](\ccp^{\nIl})} \ \text{and}\\
 \ccM &= \max \set{\cci \in \mPhi_1 : \ithComp{\ccz}[\cci] \neq \word-[\cci](\ccp^{\nIl})}.
\end{align*}
Since \(\restWord{\bdz}[\mPhi_0] = \restWord{\bdz}<\mPhi_0> = \restWord{\word-(\ccp^\nIl)}<\mPhi_0> = \restWord{\word-(\ccp^\nIl)}[\mPhi_0]\),
it follows that \(\ccN, \ccM \not \in \mPhi_0\). In particular, \(\ccM \in \mPhi_1 \setminus \mPhi_0 = \Cdn[\mnGen-](\mVec{1 \\ 0})\).

We show that \(\ccM < \ccN\).
Since \(\bdz >_{\ccF} \word-(\ccp^{\nIl})\),
it follows that
\(\ithComp{\ccz}[\ccN] > \word-[\ccN](\ccp^{\nIl})\).
Recall that \(\restWord{\bdz}<\mPhi_1> = \word-!(\ccp^{\nIl - 1}) <_{\ccF^{\sharp}} \restWord{\word-(\ccp^{\nIl})}<\mPhi_1>\).
This implies that \(\ithComp{\ccz}[\ccM]\relax < \word-[\ccM](\ccp^{\nIl})\).
Hence \(\ccM < \ccN\).

We next show that \(\ccM > \ccN\).
Recall that \(\ccN \not \in \mPhi_0\).
Moreover, \(\ccN \not \in \mPhi_1\)
because if \(\ccN \in \mPhi_1\), then \(\ccN = \ccM\).
Since \(\supp_\ccF(\bdy) \subseteq \mPhi_0 \cup \mPhi_1\),
it follows that \(\ithComp{\ccy}[\ccN] = 0\).
Thus \(\ccN \in \msupp|\ccF|(\bdz - \bdy) \subseteq \Cdn[\mnGen-](\Set{ \mVec{1 \\ 1}, \ldots, \mVec{1 \\ \ccp - 1}}) \setminus \Theta\).
Since \(\ccM \in \Cdn[\mnGen-](\mVec{1 \\ 0})\),
it follows from Lemma \ref{org06db361} that \(\ccM > \ccN\), a contradiction.
Therefore \(\restWord{\word-(\ccp^{\nIl})}<\mPhi_1> = \word-!(\ccp^{\nIl - 1})\).
\hspace*{\fill}\(\qedhere\)
 
\end{step} 
\end{proof}

\begin{proof}[Proof of Proposition \ref{org6d2e6f0} ]
 \comment{Proof. [Proof of Proposition \ref{org6d2e6f0} ]}
\label{sec:org594c49d}

Theorem \ref{org56f1397} shows that
the code \(\mRes(\mLex-{\nIk - 1})\) meets the Griesmer bound.
It follows from Corollary \ref{org09650b1} that \(\mGen-{\nIk - 1}\) is \(\mDu{\ccd}\)-distributed.
We show that \(\mGen-{\nIk}\) is \(\mDu{\ccd}\)-distributed by induction on \(\nIk\).

\resetmycase

\begin{mycase}[\(\nIk = 1\)]
 \comment{Case. [\(\nIk = 1\)]}
\label{sec:orgf54a6a1}
Note that \(\ccd = \ccp \mQuo + \mRem\).
Let \(\ccn\) be the length of \(\mRes(\mLex-{1})\). Then
\[
 \ccn = \cdn[\mnGen-](1) + \cdn[\mnGen-](\mVec{0 \\ 1}) = \ccd + \cdn[\mnGen-](\mVec{0 \\ 1}) = \ccp \mQuo + \mRem + \cdn[\mnGen-](\mVec{0 \\ 1}).
\]

\resetstep
 
\end{mycase} 
 
 \begin{step}
 \comment{Step.}
\label{sec:org44a9cf8}

We first show that \(\cdn[\mnGen-](\mVec{0 \\ 1}) = \mQuo + 1\).
By the Griesmer bound, \(\ccn \ge \ccd + \lceil \frac{\ccd}{\ccp} \rceil = \ccp \mQuo + \mRem + \mQuo + 1\),
and hence \(\cdn[\mnGen-](\mVec{0 \\ 1}) \ge \mQuo + 1\).
Assume that \(\cdn[\mnGen-](\mVec{0 \\1}) > \mQuo + 1\).
Since 
\(\mQuo + 1 \ge 2\), there exists \(\ccM \in \Cdn[\mnGen-](\mVec{0 \\ 1 }) \setminus \Theta\).
Let
\[
\bdy = \word-(\ccp) - \bdf_{\ccM}  = \word-(\ccp) - \bde_{\ccM}.
\]
Note that
\(\ithComp{y}[\ccM] = \word-[\ccM](\ccp) - 1 = 0\)
and
\(\cdn[\mGen-{0}[\bdy]](\mVec{0 \\ 1}) = \cdn[\mnGen-](\mVec{0 \\1}) - 1 \ge \mQuo + 1\).
Since \(\mQuo + 1, \mQuo + \mRem - 2 \ge 2\),
it follows from Lemma \ref{org2f35cc7} that there exists \(\bdz\) satisfying the following conditions.
\begin{align*}
 &\cdn[\mGen-{0}[\bdz]](\mVec{1 \\ \alpha})
 = \mDu{\ccd}(\mVec{1 \\ \alpha}) =
\begin{cases}
\mQuo + 1 & \tif \alpha \neq \ccp - 1, \\
\mQuo + \mRem - \ccp + 1 & \tif \alpha = \ccp - 1. \\
\end{cases}\\
 &\supp_{\ccE}(\bdy - \bdz) = \supp_{\ccF}(\bdy - \bdz) \subseteq \Cdn[\mnGen-](1) \setminus \Theta.
\end{align*}
We claim that \(\bdz <_{\ccF} \word-(\ccp)\).
Since \(\ccM \in \Cdn[\mnGen-](\mVec{0 \\ 1})\), we see that \(\ithComp{z}[\ccM] = \ithComp{y}[\ccM] = 0 < 1 = \word-[\ccM](\ccp)\).
For \(\cci \in \msupp|\ccE|(\bdy - \bdz) \subseteq \Cdn[\mnGen-](1) \setminus \Theta\),
we see that \(\cci < \ccM\) by Lemma \ref{orgfc26eca}. Hence \(\bdz <_{\ccF} \word-(\ccp)\).
Moreover, since \(\cdn[\mGen-{0}[\bdz]](\mVec{0 \\ 1}) = \cdn[\mGen-{0}[\bdy]](\mVec{0 \\ 1}) \ge \mQuo + 1\) and \(\mGen-{0}[\bdz]\) is \(\mDu{\ccd}\)-distributed on \(\mVct{1}(1)\),
it follows from Lemma \ref{org65ab687} that \(\ccd(\bdz, \word-(\cca)) \ge \ccd\) for \(0 \le \cca < \ccp\).
Hence \(\bdz \ge_{\ccF} \word-(\ccp)\), which is impossible. 
Therefore  \(\cdn[\mnGen-](\mVec{0 \\ 1}) = \mQuo + 1\).
In particular, \(\mRes(\mLex-{1})\) meets the Griesmer bound.
 
\end{step}

 \begin{step}
 \comment{Step.}
\label{sec:org9833e29}
We next show that \(\cdn[\mnGen-](\mVec{1 \\ \ccp - 1}) = \mQuo + \mRem - \ccp + 1\).
Assume that \(\cdn[\mnGen-](\mVec{1 \\ \ccp - 1}) > \mQuo + \mRem - \ccp + 1\).
Since \(\mQuo + 1, \mQuo + \mRem - \ccp + 1 \ge 2\),
it follows from Lemma \ref{org2f35cc7} that there exist \(\bdy\) satisfying the following conditions.
\begin{align*}
 &\cdn[\mGen-{0}[\bdy]](\mVec{1 \\ \alpha})
 = \mDu{\ccd}(\mVec{1 \\ \alpha}) =
\begin{cases}
\mQuo + 1 & \tif \alpha \neq \ccp - 1, \\
\mQuo + \mRem - \ccp + 1 & \tif \alpha = \ccp - 1. \\
\end{cases}\\
 &\supp_{\ccE}(\bdy - \word-(\ccp)) = \supp_{\ccF}(\bdy - \word-(\ccp)) \subseteq \Cdn[\mnGen-](1) \setminus \Theta.
\end{align*}
Since \(\ccd = \ccp \mQuo + \mRem < \ccp(\mQuo + 1)\), it follows from 
Corollary \ref{orgb141d90} that 
\(\cdn[\mnGen-](\mVec{1 \\ 0}) \le \mQuo + 1\).
Hence we may assume that
\[
 \msupp|\ccE|(\bdy - \word-(\ccp)) = 
\msupp|\ccF|(\bdy - \word-(\ccp)) \subseteq \Cdn[\mnGen-](\mVec{1 \\ \ccp - 1}) \setminus \Theta.
\]
For \(\cci \in \msupp|\ccF|(\bdy - \word-(\ccp))\), we see that \(\word-[\cci](\ccp) = \ccp - 1\), so \(\bdy <_{\ccF} \word-(\ccp)\).
However, \(\ccd(\bdy, \word-(\cca)) \ge \ccd\) for \(0 \le \cca < \ccp\), which is impossible.
Therefore \(\cdn[\mnGen-](\mVec{1 \\ \ccp - 1}) = \mQuo + \mRem - \ccp + 1\) and
\(\cdn[\mnGen-](\mVec{1 \\ 0}) = \cdn[\mnGen-](\mVec{1 \\ 1}) = \mQuo + 1\)
because \(\sum \cdn[\mnGen-](\mVec{1 \\ \alpha}) = \ccp \mQuo + \mRem\) and \(\cdn[\mnGen-](\mVec{1 \\ \alpha}) \le \mQuo + 1\).
 
\end{step}

\begin{mycase}[\(\cck \ge 2\)]
 \comment{Case. [\(\cck \ge 2\)]}
\label{sec:org5db183b}

\resetstep
 
\end{mycase} 
 
 \begin{step}
 \comment{Step.}
\label{sec:orgb501c8e}

We first show that \(\mGen-{\nIk}\) is \(\mDu{\ccd}\)-distributed on  \(\mVct{\nIk}(0) \cup \mVct{\nIk}(\mVec{1 \\ 0 })\).

By the induction hypothesis, (\(\text{D}_{\nIk}\)) holds.
It follows from Lemmas \ref{org7ac4c8a} and  \ref{org2014e3d} that
\(\mGen-{\nIk}\) is \(\mDu{\ccd}\)-distributed on  \(\mVct{\nIk}(0) \cup \mVct{\nIk}(\mVec{1 \\ 0})\).
 
\end{step}

 \begin{step}
 \comment{Step.}
\label{sec:org3b59388}
For \(\alpha, \beta \in \FF_{\ccp} \setminus \set{0}\),
let
\[
\mConst{\alpha}{\beta} = \alpha + \beta \ccp + \ccp^{\nIk} - \ccp^2 = 
 \alpha + \beta \ccp + (\ccp - 1) \ccp^2 + \cdots + (\ccp - 1) \ccp^{\nIk - 1}.
\]
Let \(\Omega = \mVct{\nIk - 1}(\Set{\mVec{1 \\ 1}, \mVec{1 \\ 2}, \ldots, \mVec{1 \\ \ccp - 1}})\).
To prove 
\(\mGen-{\nIk}\) is \(\mDu{\ccd}\)-distributed on \(\mVct{\nIk}(\mVec{1 \\ 1})\),
we show that
\[
 \ccd(\word-(\ccp^\nIk),\ \word-(\mConst{\alpha}{\beta}) = 
\ccp^{\nIk - 2}\Bigl((\ccp^{2} + \ccp - 1) \mQuo + (\ccp + 1) \mRem - 1 \Bigr) 
- \smashoperator{\sum_{\mvecA \in \Omega}}\ \cdn[\mGen-{\nIk}](\mvecA[\innerproduct{\mvecA}{\mConst{\alpha}{\beta}}]).
\]
Let \(\cca = \mConst{\alpha}{\beta}\).
By Lemma \ref{org4a0db5b},
\begin{align*}
 &\ccd(\word-(\ccp^{\nIk}),\ \word-(\cca)) \\
 &= \sum_{\mvecA \in \mVct{\nIk}, \mvecA<\nIk> \neq \innerproduct{\mvecA}{\cca}} \cdn[\mGen-{\nIk}](\mvecA) \\
 &= \sum_{\mvecA \in \mVct{\nIk}} \cdn[\mGen-{\nIk}](\mvecA) - \sum_{\mvecA \in \mVct{\nIk - 1}} \cdn[\mGen-{\nIk}](\mvecA[\innerproduct{\mvecA}{\cca}]) \\
 &= (\ccp \mQuo + \mRem)\frac{\ccp^{\nIk} - 1}{\ccp - 1}+ \mQuo + 1 - \sum_{\mvecA \in \mVct{\nIk - 1}} \cdn[\mGen-{\nIk}](\mvecA[\innerproduct{\mvecA}{\cca}]) \\
 &= \frac{\ccp^{\nIk}}{\ccp - 1}(\ccp \mQuo + \mRem) - \frac{\mQuo  + \mRem - \ccp + 1}{\ccp - 1} - \sum_{\mvecA \in \mVct{\nIk - 1}} \cdn[\mGen-{\nIk}](\mvecA[\innerproduct{\mvecA}{\cca}]).
\end{align*}

For \(\mvecA \in \mVct{\nIk - 1}(0) \cup \mVct{\nIk - 1}(\mVec{1 \\ 0})\), we calculate 
\(\cdn[\mGen-{\nIk}](\mvecA[\innerproduct{\mvecA}{\cca}])\).
From Steps 1 and 2, \(\mGen-{\nIk}\) is \(\mDu{\ccd}\)-distributed on
\(\mVct{\nIk - 1}(0) \cup \mVct{\nIk - 1}(\mVec{1 \\ 0})\),
and hence
\(\cdn[\mGen-{\nIk}](\mvecA[\innerproduct{\mvecA}{\cca}])\) is determined by
\(\innerproduct{\mvecA[\innerproduct{\mvecA}{\cca}]}{\allone}\).
Since
\[
\innerproduct{\mvecA}{\cca}
= \mvecA<0> \alpha + \mvecA<1> \beta - \mvecA<2> - \cdots -  \mvecA<\nIk - 1>,
\]
it follows that
\[
\innerproduct{\mvecA[\innerproduct{\mvecA}{\cca}]}{\allone} = (1 + \alpha) \mvecA<0> + (1 + \beta) \mvecA<1>.
\]
Since
\((\mvecA<0>, \mvecA<1>) \in \set{(0, 0), (0, 1), (1, 0)}\)
and \(\alpha, \beta \in \FF_\ccp \setminus \ccp - 1\),
it follows that
\[
\innerproduct{\mvecA[\innerproduct{\mvecA}{\cca}]}{\allone} = 0
\iff  \mvecA<0> = \mvecA<1> = 0.
\]
Therefore
\begin{align*}
 & \smashoperator{\sum_{\mvecA \in \mVct{\nIk - 1}(0) \cup \mVct{\nIk - 1}(\mVec{1 \\ 0 })}} \cdn[\mGen-{\nIk}](\mvecA[\innerproduct{\mvecA}{\cca}]) \\
 &= (\mQuo + \mRem - \ccp - 1)  \size{\mVct{\nIk - 1}(\mVec{0 \\ 0 })}
  + (\mQuo + 1) \bigl (\size{\mVct{\nIk - 1}(\mVec{0 \\ 1}, \mVec{1 \\ 0})} \bigr) \\
 &= (\mQuo + \mRem - \ccp + 1) \frac{\ccp^{\nIk - 2} - 1}{\ccp - 1} + (\mQuo + 1) 2 \ \ccp^{\nIk - 2} \\
 &= \frac{\ccp^{\nIk - 2}}{\ccp - 1}\bigl(2 \ccp \mQuo + \ccp + \mRem - \mQuo - 1\bigr) - \frac{\mQuo + \mRem - \ccp + 1}{\ccp - 1}. \\
\end{align*}
It follows that
\begin{align*}
 \ccd(\word-(\ccp^{\nIk}),\ \word-(\cca)) 
 &= \frac{\ccp^{\nIk}}{\ccp - 1}(\ccp \mQuo + \mRem) - \frac{\mQuo  + \mRem - \ccp + 1}{\ccp - 1} - \smashoperator{\sum_{\mvecA \in \mVct{\nIk - 1}}}\ \cdn[\mGen-{\nIk}](\mvecA[\innerproduct{\mvecA}{\cca}]) \\
 &= 
\ccp^{\nIk - 2}\Bigl((\ccp^{2} + \ccp - 1) \mQuo + (\ccp + 1) \mRem - 1 \Bigr) - \smashoperator{\sum_{\mvecA \in \Omega}}\ \cdn[\mGen-{\nIk}](\mvecA[\innerproduct{\mvecA}{\cca}]). 
\end{align*}
 
\end{step}

 \begin{step}
 \comment{Step.}
\label{sec:orga0ce966}
Let \(\gamma \in \FF_\ccp \setminus \set{0}\) and
\begin{align*}
 \ccD =
 \sum_{\cca_0 = 0}^{\ccp - 2} \sum_{\cca_1 = 0}^{\ccp - 2}  \mBigDist{\word-(\ccp^{\nIk})}{\word-(\mConst{\cca_0}{\cca_1})}
 + \sum_{\cca_0 = 0}^{\ccp - 2} \mBigDist{\word-(\ccp^{\nIk})}{\word-(\mConst{\cca_0}{(\cca_0 + 1) \gamma - 1})}.
\end{align*}
By calculating \(\ccD\), we show that \(\mGen-{\nIk}\) is \(\mDu{\ccd}\)-distributed on \(\mVct{\nIk}(\mVec{1 \\ -\gamma^{-1}})\). 
To this end, it suffices to calculate
\(\cdn[\mGen-{\nIk}](\mvecA[\innerproduct{\mvecA}{\cca}])\) for \(\mvecA \in \Omega\) by Step 3.
Let \(\sigma = - \mvecA<2> - \cdots - \mvecA<\nIk - 1>\). Then
\[
 \innerproduct{\mvecA}{\mConst{\cca_0}{\cca_1}} = \cca_0 + \mvecA<1>\cca_1 + \sigma.
\]
Note that
\[
\Bigl\{\innerproduct{\mvecA}{\mConst{\cca_0}{\cca_1}} : \cca_1 \in \FF_\ccp \setminus \set{\ccp - 1} \Bigr\}
=  \FF_\ccp \setminus \set{\cca_0 - \mvecA<1> + \sigma}.
\]
Hence
\begin{align*}
 \sum_{\cca_0 = 0}^{\ccp - 2} \sum_{\cca_1 = 0}^{\ccp - 2} \cdn[\mGen-{\nIk}](\mvecA[\innerproduct{\mvecA}{\mConst{\cca_0}{\cca_1}}])
&= (\ccp - 1) \cdn[\mGen-{\nIk - 1}](\mvecA) -
 \sum_{\cca_0 \in \set{0,1,\ldots, \ccp - 2}} \cdn[\mGen-{\nIk}](\mvecA[\cca_0 - \mvecA<1> + \sigma ])\\
&= (\ccp - 2) \cdn[\mGen-{\nIk - 1}](\mvecA) +
 \cdn[\mGen-{\nIk}](\mvecA[-1 - \mvecA<1> + \sigma]).
\end{align*}
Moreover, 
\[
 \innerproduct{\mvecA}{\mBigConst{\cca_0}{(\cca_0 + 1) \gamma - 1}} = \cca_0 + \mvecA<1>\Bigl((\cca_0 + 1) \gamma - 1 \Bigr) + \sigma
= \cca_0 (1 + \mvecA<1> \gamma) + \mvecA<1> \gamma - \mvecA<1> + \sigma.
\]
Let \(\nu = - \gamma^{-1}\).
Note that if \(\mvecA<1> \neq \nu\), then \(1 + \mvecA<1> \gamma \neq 0\), and hence
\[
\Set{\innerproduct{\mvecA}{\mBigConst{\cca_0}{(\cca_0 + 1) \gamma - 1}} : \cca_0 \in \FF_\ccp \setminus \set{\ccp - 1} }
=  \FF_\ccp \setminus \set{-1 - \mvecA<1> + \sigma}.
\]
If \(\mvecA<1> = \nu\), then
\[
\Set{\innerproduct{\mvecA}{\mBigConst{\cca_0}{(\cca_0 + 1) \gamma - 1}} : \cca_0 \in \FF_\ccp \setminus \set{\ccp - 1}}
=  \set{-1 - \mvecA<1> + \sigma}.
\]
Hence
\begin{align*}
 \sum_{\cca_0 = 0}^{\ccp - 2} \cdn[\mGen-{\nIk}](\mvecA[\innerproduct{\mvecA}{\mConst{\cca_0}{(\cca_0 + 1) \gamma - 1}} ])
&= \begin{cases}
 \cdn[\mGen-{\nIk - 1}](\mvecA) - \cdn[\mGen-{\nIk}](\mvecA[-1 - \mvecA<1> + \sigma])
 & \tif \mvecA<1> \neq \nu, \\
 (\ccp - 1) \cdn[\mGen-{\nIk}](\mvecA[-1 - \mvecA<1> + \sigma])
 & \tif \mvecA<1> = \nu
\end{cases}\\
&= \begin{cases}
 (\ccp - 1) (\ccp \mQuo + \mRem)  - \cdn[\mGen-{\nIk}](\mvecA[-1 - \mvecA<1> + \sigma])
 & \tif \mvecA<1> \neq \nu, \\
 (\ccp - 1) \cdn[\mGen-{\nIk}](\mvecA[-1 - \mvecA<1> + \sigma])
 & \tif \mvecA<1> = \nu.
\end{cases}
\end{align*}
Therefore
\begin{align*}
 &\sum_{\cca_0 = 0}^{\ccp - 2} \sum_{\cca_1 = 0}^{\ccp - 2} \cdn[\mGen-{\nIk}](\mvecA[\innerproduct{\mvecA}{\mConst{\cca_0}{\cca_1}}])
+ \sum_{\cca_0 = 0}^{\ccp - 2} \cdn[\mGen-{\nIk}](\mvecA[\innerproduct{\mvecA}{\mConst{\cca_0}{(\cca_0 + 1) \gamma - 1}} ])\\
 &= \begin{cases}
 (\ccp - 1) (\ccp \mQuo + \mRem)
 & \tif  \mvecA<1> \neq \nu, \\
 (\ccp - 2) (\ccp \mQuo + \mRem)  + \ccp  \cdn[\mGen-{\nIk}](\mvecA[-1 - \mvecA<1> + \sigma])
 & \tif \mvecA<1> = \nu. \\
 \end{cases}
\end{align*}
Note that, for \(\mvecA \in \Omega\), 
\[
\innerproduct{\mvecA[-1 - \mvecA<1> + \sigma]}{\allone} = - 1 + \mvecA<1> + \mvecA<2> + \cdots + \mvecA<\nIk - 1> -1 - \mvecA<1> + \sigma  =  0.
\]
Hence \(\mvecA[-1 - \mvecA<1> + \sigma]\) is an element \(\mvecB\) in \(\mVct{\nIk}(\mVec{1 \\ \mvecA<1>})\) with
\(\innerproduct{\mvecB}{\allone} = 0\).
Therefore
\begin{align*}
 \ccD &=  \sum_{\cca_0 = 0}^{\ccp - 2} \sum_{\cca_1 = 0}^{\ccp - 2}  \mBigDist{\word-(\ccp^{\nIk})}{\word-(\mConst{\cca_0}{\cca_1})}
 + \sum_{\cca_0 = 0}^{\ccp - 2} \mBigDist{\word-(\ccp^{\nIk})}{\word-(\mConst{\cca_0}{(\cca_0 + 1) \gamma - 1})}\\
&= \ccp^{\nIk - 1} (\ccp - 1) \Bigl((\ccp^{2} + \ccp - 1) \mQuo + (\ccp + 1) \mRem - 1 \Bigr)\\
 &\quad - \sum_{\mvecA \in \Omega, \mvecA<1> \neq \nu} (\ccp - 1) (\ccp \mQuo + \mRem)
  - \sum_{\mvecA \in \Omega, \mvecA<1> = \nu} (\ccp - 2) (\ccp \mQuo + \mRem) + \ccp \cdn[\mGen-{\nIk}](\mvecA[-1 - \mvecA<1> + \sigma]) \\
&= \ccp^{\nIk - 1} (\ccp - 1) \Bigl((\ccp^{2} + \ccp - 1) \mQuo + (\ccp + 1) \mRem - 1 \Bigr) \\
 &\quad - \ccp^{\nIk - 2} (\ccp - 1)(\ccp - 2) (\ccp \mQuo + \mRem)
- \ccp^{\nIk - 2} (\ccp - 2)  (\ccp \mQuo + \mRem)
 - \ccp \smashoperator{\sum_{\mvecB \in \mVct{\nIk}(\mVec{1 \\ \nu}),\ \innerproduct{\mvecB}{\allone} = 0}} \cdn[\mGen-{\nIk}](\mvecB) \\
&= \ccp^{\nIk - 1} \Bigl((\ccp^3 - \ccp^2 + 1) \mQuo + (\ccp^2 - \ccp + 1) \mRem ^ \ccp + 1 \Bigr)
 - \ccp  \smashoperator{\sum_{\mvecB \in \mVct{\nIk}(\mVec{1 \\ \nu}),\ \innerproduct{\mvecB}{\allone} = 0}} \cdn[\mGen-{\nIk}](\mvecB) \\
 &\ge (\ccp^2 - \ccp) \ccd = (\ccp^2 - \ccp) (\ccp \mQuo + \mRem).
\end{align*}
 This implies that
\[
\smashoperator{\sum_{\mvecB \in \mVct{\nIk}(\mVec{1 \\ \nu}),\ \innerproduct{\mvecB}{\allone} = 0}} \cdn[\mGen-{\nIk}](\mvecB)
\le \ccp^{\nIk - 2}(\mQuo + \mRem - \ccp + 1).
\]
Since \(\cdn[\mGen-{\nIk}](\mvecB) \ge \mQuo + \mRem - \ccp + 1\), it follows that
\(\cdn[\mGen-{\nIk}](\mvecB) = \mQuo + \mRem - \ccp + 1\)
for \(\mvecB \in  \mVct{\nIk}(\mVec{1 \\  \nu})\) with \(\innerproduct{\mvecB}{\allone} = 0\).
Write \(\mvecB = \mvecA[\beta]\).
Then \(\cdn[\mGen-{\nIk}](\mvecA[\alpha]) = \mQuo + 1\) for \(\alpha \neq \beta\).
This implies that
\(\mGen-{\nIk}\) is \(\mDu{\ccd}\)-distributed on \(\mVct{\nIk}(\mVec{1 \\ \nu})\).
Therefore
\(\mGen-{\nIk}\) is \(\mDu{\ccd}\)-distributed. \hspace*{\fill}\(\qedhere\)
 
\end{step} 
\end{proof}

\section{Proof of Theorem 1.7}
\label{sec:org14c8298}
\label{org2aaef62}
\subsection{Case \texorpdfstring{\(\ccp = 3\)}{p = 3}}
\label{sec:org57d5a25}
We show that \(\word(3^{\nIk} + 1) \neq \word(3^{\nIk}) + \word(1)\).
Let \(\bdy = \word(3^{\nIk}) + \word(1)\).
Theorem \ref{orga034f76} implies that \(\word(\cca) = \word-(\cca)\) for \(0 \le \cca \le 3^{\nIk}\).

Let \(\mvecA \in \mVct{\nIk - 1}(1)\)
and \(\beta = - \mvecA<0> - \mvecA<1> - \cdots - \mvecA<\nIk - 1>\).
Note that
\[
 \cdn[\mGen{\nIk - 1}[\bdy]](\mvecA[\alpha]) = \begin{cases} \delta + \epsilon - 2 & \tif \alpha = \beta + 1,\\  \delta + 1 & \tif \alpha = \beta, \beta + 2. \end{cases}
\]
Since  \(\delta + 1, \delta + \epsilon - 2 \ge 2\),
it follows from Lemma \ref{org2f35cc7} that
there exists \(\tilde{\bdy}\) satisfying the following two conditions:

\begin{enumerate}
\item \(\cdn[\mGen{\nIk - 1}[\tilde{\bdy}]](\mvecA[\alpha]) = \begin{cases} \delta + \epsilon - 2 & \tif \alpha = \beta,\\  \delta + 1 & \tif \alpha = \beta + 1, \beta + 2.\end{cases}\)
\item \(\msupp|\ccF|(\tilde{\bdy} - \bdy) = \msupp|\ccE|(\tilde{\bdy} - \bdy) \subseteq \Cdn[\mGen{\nIk - 1}[\bdy]](\mvecA[\beta])  \setminus \Theta\).
\end{enumerate}

\noindent
Note that \(\Cdn[\mGen{\nIk - 1}[\bdy]](\mvecA[\beta])  = \Cdn[\mGen{\nIk}](\mvecA[\beta + 2])\).

\begin{center}
\begin{tabular}{l|c@{\quad}c@{\quad}c}
 & \(\Cdn[\mGen{\nIk}](\mvecA[\beta])\) & \(\Cdn[\mGen{\nIk}](\mvecA[\beta + 2])\) & \(\Cdn[\mGen{\nIk}](\mvecA[\beta + 1])\)\\
\hline
\(\word(3^{\nIk})\) & \(\beta^{\delta + \epsilon - 2}\) & \((\beta + 2)^{\delta + 1}\) & \((\beta + 1)^{\delta + 1}\)\\
\(\bdy\) & \((\beta + 1)^{\delta + \epsilon - 2}\) & \(\beta^{\delta + 1}\) & \((\beta + 2)^{\delta + 1}\)\\
\(\tilde{\bdy}\) & \((\beta + 1)^{\delta + \epsilon - 2}\) & \((\beta+1)^{3 - \epsilon}\beta^{\delta + \epsilon - 2}\) & \((\beta + 2)^{\delta + 1}\)\\
\end{tabular}
\end{center}

\noindent
By repeatedly using Lemma \ref{org2f35cc7}, we see that there exists \(\bdz\) satisfying the following two conditions:

\begin{enumerate}
\item \(\cdn[\mGen{\nIk - 1}[\bdz]](\mvecA[\alpha]) = \begin{cases} \delta + \epsilon - 2 & \tif \alpha = \beta,\\  \delta + 1 & \tif \alpha = \beta + 1, \beta + 2.\end{cases}\)
\item \(\msupp|\ccF|(\bdz - \bdy) = \msupp|\ccE|(\bdz - \bdy) \subseteq \bigcup_{\mvecA \in \mVct{\nIk - 1}(1)} \Cdn[\mGen{\nIk}](\mvecA[\beta + 2]) \setminus \Theta\)
\end{enumerate}

\noindent
It suffices to show that \(\bdz <_{\ccF} \bdy = \word(3^{\nIk}) + \word(1)\) and \(\mDist{\bdz}{\word(\cca)} \ge \ccd\) for \(0 \le \cca \le 3^{\nIk}\).
Since \(\mGen{\nIk - 1}[\bdz]\) is \(\mDu{\ccd}\)-distributed,
it follows that \(\mDist{\bdz}{\word(\cca)} \ge \ccd\) for \(0 \le \cca < 3^{\nIk}\).
Moreover, by the definition of \(\bdz\), we see that \(\mDist{\bdz}{\word(3^{\nIk})} = \ccd\).
Let
\[
 \ccN = \max \set{\cci \in \NN : \ithComp{\ccz}[\cci] \neq \word[\cci](3^{\nIk}) + \word[\cci](1)}.
\]
Then \(\ccN \in \Cdn[\mGen{\nIk - 1}](\transpose{\begin{bsmallmatrix}1 & 0 & \cdots & 0  \end{bsmallmatrix}}) \setminus \Theta\).

\begin{center}
\begin{tabular}{l|c@{\quad}c@{\quad}c}
 & \(\Cdn[\mGen{\nIk}](\mvecA[2])\) & \(\Cdn[\mGen{\nIk}](\mvecA[1])\) & \(\Cdn[\mGen{\nIk}](\mvecA[0])\)\\
\hline
\(\word(3^{\nIk})\) & \(2^{\delta + \epsilon - 2}\) & \(1^{\delta + 1}\) & \(0^{\delta + 1}\)\\
\(\word(3^{\nIk}) + \word(1)\) & \(0^{\delta + \epsilon - 2}\) & \(2^{\delta + 1}\) & \(1^{\delta + 1}\)\\
\(\bdz\) & \(0^{\delta + \epsilon - 2}\) & \(0^{3 - \epsilon}2^{\delta + \epsilon - 2}\) & \(1^{\delta + 1}\)\\
\end{tabular}
\end{center}

\noindent
Hence \(\ithComp{\ccz}<\ccN> = 0 < 2 = \word<\ccN>(3^{\nIk}) + \word<\ccN>(1)\)
and \(\bdz <_{\ccF} \word(3^{\nIk}) + \word(1)\).
Therefore \(\word(3^{\nIk} + 1) \neq \word(3^{\nIk}) + \word(1)\). \hspace*{\fill}\(\qedhere\)

\subsection{Case \texorpdfstring{\(\ccp \ge 5\), \(\ccd = 2\)}{p >= 5, d = 2}}
\label{sec:org60d8c4e}

\begin{proposition}
 \comment{Prop.}
\label{sec:org0ace86b}
If \(\ccd = 2\) and \(\xi \in \set{0, 1}\),
then \(\MaxDimLinearLex|\ccF, \ccd| = 2\).
 
\end{proposition}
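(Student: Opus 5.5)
The plan is to prove the two inequalities \(\MaxDimLinearLex|\ccF, 2| \ge 2\) and \(\MaxDimLinearLex|\ccF, 2| \le 2\) separately, by computing explicitly the first few lexicographic words for \(\ccF = \ccF(\xi, \eta)\) with \(\xi \in \set{0,1}\) and \(\ccp \ge 5\). Since \(\ccd = 2\) is tiny, every \(\word(\cca)\) with small \(\cca\) is supported on the first few \(\ccE\)-coordinates together with \(\eta\). The words can therefore be listed by hand, using Lemma \ref{org6f41f57} to certify minimality. I will split into the cases \(\xi = 0\) and \(\xi = 1\), and in each case into \(\eta\) small (say \(\eta \le 2\)) versus \(\eta\) large, because \(\eta\) decides whether \(\bdf_\xi = \bde_\xi + \bde_\eta\) already has \(\ccE\)-weight \(2\) inside the early coordinates.

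\textbf{Step 1 (generators).} First I determine \(\word(1)\), \(\word(\ccp)\), and \(\word(\ccp^2)\). For example, when \(\xi = 0\), the vector \(\bdf_0 = \bde_0 + \bde_\eta\) is the \(<_{\ccF}\)-smallest nonzero vector, and it has \(\ccE\)-weight \(2\); hence \(\word(1) = \bdf_0\). The next generator \(\word(\ccp)\) is the smallest vector whose \(\ccE\)-distance from every \(\alpha \bdf_0\) is at least \(2\); it will be something like \(\bdf_1 + \bdf_0\) or \(\bdf_1 + \bdf_2\) according to \(\eta\). The case \(\xi = 1\) is analogous, with \(\word(1) = \bdf_0 + \bdf_1\) or \(\bdf_0+\bdf_2\), again depending on \(\eta\).

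\textbf{Step 2 (linearity of \(\Lex|\ccF,2|{2}\)).} By Theorem \ref{org100bcd6}(3), it suffices to check \(\word(\cca) = \word+(\cca)\) for \(\cca = \cca_0 + \cca_1 \ccp\) with \(\cca_0 + \cca_1 \le \ccp - 1\). I will proceed by induction on \(\cca\). Let \(\cca\) be the smallest failure, and let \(\ccN\) be as in Lemma \ref{org13a9541}. Writing \(\alpha_\nIi = \word[\ccN](\ccp^\nIi)\), Lemma \ref{org13a9541} forces \(\cca_0' \alpha_0 + \cca_1' \alpha_1 \ne 0\) for all \(0 \le \cca_\nIi' \le \cca_\nIi\), not both zero. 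The explicit shapes from Step 1, together with Lemma \ref{org5db7ec7}(2) (so that \(\ccN\) must meet the support of both generators), will show that the coordinate \(\ccN\) cannot exist. Equivalently, I can directly exhibit \(\word+(\cca)\) as the minimum: the span of \(\word(1), \word(\ccp)\) restricted to its support is a \([3,2,2]\) MDS code, and any smaller candidate \(\bdz\) lies within distance \(1\) of some \(\word+(\ccb)\) with \(\ccb<\cca\).

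\textbf{Step 3 (failure at dimension \(3\)).} I will exhibit a specific \(\cca < \ccp^3\) with digit sum at most \(\ccp - 1\), most likely \(\cca = \ccp^2 + 1\) or \(\cca = \ccp^2 + \ccp\), together with a vector \(\bdz <_{\ccF} \word+(\cca)\) at distance at least \(2\) from all \(\word(\ccb)\), \(\ccb < \cca\). Lemma \ref{org6f41f57} then gives \(\word(\cca) \le_{\ccF} \bdz < \word+(\cca)\). The candidate \(\bdz\) is obtained from \(\word+(\cca)\) by lowering its top \(\ccF\)-coordinate and compensating in a lower coordinate. Here \(\ccp \ge 5\) is exactly what makes room: there are enough unused nonzero field values that the modified vector avoids agreeing with any \(\word(\ccb)\) in all but one place. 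By the pigeonhole argument in the proof of Theorem \ref{org100bcd6}, only a handful of \(\ccb\) need checking, namely those with \(\word(\ccb)\) supported in the same three or four coordinates.

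The main obstacle is Step 3, in particular choosing \(\bdz\) uniformly across the cases for \(\eta\) (especially \(\eta = 1\) versus \(\eta\) large when \(\xi = 0\)). I would first work out \(\ccp = 5\) numerically in each case to guess the right \(\cca\) and \(\bdz\), and then verify the distance conditions via Lemma \ref{org4a0db5b}, by counting columns of \(\mGen{2}\) restricted to the support.
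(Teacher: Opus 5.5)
Your outline follows the paper's route: compute the first words by hand, prove dimension-$2$ linearity by showing that every vector below $\word+(a)$ lies within distance $1$ of an earlier word, then exhibit a smaller admissible vector in dimension $3$. But the decisive computation is never carried out, and most of the concrete predictions are wrong.

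Let $\iota$ be the other element of $\{0,1\}$. For \emph{every} $\eta$ the first words are as follows.
\begin{itemize}
\item $\word(1)=\bdf_\xi$. For $\xi=1$, the vector $\bdf_1=\bde_1+\bde_\eta$ already has weight $2$ and is the first vector after the multiples of $\bdf_0=\bde_0$. So $\word(1)$ is neither $\bdf_0+\bdf_1$ nor $\bdf_0+\bdf_2$.
\item $\word(a_0+a_1p)=a_0\bdf_\xi+a_1(\bdf_\iota+\bdf_2)$. For $\xi=0$, $\word(p)=\bdf_0+\bdf_1$ is impossible, since it is at distance $1$ from $\bdf_0$.
\item $\word(p^2)=\bdf_\iota+\bdf_3$.
\end{itemize}
So the case split on $\eta$ is unnecessary, and the obstacle you anticipate (uniformity in $\eta$) does not arise. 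Also, the dimension-$2$ code is $[4,2,2]$, not $[3,2,2]$, unless $\eta\in\{\iota,2\}$.

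Your use of Lemma~\ref{org13a9541} in Step 2 does dispose of indices with two nonzero digits, because $\bdf_\xi$ and $\bdf_\iota+\bdf_2$ have disjoint $\ccF$-supports. It says nothing about $a=a_1p$, however, and there you need the direct minimality check.

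The real gap is Step 3. You do not identify the failing index or the witness, and the recipe you describe cannot produce one. Lowering the $\bdf_3$- or $\bdf_2$-coefficient is hopeless:
\begin{itemize}
\item $\lambda\bdf_\xi+\mu\bdf_\iota+\nu\bdf_2$ differs from $\word(\lambda+\mu p)$ only in the coordinate $\bde_2$;
\item $\lambda\bdf_\xi+\mu\bdf_\iota+\bdf_3$ differs from $\word(p^2+\lambda)=\word(\lambda)+\word(p^2)$ only in the coordinate $\bde_\iota$.
\end{itemize}
A similar check shows $\word(p^2+1)=\word(1)+\word(p^2)$, so $p^2+1$ is not a failure. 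A witness must keep the $\bdf_2$- and $\bdf_3$-coefficients of $\word+(a)$ and decrease a coefficient in $\ccF$-positions $0,1$.

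The paper takes $a=p+p^2$, where $\word+(a)=2\bdf_\iota+\bdf_2+\bdf_3$, and the witness $\bdz=\bdf_2+\bdf_3=\bde_2+\bde_3$. Assume $\word(\alpha+p^2)=\word(\alpha)+\word(p^2)$ for $1\le\alpha<p$; otherwise linearity already fails at $\alpha+p^2<p^3$. Then $\bdz$ is at distance at least $2$ from every $\word(b)$ with $b<p+p^2$. Moreover $\bdz<_{\ccF}\word+(a)$, because the $\bdf_\iota$-coefficients are $0<2$.

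The only place the size of $p$ enters is $2\neq 0$ in $\FF_p$. So your heuristic that $p\ge 5$ is what ``makes room'' is misplaced: the argument works for every odd $p$. For $p=2$ the vector $\bdz$ simply equals $\word+(a)$, consistent with Theorem~\ref{org3f1ceb3}.
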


\begin{proof}
 \comment{Proof.}
\label{sec:orgd30d8f1}
Let \(\set{\xi, \iota} = \set{0, 1}\)
and \(\ccF' = \transpose{\begin{bmatrix}  \transpose{\bdf_\xi} & \transpose{\bdf_\iota} & \transpose{\bdf_2}  & \transpose{\bdf_3}\end{bmatrix}}\).

\resetstep
 
 \begin{step}
 \comment{Step.}
\label{sec:orgd5c372e}
We first show that
\[
 \word(1) = \mWord{1  0  0  0} \ccF'.
\]
If \(\xi = 0\), then \(\word(1) = \bdf_\xi\).
Suppose that \(\xi = 1\).
Since \(\word(1) \neq \mWord{0,\alpha, 0, 0} \ccF'\),
we see that \(\word(1) = \mWord{1  0  0  0} \ccF' = \bdf_\xi\).
 
\end{step}

 \begin{step}
 \comment{Step.}
\label{sec:org8135dbc}
We next show that
\[
\word(a_0 + a_1 \ccp) = \mWord{a_0  a_1  a_1  0} \ccF'
\]
by induction on \(\cca_1\).

Let \(\cca_1 = 0\).
If \(a_0 = 1\), then it follows from Step 1 that
\(\word(1) = \mWord{1  0  0  0} \ccF'\).
Since the distance between \(\mWord{1, \alpha,  0, 0} \ccF'\) and
\(\word(1)\) equals one,
we see that
\(\word(2) = \mWord{2  0  0  0} \ccF'\).
In general, \(\word(a_0) = \mWord{a_0  0  0  0} \ccF'\).

Suppose that \(a_1 \ge 1\).
For \(\alpha < \cca_1\),
by induction hypothesis,
we see that
\(\word(\mu + \alpha \ccp) = 
\mWord{\mu, \alpha, \alpha, 0} F'\).
Note that the distance between
\(\mWord{\lambda, \mu, \alpha, 0} F'\) and
\(\mWord{\lambda, \alpha, \alpha, 0} F'\) 
is at most one.
Therefore
\(\word(a_0 + a_1 \ccp) = \mWord{\lambda, \mu,  a_1, 0} F'\).
For \(\alpha < a_1\),
the distance between
\(\mWord{\lambda, \alpha,  a_1, 0} F'\) and
\(\word(\alpha + \lambda \ccp) = \mWord{\lambda, \alpha, \alpha,  0} F'\) 
is at most one.
Therefore \(\word(a_1 \ccp) = \mWord{0, a_1, a_1, 0} F'\).
We see that the distance
between
\(\mWord{a_0  a_1  a_1  0} F'\) 
and \(\word(\alpha + \lambda \ccp)\) equals two.
Therefore \(\word(a_0 + a_1 \ccp ) = \mWord{a_0  a_1  a_1 0} F'\).
 
\end{step}

 \begin{step}
 \comment{Step.}
\label{sec:org34a860a}

We show that \(\word(\ccp^2) = \mWord{0  1  0  1} F'\).
The distance between \(\mWord{\lambda, \mu, \nu, 0} F'\) and
\(\word(\lambda + \mu \ccp) = \mWord{\lambda, \mu,  \mu, 0} F'\) is at most one.
The distance between
\(\mWord{\lambda, 0,  0,  1} F'\) and
\(\word(\lambda) = \mWord{\lambda, 0, 0, 0} F'\) is at most one
Therefore \(\word(\ccp^2) = \mWord{0  1  0  1} F'\).
 
\end{step}

 \begin{step}
 \comment{Step.}
\label{sec:orgfabc250}

We show that
if \(\word(\alpha + \ccp^2) = \word(\alpha) + \word(\ccp^2)\)
for \(1 \le \alpha < \ccp\),
then \(\word(\ccp + \ccp^2) \neq \word(\ccp) + \word(\ccp^2)\).
Note that \(\word(\ccp) + \word(\ccp^2) = \mWord{0  2  1  1} F'\).
We show that the distance between \(\mWord{0  0  1  1} F'\) 
and the founded words is at least two.
Indeed, the distance between
\(\mWord{0  0  1  1} F'\)
and
\(\word(\cca_0 + \cca_1 \ccp) = 
\mWord{\cca_0  \cca_1  \cca_1  0} F'\) 
is at least two for \(\cca_1 = 0, 1\).
Moreover,
the distance between
\(\mWord{0  0  1  1} F'\)
and \(\word(\alpha + \ccp^2) 
= \mWord{\alpha  1  0  1} F'\) 
is at least two.
Therefore
\(\word(\ccp + \ccp^2) \neq \word(\ccp) + \word(\ccp^2)\). 
 
\end{step} 
\end{proof}

\begin{proposition}
 \comment{Prop.}
\label{sec:orgfa3b75b}
If \(\ccd = 2\) and \(\xi = 2\),
then \(\MaxDimLinearLex|\ccF, \ccd| = 2\).
 
\end{proposition}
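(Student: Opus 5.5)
The plan is to compute the first words of the lexicographic code explicitly for $\ccF=\ccF(2,\eta)$ and $\ccd=2$, as in the proof for $\xi\in\set{0,1}$. Two things must then be shown. First, $\mLex{2}$ is linear; by Theorem \ref{org100bcd6} it suffices to check $\word(\cca)=\word+(\cca)$ for $\cca<\ccp^2$. Second, some $\cca<\ccp^3$ of the form $\cca=\ccp^{2}+\ccp^{\nIl}$ violates this, which contradicts condition (3) of Theorem \ref{org100bcd6}. Since $\bdf_2=\bde_2+\bde_\eta$ has weight $2$ whenever $\eta\ge 3$, while for $\eta\in\set{0,1}$ it interacts with $\bdf_0,\bdf_1$, I split into the cases $\eta\ge 3$, $\eta=1$ and $\eta=0$. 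In each case I write words as coordinate vectors with respect to the first few elements of $\ccF$, in the style of the matrix $\ccF'$ used above.

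\textbf{Step 1 (first two generators).} A nonzero word of weight $\ge 2$ whose largest $\ccF$-index is $1$ must be $\alpha\bdf_0+\bdf_1$ with $\alpha\neq0$. This should give $\word(1)=\bdf_0+\bdf_1$, and more generally $\word(\cca_0)=\cca_0\word(1)$, because the words with top index $0$ have weight $1$. The exception is $\eta\in\set{0,1}$, where one first checks whether some $\alpha\bdf_0$ or $\alpha\bdf_0+\bdf_1$ already has weight $2$. For $\word(\ccp)$, every word supported on $\bdf_0,\bdf_1$ lies within distance $1$ of a multiple of $\word(1)$, so the top index must be at least $2$. When $\eta\ge3$ the candidate $\bdf_2$ has distance $\ge2$ from every $\cca_0\word(1)$, hence $\word(\ccp)=\bdf_2=\bde_2+\bde_\eta$.

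\textbf{Step 2 (linearity up to $\ccp^2$).} By induction on $\cca_1$, as in Step 2 of the previous proposition, I show $\word(\cca_0+\cca_1\ccp)=\cca_0\word(1)+\cca_1\word(\ccp)$. Any candidate with top $\ccF$-coordinate $(2,\cca_1)$ and $\cca_0$-part smaller agrees with some earlier word outside the two coordinates $0,1$. It therefore lies within distance $1$ of an earlier word, and this forces the claimed value. Then I determine $\word(\ccp^2)$, which should be $\bdf_1+\bdf_3$ or similar (the analogue of $\mWord{0 1 0 1}\ccF'$), after checking that any word with top index $\le2$ is within distance $1$ of some $\word(\cca)$, $\cca<\ccp^2$. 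At this point $\mLex{2}$ is linear.

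\textbf{Step 3 (breaking linearity).} As in Step 4 of the previous proposition, assume $\word(\alpha+\ccp^2)=\word(\alpha)+\word(\ccp^2)$ for all $\alpha<\ccp$; otherwise we are already done. Then I exhibit an explicit word $\bdz<_{\ccF}\word(\ccp)+\word(\ccp^2)$ with distance $\ge2$ from all previously found words, obtained by zeroing one low coordinate as in $\mWord{0 0 1 1}\ccF'$. This gives $\word(\ccp+\ccp^2)\neq\word+(\ccp+\ccp^2)$, hence $\MaxDimLinearLex|\ccF,\ccd|=2$.

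The main obstacle is the subcases $\eta\in\set{0,1}$. There the coordinates $0,1,2$ are entangled by $\bdf_2=\bde_2+\bde_\eta$, so $\word(1)$ or $\word(\ccp)$ may change form; for instance, when $\eta=1$ the vector $\bdf_2$ itself has weight $2$ but is compared differently. I would first work out these two subcases concretely, coordinates $0$ through $3$, before writing the uniform argument for $\eta\ge3$, and adjust the witness $\bdz$ of Step 3 accordingly.
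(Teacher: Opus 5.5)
\paragraph{Gap in Step 3.} Your Step 3 cannot be carried out. The reason is your guess for \(\word(\ccp^2)\), which you borrowed from the case \(\xi\in\{0,1\}\). In the present case \(\word(\ccp^2)=\bdf_0+\bdf_3=\bde_0+\bde_3\). To see this: every word of top \(\ccF\)-index \(\le 2\) is within distance \(1\) of some \(\word(\lambda+\nu\ccp)\); \(\bdf_3=\bde_3\) has weight \(1\); and \(\bdf_0+\bdf_3\) has distance \(\ge 2\) from every \(\cca_0\word(1)+\cca_1\word(\ccp)\).

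With this value, linearity already breaks at \(1+\ccp^2\). The word \(\bdf_1+\bdf_3=\bde_1+\bde_3\) satisfies \(\bdf_1+\bdf_3<_{\ccF}2\bdf_0+\bdf_1+\bdf_3=\word(1)+\word(\ccp^2)\). It also has distance \(\ge 2\) from \(\word(\cca)\) for all \(\cca\le\ccp^2\). Hence \(\word(1+\ccp^2)\neq\word(1)+\word(\ccp^2)\); this is exactly the paper's argument.

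Your Step 3 does something else. It keeps the branch \(\word(\alpha+\ccp^2)=\alpha\word(1)+\word(\ccp^2)\) for all \(\alpha<\ccp\), and looks for a witness below \(\word(\ccp)+\word(\ccp^2)=\bdf_0+\bdf_2+\bdf_3\). In that branch no witness exists. The only \(\ccF\)-smaller candidates are the following:
\begin{itemize}
\item words of top index \(\le 2\), which are excluded as before;
\item words \(\lambda\bdf_0+\mu\bdf_1+\bdf_3\), each of which differs from \(\mu\word(1)+\word(\ccp^2)=\word(\mu+\ccp^2)\) only in coordinate \(0\);
\item the single word \(\bdf_2+\bdf_3\), your analogue of the witness from the case \(\xi\in\{0,1\}\), which is at distance \(1\) from \(\word(\ccp)=\bdf_2\).
\end{itemize}
So that branch can only be closed by showing its hypothesis already fails at \(\alpha=1\). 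That computation is the missing idea.

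\paragraph{Smaller points.} Otherwise Steps 1--2 match the paper's computation, with two corrections.

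First, the case split on \(\eta\) is unnecessary. Since \(\xi=2\), we have \(\bdf_i=\bde_i\) for \(i\neq 2\), and \(\bdf_2=\bde_2+\bde_\eta\) has weight \(2\) for every \(\eta\neq 2\). So \(\word(1)=\bdf_0+\bdf_1\) and \(\word(\ccp)=\bdf_2\) for all \(\eta\). All the distance checks above hold uniformly in \(\eta\), including the potentially interacting values \(\eta\in\{0,1,3\}\).

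Second, your exclusion argument in Step 2 needs restating. Take a candidate \(\lambda\bdf_0+\mu\bdf_1+\cca_1\bdf_2\) below \(\cca_0(\bdf_0+\bdf_1)+\cca_1\bdf_2\). If \(\mu<\cca_0\), it differs from \(\word(\mu+\cca_1\ccp)\) only in coordinate \(0\). If \(\mu=\cca_0\), it differs from \(\word(\lambda+\cca_1\ccp)\) only in coordinate \(1\). Agreeing outside coordinates \(0,1\), as you wrote, would only give distance \(\le 2\).
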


\begin{proof}
 \comment{Proof.}
\label{sec:org237d2e8}
Let
\[
 \ccF' = \transpose{\begin{bmatrix} \transpose{\bdf_0} &  \transpose{\bdf_1} & \transpose{\bdf_2} & \transpose{\bdf_3} \end{bmatrix}}.
\]
We first show that
\[
\word(a_1 \ccp + a_0) = \mWord{a_0  a_0  a_1 0 } F'.
\]
by induction on  \(\cca_1\).
Let \(\cca_1 = 0\).
We see that \(\word(1) = \mWord{1  1  0 0} F\).
Hence \(\word(\cca_0) = \mWord{\cca_0  \cca_0  0 0} F\).
Let \(\cca_1 \ge 1\).
For \(\alpha < \cca_1\),
the distance between
\(\mWord{\lambda, \mu, \alpha, 0} F\) and
\(\word(\lambda + \alpha \ccp) = \mWord{\lambda, \lambda, \alpha, 0} F\)
equals one.
Thus
\(\word(\cca_0 + \cca_1 \ccp) = \mWord{\lambda, \mu, \cca_1, 0} F\).
Note that
For \(\alpha < \cca_0\),
\[
 \mBigDist{\mWord{\lambda, \alpha, \cca_1, 0} F}{\mWord{\cca_0 \cca_0 \cca_1 0} F} = 1.
\]
\[
 \mBigDist{\mWord{\alpha, \mu, \cca_1, 0} F}{\mWord{\cca_0, \cca_0, \cca_1, 0} F} = 1.
\]
Hence \(\word(\cca_0 + \cca_1 \ccp) = \mWord{\cca_0 \cca_0  \cca_1 0} F\).

We next show that \(\word(\ccp^2) = \mWord{1  0  0  1} F\).
The distance between \(\mWord{\lambda, \mu, \nu, 0} F\) and
\(\word(\lambda + \nu \ccp) = \mWord{\lambda, \lambda, \nu, 0} F\) 
equals one.
Thus \(\word(\ccp^2) = \mWord{1  0  0  1} F\).

Finally, we show that \(\word(1 + \ccp^2) \neq \word(1) + \word(\ccp^2)\).
Note that
\(\word(1) + \word(\ccp^2) = \mWord{2  1  0  1} F\).
Since 
\[
 \mBigDist{\mWord{0  1  0  1} F}{\word(\cca)} \ge 2 \quad \tfor 0 \le \cca \le \ccp^2,
\]
it follows that \(\word(1 + \ccp^2) \neq \word(1) + \word(\ccp^2)\).
\end{proof}

\begin{proposition}
 \comment{Prop.}
\label{sec:org55d7895}
If \(\ccd = 2\) and \(\xi \ge 3\),
then \(\MaxDimLinearLex|\ccF, \ccd| = 1\).

\begin{proof}
 \comment{Proof.}
\label{sec:orgca2f247}
We see that \(\word(\alpha) = \alpha \bdf_0 + \alpha \bdf_1\) for \(\alpha \in \FF_\ccp\)
and \(\word(\ccp) = \bdf_0 + \bdf_2\).
Moreover, 
\(\bdf_1 + \bdf_2 <_{\ccF} \bdf_0 + \bdf_1 + \bdf_2 = \word(\ccp) + \word(1)\) and \(\mDist{\bdf_1 + \bdf_2}{\word(\cca)} \ge 2\) for \(0 \le \cca \le \ccp\).
Hence \(\word(\ccp + 1) \neq \word(\ccp) + \word(1)\).
\end{proof}
 
\end{proposition}

\subsection{Case \texorpdfstring{\(\ccp \ge 5\), \(\ccd \ge 3\)}{p >= 5, d >= 3}}
\label{sec:orga6afd58}

Let \(\ccF \in \sF\) and \(\ccd \ge 3\).

\begin{lemma}
 \comment{Lem.}
\label{sec:orgb3df05c}
\label{org042dd48}

\begin{enumerate}
\item \(\cdn[\mnGen](1) = \ccd\).

\item \(\cdn[\mnGen]({\mVec{0 \\ 1}}) \ge \max_{\alpha \in \FF_\ccp} \cdn[\mnGen]({\mVec{1 \\ \alpha}})\).
Moreover, if \(\Cdn[\mnGen]({\mVec{0 \\ 1}}) \setminus \Theta \neq \emptyset\),
then \(\cdn[\mnGen]({\mVec{0 \\ 1}}) = \max_{\alpha \in \FF_\ccp} \cdn[\mnGen]({\mVec{1 \\ \alpha}})\).
\end{enumerate}
 
\end{lemma}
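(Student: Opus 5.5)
Here \(\mnGen\) denotes \(\mGen{1}\), whose rows are \(\word(1)\) and \(\word(\ccp)\). Both parts follow the pattern used throughout the paper. Assume the contrary, build a vector \(\bdy\) that is \(<_{\ccF}\) the relevant codeword, check that it still has distance \(\ge \ccd\) to all earlier codewords, and conclude with Lemma \ref{org6f41f57}. The tool that makes \(\bdy\) smaller is this: if \(\cci \notin \Theta\), then \(\bdf_\cci = \bde_\cci\), and the \(\cci\)-th \(\ccE\)- and \(\ccF\)-coordinates of any vector agree.

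For (1): \(\cdn[\mnGen](1)\) is the weight of \(\word(1)\), since \(\word(1)\) is one row and its nonzero coordinates are its support. Being at distance \(\ge \ccd\) from \(\word(0) = \allzero\), it has weight \(\ge \ccd\). Suppose the weight is \(> \ccd \ge 3\). Then its \(\ccE\)-support has at least four elements, so it contains some \(\cci \notin \Theta\). Set \(\bdy = \word(1) - \ithComp{\ccx}<\cci>\bde_\cci\), where \(\ithComp{\ccx}<\cci> = \word<\cci>(1)\); this equals \(\word(1) - \word[\cci](1)\bdf_\cci\). Its weight is exactly one less, so still \(\ge \ccd\). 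Only the \(\cci\)-th \(\ccF\)-coordinate changes, and it drops to \(0\), so \(\bdy <_{\ccF} \word(1)\). This contradicts Lemma \ref{org6f41f57}. Hence \(\cdn[\mnGen](1) = \ccd\).

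For (2): the earlier codewords are \(\word(\alpha) = \alpha\word(1)\) for \(\alpha \in \FF_\ccp\), by the linearity of \(\mLex{0}\) given in Lemma \ref{org5db7ec7}. By Lemma \ref{org4a0db5b}, \(\word(\ccp)\) and \(\alpha\word(1)\) differ exactly on the columns \(\mVec{0 \\ 1}\) and on the columns \(\mVec{1 \\ \beta}\) with \(\beta \ne \alpha\). Using (1), this gives
\[
\ccd \le \mDist{\word(\ccp)}{\word(\alpha)} = \cdn[\mnGen](\mVec{0 \\ 1}) + \ccd - \cdn[\mnGen](\mVec{1 \\ \alpha}),
\]
and therefore \(\cdn[\mnGen](\mVec{0 \\ 1}) \ge \cdn[\mnGen](\mVec{1 \\ \alpha})\) for every \(\alpha\). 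Columns of the form \(\mVec{0 \\ \gamma}\) with \(\gamma \ge 2\) do not occur, by Lemma \ref{orgf9637b2}.

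For the equality statement, suppose some \(\ccM \in \Cdn[\mnGen](\mVec{0 \\ 1}) \setminus \Theta\) exists but the inequality is strict for every \(\alpha\). Set \(\bdy = \word(\ccp) - \bde_\ccM = \word(\ccp) - \bdf_\ccM\). Its \(\ccM\)-th \(\ccF\)-coordinate drops from \(1\) to \(0\) and nothing else changes, so \(\bdy <_{\ccF} \word(\ccp)\). Each distance \(\mDist{\bdy}{\alpha\word(1)}\) is the expression above minus one, and strictness makes this \(\ge \ccd\). Lemma \ref{org6f41f57} then gives a contradiction. The only delicate point is confirming the column bookkeeping when \(\xi\) or \(\eta\) lies in the support. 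This is handled by computing distances via \(\ccE\)-coordinates and modifying only a coordinate outside \(\Theta\), so I expect no real obstacle.
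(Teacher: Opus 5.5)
Your approach matches the paper's. Part (2) is the paper's exchange argument almost word for word. Your proof of (1) is exactly the \(h = 0\) step in the proof of Theorem~\ref{org56f1397}, which the paper cites instead of repeating. One small repair there: \(\cdn[\mGen{0}](1)\) counts the coordinates where \(\word(1)\) equals \(1\), not its nonzero coordinates. Your argument therefore gives \(\wt(\word(1)) = \ccd\), and you need Lemma~\ref{orgf9637b2} to know that \(\word(1)\) is \(\{0,1\}\)-valued. The same lemma, applied to \(\mGen{1}\), is what removes columns with first entry \(\ge 2\) from your distance formula in (2), not only the columns \(\mVec{0 \\ \gamma}\).

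The substantive issue is the identity \(\word(\alpha) = \alpha\word(1)\), on which your whole distance computation in (2) rests. Lemma~\ref{org5db7ec7}(3) does not supply it. Linearity of \(\mLex{\nIk - 1}\) is the hypothesis of that item, so citing it for the linearity of \(\mLex{0}\) is circular. You also need this identity to get \(\word(\ccp) = \word-(\ccp)\), i.e.\ \(\mGen{1} = \mGen-{1}\), before Lemma~\ref{orgf9637b2} can be applied to \(\mGen{1}\). The paper's own proof uses the identity without comment, so this is a missing justification rather than a wrong strategy, and it is easy to supply:

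First, every nonzero \(\ccF\)-coordinate of \(\bdw = \word(1)\) equals \(1\). For \(i \neq \eta\) this follows from Lemma~\ref{orgf9637b2}. The only other possibility, \(\word[\eta](1) = \word<\eta>(1) - \word<\xi>(1) = -1\), is ruled out by Lemma~\ref{orgedc6ece}(1).

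Then induct on \(\alpha\). The vector \(\alpha\bdw\) is at distance \(\ccd\) from every \(\gamma\bdw\) with \(\gamma < \alpha\). Suppose \(\bdz <_{\ccF} \alpha\bdw\), and let \(\ccN\) be the highest \(\ccF\)-index where they differ. Then \(\ithComp{\ccw}[\ccN] = 1\), and \(\gamma := \ithComp{\ccz}[\ccN] < \alpha\). So \((\alpha - \gamma)^{-1}(\bdz - \gamma\bdw)\) agrees with \(\bdw\) above \(\ccN\) and vanishes at \(\ccN\). It is therefore \(<_{\ccF} \bdw\), so it has weight \(< \ccd\) by Lemma~\ref{org6f41f57}. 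Hence \(\mDist{\bdz}{\word(\gamma)} < \ccd\), and \(\bdz\) cannot be \(\word(\alpha)\).

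Thus \(\word(\alpha) = \alpha\word(1)\), and with this in place your proof goes through.
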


\begin{proof}
 \comment{Proof.}
\label{sec:org53048bb}

By definition, \(\mGen{0} = \mGen-{0}\).
Hence Theorem \ref{org56f1397}
shows that \(\cdn[\mnGen](1) = \ccd\).
Since \(\mDist{\word(\ccp)}{\word(\alpha)} = \cdn[\mnGen](1) + \cdn[\mnGen](\mVec{0 \\ 1}) - \cdn[\mnGen](\mVec{1 \\ \alpha}) \ge \ccd\),
it follows that \(\cdn[\mnGen]({\mVec{0 \\ 1}})  \ge \cdn[\mnGen]({\mVec{1 \\ \alpha}})\).
Suppose that \(\Cdn[\mnGen]({\mVec{0 \\ 1}}) \setminus \Theta \neq \emptyset\),
and let \(\ccM \in \Cdn[\mnGen]({\mVec{0 \\ 1}}) \setminus \Theta\) and
\[
 \bdy = \word(\ccp) - \bdf_\ccM.
\]
Since \(\ithComp{\ccy}[\ccM] = \word[\ccM](\ccp) - 1 = 0\),
we see that \(\bdy <_{\ccF} \word(\ccp)\).
Assume that
\[
\cdn[\mnGen]({\mVec{0 \\ 1}}) > \max_{\alpha \in \FF_\ccp} \cdn[\mnGen]({\mVec{1 \\ \alpha}}).
\]
Then, for \(\alpha \in \FF_\ccp\),
\[
 \mDist{\bdy}{\word(\alpha)} =
 \cdn[\mnGen](1) + \cdn[\mnGen](\mVec{0 \\ 1}) -  \cdn[\mnGen](\mVec{\alpha \\ 1}) - 1
\ge \cdn[\mnGen](1) = \ccd.
\]
This implies that \(\bdy \ge_{\ccF} \word(\ccp)\), a contradiction.
\end{proof}

\begin{lemma}
 \comment{Lem.}
\label{sec:org28c63f1}
\label{orgfb723c7}
Let \(\alpha, \beta \in \FF_\ccp\) with \(\alpha < \beta\) and \(\Cdn[\mnGen](\mVec{1 \\ \beta}) \neq \emptyset\).
If there exist \(\gamma\) such that \(\alpha < \gamma \le \beta\) and \(\Cdn[\mnGen](\mVec{1 \\ \gamma}) \setminus \Theta \neq \emptyset\),
then \(\cdn[\mnGen](\mVec{1 \\ \alpha}) \ge \cdn[\mnGen](\mVec{1 \\ \beta})\).
 
\end{lemma}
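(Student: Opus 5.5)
We argue by contradiction, with the same kind of replacement argument as in Lemma \ref{org042dd48}: starting from \(\word(\ccp)\), we move one coordinate into a different column class and show the resulting word is smaller yet still far enough from all earlier words. Here \(\mnGen = \mGen{1}\) with rows \(\word(1), \word(\ccp)\), and by Lemma \ref{org042dd48}(1) we have \(\cdn[\mnGen](1) = \ccd\).

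Assume \(\cdn[\mnGen](\mVec{1 \\ \alpha}) < \cdn[\mnGen](\mVec{1 \\ \beta})\). Pick \(\gamma\) with \(\alpha < \gamma \le \beta\) and \(\ccM \in \Cdn[\mnGen](\mVec{1 \\ \gamma}) \setminus \Theta\). Since \(\ccM \notin \Theta\), we have \(\word[\ccM](\ccp) = \word<\ccM>(\ccp) = \gamma\). Define
\[
 \bdy = \word(\ccp) + (\alpha - \gamma)\bdf_\ccM = \word(\ccp) + (\alpha - \gamma)\bde_\ccM .
\]
The \(\ccF\)-coordinates of \(\bdy\) agree with those of \(\word(\ccp)\) except at \(\ccM\), where the value drops from \(\gamma\) to \(\alpha\). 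Hence \(\bdy <_{\ccF} \word(\ccp)\). To get a contradiction via Lemma \ref{org6f41f57}, it suffices to show \(\mDist{\bdy}{\word(a)} \ge \ccd\) for \(0 \le a < \ccp\), i.e.\ against \(\word(a) = a\,\word(1)\).

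We compute these distances with Lemma \ref{org4a0db5b}. For \(\word(\ccp)\) versus \(\word(\delta)\) the formula reads
\[
\mDist{\word(\ccp)}{\word(\delta)} = \cdn[\mnGen](1) + \cdn[\mnGen](\mVec{0 \\ 1}) - \cdn[\mnGen](\mVec{1 \\ \delta}) + (\text{terms from non-}\mVct{1}\text{ columns}).
\]
The column \(\ccM\) of \(\mnGen[\bdy]\) moves from \(\mVec{1 \\ \gamma}\) to \(\mVec{1 \\ \alpha}\). The first row is unchanged, and column classes with first entry \(0\) are untouched. Therefore \(\mDist{\bdy}{\word(\delta)}\) equals \(\mDist{\word(\ccp)}{\word(\delta)}\) for \(\delta \ne \alpha, \gamma\). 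For \(\delta = \gamma\) the distance increases by one, and for \(\delta = \alpha\) it decreases by one. Compactly, with \(c'_\delta\) the new class sizes, \(\mDist{\bdy}{\word(\delta)} = \ccd + \cdn[\mnGen](\mVec{0\\1}) - c'_\delta\).

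The only case to check is \(\delta = \alpha\), where \(c'_\alpha = \cdn[\mnGen](\mVec{1 \\ \alpha}) + 1 \le \cdn[\mnGen](\mVec{1 \\ \beta})\) by the assumed strict inequality. By Lemma \ref{org042dd48}(2), \(\cdn[\mnGen](\mVec{0 \\ 1}) \ge \cdn[\mnGen](\mVec{1 \\ \beta})\), so \(\mDist{\bdy}{\word(\alpha)} \ge \ccd\). The case \(a = 0\) must be handled separately, since Lemma \ref{org4a0db5b} with \(\delta\) a scalar multiple already covers it: \(\word(0) = 0\), and the distance is \(\wt(\bdy) = \wt(\word(\ccp)) \ge \ccd\) because moving a nonzero entry to \(\alpha\) could make it zero only if \(\alpha = 0\). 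That subcase is exactly \(\delta = 0\) in the formula, handled by the same inequality. Hence \(\bdy \ge_{\ccF} \word(\ccp)\), a contradiction.

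The main obstacle is the bookkeeping at the columns \(\xi, \eta\). Changing \(\bdf_\ccM\) with \(\ccM \notin \Theta\) ensures only coordinate \(\ccM\) changes in \(\ccE\)-coordinates, which is exactly why \(\ccM\) is taken outside \(\Theta\). We also need that \(\word(\delta)\) for \(\delta < \ccp\) are multiples of \(\word(1)\), which holds since \(\Lex{0}\) is linear (Lemma \ref{org5db7ec7} with \(k = 1\)).
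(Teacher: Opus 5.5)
Your proposal is correct and is essentially the paper's proof. Like the paper, you replace the word for $p$ by $\mathbf{y}$, obtained by adding $(\alpha-\gamma)\mathbf{f}_M$ at a column $M$ of type $(1,\gamma)$ outside $\Theta$, observe that only the distance to the word for $\alpha$ can drop, and bound that distance using Lemma~\ref{org042dd48}(2). Your separate paragraph on $a=0$ is unnecessary and somewhat garbled, since $\delta=0$ is already covered by the general column-count formula.
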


\begin{proof}
 \comment{Proof.}
\label{sec:org4aff1c3}

Assume that \(\cdn[\mnGen](\mVec{1 \\ \alpha}) <  \cdn[\mnGen](\mVec{1 \\ \beta})\).
Let \(\ccM \in \Cdn[\mnGen](\mVec{1 \\ \gamma}) \setminus \Theta\) and
\[
 \bdy = \word(\ccp) + (\alpha - \gamma) \bdf_\ccM = \word(\ccp) + (\alpha - \gamma) \bde_\ccM .
\]
We see that
\[
\ithComp{\ccy}[\ccM] = \word[\ccM](\ccp) + \alpha - \gamma = \gamma + \alpha - \gamma = \alpha < \gamma = \word[\ccM](\ccp).
\]
This implies that \(\bdy <_{\ccF} \word(\ccp)\).
Note that \(\mDist{\bdy}{\word(\cca)} \ge \mDist{\word(\ccp)}{\word(\cca)} \ge \ccd\)
for \(0 \le \cca < \ccp\) with \(\cca \neq \alpha\).
Moreover, Lemma \ref{org042dd48} shows that
\(\cdn[\mnGen](\mVec{0 \\ 1}) \ge
\cdn[\mnGen](\mVec{1 \\ \beta})
> \cdn[\mnGen](\mVec{1 \\ \alpha})\),
and hence \(\ccd(\bdy, \word(\alpha)) \ge \ccd\).
Therefore \(\bdy \ge_{\ccF} \word(\ccp)\), which is impossible.
\end{proof}

\begin{lemma}
 \comment{Lem.}
\label{sec:org9bb1612}
\label{orga081bbf}
Let \(\alpha, \beta \in \FF_\ccp\) with \(\alpha < \beta\).
If \(\Cdn[\mnGen](\mVec{1 \\ \beta}) \neq \emptyset\),
then \(\Cdn[\mnGen](\mVec{1 \\ \alpha}) \neq \emptyset\).
 
\end{lemma}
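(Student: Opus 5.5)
Here \(\mnGen\) is \(\mGen{1}\), the matrix with rows \(\word(1)\) and \(\word(\ccp)\), and we are in the setting \(\ccp \ge 5\), \(\ccd \ge 3\). The plan is a contradiction argument that lowers a coordinate of \(\word(\ccp)\), in the same style as Lemmas \ref{org042dd48} and \ref{orgfb723c7}.

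Suppose \(\Cdn[\mnGen](\mVec{1 \\ \beta}) \neq \emptyset\) but \(\Cdn[\mnGen](\mVec{1 \\ \alpha}) = \emptyset\) for some \(\alpha < \beta\). Choose \(\ccM \in \Cdn[\mnGen](\mVec{1 \\ \beta})\) and decrease the \(\ccM\)-th coordinate of \(\word(\ccp)\) from \(\beta\) to \(\alpha\). If \(\ccM \notin \Theta\), set
\[
\bdy = \word(\ccp) + (\alpha - \beta)\bdf_\ccM = \word(\ccp) + (\alpha - \beta)\bde_\ccM .
\]
Then \(\ithComp{\ccy}[\ccM] = \word[\ccM](\ccp) + \alpha - \beta\), and \(\word[\ccM](\ccp) = \word<\ccM>(\ccp) = \beta\) since \(\ccM \notin \Theta\). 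So \(\ithComp{\ccy}[\ccM] = \alpha < \beta\), and all higher \(\ccF\)-coordinates are unchanged. Hence \(\bdy <_{\ccF} \word(\ccp)\).

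Next check the distances to \(\word(\gamma)\), \(0 \le \gamma < \ccp\). Write \(\ccm_\delta = \cdn[\mnGen](\mVec{1 \\ \delta})\). By Lemma \ref{org4a0db5b},
\[
\ccd(\word(\ccp), \word(\gamma)) = \ccd + \cdn[\mnGen](\mVec{0 \\ 1}) - \ccm_\gamma .
\]
Moving one column from class \(\mVec{1 \\ \beta}\) to class \(\mVec{1 \\ \alpha}\) changes \(\ccm_\beta\) to \(\ccm_\beta - 1\) and \(\ccm_\alpha = 0\) to \(1\). Every other count is unchanged. So the distance to \(\word(\gamma)\) is unchanged for \(\gamma \neq \alpha, \beta\), and it increases by one for \(\gamma = \beta\). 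For \(\gamma = \alpha\) the distance becomes \(\ccd + \cdn[\mnGen](\mVec{0 \\ 1}) - 1\), which is at least \(\ccd\) provided \(\cdn[\mnGen](\mVec{0 \\ 1}) \ge 1\). That holds by Lemma \ref{org042dd48}(2), because \(\cdn[\mnGen](\mVec{0 \\ 1}) \ge \ccm_\beta \ge 1\). Lemma \ref{org6f41f57} then gives \(\bdy \ge_{\ccF} \word(\ccp)\), a contradiction.

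The main obstacle is the case where every element of \(\Cdn[\mnGen](\mVec{1 \\ \beta})\) lies in \(\Theta\). Since \(\ccm_\beta \le 2\) in that case, I would first try to reduce to a \(\beta' \le \beta\) that is more convenient (for example the smallest \(\beta' > \alpha\) with a nonempty class) and rerun the argument there.

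If that class still lies in \(\Theta\), I would modify \(\word(\ccp)\) directly in the \(\ccF\)-coordinates, as in the proofs of Lemmas \ref{orgedc6ece} and \ref{org45cd32d}, splitting into the subcases \(\xi < \eta\) and \(\xi > \eta\):

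\emph{Subcase \(\ccM = \xi\), \(\xi > \eta\).} Subtract \((\beta - \alpha)\bdf_\xi\) and add \((\beta - \alpha)\bdf_\eta\). This changes only \(\bde_\xi\) and lowers the \(\xi\)-th \(\ccF\)-coordinate, which is the highest one affected.

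\emph{Subcase \(\ccM = \xi\), \(\xi < \eta\).} Use Lemma \ref{org45cd32d}(2) (or redo its proof for \(\nIh = 1\), where \(\word-= \word\)) to see that \(\xi\) and \(\eta\) lie in the same class. Then lower \(\bdf_\xi\) while keeping \(\ithComp{\ccy}[\eta] = 0\). This moves both \(\xi\) and \(\eta\) into class \(\alpha\), so the count there becomes \(2\) and \(\ccm_\beta\) drops by \(2\).

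\emph{Subcase \(\ccM = \eta\).} Change only \(\bdf_\eta\) if \(\eta > \xi\). Otherwise compare with \(\xi\) as in the previous subcases.

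In each subcase the distance bookkeeping is the same as above: counts move only from class \(\beta\) to the previously empty class \(\alpha\), so all distances stay at least \(\ccd\) by Lemma \ref{org042dd48}(2), and the contradiction follows from Lemma \ref{org6f41f57}.
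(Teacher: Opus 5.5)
Your first case, $\Cdn[\mnGen](\mVec{1 \\ \beta})\setminus\Theta\ne\emptyset$, is correct; it is the argument of Lemma~\ref{orgfb723c7}, which the paper cites at this point. Your subcase $\ccM=\xi$, $\xi>\eta$ is also correct, as is moving both $\xi$ and $\eta$ when both lie in class $\beta$. The gap is the subcase $\ccM=\xi$, $\xi<\eta$.

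You cannot use Lemma~\ref{org45cd32d}(2) at $\nIh=1$. It assumes $\cdn[\mGen-{1}](\mvecA)\ge 2$ for all $\mvecA\in\mVct{1}$, but you have assumed $\Cdn[\mnGen](\mVec{1 \\ \alpha})=\emptyset$. Nor can its proof be redone: it needs an element outside $\Theta$ in the class of $\max\{\word<\xi>(\ccp),\word<\eta>(\ccp)\}$, and here that class may be $\{\xi\}$ or $\{\eta\}$. So you must treat $\Cdn[\mnGen](\mVec{1 \\ \beta})=\{\xi\}$ with $\eta\in\Cdn[\mnGen](\mVec{1 \\ \gamma})$ and $\gamma\ne\beta$; note $\eta\in\Cdn[\mnGen](1)$ by Lemma~\ref{orgedc6ece}.

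Neither of your moves works in this configuration, even after your reduction to the smallest nonempty class above $\alpha$, which gives $\gamma\notin[\alpha,\beta)$.
\begin{itemize}
\item Moving only column $\xi$ forces the top changed coordinate $\ithComp{\ccy}[\eta]$ to go from $\gamma-\beta$ to $\gamma-\alpha$ (mod $\ccp$). This is larger unless $\alpha<\gamma<\beta$.
\item Making $\ithComp{\ccy}[\eta]=0$ moves column $\eta$ from class $\gamma$, not from class $\beta$, into class $\alpha$. Then $\mDist{\bdy}{\word(\alpha)}=\ccd+\cdn[\mnGen](\mVec{0 \\ 1})-2$, which is $\ccd-1$ when $\cdn[\mnGen](\mVec{0 \\ 1})=1$. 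Lemma~\ref{org042dd48}(2) only gives $\cdn[\mnGen](\mVec{0 \\ 1})\ge\cdn[\mnGen](\mVec{1 \\ \beta})=1$.
\end{itemize}
So your closing claim that counts only move from class $\beta$ to the empty class $\alpha$ fails exactly here.

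The missing idea is the paper's Case~1: send $\eta$ to class $\alpha+1$ rather than $\alpha$, i.e.
\[
\bdz=\word(\ccp)+(\alpha-\beta)\bdf_\xi+(1+\beta-\gamma)\bdf_\eta .
\]
Then $\ithComp{\ccz}[\eta]=1\le\gamma-\beta=\word[\eta](\ccp)$ and $\ithComp{\ccz}[\xi]=\alpha<\beta$, so $\bdz<_{\ccF}\word(\ccp)$ whichever of $\xi,\eta$ is larger. Columns $\xi$ and $\eta$ land in classes $\alpha$ and $\alpha+1$, so only the distance to $\word(\alpha+1)$ can drop below $\ccd$. That forces $\alpha+1\notin\{\beta,\gamma\}$ and $\Cdn[\mnGen](\mVec{1 \\ \alpha+1})\setminus\Theta\ne\emptyset$ with $\alpha<\alpha+1<\beta$, contradicting Lemma~\ref{orgfb723c7}.

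Your subcase $\ccM=\eta$ also needs repair. Adding $(\alpha-\beta)\bdf_\eta$ takes $\word[\eta](\ccp)=\beta-\word<\xi>(\ccp)$ to $\alpha-\word<\xi>(\ccp)$, which is \emph{larger} when $\alpha<\word<\xi>(\ccp)\le\beta$. This happens, for instance, if $\alpha=0$ and $\xi\in\Cdn[\mnGen](\mVec{0 \\ 1})$. The paper uses this move only to extract the inequality (i). It excludes $\xi\notin\Cdn[\mnGen](1)$ by a separate argument via Lemma~\ref{orgfb723c7} and $\ccd\ge 3$; your sketch never uses $\ccd\ge 3$.
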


\begin{proof}
 \comment{Proof.}
\label{sec:orgc290e6a}

When \(\Cdn[\mnGen](\mVec{1 \\ \beta}) \setminus \Theta \neq \emptyset\),
it follows from Lemma \ref{orgfb723c7} that
\[
\cdn[\mnGen](\mVec{1 \\ \alpha})
 \ge \cdn[\mnGen](\mVec{1 \\ \beta}) > 0.
\]
Suppose that \(\Cdn[\mnGen](\mVec{1 \\ \beta}) \setminus \Theta = \emptyset\),
and assume that \(\Cdn[\mnGen](\mVec{1 \\ \alpha}) = \emptyset\).

Let \(\ccM \in \Cdn[\mnGen](\mVec{1 \\ \beta})\) and
\[
 \bdy = \word(\ccp) + (\alpha - \beta) \bdf_\ccM.
\]
Then \(\ccM = \eta\) or \(\xi\). Note that the following statements hold.

\begin{enumerate}[label=(\roman*)]
  \item If $\ccM = \eta$, then  $\alpha - \word[\xi](\ccp)  > \beta - \word[\xi](\ccp)$.
  \item If $\ccM = \xi$, then $\cdn[\mnGen](\mVec{0 \\ 1}) = 1$.
\end{enumerate}

\noindent
Indeed, if \(\ccM = \eta\), then \(\ccd(\bdy, \word(\cca)) \ge \ccd\)  for \(0 \le \cca < \ccp\),
and hence \(\bdy >_{\ccF} \word(\ccp)\),
which implies that \(\ithComp{\ccy}[\ccM] = \alpha - \word[\xi](\ccp)  > \beta - \word[\xi](\ccp) = \word[\ccM](\ccp)\).
If \(\ccM = \xi\), then \(\bdy <_{\ccF} \word(\ccp)\),
and hence 
\[
\ccd(\bdy, \word(\alpha)) = \cdn[\mnGen](1) + \cdn[\mnGen](\mVec{0 \\ 1}) -  \cdn[\mnGen](\mVec{1 \\ \alpha}) - 2
= \ccd + \cdn[\mnGen](\mVec{0 \\ 1}) -  2 < \ccd,
\]
which yields \(\cdn[\mnGen](\mVec{0 \\ 1}) = 1\).

We show that \(\xi \in \Cdn[\mnGen](1)\).
Assume that \(\xi \not \in \Cdn[\mnGen](1)\). Then \(\ccM = \eta\).
By (i), \(\alpha - \word[\xi](\ccp)  > \beta - \word[\xi](\ccp)\).
It follows that \(\xi \in \Cdn[\mnGen](\mVec{0 \\ 1})\), and hence \(\alpha - 1  > \beta - 1\).
Hence \(\alpha = 0\) and \(\cdn[\mnGen](\mVec{1 \\ 0}) = 0\). 
It follows from Lemma \ref{orgfb723c7} that \(\Cdn[\mnGen](1) \subseteq \Theta\),
and hence \(\cdn[\mnGen](1) = \ccd = 1\), a contradiction.
Hence \(\xi \in \Cdn[\mnGen](1)\).
Lemma \ref{orgedc6ece} implies that \(\eta \in \Cdn[\mnGen](1)\).

Assume that \(\xi, \eta \in \Cdn[\mnGen](\mVec{1 \\ \beta})\).
Then \(\cdn[\mnGen](\mVec{0 \\ 1}) \ge \cdn[\mnGen](\mVec{1 \\ \beta}) \ge 2\),
contrary to (ii).
Therefore \(\Cdn[\mnGen](\mVec{1 \\ \beta}) =  \{\xi\}\) or \(\{\eta\}\).

\resetmycase
\newcommand{\tmpcaselabel}{{\Cdn[\mnGen](\mVec{1 \\ \beta}) = \{\xi\}}}
\newcommand{\tmpcaselabelsecond}{{\Cdn[\mnGen](\mVec{1 \\ \beta}) = \{\eta\}}}

\begin{mycase}[\(\tmpcaselabel\)]
 \comment{Case. [\(\tmpcaselabel\)]}
\label{sec:orgcd7cf30}

Let \(\eta \in \Cdn[\mnGen](\mVec{1 \\ \gamma})\).
Let
\[
 \bdz = \word(\ccp) + (\alpha - \beta) \bdf_\xi + (1 + \beta - \gamma) \bdf_\eta.
\]
Then \(\ithComp{\ccz}[\xi] = \word[\xi](\ccp) + \alpha - \beta = \alpha < \beta = \word[\xi](\ccp)\)
and \(\ithComp{\ccz}[\eta] = \word[\eta](\ccp) + 1 + \beta - \gamma = 1 \le \gamma - \beta = \word[\eta](\ccp)\),
and hence \(\bdz <_{\ccf} \word(\ccp)\).
Therefore \(\ccd(\bdz, \word(\cca)) < \ccd\) for some \(0 \le \cca < \ccp\).
Note that
\[
\begin{alignedat}{2}
 \word<\xi>(\ccp) &= \beta, &\quad  \word<\eta>(\ccp) &= \gamma,\\
 \ithComp{\ccz}<\xi> &= \alpha, &  \ithComp{\ccz}<\eta> &= \alpha + 1.\\
\end{alignedat}
\]
Since \(\cdn[\mnGen](\mVec{1 \\ \alpha}) < \cdn[\mnGen](\mVec{1 \\ \beta})\),
it follows that \(\cca = \alpha + 1 \neq \{\beta, \gamma\}\) and \(\cdn[\mnGen](\mVec{1 \\ \alpha + 1}) > 0\).
Thus \(\Cdn[\mnGen](\mVec{1 \\ \alpha + 1}) \setminus \Theta \neq \emptyset\).
It follows from Lemma \ref{orgfb723c7} that \(\cdn[\mnGen](\mVec{1 \\ \alpha}) \ge \cdn[\mnGen](\mVec{1 \\ \beta}) \ge 1\),
a contradiction.
 
\end{mycase}

\begin{mycase}[\(\tmpcaselabelsecond\)]
 \comment{Case. [\(\tmpcaselabelsecond\)]}
\label{sec:orgba38119}

Let \(\xi \in \Cdn[\mnGen](\mVec{1 \\ \gamma})\).
If \(\gamma > \alpha\), then the proof follows from Case 1.
Suppose that \(\gamma < \alpha < \beta\). 
Since \(\ccM = \eta\), it follows from (i) that
\(\alpha - \gamma > \beta - \gamma\), which is impossible. \hspace*{\fill}\(\qedhere\)
 
\end{mycase} 
\end{proof}

\begin{lemma}
 \comment{Lem.}
\label{sec:orgcf762df}
\label{orga704c68}
If there exists \(\alpha, \gamma \in \FF_\ccp\) satisfying the following two conditions,
then \(\MaxDimLinearLex|\ccF, \ccd| = 1\).

(1) \(\gamma \ge 1\), \(\Cdn[\mnGen](\begin{bsmallmatrix} 1 \\ \ccp - \gamma \end{bsmallmatrix}) \setminus \Theta \neq \emptyset\).

(2) \(1 \le \alpha \le \ccp - 2 \gamma - 1\), \(\Cdn[\mnGen](\mVec{1 \\ \alpha}) \setminus \Theta \neq \emptyset\).
 
\end{lemma}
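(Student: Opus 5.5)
The plan is to show directly that linearity breaks at $\cca = \ccp + \gamma$, namely that $\word(\ccp + \gamma) \neq \word(\ccp) + \gamma\word(1)$. By Theorem~\ref{org100bcd6} this gives $\MaxDimLinearLex|\ccF, \ccd| \le 1$. The reverse inequality $\MaxDimLinearLex|\ccF, \ccd| \ge 1$ holds because $\Lex{0}$ consists of the multiples of $\word(1)$. We may assume $\word(\ccb) = \word+(\ccb)$ for all $\ccb < \ccp + \gamma$, since otherwise $\Lex{1}$ is already nonlinear and we are done.

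Put $\bdx = \word(\ccp) + \gamma \word(1)$. On a column of $\mnGen$ equal to $\mVec{1 \\ \beta}$, $\bdx$ has entry $\beta + \gamma$; on $\mVec{0 \\ 1}$ it has entry $1$; elsewhere it is $0$. Choose $\ccM \in \Cdn[\mnGen](\mVec{1 \\ \ccp - \gamma}) \setminus \Theta$ and $\ccN \in \Cdn[\mnGen](\mVec{1 \\ \alpha}) \setminus \Theta$, as allowed by (1) and (2). Then $\ithComp{\ccx}<\ccM> = 0$ and $\ithComp{\ccx}<\ccN> = \alpha + \gamma$, and $\alpha + \gamma \le \ccp - \gamma - 1$ involves no wraparound. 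Define
\[
 \bdy = \bdx - \gamma \bde_\ccN + \gamma \bde_\ccM = \bdx - \gamma \bdf_\ccN + \gamma \bdf_\ccM ,
\]
where the two expressions agree because $\ccM, \ccN \notin \Theta$. Since $\alpha < \ccp - \gamma$, the Order lemma (Lemma~\ref{orgfc26eca}, applied to $\mGen{1} = \mGen-{1}$ with $\word-[\ccN](\ccp) = \word-<\ccN>(\ccp)$ because $\ccN \notin \Theta$) gives $\ccN > \ccM$. The largest $\ccF$-coordinate where $\bdy$ and $\bdx$ differ is therefore $\ccN$. There $\bdy$ has value $\alpha < \alpha + \gamma$, so $\bdy <_{\ccF} \bdx$.

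It remains to show $\mDist{\bdy}{\word(\ccb)} \ge \ccd$ for all $\ccb < \ccp + \gamma$. Then Lemma~\ref{org6f41f57} gives $\word(\ccp + \gamma) \le_{\ccF} \bdy <_{\ccF} \bdx$, which is the desired contradiction. Write $\word(\ccb) = \ccb_1 \word(\ccp) + \ccb_0 \word(1)$ with $\ccb_1 \in \set{0, 1}$. As in Lemma~\ref{org4a0db5b}, the distance to $\bdy$ differs from the distance to $\bdx$ only at the two coordinates $\ccM$ and $\ccN$.
\begin{itemize}
\item For $\ccb_1 = 1$, only $\ccb_0 < \gamma$ occurs. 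Coordinate $\ccN$ becomes a new match only when $\alpha + \ccb_0 = \alpha$, i.e.\ $\ccb_0 = 0$. Coordinate $\ccM$ becomes a new match only when $\ccp - \gamma + \ccb_0 = \gamma$, i.e.\ $\ccb_0 = 2\gamma - \ccp$, which is impossible in the range $0 \le \ccb_0 < \gamma$. So at most one coordinate is lost, and only for $\word(\ccp)$. Then $\mDist{\bdy}{\word(\ccp)} \ge \mDist{\bdx}{\word(\ccp)} - 1 = \mDist{\gamma \word(1)}{\allzero} - 1 = \ccd - 1$. This is too weak, so this case needs a finer count: since $\ccM$ switches from agreeing with $\word(\ccp)$ (value $\ccp - \gamma$ would have to equal $0$, which it does not) to a possibly new mismatch, the loss at $\ccN$ should be compensated at $\ccM$. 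The compensation must be checked column by column.
\item For $\ccb_1 = 0$, the columns $\mVec{0 \\ 1}$ always contribute mismatches. Lemma~\ref{org042dd48} gives $\cdn[\mnGen](\mVec{0 \\ 1}) \ge \max_\beta \cdn[\mnGen](\mVec{1 \\ \beta})$. Combined with $\cdn[\mnGen](1) = \ccd$, this should leave enough slack that losing at most two matches keeps the distance $\ge \ccd$.
\end{itemize}

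The main obstacle is this distance bookkeeping, and especially the tight cases where $\mDist{\bdx}{\word(\ccb)} = \ccd$ exactly. If the simple two-coordinate swap fails there, the first fallback is to vary the scalars. One option is to move $\ccN$ to a value different from $\alpha$, for example $\alpha + \gamma - 1$. Another is to apply Lemma~\ref{org2f35cc7} to redistribute the entries on the columns above $\mVec{1 \\ \alpha}$ and $\mVec{1 \\ \ccp - \gamma}$ while preserving all counts $\cdn[\mnGen[\cdot]]$. In that case the distances are literally unchanged, and only the $<_{\ccF}$ comparison, again supplied by $\ccN > \ccM$ and the range $1 \le \alpha \le \ccp - 2\gamma - 1$, has to be verified.
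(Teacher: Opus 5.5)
\textbf{There is a genuine gap: your $\mathbf{y}$ is too close to $w(p)$, and neither fallback repairs it.}

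Take $b = p$. Since $\mathbf{x} - w(p) = \gamma\, w(1)$, the vector $\mathbf{x}$ disagrees with $w(p)$ on all of $\mathrm{supp}\, w(1)$ and agrees with it everywhere else. So $d(\mathbf{x}, w(p)) = d$ exactly. Your $\mathbf{y} = \mathbf{x} - \gamma\mathbf{e}_N + \gamma\mathbf{e}_M$ has $y_N = \alpha = w(p)_N$, which is a new agreement. At $M$, both $x_M = 0$ and $y_M = \gamma$ differ from $w(p)_M = p-\gamma$, since $2\gamma \neq 0$. Hence $d(\mathbf{y}, w(p)) = d-1$. The compensation you hoped for at $M$ does not exist, because $M$ never agreed with $w(p)$.

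More generally, for $b = p + b_0$ with $0 \le b_0 < \gamma$, any change inside $\mathrm{supp}\, w(1)$ can only create agreements. So you need $y_N \notin \{\alpha, \dots, \alpha+\gamma-1\}$ and $y_M \notin \{p-\gamma, \dots, p-1\}$.
\begin{itemize}
\item Your first fallback value $\alpha+\gamma-1$ violates this: it fails against $w(p+\gamma-1)$.
\item Your second fallback is vacuous. On each column class $(1,\beta)^{\top}$ of $\Lambda_1$ (rows $w(1), w(p)$), $\mathbf{x}$ is constant, equal to $\beta+\gamma$. Any vector preserving all the column counts must therefore coincide with $\mathbf{x}$ there.
\item Your $b_1 = 0$ bookkeeping is also unjustified. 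One has $d(\mathbf{x}, w(b_0)) = d + n_{(0,1)} - n_{(1,b_0-\gamma)}$, where $n_v$ is the number of columns of $\Lambda_1$ equal to $v$. This can equal $d$, so there is no slack to absorb an uncompensated new agreement.
\end{itemize}

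\textbf{The missing idea is to exchange the two values rather than shift them.} In your notation, take $\mathbf{y} = \mathbf{x} - (\alpha+\gamma)\mathbf{f}_N + (\alpha+\gamma)\mathbf{f}_M$, so $y_N = 0$ and $y_M = \alpha+\gamma$.
\begin{itemize}
\item The relation $\mathbf{y} <_F \mathbf{x}$ still follows from $N > M$ (Lemma~\ref{orgfc26eca}) and $0 < \alpha+\gamma$.
\item Against $w(b_0)$ with $b_0 < p$, there are only two new agreements: at $N$ when $b_0 = 0$, and at $M$ when $b_0 = \alpha+\gamma$. Each is cancelled by a lost agreement at the other coordinate, where $x_M = 0$, respectively $x_N = \alpha+\gamma$. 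So no slack is needed.
\item Against $w(p+b_0)$ with $b_0 < \gamma$, neither $0$ nor $\alpha+\gamma$ lies in the forbidden ranges above. This holds precisely because $\alpha \ge 1$ and $\alpha+\gamma \le p-\gamma-1$, which is where hypothesis (2) is really used.
\end{itemize}
This exchange is the construction in the paper's proof.
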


\begin{proof}
 \comment{Proof.}
\label{sec:orgd10871c}
It suffices to show that
if \(\word(\ccp + \beta) = \word(\ccp) + \word(\beta)\) for \(1 \le \beta < \gamma\),
then \(\word(\ccp + \gamma) \neq \word(\ccp) + \word(\gamma)\).

Let
\(\ccN \in \Cdn[\mnGen](\begin{bsmallmatrix} 1 \\ \ccp - \gamma \end{bsmallmatrix}) \setminus \Theta\) and
\(\ccM \in \Cdn[\mnGen](\mVec{1 \\ \alpha}) \setminus \Theta\).
Since \(\alpha < \ccp - \gamma\), it follows from Lemma \ref{orgfc26eca} that \(\ccM > \ccN\).
Note that
\(\word[\ccN](\ccp) + \word[\ccN](\gamma) = (\ccp - \gamma) \oplus_\ccp \gamma = 0\) and
\(\word[\ccM](\ccp) + \word[\ccM](\gamma) = \alpha + \gamma\).
Let
\[
 \bdy = \word(\ccp) + \word(\gamma) - (\alpha + \gamma) \bdf_\ccM + (\alpha + \gamma) \bdf_\ccN.
\]
We see that \(\ccy_{[\ccM]} = 0 < \alpha + \gamma = \word[\ccM](\ccp) + \word[\ccM](\gamma)\),
and hence \(\bdy <_{\ccF} \word(\ccp) + \word(\gamma)\).
Since \(1 \le \alpha \le \ccp - 2 \gamma - 1\), it follows that
\(\alpha + \gamma \le \ccp - \gamma - 1\);
in particular, \(\alpha + \gamma \neq \ccp - \gamma, \ccp - \gamma + 1, \ldots, \ccp - 1, 0\).
Hence
\(\ccd(\bdy, \word(\cca)) \ge \ccd\) for \(0 \le \cca < \ccp + \gamma\).
Therefore \(\word(\ccp + \gamma) \neq \word(\ccp) + \word(\gamma)\).

\begin{center}
\begin{tabular}{lll}
 & \(\ccN\) & \(\ccM\)\\
\hline
\(\word[\cci](1)\) & \(1\) & \(1\)\\
\(\word[\cci](\ccp)\) & \(\ccp - \gamma\) & \(\alpha\)\\
\(\word[\cci](\ccp) + \word[\cci](1)\) & \(\ccp - \gamma + 1\) & \(\alpha + 1\)\\
\(\word[\cci](\ccp) + \word[\cci](2)\) & \(\ccp - \gamma + 2\) & \(\alpha + 2\)\\
\(\hphantom{\word[\cci](\ccp)}\ \, \vdots\) &  & \\
\(\word[\cci](\ccp) + \word[\cci](\gamma - 1)\) & \(\ccp - 1\) & \(\alpha + \gamma - 1\)\\
\(\word[\cci](\ccp) + \word[\cci](\gamma)\) & \(0\) & \(\alpha + \gamma\)\\
\(\ccy_{[\cci]}\) & \(\alpha + \gamma\) & \(0\)\\
\end{tabular}
\end{center}
\end{proof}

\begin{proposition}
 \comment{Prop.}
\label{sec:orgc367d2c}
\label{orgb060bb2}
Let \(\ccp \ge 7\).
If \(\Cdn[\mnGen](\begin{bsmallmatrix} 1 \\ \ccp - 1 \end{bsmallmatrix}) \setminus \Theta \neq \emptyset\),
then \(\MaxDimLinearLex|\ccF, \ccd| = 1\).
 
\end{proposition}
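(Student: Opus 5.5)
The plan is to apply Lemma~\ref{orga704c68} with \(\gamma = 1\). Condition (1) of that lemma then says exactly that \(\Cdn[\mnGen](\begin{bsmallmatrix} 1 \\ \ccp - 1 \end{bsmallmatrix}) \setminus \Theta \neq \emptyset\), which is our hypothesis. Condition (2) asks for some \(\alpha\) with \(1 \le \alpha \le \ccp - 3\) and \(\Cdn[\mnGen](\mVec{1 \\ \alpha}) \setminus \Theta \neq \emptyset\). So the whole proof reduces to producing such an \(\alpha\). Since \(\ccp \ge 7\), the admissible range \(\{1, \ldots, \ccp - 3\}\) contains at least four values, and only two column indices \(\xi, \eta\) are excluded by \(\Theta\).

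First I show that every class \(\Cdn[\mnGen](\mVec{1 \\ \alpha})\) with \(0 \le \alpha \le \ccp-1\) is nonempty. The hypothesis gives \(\Cdn[\mnGen](\begin{bsmallmatrix} 1 \\ \ccp - 1 \end{bsmallmatrix}) \neq \emptyset\), and Lemma~\ref{orga081bbf} then gives \(\Cdn[\mnGen](\mVec{1 \\ \alpha}) \neq \emptyset\) for every \(\alpha < \ccp - 1\). Next, choose \(M \in \Cdn[\mnGen](\begin{bsmallmatrix} 1 \\ \ccp - 1 \end{bsmallmatrix}) \setminus \Theta\). For each \(\alpha < \ccp - 1\), Lemma~\ref{orgfb723c7} applies with \(\beta = \gamma = \ccp - 1\) and yields
\[
\cdn[\mnGen](\mVec{1 \\ \alpha}) \ge \cdn[\mnGen](\begin{bsmallmatrix} 1 \\ \ccp - 1 \end{bsmallmatrix}) \ge 1 .
\]

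It remains to escape \(\Theta\). The sets \(\Cdn[\mnGen](\mVec{1 \\ \alpha})\) for \(\alpha = 1, \ldots, \ccp - 3\) are pairwise disjoint and each is nonempty. There are \(\ccp - 3 \ge 4 > 2 = |\Theta|\) of them, so at least one contains an element outside \(\Theta\). That \(\alpha\) satisfies condition (2), and Lemma~\ref{orga704c68} gives \(\MaxDimLinearLex|\ccF, \ccd| = 1\).

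The only delicate point is checking that the hypotheses of Lemmas~\ref{orga081bbf} and \ref{orgfb723c7} really are met. That is where \(M \notin \Theta\) with \(\gamma = \beta = \ccp - 1\) is used. The counting step needs \(\ccp - 3 > 2\), i.e. \(\ccp \ge 7\), and I expect this to explain why \(\ccp = 5\) is excluded from the statement. If disjointness plus nonemptiness felt too thin, the sharper fact \(\cdn \ge 1\) from Lemma~\ref{orgfb723c7} would not help further. The pigeonhole argument is already sufficient.
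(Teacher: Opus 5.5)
Your proof is correct and is essentially the paper's own argument. Lemma~\ref{orga081bbf} makes the classes \(\Cdn[\mnGen](\mVec{1 \\ \alpha})\) nonempty, pigeonhole over \(1 \le \alpha \le \ccp - 3\) (at least four disjoint classes against \(|\Theta| \le 2\)) yields an admissible \(\alpha\), and Lemma~\ref{orga704c68} with \(\gamma = 1\) finishes; your extra appeal to Lemma~\ref{orgfb723c7} is valid but not needed.
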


\begin{proof}
 \comment{Proof.}
\label{sec:org48a4b1c}

Lemma \ref{orga081bbf} implies that
\(\Cdn[\mnGen](\mVec{1 \\ \alpha}) \neq \emptyset\)
for \(0 \le \alpha \le \ccp - 2\).
Since \(\ccp \ge 7\), it follows that
\(\Cdn[\mnGen](\mVec{1 \\ \alpha}) \setminus \Theta \neq \emptyset\) for some \(\alpha\)
with \(1 \le \alpha \le \ccp - 3\).
Lemma \ref{orga704c68} shows that \(\MaxDimLinearLex|\ccF, \ccd| = 1\).
\end{proof}

\begin{proposition}
 \comment{Prop.}
\label{sec:org6cae5bc}
\label{org53f50d3}
Let \(\ccp \ge 7\). 
If \(\Cdn[\mnGen](\begin{bsmallmatrix} 1 \\ \ccp - 1 \end{bsmallmatrix}) = \Theta\), 
then \(\MaxDimLinearLex|\ccF, \ccd| = 1\).
 
\end{proposition}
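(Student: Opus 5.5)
The plan is to reduce the statement to Lemma \ref{orga704c68}, exactly as in the proof of Proposition \ref{orgb060bb2}. The only difference from that proposition is that the class \(\Cdn[\mnGen](\begin{bsmallmatrix} 1 \\ \ccp - 1 \end{bsmallmatrix})\) now consists only of \(\xi\) and \(\eta\). So the choice \(\gamma = 1\) is unavailable in Lemma \ref{orga704c68}, since condition (1) needs a non-\(\Theta\) index in \(\Cdn[\mnGen](\begin{bsmallmatrix} 1 \\ \ccp - \gamma \end{bsmallmatrix})\). I will use \(\gamma = 2\) instead.

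First, the class \(\Cdn[\mnGen](\begin{bsmallmatrix} 1 \\ \ccp - 1 \end{bsmallmatrix}) = \Theta\) is nonempty. Applying Lemma \ref{orga081bbf} with \(\beta = \ccp - 1\) therefore gives \(\Cdn[\mnGen](\mVec{1 \\ \alpha}) \neq \emptyset\) for every \(0 \le \alpha \le \ccp - 2\).

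Second, the classes \(\Cdn[\mnGen](\mvecA)\) for distinct columns \(\mvecA\) are disjoint. Both \(\xi\) and \(\eta\) lie in the class of \(\begin{bsmallmatrix} 1 \\ \ccp - 1 \end{bsmallmatrix}\), so no other class contains an element of \(\Theta\). Hence \(\Cdn[\mnGen](\mVec{1 \\ \alpha}) \setminus \Theta = \Cdn[\mnGen](\mVec{1 \\ \alpha}) \neq \emptyset\) for all \(0 \le \alpha \le \ccp - 2\).

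Third, I apply Lemma \ref{orga704c68} with \(\gamma = 2\) and \(\alpha = 1\).
\begin{itemize}
\item Condition (1) requires \(\gamma \ge 1\) and \(\Cdn[\mnGen](\begin{bsmallmatrix} 1 \\ \ccp - 2 \end{bsmallmatrix}) \setminus \Theta \neq \emptyset\). This holds by the second step, since \(\ccp - 2 \le \ccp - 2\).
\item Condition (2) requires \(1 \le \alpha \le \ccp - 2\gamma - 1 = \ccp - 5\) and \(\Cdn[\mnGen](\mVec{1 \\ 1}) \setminus \Theta \neq \emptyset\). The inequality holds because \(\ccp \ge 7\) gives \(\ccp - 5 \ge 2\), and the nonemptiness holds by the second step since \(1 \le \ccp - 2\).
\end{itemize}
The lemma then yields \(\MaxDimLinearLex|\ccF, \ccd| = 1\).

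The main point needing care is the second step: it relies on \(\xi\) and \(\eta\) being used up by the class of \(\begin{bsmallmatrix} 1 \\ \ccp - 1 \end{bsmallmatrix}\). If the hypothesis were read as \(\Cdn[\mnGen](\begin{bsmallmatrix} 1 \\ \ccp - 1 \end{bsmallmatrix}) \subseteq \Theta\) rather than equality, that class might contain only one of \(\xi, \eta\), and the other could sit in some class \(\mVec{1 \\ \alpha}\) with \(\alpha \le \ccp - 2\). In that case I would use the counting argument from the proof of Proposition \ref{orgb060bb2}. The \(\ccp - 1 \ge 6\) nonempty classes with \(0 \le \alpha \le \ccp - 2\) can lose at most one index to \(\Theta\). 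So the class for \(\ccp - 2\) and some class with \(1 \le \alpha \le \ccp - 5\) still have non-\(\Theta\) elements, and Lemma \ref{orga704c68} with \(\gamma = 2\) applies as before.

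This argument uses \(\ccp \ge 7\) only to guarantee that the range \(1 \le \alpha \le \ccp - 5\) is nonempty.
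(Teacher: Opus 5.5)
Your proof is correct and is essentially the paper's. Lemma~\ref{orga081bbf} makes every class \(\begin{bsmallmatrix} 1 \\ \alpha \end{bsmallmatrix}\) with \(\alpha \le p-2\) nonempty, and also free of \(\Theta\) because \(\xi,\eta\) are already used up by \(\begin{bsmallmatrix} 1 \\ p-1 \end{bsmallmatrix}\); Lemma~\ref{orga704c68} is then applied with \(\gamma = 2\), and the paper's \(\alpha = p-5\) versus your \(\alpha = 1\) is immaterial.

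Your closing aside on the weaker hypothesis \(\subseteq \Theta\) is not sound. Nothing in your count prevents the other element of \(\Theta\) from being the only member of the class \(\begin{bsmallmatrix} 1 \\ p-2 \end{bsmallmatrix}\), in which case \(\gamma = 2\) is unavailable; the paper handles exactly this subcase with \(\gamma = 3\) in Proposition~\ref{org0734f0d} (so \(p \ge 11\)) and by a computer check for \(p = 7\).
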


\begin{proof}
 \comment{Proof.}
\label{sec:org58cbc6e}
Lemma \ref{orga081bbf} implies that
\(\Cdn[\mnGen](\mVec{1 \\ \alpha}) \neq \emptyset\)
for \(0 \le \alpha \le \ccp - 2\).
Thus \(\Cdn[\mnGen](\begin{bsmallmatrix} 1 \\ \ccp - 2 \end{bsmallmatrix}) \setminus \Theta\neq \emptyset\).
Since \(\ccp \ge 7\), we see that \(\ccp - 5 \ge 1\)
and \(\Cdn[\mnGen](\begin{bsmallmatrix} 1 \\ \ccp - 5 \end{bsmallmatrix}) \setminus \Theta \neq \emptyset\).
Lemma \ref{orga704c68} shows that \(\MaxDimLinearLex|\ccF, \ccd| = 1\).
\end{proof}

\begin{proposition}
 \comment{Prop.}
\label{sec:org5bcfaa5}
\label{org0734f0d}
Let \(\ccp \ge 11\).
If  \(\Cdn[\mnGen](\begin{bsmallmatrix} 1 \\ \ccp - 1 \end{bsmallmatrix}) \subseteq \Theta\)
and \(\cdn[\mnGen](\begin{bsmallmatrix} 1 \\ \ccp - 1 \end{bsmallmatrix}) = 1\),
then \(\MaxDimLinearLex|\ccF, \ccd| = 1\).
 
\end{proposition}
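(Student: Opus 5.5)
Since \(\Cdn[\mnGen](\begin{bsmallmatrix} 1 \\ \ccp - 1 \end{bsmallmatrix}) \subseteq \Theta\) is nonempty (its size is \(1\)), the single column with top entry \(1\) and bottom entry \(\ccp-1\) is \(\xi\) or \(\eta\). The plan is to reduce to Lemma \ref{orga704c68} with a value \(\gamma \ge 2\). Taking \(\gamma = 1\) in that lemma is unavailable, because it needs a non-\(\Theta\) index in the class \(\begin{bsmallmatrix} 1 \\ \ccp - 1 \end{bsmallmatrix}\). So we look for \(\gamma\) with \(\ccp-\gamma \le \ccp-2\) whose class contains an index outside \(\Theta\), together with a small \(\alpha\) satisfying \(1 \le \alpha \le \ccp - 2\gamma - 1\) whose class also avoids \(\Theta\).

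\textbf{Step 1: nonemptiness of the lower classes.} Lemma \ref{orga081bbf} applied with \(\beta = \ccp - 1\) shows that \(\Cdn[\mnGen](\mVec{1 \\ \alpha}) \neq \emptyset\) for every \(0 \le \alpha \le \ccp-2\).

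\textbf{Step 2: locating \(\Theta\).} The set \(\Theta\) has two elements, and one of them lies in the class of \(\ccp-1\). Hence at most one of the classes \(\begin{bsmallmatrix} 1 \\ \alpha\end{bsmallmatrix}\) with \(\alpha \le \ccp-2\) can meet \(\Theta\). This holds even in the degenerate situation where that class would be contained in \(\Theta\). Indeed, a class with \(\alpha \le \ccp-2\) could fail to have a non-\(\Theta\) element only if it equals \(\{\theta\}\) for the remaining \(\theta \in \Theta\), and this can happen for at most one value of \(\alpha\).

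\textbf{Step 3: choosing the parameters.} Call the possibly bad index \(\alpha^*\). If \(\alpha^* \ne \ccp-2\), take \(\gamma = 2\); condition (1) then holds for the class \(\begin{bsmallmatrix} 1 \\ \ccp-2\end{bsmallmatrix}\). Condition (2) asks for some \(1 \le \alpha \le \ccp-5\) whose class avoids \(\Theta\). Since \(\ccp \ge 11\), the range contains at least six values, and at most one of them is bad. If instead \(\alpha^* = \ccp-2\), take \(\gamma = 3\), so that \(\ccp-\gamma = \ccp-3\) is good. Condition (2) now requires \(1 \le \alpha \le \ccp-7\), a range of at least four values, all of them good. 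In either case Lemma \ref{orga704c68} gives \(\MaxDimLinearLex|\ccF, \ccd| = 1\).

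\textbf{Main obstacle.} The hypothesis \(\cdn = 1\) is used only to pin the \(\Theta\)-element inside the \(\ccp-1\) class. The point to verify carefully is Step 2: one must show that the remaining element of \(\Theta\) cannot make two distinct lower classes fail. Since each element of \(\Theta\) is a single column index, it lies in exactly one class, which gives this directly. The bound \(\ccp \ge 11\) is only needed so that the ranges in Step 3 are nonempty with a margin.
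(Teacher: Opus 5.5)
Your proposal is correct and follows essentially the same route as the paper. Lemma \ref{orga081bbf} makes all lower classes nonempty, and you apply Lemma \ref{orga704c68} with \(\gamma = 2\) when the class of \(\ccp - 2\) has an index outside \(\Theta\), and with \(\gamma = 3\) when it does not (the paper takes \(\alpha = \ccp - 7\) there); your explicit remark that the single remaining element of \(\Theta\) can spoil at most one lower class is exactly what the paper uses implicitly when it asserts that the class of \(1\) or of \(2\) avoids \(\Theta\).
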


\begin{proof}
 \comment{Proof.}
\label{sec:org2b7f74a}

Suppose that
\(\Cdn[\mnGen](\begin{bsmallmatrix} 1 \\ \ccp - 2 \end{bsmallmatrix}) \setminus \Theta \neq \emptyset\). 
Since
\(\Cdn[\mnGen](\begin{bsmallmatrix} 1 \\ 1 \end{bsmallmatrix}) \setminus \Theta \neq \emptyset\)
or
\(\Cdn[\mnGen](\begin{bsmallmatrix} 1 \\ 2 \end{bsmallmatrix}) \setminus \Theta \neq \emptyset\),
it follows from Lemma \ref{orga704c68} that \(\MaxDimLinearLex|\ccF, \ccd| = 1\).

Suppose that \(\Cdn[\mnGen](\begin{bsmallmatrix} 1 \\ \ccp - 2 \end{bsmallmatrix}) \setminus \Theta = \emptyset\).
Then
\(\Cdn[\mnGen](\begin{bsmallmatrix} 1 \\ \ccp - 1 \end{bsmallmatrix}) \cup \Cdn[\mnGen](\begin{bsmallmatrix} 1 \\ \ccp - 2 \end{bsmallmatrix}) = \Theta\).
Since \(\ccp \ge 11\), we see that  \(\ccp - 7 \ge 1\).
Moreover,
\(\Cdn[\mnGen](\begin{bsmallmatrix} 1 \\ \ccp - 3 \end{bsmallmatrix}) \setminus \Theta \neq \emptyset\)
and
\(\Cdn[\mnGen](\begin{bsmallmatrix} 1 \\ \ccp - 7 \end{bsmallmatrix}) \setminus \Theta \neq \emptyset\).
Lemma \ref{orga704c68} shows that \(\MaxDimLinearLex|\ccF, \ccd| = 1\).
\end{proof}

\begin{lemma}
 \comment{Lem.}
\label{sec:org4f17579}
\label{orga67785b}
If one of the following two conditions holds,
then \(\MaxDimLinearLex|\ccF, \ccd| = 1\). 

\begin{enumerate}
\item \(\cdn[\mnGen](\mVec{1 \\ p - 1}) \le \cdn[\mnGen](\mVec{0 \\ 1}) - 1\) and  \(\Cdn[\mnGen](\mVec{1 \\ \alpha}) \setminus \Theta \neq \emptyset\) for some \(1 \le \alpha \le \ccp - 2\).
\item \(\cdn[\mnGen](\mVec{1 \\ p - 1}) \le \cdn[\mnGen](\mVec{0 \\ 1}) - 2\) and \(\Cdn[\mnGen](\mVec{1 \\ \alpha}) = \Theta\) for some \(1 \le \alpha \le \ccp - 2\).
\end{enumerate}
 
\end{lemma}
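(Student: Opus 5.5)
The plan is to mimic the proof of Lemma~\ref{orga704c68}. We exhibit a word $\bdy <_{\ccF} \word(\ccp) + \word(1)$ at distance at least $\ccd$ from every $\word(\cca)$ with $0 \le \cca \le \ccp$. This gives $\word(\ccp + 1) \neq \word(\ccp) + \word(1)$, so $\Lex|\ccF, \ccd|{1}$ is not contained in a linear $\Lex|\ccF, \ccd|{2}$, and hence $\MaxDimLinearLex|\ccF, \ccd| = 1$. The dimension is at least $1$ because $\word(\alpha) = \alpha\word(1)$ for $\alpha < \ccp$, by Lemma~\ref{org5db7ec7}.

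All distances are computed via Lemma~\ref{org4a0db5b} from the column counts of $\mGen{1}$. For $\bdx = \word(\ccp)+\word(1)$ and $\cca = a_0 + a_1\ccp$ with $a_1 \in \{0,1\}$, the distance $\ccd(\bdx, \word(\cca))$ equals $\ccd(\word(\ccp), \word((1-a_1)\ccp + (a_0 - 1)))$, which is at least $\ccd$. The idea is to modify $\bdx$ in a few coordinates so that it becomes lexicographically smaller while no distance drops below $\ccd$.

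Case (1). Choose $\ccM \in \Cdn[\mnGen](\mVec{1 \\ \alpha}) \setminus \Theta$ with $1 \le \alpha \le \ccp-2$. At $\ccM$ the word $\bdx$ has value $\alpha+1 \neq 0$. Set $\bdy = \bdx - (\alpha+1)\bdf_\ccM$, so that $\bdy$ has a $0$ there; since $\ccM \notin \Theta$, $\bdy <_{\ccF} \bdx$. Changing one coordinate lowers each distance by at most $1$, and only the words $\word(\cca)$ with $\word<\ccM>(\cca) = 0$ get closer.
\begin{itemize}
\item For $a_1 = 0$ we need $a_0 = 0$. But $\ccd(\bdx, \word(0)) = \wt(\bdx) = \ccd + \cdn[\mnGen](\mVec{0 \\ 1}) - \cdn[\mnGen](\mVec{1 \\ \ccp-1})$, and the hypothesis $\cdn[\mnGen](\mVec{1\\ \ccp-1}) \le \cdn[\mnGen](\mVec{0\\1}) - 1$ gives $\wt(\bdx) \ge \ccd + 1$, which absorbs the loss.
\item For $a_1 = 1$ we need $a_0 = -\alpha$. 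The distance $\ccd(\bdx, \word(\ccp - \alpha))$ equals $\ccd(\word(\ccp), \word(\ccp - \alpha - 1))$, computed from $\cdn[\mnGen](\mVec{1 \\ \ccp-\alpha-1})$. It must be checked to be at least $\ccd + 1$, using $\cdn[\mnGen](\mVec{0\\1}) \ge \max_\beta \cdn[\mnGen](\mVec{1\\ \beta})$ (Lemma~\ref{org042dd48}), with equality only in a degenerate case.
\end{itemize}
If this slack is missing, the fallback is to pair the change at $\ccM$ with a compensating change at a second non-$\Theta$ coordinate of larger index, as in Lemma~\ref{orga704c68}, ordered via Lemma~\ref{orgfc26eca}.

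Case (2). Here $\Cdn[\mnGen](\mVec{1\\ \alpha}) = \Theta$, so we must modify both $\xi$ and $\eta$, choosing the $\bdf$-coefficients so that the $\bde$-values at $\xi$ and $\eta$ both become $0$ and the top changed $\bdf$-coordinate decreases. This needs a case split on $\xi < \eta$ versus $\xi > \eta$; Lemma~\ref{org45cd32d} constrains the $\bdf$-coefficients there. Two coordinates now change, so $\ccd(\bdy, \word(0))$ can drop by $2$. This is exactly why the hypothesis strengthens to $\cdn[\mnGen](\mVec{1\\ \ccp-1}) \le \cdn[\mnGen](\mVec{0\\1}) - 2$, which gives $\wt(\bdx) \ge \ccd + 2$.

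The main obstacle is the second bullet of Case (1), and its Case (2) analogue: verifying that the word $\word(\ccp-\alpha)$ (respectively the corresponding one) really stays at distance at least $\ccd$. This needs an extra inequality among the counts $\cdn[\mnGen](\mVec{1\\ \beta})$. I would try to extract it from Lemmas~\ref{orgfb723c7} and \ref{orga081bbf}, which give monotonicity of these counts in $\beta$ across non-$\Theta$ columns.
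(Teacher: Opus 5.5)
Your Case~(1) construction, $\bdy=\word(\ccp)+\word(1)-(1+\alpha)\bdf_\ccM$ with $\ccM\in\Cdn[\mnGen](\mVec{1 \\ \alpha})\setminus\Theta$, and your check against $\word(0)$ match the paper's proof. The gap is that the proposal then stalls on a condition that is unnecessary, and in fact false.

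By Lemma~\ref{org6f41f57}, to get $\word(\ccp + 1) \neq \word(\ccp) + \word(1)$ you only need $\ccd(\bdy,\word(\cca))\ge\ccd$ for $0\le\cca\le\ccp$. These are the words constructed before $\word(\ccp+1)$. With $a_1=1$ this leaves only $\cca=\ccp$, whose $\ccM$-th coordinate is $\alpha\neq 0$. For $1\le\cca\le\ccp-1$ the $\ccM$-th coordinate is $\cca\neq 0$. Since $\bdy$ differs from $\bdx$ only in the $\ccM$-th coordinate, which becomes $0$, none of these distances drops. So Case~(1) is already finished after your first bullet.

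The word in your second bullet ($a_1=1$, $a_0=\ccp-\alpha$) is $\word(2\ccp-\alpha)$, not $\word(\ccp-\alpha)$, and it lies outside the relevant range. Your distance formula for it is also wrong. Writing $\word(\cca)=a_0\word(1)+a_1\word(\ccp)$, as you implicitly do, gives $\bdx-\word(\cca)=(1-a_1)\word(\ccp)+(1-a_0)\word(1)$. For $a_1=1$, $a_0=\ccp-\alpha$ that distance is $\wt((1+\alpha)\word(1))=\ccd$ exactly, and your modification would push it to $\ccd-1$. So the ``extra inequality'' you hope to extract from Lemmas~\ref{orgfb723c7} and~\ref{orga081bbf} does not exist. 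Neither it nor the fallback with a compensating second coordinate is needed.

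Case~(2) is also simpler than your sketch. Since $\xi,\eta\in\Cdn[\mnGen](\mVec{1 \\ \alpha})$, the $\bde_\xi$- and $\bde_\eta$-coordinates of $\bdx$ both equal $\alpha+1$. Because $\bdf_\xi=\bde_\xi+\bde_\eta$, this forces the $\bdf_\xi$-coefficient of $\bdx$ to be $\alpha+1$ and its $\bdf_\eta$-coefficient to be $0$. Hence the single change $\bdy=\bdx-(1+\alpha)\bdf_\xi$, which is the paper's choice $\ccM=\xi$, zeroes both $\bde$-coordinates and lowers only the $\xi$-th $\ccF$-coordinate. So $\bdy<_{\ccF}\bdx$ with no case split on $\xi<\eta$ and no appeal to Lemma~\ref{org45cd32d}.

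The same argument as above shows the distances to $\word(1),\dots,\word(\ccp)$ do not drop. The loss of $2$ at $\word(0)$ is absorbed by $\cdn[\mnGen](\mVec{1 \\ p - 1})\le\cdn[\mnGen](\mVec{0 \\ 1})-2$, as you correctly anticipated.
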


\begin{proof}
 \comment{Proof.}
\label{sec:org022b4ed}
When (1) holds, let \(\ccM \in \Cdn[\mnGen](\mVec{1 \\ \alpha}) \setminus \Theta\).
When (2) holds, let \(\ccM = \xi\).
Let
\[
 \bdy = \word(\ccp) + \word(1) - (1 + \alpha) \bdf_\ccM.
\]
Then \(\word[\ccM](\ccp) + \word[\ccM](1) = \alpha + 1 > 0\)
and \(\ithComp{\ccy}[\ccM] = 0\).
Thus \(\bdy <_\ccF \word(\ccp) + \word(1)\).
It suffices to show that \(\mbigDist{\bdy}{\word(\cca)} \ge \ccd\) for  \(0 \le \cca \le \ccp\).

First, suppose that (1) holds.
Since \(\ithComp{\ccy}<\ccM>= 0\) and \(\ccM \in \Cdn[\mnGen](\mVec{1 \\ \alpha})\),
it follows that 
\[
 \mbigDist{\bdy}{\word(\cca)} \ge \mbigDist{\word(\ccp) + \word(1)}{\word(\cca)} \ge \ccd
\tfor 1 \le \cca \le \ccp. 
\]
Moreover, since \(\cdn[\mnGen](\mVec{1 \\ p - 1}) \le \cdn[\mnGen](\mVec{0 \\ 1}) - 1\) and
\(\Cdn[\mGen{0}[\bdy]](\mVec{1 \\ 0}) = \Cdn[\mnGen](\mVec{1 \\ p - 1}) \cup \set{\ccM}\),
it follows that 
\[
 \mbigDist{\bdy}{\word(0)} = \cdn[\mnGen](1) + \cdn[\mnGen](\mVec{0 \\ 1}) - \cdn[\mnGen](\mVec{1 \\ p - 1}) - 1  \ge \ccd.
\]
Thus \(\mbigDist{\bdy}{\word(\cca)} \ge \ccd\) for \(0 \le \cca \le \ccp\).

Next, suppose that (2) holds.
Since \(\ithComp{\ccy}<\ccM> = \ithComp{\ccy}<\xi> = \ithComp{\ccy}<\eta> = 0\) and \(\ccM \in \Cdn[\mnGen](\mVec{1 \\ \alpha})\),
it follows that \(\mbigDist{\bdy}{\word(\cca)} \ge \ccd\)
for \(1 \le \cca \le \ccp\).
Moreover, since \(\cdn[\mnGen](\mVec{1 \\ p - 1}) \le \cdn[\mnGen](\mVec{0 \\ 1}) - 2\) and
\(\Cdn[\mGen{0}[\bdy]](\mVec{1 \\ 0}) = \Cdn[\mnGen](\mVec{1 \\ p - 1}) \cup \Theta\),
it follows that
\[
 \mbigDist{\bdy}{\word(0)} = \cdn[\mnGen](1) + \cdn[\mnGen](\mVec{0 \\ 1}) - \cdn[\mnGen](\mVec{1 \\ p - 1}) - 2  \ge \ccd.
\]
Therefore \(\mbigDist{\bdy}{\word(\cca)} \ge \ccd\) for \(0 \le \cca \le \ccp\).
\end{proof}

\begin{proposition}
 \comment{Prop.}
\label{sec:orgfb47e42}
\label{orgc4b4730}
Let \(\ccp \ge 5\).
If  \(\Cdn[\mnGen](\begin{bsmallmatrix} 1 \\ \ccp - 1 \end{bsmallmatrix}) = \emptyset\),
then \(\MaxDimLinearLex|\ccF, \ccd| = 1\). 
 
\end{proposition}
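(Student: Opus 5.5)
Under the hypothesis $\Cdn[\mnGen](\begin{bsmallmatrix} 1 \\ \ccp - 1 \end{bsmallmatrix}) = \emptyset$, where $\mnGen = \mGen{1}$ is the matrix formed by $\word(1)$ and $\word(\ccp)$, I plan to reduce everything to Lemma~\ref{orga67785b}. Write $c_\alpha = \cdn[\mnGen](\mVec{1 \\ \alpha})$ and $c' = \cdn[\mnGen](\mVec{0 \\ 1})$. The hypothesis says $c_{\ccp-1} = 0$. So the first inequality of Lemma~\ref{orga67785b}(1), namely $c_{\ccp-1} \le c' - 1$, reduces to $c' \ge 1$. Likewise the first inequality of (2), namely $c_{\ccp-1} \le c'-2$, reduces to $c' \ge 2$.

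First I show that $c' \ge 1$. Otherwise $\word(\ccp)$ is supported inside $\Cdn[\mnGen](1)$. Then $\word(\ccp)$ restricted to those $\ccd$ coordinates must differ from every $\alpha\word(1)$ in at least $\ccd$ places. Since $\sum_\alpha c_\alpha = \ccd$, this forces $c_\alpha = 0$ for all $\alpha$, which is impossible. Alternatively one can read $c' \ge \max_\alpha c_\alpha \ge 1$ directly off Lemma~\ref{org042dd48}(2).

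Second, I need some $1 \le \alpha \le \ccp-2$ with $\Cdn[\mnGen](\mVec{1\\ \alpha}) \neq \emptyset$. Suppose instead that all of $\Cdn[\mnGen](1)$ lies in column class $\mVec{1\\0}$, that is, $c_0 = \ccd$. Then $\ccd(\word(\ccp), \word(1)) = c' + c_0 - c_1 = c' + \ccd$ and $\ccd(\word(\ccp), \word(0)) = c'$. Hence $c' \ge \ccd$, and then $\word(\ccp) - \bdf_M$ for a suitable $M$ beats $\word(\ccp)$ in the order. To avoid this case analysis I would rather invoke Lemma~\ref{orgfb723c7} or a direct minimality argument in the spirit of Lemma~\ref{org042dd48}: if $\Cdn[\mnGen](\mVec{0\\1}) \setminus \Theta$ is nonempty, then $c' = \max_\alpha c_\alpha = c_0 = \ccd$. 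Then changing one coordinate $M \in \Cdn[\mnGen](\mVec{1\\0})\setminus\Theta$ of $\word(\ccp)$ from $0$ to $1$ keeps all distances at least $\ccd$ (using $\ccd \ge 3$). But it lowers the vector, since such $M$ exceed the coordinates in $\Cdn[\mnGen](\mVec{0\\1})\setminus\Theta$ by the Order lemma~\ref{orgfc26eca}. Hmm: raising a coordinate increases the word, so the correct move is the opposite one: lower the top support coordinate, as in Lemma~\ref{org042dd48}. I would settle this step by the same template: exhibit a smaller $\bdy$ at distance at least $\ccd$ from $\word(0), \dots, \word(\ccp-1)$, which contradicts Lemma~\ref{org6f41f57}. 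I expect this step, ruling out the degenerate distribution $c_0 = \ccd$ and thereby getting some $\alpha \in [1,\ccp-2]$ with nonempty class, to be the main obstacle. The $\Theta$ exceptions (at most two coordinates) must be handled using $\ccd \ge 3$ and Lemma~\ref{orgedc6ece}.

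Finally, fix such an $\alpha$. If $\Cdn[\mnGen](\mVec{1\\ \alpha}) \setminus \Theta \neq \emptyset$, Lemma~\ref{orga67785b}(1) gives $\MaxDimLinearLex|\ccF, \ccd| = 1$. Otherwise the class lies inside $\Theta$. By Lemma~\ref{orgedc6ece} and the $\xi,\eta$ case analysis of Lemma~\ref{orga081bbf}, I would show that it equals $\Theta$ exactly. Then $c' \ge c_\alpha = 2$ by Lemma~\ref{org042dd48}(2), and Lemma~\ref{orga67785b}(2) applies. If the class is a single element of $\Theta$, I would choose a different $\alpha$ among the at least $\ccp-2 \ge 3$ candidates. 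This is possible because $\ccd \ge 3$ prevents all nonzero-$\alpha$ classes from living inside $\Theta$ unless $c_0$ absorbs the rest, and the second step already handles that situation.
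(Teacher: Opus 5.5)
Your opening step agrees with the paper. Write $C_\alpha$ and $C'$ for the sets of coordinates whose column in $(\word(1),\word(\ccp))$ is $(1,\alpha)^{T}$ and $(0,1)^{T}$, with sizes $c_\alpha$ and $c'$. Since $c_{p-1}=0$, and $c'\ge\max_\alpha c_\alpha\ge 1$ by Lemma~\ref{org042dd48}, any $\alpha\in[1,p-2]$ with $C_\alpha\not\subseteq\Theta$ lets you apply Lemma~\ref{orga67785b}(1).

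The gap is the complementary case, where $C_1,\dots,C_{p-2}$ all lie inside $\Theta$. You plan to push this case into Lemma~\ref{orga67785b}(2), but that cannot work in general. Both parts of that lemma are proved by showing $\word(\ccp + 1)\neq\word(\ccp)+\word(1)$, and some configurations satisfying the hypothesis have equality there. Take $p=5$, $d=3$, $F=F(0,1)$, so $\xi=0$ and $\eta=1$. In standard coordinates $\word(1)=(1,1,1,0,\dots)$ and $\word(\ccp)=(1,2,0,1,0,\dots)$. Hence $C_1=\{\xi\}$, $C_2=\{\eta\}$, $C_0=\{2\}$, $C'=\{3\}$ and $C_3=C_4=\emptyset$. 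A direct check gives $\word(\ccp + 1)=\word(\ccp)+\word(1)=(2,3,1,1,0,\dots)$. Linearity first breaks at $\word(\ccp + 2)\neq\word(\ccp)+\word(2)$, witnessed by $\mathbf{f}_1+2\mathbf{f}_2+\mathbf{f}_3=(0,1,2,1,0,\dots)$. In this example no class equals $\Theta$, no other $\alpha$ is available, and $c'=1$. So your claims that a class inside $\Theta$ ``equals $\Theta$ exactly'' or that you can switch to another $\alpha$ are false. Your Step~2 only tries to exclude $c_0=d$, and does not actually produce a competing vector, so it never reaches this situation.

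Two ideas from the paper are missing. First, in this branch you must prove $c_0\le 2$. Suppose $c_0\ge 3$. Pick $M\in C_0\setminus\Theta$ and $N\in C'\setminus\Theta$, which exist since $c'\ge c_0\ge 3$. Pick $\beta\in[1,p-2]$ with $C_\beta=\emptyset$; it exists because at most two of these $p-2\ge 3$ classes are nonempty, and this is where $p\ge 5$ is used. Then $\word(\ccp)+\beta\mathbf{f}_M-\mathbf{f}_N$ is smaller than $\word(\ccp)$, because $N>M$ by Lemma~\ref{orgfc26eca}. It is still at distance at least $d$ from $\word(0),\dots,\word(\ccp - 1)$, a contradiction. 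This forces $d\in\{3,4\}$.

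Second, the resulting finite list of configurations needs hand-built competing vectors. For $d=4$, and for $d=3$ with $c'=2$, such vectors force $C_1=\Theta$, after which Lemma~\ref{orga67785b}(2) applies. For $d=3$ with $c'=1$, which is the example above, the paper shows directly that $\word(\ccp + 2)\neq\word(\ccp)+\word(2)$ when $\word(\ccp + 1)=\word(\ccp)+\word(1)$. That argument lies outside the scope of Lemma~\ref{orga67785b}.
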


\begin{proof}
 \comment{Proof.}
\label{sec:org39f59a1}

If \(\Cdn[\mnGen](\mVec{1 \\ \alpha}) \setminus \Theta \neq \emptyset\) for some \(\alpha\) with \(1 \le \alpha \le \ccp - 2\),
then Lemma \ref{orga67785b} shows that \(\MaxDimLinearLex|\ccF, \ccd| = 1\). 
Suppose that \(\Cdn[\mnGen](\mVec{1 \\ \alpha}) \setminus \Theta= \emptyset\) for \(1 \le \alpha \le \ccp - 2\).
We first show that \(\cdn[\mnGen](\begin{bsmallmatrix} 1 \\ 0 \end{bsmallmatrix}) \le 2\).
Suppose that
\(\cdn[\mnGen](\begin{bsmallmatrix} 1 \\ 0 \end{bsmallmatrix}) \ge 3\).
Then there exists \(\ccM \in \Cdn[\mnGen](\begin{bsmallmatrix} 1 \\ 0 \end{bsmallmatrix}) \setminus \Theta\).
Since \(\ccp \ge 5\),
we see that there exists \(\beta\) such that
\(\Cdn[\mnGen](\mVec{1 \\ \beta}) = \emptyset\) and \(1 \le \beta \le \ccp - 2\).
We also see that there exists \(\ccN \in \Cdn[\mnGen](\mVec{0 \\ 1}) \setminus \Theta\) since
\(\cdn[\mnGen](\mVec{0 \\ 1}) \ge \Cdn[\mnGen](\mVec{1 \\ 0})\ge 3\).
Note that \(\ccN > \ccM\).
Let
\[
\bdy = \word(\ccp) + \beta \bdf_{\ccM} - \bdf_{\ccN}.
\]
Then \(\ithComp{\ccy}[\ccN] = \word[\ccN](\ccp) - 1 = 0\),
and hence \(\bdy <_{\ccF} \word(\ccp)\).
Moreover, 
\[
 \mbigDist{\bdy}{\word(\beta)} = \cdn[\mnGen](1) + \cdn[\mnGen](\mVec{0 \\ 1}) - \cdn[\mnGen](\mVec{1 \\ \beta}) - 1 \ge \ccd.
\]
Thus 
\[
 \ccd(\bdy, \word(\cca)) \ge \ccd \tfor 0 \le \cca < \ccp,
\]
a contradiction.
Hence \(\cdn[\mnGen](\begin{bsmallmatrix} 1 \\ 0 \end{bsmallmatrix})\le 2\).
Since \(\Cdn[\mnGen](\mVec{1 \\ \alpha}) \subseteq \Theta\) for \(1 \le \alpha \subseteq \ccp - 1\),
it follows that \(\ccd = \sum_{\alpha = 0}^{\ccp - 1} \cdn[\mnGen](\mVec{1 \\ \alpha}) \le 4\)
Thus \(\ccd = 3 \tor 4\).

\resetmycase
\begin{mycase}[\(\ccd = 4\)]
 \comment{Case. [\(\ccd = 4\)]}
\label{sec:org8a571ad}

Since \(\ccd =  \sum_{\alpha = 0}^{\ccp - 1} \cdn[\mnGen](\mVec{1 \\ \alpha})\)
and \(\Cdn[\mnGen](\mVec{1 \\ \alpha}) \subseteq \Theta\) for \(\alpha \neq 0\),
it follows that \(\cdn[\mnGen](\begin{bsmallmatrix} 1 \\ 0 \end{bsmallmatrix}) = 2\)
and \(\Cdn[\mnGen](\mVec{1 \\ 1}) \cup \Cdn[\mnGen](\mVec{1 \\ 2}) = \Theta\).
Let \(\cdn[\mnGen](\begin{bsmallmatrix} 1 \\ 0 \end{bsmallmatrix}) = \set{\ccN, \ccM}\)
and
\[
 \bdy = \bdf_{\ccM} + \bdf_{\ccN} + \bdf_{\xi}.
\]
Then \(\mDist{\bdy}{\word(0)} = 4\).
Since \(\xi \in \Cdn[\mnGen](1)\), it follows that \(\bdy = \word(1)\).
Recall that \(\Cdn[\mnGen](\mVec{1 \\ 1}) \cup \Cdn[\mnGen](\mVec{1 \\ 2}) = \Theta\).
Assume that \(\cdn[\mnGen](\mVec{1 \\ 2}) = 1\).
Let \(\Cdn[\mnGen](\mVec{0 \\ 1}) = \set{\cci_0, \cci_1}\) and 
\(\ccE' = \transpose{\begin{bmatrix} \transpose{\transpose{\bde_{\xi}} &  \transpose{\bde_{\eta}} & \bde_{\ccN}} & \transpose{\bde_{\ccM}} & \transpose{\bde_{\cci_0}} & \transpose{\bde_{\cci_1}} \end{bmatrix}}\).
Then we see that
\begin{align*}
 \word(1) &= \mWord{1 1 1 1 0 0} \ccE',\\
 \word(\ccp) &= \mWord{1 2 0 0 1 1} \ccE'.\\
\end{align*}
Let \(\bdy = \word(\ccp)  - \bdf_{i_1} + 3 \bdf_{\ccM}\). Then
\[
 \bdy = \mWord{1 2 3 0 1 0} \ccE'.\\
\]
Since \(i_1 > \ccM\), we see that \(\bdy <_{\ccF} \word(\ccp)\).
Moreover, \(\ccd(\bdy, \word(\cca)) \ge 4\) for \(0 \le \cca < \ccp\), a contradiction.
Hence \(\cdn[\mnGen](\mVec{1 \\ 2}) = 0\) and \(\Cdn[\mnGen](\mVec{1 \\ 1}) = \Theta\).
Lemma \ref{orga67785b} shows that \(\MaxDimLinearLex|\ccF, \ccd| = 1\). 
 
\end{mycase}

\begin{mycase}[\(\ccd = 3\)]
 \comment{Case. [\(\ccd = 3\)]}
\label{sec:org638aaee}

\quad

\noindent
\textbf{Case 2.1.} \(\cdn[\mnGen](\mVec{0 \\ 1}) = 2\).

Since \(\Cdn[\mnGen](1) \cap \Theta \neq \emptyset\),
we see that \(\Cdn[\mnGen](\mVec{0 \\ 1}) \setminus \Theta \neq \emptyset\).
It follows from  Lemma \ref{org042dd48} that
\(\cdn[\mnGen](\mVec{1 \\ \alpha}) = 2\) for some \(\alpha\).

Assume that \(\cdn[\mnGen](\begin{bsmallmatrix} 1 \\ 0 \end{bsmallmatrix}) = 2\).
Since \(\sum_{\alpha \in \FF_\ccp} \cdn[\mnGen](\begin{bsmallmatrix} 1 \\ \alpha \end{bsmallmatrix}) = 3\),
it follows from Lemma \ref{orga081bbf} that \(\cdn[\mnGen](\begin{bsmallmatrix} 1 \\ 1 \end{bsmallmatrix}) = 1\).
Moreover, there exists \(\ccN \in \Cdn[\mnGen](\begin{bsmallmatrix} 1 \\ 0 \end{bsmallmatrix}) \setminus \Theta\)
and 
\(\ccM \in \Cdn[\mnGen](\mVec{0 \\ 1}) \setminus \Theta\).
Note that \(\ccN < \ccM\).
Let \(\Cdn[\mnGen](\mVec{1 \\ 1}) = \set{i_0}\),
 \(\Cdn[\mnGen](\mVec{1 \\ 0}) = \set{\ccN, i_1}\),
 \(\Cdn[\mnGen](\mVec{0 \\ 1}) = \set{\ccM, i_2}\),
and \(\ccE' = \transpose{\begin{bmatrix}\transpose{\bde_{i_0}} & \transpose{\bde_{N}} & \transpose{\bde_{i_1}} & \transpose{\bde_{M}} & \transpose{\bde_{i_2}}\end{bmatrix}}\).
Then
\begin{align*}
 \word(1) &= \mWord{1  1  1  0  0} E'.\\
 \word(\ccp) &= \mWord{1  0  0  1  1} E'.\\
\end{align*}
Let \(\bdy = \word(\ccp) + 2 \bdf_{\ccN} - \bdf_{\ccM}\). Then
\begin{align*}
 \bdy &= \mWord{1  2  0  0  1} E'.
\end{align*}
Since \(\ithComp{\ccy}[\ccM] = 0\) and \(\ccN < \ccM\), it follows that \(\bdy <_{\ccF} \word(\ccp)\).
Moreover, \(\ccd(\bdy, \word(\cca)) \ge 3\) for \(0 \le \cca < \ccp\), a contradiction.
Therefore \(\cdn[\mnGen](\begin{bsmallmatrix} 1 \\ 0 \end{bsmallmatrix}) = 1\).
This implies that \(\cdn[\mnGen](\begin{bsmallmatrix} 1 \\ 1 \end{bsmallmatrix}) = 2\).
Thus \(\Cdn[\mnGen](\begin{bsmallmatrix} 1 \\ 1 \end{bsmallmatrix}) = \Theta\).
Therefore Lemma \ref{orga67785b} shows that \(\MaxDimLinearLex|\ccF, \ccd| = 1\).

\noindent
\textbf{Case 2.2.} \(\cdn[\mnGen](\mVec{0 \\ 1}) = 1\).

It suffices to show that
if \(\word(\ccp + 1) = \word(\ccp) + \word(1)\), 
then \(\word(\ccp + 2) \neq \word(\ccp) + \word(2)\).
Since \(\ccd = 3\), it follows that
\(\cdn[\mnGen](\begin{bsmallmatrix} 1 \\ 0 \end{bsmallmatrix}) = 
\cdn[\mnGen](\begin{bsmallmatrix} 1 \\ 1 \end{bsmallmatrix}) = 
\cdn[\mnGen](\begin{bsmallmatrix} 1 \\ 2 \end{bsmallmatrix}) = 1\).
Let \(\Cdn[\mnGen](\begin{bsmallmatrix} 1 \\ 0 \end{bsmallmatrix}) =  \{\ccN\}\) and
\(\Cdn[\mnGen](\mVec{0 \\ 1}) =  \{\ccM\}\).
Note that
\[
\bigl(\Cdn[\mnGen](\begin{bsmallmatrix} 1 \\ 1 \end{bsmallmatrix}),  
\Cdn[\mnGen](\begin{bsmallmatrix} 1 \\ 2 \end{bsmallmatrix}) \bigr)
= \bigl(\{\xi\}, \{\eta\}\bigr)\ \text{or}\ \bigl(\{\eta\}, \{\xi\} \bigr).
\]
We show that 
\(\bigl(\Cdn[\mnGen](\begin{bsmallmatrix} 1 \\ 1 \end{bsmallmatrix}),  
\Cdn[\mnGen](\begin{bsmallmatrix} 1 \\ 2 \end{bsmallmatrix}) \bigr)
= \bigl(\{\xi\}, \{\eta\}\bigr)\).
If \(\bigl(\Cdn[\mnGen](\begin{bsmallmatrix} 1 \\ 1 \end{bsmallmatrix}),  
\Cdn[\mnGen](\begin{bsmallmatrix} 1 \\ 2 \end{bsmallmatrix}) \bigr)
= \bigl(\{\xi\}, \{\eta\}\bigr)\),
then \(\word[\xi](\ccp) = 1\) and \(\word[\eta](\ccp) = 1\).
If 
\(\bigl(\Cdn[\mnGen](\begin{bsmallmatrix} 1 \\ 1 \end{bsmallmatrix}),  
\Cdn[\mnGen](\begin{bsmallmatrix} 1 \\ 2 \end{bsmallmatrix}) \bigr)
= \bigl(\{\eta\}, \{\xi\} \bigr)\),
then \(\word[\xi](\ccp) = 2\) and
\(\word[\eta](\ccp) = \ccp - 1\).
Thus \(\Cdn[\mnGen](\begin{bsmallmatrix} 1 \\ 1 \end{bsmallmatrix}) = \{\xi\}\) and
\(\Cdn[\mnGen](\begin{bsmallmatrix} 1 \\ 2 \end{bsmallmatrix}) = \{\eta\}\).

Let \(\ccF' = \transpose{\begin{bmatrix} \transpose{\bdf_\xi} & \transpose{\bdf_\eta } & \transpose{\bdf_{\ccN}} & \transpose{\bdf_{\ccM}} \end{bmatrix}}\),
and \(\ccE' = \transpose{\begin{bmatrix} \transpose{\bde_\xi} & \transpose{\bde_\eta } & \transpose{\bde_{\ccN}} & \transpose{\bde_{\ccM}} \end{bmatrix}}\).
Then
\begin{align*}
 \word(1) &= \mWord{1 0 1 0} \ccF' 
             = \mWord{1 1 1 0} \ccE'.\\
 \word(\ccp) &= \mWord{1 1 0 1} \ccF' 
             = \mWord{1 2 0 1} \ccE'.
\end{align*}
It follows that 
\begin{align*}
 \word(\ccp) + \word(1) &= \mWord{2 1 1 1} \ccF' 
             = \mWord{2  3  1  1} \ccE'.\\
 \word(\ccp) + \word(2) &= \mWord{3 1 2 1} \ccF' 
             = \mWord{3  4  2  1} \ccE'.
\end{align*}
Let
\begin{align*}
 \bdy = \mWord{0  1  2  1} \ccF' 
      = \mWord{0  1  2  1} \ccE'.
\end{align*}
Then \(\bdy <_{\ccF} \word(\ccp) + \word(2)\)
and 
\(\ccd(\bdy, \word(\cca)) \ge \ccd\)  for \(0 \le \cca \le \ccp + 1\).
Therefore \(\word(\ccp + 2) \neq \word(\ccp) + \word(2)\). \(\hspace*{\fill}\) \(\qedhere\)
 
\end{mycase} 
\end{proof}

\begin{proposition}
 \comment{Prop.}
\label{sec:org3685491}
\label{org230bb86}
If \(\ccp \ge 5\), then \(\MaxDimLinearLex|\ccF, \ccd| \le 2\). 
 
\end{proposition}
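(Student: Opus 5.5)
The plan is to assemble the propositions of this subsection together with the case \(\ccd = 2\) treated in the previous subsection. If \(\ccd = 2\), the three propositions for \(\xi \in \set{0,1}\), \(\xi = 2\), and \(\xi \ge 3\) already give \(\MaxDimLinearLex|\ccF, \ccd| \le 2\). So assume \(\ccd \ge 3\), with \(\mnGen = \mGen{1}\) as in Lemma \ref{org042dd48}. By Theorem \ref{org100bcd6} it suffices to exhibit, in each case, some \(\cca < \ccp^3\) with \(\word(\cca) \neq \word+(\cca)\). Most cases in fact give \(\MaxDimLinearLex|\ccF, \ccd| = 1\).

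I would split according to \(\ccC = \Cdn[\mnGen](\begin{bsmallmatrix} 1 \\ \ccp - 1 \end{bsmallmatrix})\). There are four cases.
\begin{enumerate}
\item If \(\ccC = \emptyset\), Proposition \ref{orgc4b4730} gives the value \(1\) for every \(\ccp \ge 5\).
\item If \(\ccC \setminus \Theta \neq \emptyset\), Proposition \ref{orgb060bb2} handles \(\ccp \ge 7\).
\item If \(\ccC = \Theta\), Proposition \ref{org53f50d3} handles \(\ccp \ge 7\).
\item If \(\ccC \subseteq \Theta\) with \(\cdn[\mnGen](\begin{bsmallmatrix} 1 \\ \ccp - 1 \end{bsmallmatrix}) = 1\), Proposition \ref{org0734f0d} handles \(\ccp \ge 11\).
\end{enumerate}
These four cases exhaust all possibilities for \(\ccC\), since \(\size{\Theta} = 2\).

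What remains are small primes: \(\ccp = 5\) in cases 2 and 3, and \(\ccp \in \set{5,7}\) in case 4. Here I would use Lemma \ref{orga081bbf}, which says that all columns \(\begin{bsmallmatrix} 1 \\ \alpha \end{bsmallmatrix}\) with \(\alpha \le \ccp - 2\) occur. Then I would look for a pair \((\alpha, \gamma)\) satisfying the hypotheses of Lemma \ref{orga704c68}, or else verify the hypotheses of Lemma \ref{orga67785b}. The weight inequality needed there comes from Lemma \ref{org042dd48}.

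For \(\ccp = 7\) in case 4, \(\Cdn[\mnGen](\begin{bsmallmatrix} 1 \\ \alpha \end{bsmallmatrix}) \setminus \Theta\) is nonempty for all \(\alpha \le 5\) except possibly one. Taking \(\gamma = 2\), which needs \(\begin{bsmallmatrix} 1 \\ 5 \end{bsmallmatrix}\), and \(\alpha \in \set{1,2}\) should typically work. If instead \(\Cdn[\mnGen](\begin{bsmallmatrix} 1 \\ 5 \end{bsmallmatrix}) \subseteq \Theta\), I would take \(\gamma = 3\) together with \(\alpha = 1\), or fall back on Lemma \ref{orga67785b}.

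The main obstacle is \(\ccp = 5\). Lemma \ref{orga704c68} then requires \(\gamma = 1\) and \(\alpha \in \set{1,2,3}\) with \(\Cdn[\mnGen](\begin{bsmallmatrix} 1 \\ 4 \end{bsmallmatrix}) \setminus \Theta \neq \emptyset\), or \(\gamma = 2\) and \(\alpha = 0\), which is excluded. So when \(\ccC \subseteq \Theta\), that lemma is unavailable. In this situation I would aim to show that the code is still linear up to dimension \(2\) but breaks at dimension \(3\), which is exactly why the statement is \(\le 2\) rather than \(=1\). Concretely, I would try Lemma \ref{orga67785b} first. If \(\cdn[\mnGen](\begin{bsmallmatrix} 0 \\ 1 \end{bsmallmatrix})\) is too small for it, the counts are tightly constrained by \(\sum_\alpha \cdn[\mnGen](\begin{bsmallmatrix} 1 \\ \alpha \end{bsmallmatrix}) = \ccd\) and Lemma \ref{orgfb723c7}. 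Then I would compute \(\word(\ccp^2)\) explicitly, as in the \(\ccd = 2\) propositions, and produce a word \(\bdy <_{\ccF} \word(\ccp^2) + \word(\beta)\), for suitable \(\beta \in \set{1, \ccp}\), at distance at least \(\ccd\) from all earlier codewords. The construction would modify \(\word(\ccp^2) + \word(\beta)\) at a coordinate in \(\Theta\) and at one index of \(\Cdn[\mnGen](\begin{bsmallmatrix} 0 \\ 1 \end{bsmallmatrix}) \setminus \Theta\), using the Order lemma (Lemma \ref{orgfc26eca}) to certify that \(\bdy\) is smaller in the order. Carrying this out in full is the step I expect to need the most care.
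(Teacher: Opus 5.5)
Your reduction matches the paper. The case $\ccd = 2$ together with Propositions \ref{orgb060bb2}--\ref{orgc4b4730} leaves exactly $\ccp = 5$ in your cases 2 and 3, and $\ccp \in \{5,7\}$ in case 4. But those residual cases are the real content of the proof, and your proposal does not settle them. Worse, the main tool you rely on is misapplied.

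Write $C_\alpha = \Cdn[\mnGen](\mVec{1 \\ \alpha})$ and $C' = \Cdn[\mnGen](\mVec{0 \\ 1})$. For $\ccp = 5$ and $\gamma = 1$, Lemma \ref{orga704c68} needs $1 \le \alpha \le \ccp - 2\gamma - 1 = 2$, not $\alpha \le 3$. With $\alpha = 3$ one has $\alpha + \gamma = \ccp - \gamma$, so the modified word agrees with $\word(\ccp)$ at $\ccN$ and lies at distance $\ccd - 1$ from it.

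This error matters. Your case 2 at $\ccp = 5$ then leaves the subcase $C_1 = \{\xi\}$, $C_2 = \{\eta\}$ open, and that is exactly where the bound $2$ is attained. For example, take $\ccp = \ccd = 5$ and $\ccF = \ccF(2,3)$. Then $\word(1) = (1,1,1,1,1,0,\ldots)$ and $\word(5) = (4,3,1,2,0,1,0,\ldots)$. So $C_4 = \{0\}$ and $C_3 = \{1\}$ avoid $\Theta = \{2,3\}$, yet $\word(6) = \word(5) + \word(1)$.

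The paper handles this subcase as follows. It shows $|C'| = 1$, since otherwise $\word(\ccp) - \bdf_\eta$ would be a smaller admissible word. Hence $\ccd = 5$, and the paper then finds $\MaxDimLinearLex|\ccF, 5|$ by computer. The value is $2$ for $\xi = 2$, $\eta \in \{3,4,5\}$, so a genuine break at dimension $3$ has to be exhibited there. Your sketch of modifying $\word(\ccp^{2}) + \word(\beta)$ is never carried out, and you aim it at the wrong cases ($C \subseteq \Theta$).

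The other small cases are also left open.
\begin{itemize}
\item For $\ccp = 7$ in case 4, your fallback $\gamma = 3$, $\alpha = 1$ violates $\alpha \le \ccp - 2\gamma - 1 = 0$. Lemma \ref{orga67785b} requires $|C'| \ge 2$.
\item If $C_5 \subseteq \Theta$ and $|C'| = 1$, Lemmas \ref{org042dd48} and \ref{orga081bbf} force every $C_\alpha$ to be a singleton. Then $\ccd = \ccp$ and none of the lemmas applies. The same leftover case $\ccd = \ccp$ occurs in case 4 for $\ccp = 5$, and the paper settles both by computer.
\item For $\ccp = 5$ and $C_4 = \Theta$, the paper gives an explicit argument. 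Lemma \ref{orgfb723c7} gives two indices $\ccM, \ccM' \in C_1$. The word $\word(\ccp) - 3\bdf_\xi + 3(\bdf_\ccM + \bdf_{\ccM'})$ forces $\max\{\ccM, \ccM'\} > \xi$. Then $\word(\ccp) + \word(1) + 2\bdf_\xi - 2(\bdf_\ccM + \bdf_{\ccM'})$ breaks linearity already at dimension $2$. So your plan of "linear up to dimension $2$, break at dimension $3$" cannot work in this case.
\end{itemize}

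To finish the proof you need these reductions. In addition you need either the finite check for $\ccd = \ccp \in \{5,7\}$, or explicit dimension-$3$ constructions that remain valid in the cases where the value is $2$.
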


\begin{proof}
 \comment{Proof.}
\label{sec:orgaf60725}

From Propositions \ref{orgb060bb2}--\ref{orgc4b4730},
if one of the following conditions holds,
then \(\MaxDimLinearLex|\ccF, \ccd| = 1\). 

\begin{enumerate}
\item \(\ccp \ge 7\) and \(\Cdn[\mnGen](\begin{bsmallmatrix} 1 \\ \ccp - 1 \end{bsmallmatrix}) \setminus \Theta \neq \emptyset\).

\item \(\ccp \ge 7\) and \(\Cdn[\mnGen](\begin{bsmallmatrix} 1 \\ \ccp - 1 \end{bsmallmatrix}) = \Theta\).

\item \(\ccp \ge 11\) and \(\Cdn[\mnGen](\begin{bsmallmatrix} 1 \\ \ccp - 1 \end{bsmallmatrix}) = \set{\xi} \tor \set{\eta}\).

\item \(\ccp \ge 5\) and \(\Cdn[\mnGen](\begin{bsmallmatrix} 1 \\ \ccp - 1 \end{bsmallmatrix}) = \emptyset\).
\end{enumerate}

The remaining cases are as follows.

\noindent
\((1)'\)  \(\ccp = 5, 7\) and \(\Cdn[\mnGen](\begin{bsmallmatrix} 1 \\ \ccp - 1 \end{bsmallmatrix}) = \set{\xi} \tor \set{\eta}\).

\noindent
\((2)'\) \(\ccp = 5\) and \(\Cdn[\mnGen](\begin{bsmallmatrix} 1 \\ \ccp - 1 \end{bsmallmatrix}) \setminus \Theta \neq \emptyset\).

\noindent
\((3)'\) \(\ccp = 5\) and \(\Cdn[\mnGen](\begin{bsmallmatrix} 1 \\ \ccp - 1 \end{bsmallmatrix}) = \Theta\).

\medskip
\noindent
\((1)'\) \(\ccp = 5, 7\) and \(\Cdn[\mnGen](\begin{bsmallmatrix} 1 \\ \ccp - 1 \end{bsmallmatrix}) = \set{\xi}\) or \(\set{\eta}\).

Lemma \ref{orga081bbf} shows that
\(\Cdn[\mnGen](\mVec{1 \\ \alpha}) \neq \emptyset\) for \(0 \le \alpha \le \ccp - 1\).
Since \(\Cdn[\mnGen](\begin{bsmallmatrix} 1 \\ \ccp - 1 \end{bsmallmatrix}) \cap \Theta \neq \emptyset\),
it follows that
\(\Cdn[\mnGen](\mVec{1 \\ 1}) \setminus \Theta \neq \emptyset\) or
\(\Cdn[\mnGen](\mVec{1 \\ 2}) \setminus \Theta \neq \emptyset\).
If \(\cdn[\mnGen](\mVec{0 \\ 1}) \ge 2\),
then Lemma \ref{orga67785b} 
shows that \(\MaxDimLinearLex|\ccF, \ccd| = 1\). 
Hence we may assume that
\(\cdn[\mnGen](\mVec{0 \\ 1}) = 1\).
Therefore \(\ccd = \ccp\).
By computer, we see that \(\MaxDimLinearLex|\ccF, \ccd| = 1\).

\medskip
\noindent
\((2)'\) \(\ccp = 5\) and \(\Cdn[\mnGen](\begin{bsmallmatrix} 1 \\ \ccp - 1 \end{bsmallmatrix}) \setminus \Theta \neq \emptyset\).

If \(\Cdn[\mnGen](\mVec{1 \\ \alpha}) \setminus \Theta \neq \emptyset\) for some
\(1 \le \alpha \le \ccp - 3 = 2\),
then \(\MaxDimLinearLex|\ccF, \ccd| = 1\). 
Hence we may assume that
\(\Cdn[\mnGen](\begin{bsmallmatrix} 1 \\ 1 \end{bsmallmatrix}) \cup \Cdn[\mnGen](\begin{bsmallmatrix} 1 \\ 2 \end{bsmallmatrix}) = \Theta\).
Since
\(\Cdn[\mnGen](\begin{bsmallmatrix} 1 \\ 4 \end{bsmallmatrix}) \neq \emptyset\),
it follows from Lemma \ref{orga081bbf} that
\(\cdn[\mnGen](\mVec{1 \\ \alpha}) \neq 0\),
and hence \(\cdn[\mnGen](\mVec{1 \\ 1}) = \cdn[\mnGen](\mVec{1 \\ 2}) = 1\).
Thus \(\Cdn[\mnGen](\mVec{1 \\ 1}) = \set{\xi}\)
and \(\Cdn[\mnGen](\mVec{1 \\ 2}) = \set{\eta}\).
Note that \(\word[\eta](\ccp) = 1\) and \(\word<\eta>(\ccp) = 2\).
Assume that \(\cdn[\mnGen](\mVec{0 \\ 1}) \ge 2\).
Let \(\bdy = \word(\ccp) - \bdf_\eta\).
Then \(\ithComp{\ccy}[\eta] = 0\), and hence \(\bdy <_{\ccF} \word(\ccp)\).
Moreover,
\[
 \mbigDist{\bdy}{\word(1)} = \cdn[\mnGen](1) + \cdn[\mnGen](\mVec{0 \\ 1}) - 2 \ge \ccd.
\]
Thus \(\mbigDist{\bdy}{\word(\cca)} \ge \ccd\) for \(0 \le \cca < \ccp\),
a contradiction.
Hence \(\cdn[\mnGen](\mVec{0 \\ 1}) = 1\) and \(\ccd = 5\). 
By computer, we see that
\[
 \MaxDimLinearLex|\ccF, 5|
= \begin{cases}
 2 & \tif \xi = 2 \tand \eta \in \set{3,4,5},\\
 1 & \totherwise.
\end{cases}
\]

\medskip
\noindent
\((3)'\) \(\ccp = 5\) and \(\Cdn[\mnGen](\begin{bsmallmatrix} 1 \\ \ccp - 1 \end{bsmallmatrix}) = \Theta\).

Lemma \ref{orga081bbf} shows that
\(\Cdn[\mnGen](\mVec{1 \\ \alpha}) \neq \emptyset\) for \(0 \le \alpha \le 4\).
Note that
\(\Cdn[\mnGen](\mVec{1 \\ 4}) = \Theta\) and
\(\Cdn[\mnGen](\mVec{1 \\ 3}) \setminus \Theta \neq \emptyset\).
Lemma \ref{orgfb723c7} shows that \(\cdn[\mnGen](\begin{bsmallmatrix} 1 \\ 1 \end{bsmallmatrix}) \ge 2\).
Let \(\ccM , \ccM' \in \Cdn[\mnGen](\begin{bsmallmatrix} 1 \\ 1 \end{bsmallmatrix})\).
We show that  \(\max \{\ccM, \ccM'\} > \xi\).
Let
\[
 \bdy = 
\word(\ccp) - 3  \bdf_\xi + 3 (\bdf_\ccM + \bdf_\ccM').
\]
We see that \(\ccd(\bdy, \word(\cca)) \ge \ccd\) for \(0 \le \cca < \ccp\).
Hence \(\word(\ccp) <_{\ccF} \bdy\). 
This implies that \(\max \{\ccM, \ccM'\} > \xi\).

\begin{center}
\begin{tabular}{lrrrr}
 & \(\xi\) & \(\eta\) & \(\ccM\) & \(\ccM'\)\\
\hline
\(\word<\nIi>(1)\) & 1 & 1 & 1 & 1\\
\(\word<\nIi>(\ccp)\) & \(4\) & \(4\) & 1 & 1\\
\(\ithComp{\ccy}<\nIi>\) & 1 & 1 & \(4\) & \(4\)\\
\end{tabular}
\end{center}

Let
\[
 \bdz =  \word(\ccp) + \word(1) + 2 \bdf_{\xi} - 2 (\bdf_{\ccM} + \bdf_{\ccM'}).
\]
Then \(\bdz <_{\ccF} \word(1) + \word(\ccp)\) and
\(\ccd(\bdz, \word(\cca)) \ge \ccd\) for \(0 \le \cca \le \ccp\).
Therefore \(\word(\ccp + 1) \neq \word(\ccp) + \word(1)\).

\begin{center}
\begin{tabular}{lrrrr}
 & \(\xi\) & \(\eta\) & \(\ccM\) & \(\ccM'\)\\
\hline
\(\word<\nIi>(1)\) & 1 & 1 & 1 & 1\\
\(\word<\nIi>(\ccp)\) & \(4\) & \(4\) & 1 & 1\\
\(\word<\nIi>(\ccp) + \word<\nIi>(1)\) & 0 & 0 & 2 & 2\\
\(\ithComp{\ccz}<\nIi>\) & 2 & 2 & 0 & 0\\
\end{tabular}
\end{center}
\end{proof}

\comment{Bib}
\label{sec:org7f918d9}
\bibliographystyle{plain}

\end{document}